\documentclass{amsart} 
\usepackage{amsxtra}
\usepackage{amssymb}
\usepackage{tikz-cd}
\usetikzlibrary{cd,shapes}
\usetikzlibrary{arrows}
\usepackage[mathscr]{eucal}

\input xy \xyoption{matrix} \xyoption{arrow}\xyoption{frame}

\theoremstyle{plain}
\tikzstyle{mutable}=[align=center, draw,inner sep=0pt,shape=ellipse,minimum height=0.4cm, minimum width=0.4cm]
\tikzstyle{frozen}=[inner sep=0.5mm,rectangle,draw]


\newtheorem{theorem}{Theorem}[section]
\newtheorem{lemma}[theorem]{Lemma}
\newtheorem{definition-theorem}[theorem]{Definition-Theorem}
\newtheorem{proposition}[theorem]{Proposition}
\newtheorem{corollary}[theorem]{Corollary}
\newtheorem{definition}[theorem]{Definition}
\newtheorem{remark}[theorem]{Remark}
\newtheorem{conjecture}[theorem]{Conjecture}
\newtheorem{notation}[theorem]{Notation}

\newtheorem*{maintheorem*}{Main Theorem}
\newtheorem*{theorem*}{Theorem}
\newtheorem*{theoremA*}{Theorem A}
\newtheorem*{theoremB*}{Theorem B}
\newtheorem{thmintro}{Theorem}

\theoremstyle{definition}

\newtheorem*{definition*}{Definition}
\newcommand \bth[1] { \begin{theorem}\label{t#1} }
\newcommand \ble[1] { \begin{lemma}\label{l#1} }

\newcommand \bpr[1] { \begin{proposition}\label{p#1} }
\newcommand \bco[1] { \begin{corollary}\label{c#1} }
\newcommand \bde[1] { \begin{definition}\label{d#1}\rm }
\newcommand \bex[1] { \begin{example}\label{e#1}\rm }
\newcommand \bre[1] { \begin{remark}\label{r#1}\rm }
\newcommand \bcj[1] { \begin{conjecture}\label{j#1}\rm }

\newcommand \bnota[1] { \begin{notation}\label{n#1}\rm }
\renewcommand {\eth} { \end{theorem} }
\newcommand {\ele} { \end{lemma} }

\newcommand {\epr} { \end{proposition} }
\newcommand {\eco} { \end{corollary} }
\newcommand {\ede} { \end{definition} }
\newcommand {\eex} { \end{example} }
\newcommand {\ere} { \end{remark} }
\newcommand {\ecj} { \end{conjecture} }

\newcommand {\enota} { \end{notation} }



\def \KK {{\mathbb K}}

\def \ring {{R}}

\def \AAA {\mathbb{ A}} 
\def \NewRing {{\mathbb A}'} 





\def \II {{\mathcal{I}}}





\def \al {\alpha}

\def \la {\lambda}

\def \De {\Delta}




\def \rcor {\rangle}
\def \lcor {\langle}





\def \g  {\mathfrak{g}}   

\def \h  {\mathfrak{h}}

\def \frb  {\mathfrak{b}}



\DeclareMathOperator \sign {{\mathrm{sgn}}}

\DeclareMathOperator \ad { {\mathrm{ad}} }









\usepackage[normalem]{ulem}
\usepackage{amsmath,amsthm,amsfonts,amssymb, tikz-cd} 
\usepackage{stackrel}

\usepackage{verbatim}      	
\usepackage{listings} 
\lstset{
basicstyle=\small\ttfamily,
columns=flexible,
breaklines=true
}
\usepackage{mathtools}
\usepackage{epsfig}         	
\usepackage{url}		
\usepackage{epic,eepic,latexsym}
\usepackage{graphicx}
\usepackage{xcolor}
\usepackage{amsxtra}

\usepackage{lcd}
\usepackage{mathrsfs}
\usepackage[linktocpage]{hyperref}
\usepackage{lscape}
\usepackage{tabularx}
\usepackage[all]{xy}
\usepackage{enumitem}
\usepackage{stmaryrd}
\usepackage{tipa}
\usepackage{upgreek}

\usetikzlibrary {arrows.meta,positioning}
\usepackage{caption}
\usepackage{subcaption}

\DeclareMathOperator{\tw}{tw}

\newcommand{\BBB}{\mathbb{B}}

\DeclareMathOperator{\Mat}{Mat}

\DeclareMathOperator{\trop}{trop}

\DeclareMathOperator{\can}{can}
\DeclareMathOperator{\N}{N}

\DeclareMathOperator{\gr}{gr}


\def\!{\mskip-\thinmuskip}
\let\?=\overline
\let\:=\colon
\let\llb=\llbracket
\let\rrb=\rrbracket



\newcommand {\kk} {\Bbbk}

\newcommand {\sd} {{\bf s}}
\newcommand {\sdt} {{\bf t}}




\theoremstyle{plain}
 \ifdefined\thm
 \else
   \theoremstyle{plain}
    \newtheorem{thm}[theorem]{Theorem}
 \fi

 \ifdefined\prop
 \else
   \theoremstyle{plain}
    \newtheorem{prop}[theorem]{Proposition}
 \fi\ifdefined\cor
 \else
   \theoremstyle{plain}
    \newtheorem{cor}[theorem]{Corollary}
 \fi

 \ifdefined\lem
 \else
   \theoremstyle{plain}
    \newtheorem{lem}[theorem]{Lemma}
 \fi

 \ifdefined\def
 \else
   \theoremstyle{definition}
   
 \fi
 
 \ifdefined\defn
 \else
   \theoremstyle{definition}
   \newtheorem{defn}[theorem]{Definition}
 \fi
 
 \ifdefined\example
 \else
   \theoremstyle{definition}
     \newtheorem{example}[theorem]{Example}
 \fi
 
 \ifdefined\eg
 \else
     \theoremstyle{definition}
         
 \fi
 
 \ifdefined\rem
 \else
     \theoremstyle{remark}
         \newtheorem{rem}[theorem]{Remark}
 \fi
 
 \ifdefined\quest
 \else
     \theoremstyle{plain}
     
 \fi
 
 \ifdefined\assumption*
 \else
     \theoremstyle{plain}
     \newtheorem*{assumption*}{Assumption}
 \fi

    \newtheorem{asm}[theorem]{Assumption}

\newcommand{\Q}{{\mathbb Q}}
\newcommand{\R}{{\mathbb R}}
\newcommand{\Z}{{\mathbb Z}}





\newcommand{\supp}{\operatorname{supp}}


\newcommand{\pr}{\operatorname{pr}} 


\usepackage{tikz-cd}
\usepackage{bm}

\newcounter{listequation}

\numberwithin{equation}{section}

\newcommand{\BZ}{\mathrm{BZ}}
\newcommand{\BFZ}{\mathrm{BFZ}}

\newcommand{\wt}{\mathrm{wt}}

\newcommand{\cF}{\mathcal{F}}
\newcommand{\Br}{\mathsf{Br}}

\newcommand{\tB}{\widetilde{B}}

\newcommand{\LP}{\mathcal{LP}}
\newcommand{\bLP}{\overline{\LP}}
\newcommand{\hLP}{\widehat{\LP}}

\renewcommand{\N}{\mathbb{N}}

\renewcommand{\can}{\mathbf{L}}

\newcommand{\ddiag}{\symm'}
\newcommand{\symm}{\mathsf{d}}
\newcommand{\bi}{{\bf i}}
\newcommand{\ubi}{\underline{\bf i}}

\newcommand{\seq}{\boldsymbol{\mu}}

\newcommand{\cM}{M^{\circ}}







\newcommand{\Hf}{\frac{1}{2}}
\newcommand{\hf}{\frac{1}{2}}

\newcommand{\ufv}{{\operatorname{uf}}}
\newcommand{\fv}{{\mathrm f}}

\newcommand{\clAlg}{\mathcal{A}}
\newcommand{\bClAlg}{\overline{\clAlg}}
\newcommand{\upClAlg}{\mathcal{U}}
\newcommand{\bUpClAlg}{\overline{\upClAlg}}
\newcommand{\alg}{\mathscr{A}}

\newcommand{\uk}{\underline{k}}

\renewcommand{\tw}{\operatorname{tw}}

\newcommand{\base}{{\AAA'}}

\ifdefined\supp
\else
\newcommand{\supp}{\operatorname{supp}}
\fi

\ifdefined\tw
\else
	\newcommand{\tw}{{\opname tw}}
\fi

\ifdefined\G
    \renewcommand{\G}{{\mathbb G}}
\else
   \newcommand{\G}{{\mathbb G}}
\fi

\ifdefined\C
    \renewcommand{\C}{{\mathbb C}}
\else
   \newcommand{\C}{{\mathbb C}}
\fi

\usepackage{comment}



\allowdisplaybreaks

\tikzset{Dynkin/.style={circle,draw,scale=.40}}

\author{Hironori Oya}
\address[Hironori Oya]{Department of Mathematics, Institute of Science Tokyo, 2-12-1 Ookayama, Meguro-ku, Tokyo, 152-8551, Japan}
\email{hoya@math.titech.ac.jp}

\author{Fan Qin}
\address[Fan Qin]{School of Mathematical Sciences \\ Beijing Normal University \\ China}
\email{qin.fan.math@gmail.com}

\author{Milen Yakimov}
\address[Milen Yakimov]{Department of Mathematics \\ Northeastern University \\ 360 Huntington Ave, Boston, MA 02115 \\ USA 
and International Center for Mathematical Sciences, Institute of Mathematics and Informatics \\
Bulgarian Academy of Sciences \\ 
Acad. G. Bonchev Str., Bl. 8 \\
Sofia 1113, Bulgaria
}
\email{m.yakimov@northeastern.edu}

\subjclass[2020]{Primary 13F60; Secondary 20G42, 17B37, 20G15}

\title[Integral quantum cluster structures]{Integral cluster structures on \\ quantized coordinate rings}

\begin{document}

\begin{abstract}
    We develop (quantum) cluster algebra structures over arbitrary commutative unital rings 
    $\kk$ and prove that the 
    (quantized) coordinate rings of connected simply-connected complex simple algebraic groups $G$ over $\kk$ admit such structures. We first show that the integral form of the quantized coordinate ring of $G$ admits an upper quantum cluster algebra structure over $\AAA=\Z[q^{\pm\frac{1}{2}}]$ by using a combination of tools from quantum groups, canonical bases and cluster algebras and a previous result of the second and third authors over $\Q(q^\Hf)$. We then obtain (integral) quantum versions of recent results of the first author: when $G$ is not of type $F_4$, the quantized coordinate ring of $G$ admits a quantum cluster algebra structure over $\AAA'$, where $\AAA'=\AAA$ when $G$ is not of types $G_2$, $E_8$, and $F_4$; $\AAA'=\AAA[(q^2+1)^{-1}]$ when $G$ is of type $G_2$, and $\AAA'=\Q(q^\Hf)$ when $G$ is of type $E_8$. We furthermore prove that the classical versions of these results hold over $\AAA'$ (where $\AAA'=\Z$ if $G$ is not of type $F_4$ or $G_2$ and $\AAA'=\Z[\frac{1}{2}]$ if $G$ is of type $G_2$) and that the integral form of the coordinate ring of $G$ of type $F_4$ is an upper cluster algebra. 
    Finally, by using common triangular bases of (quantum) cluster algebras, we prove that the above results also hold under specializations of $\AAA$ and $\AAA'$ to commutative unital rings $\kk$.
\end{abstract}

\maketitle
\section{Introduction}
\subsection{Background}
Cluster algebras were introduced by Fomin and Zelevinsky \cite{fomin2002cluster} in 2001, and within a very short period it was discovered that they provide powerful tools for many problems in mathematics and mathematical physics. Quantum cluster algebras were defined three years later by Berenstein and Zelevinsky \cite{BerensteinZelevinsky05}. One of the major motivations of Fomin and Zelevinsky for the introduction of cluster algebras was to provide a method for the systematic and detailed study of canonical bases of the integral forms of (quantized) coordinate rings. In order to realize this goal, one should first prove that the integral forms of the (quantized) coordinate rings in question admit (quantum) cluster algebra structures.

A major role in representation theory is played by the classical coordinate ring $\C[G]$ and the quantized coordinate ring $R_q[G]$ of a connected, simply-connected complex simple algebraic group $G$.
The aim of this paper is to substantially strengthen results on the construction of classical and quantum cluster algebra structures on them. The second and third named authors \cite{QY2025partially} proved that the quantized coordinate ring $R_q[G]$ over $\Q(q^{\Hf})$ is a partially compactified upper quantum cluster algebra, while the first author \cite{Oya} proved that, if $G$ is not of type $F_4$, the coordinate ring $\C[G]$ of $G$ over $\C$ is a partially compactified cluster algebra that coincides with the corresponding upper cluster algebra.
Following the terminology of \cite{gross2018canonical}, the term {\em{partially compactified cluster algebra}} refers to the algebra where the frozen variables are not inverted. Building on the results of \cite{QY2025partially} and incorporating many methods from quantum groups, canonical bases and cluster algebras, we prove the existence of classical and quantum partially compactified cluster algebra structures on the integral forms of $\C[G]$ and $R_q[G]$. We furthermore extend these results to arbitrary commutative unital rings $\kk$ by using triangular bases of (quantum) cluster algebras.

\subsection{Quantum and classical integral results}
Set 
\[
\AAA := \Z[q^{\pm \Hf}], \quad  
\AAA_{1/2}\coloneqq \AAA[[2]_q^{-1}]=\AAA[(q^2+1)^{-1}],
\quad \mbox{and} \quad \KK:= \Q(q^{\Hf}). 
\]
For a connected, simply-connected complex simple algebraic group $G$, denote by $R_q[G]_\AAA$ the integral form of $R_q[G]$ defined in \cite{Lus:intro,Lus-Zform}, see Section \ref{2.3} and Appendix \ref{app:QCA} for details. For an intermediate algebra $\AAA \subseteq \AAA' \subseteq \KK$, write $R_q[G]_{\AAA'} := R_q[G]_\AAA \otimes_\AAA \AAA'$ and, for a quantum seed $\sd$, denote by $\bClAlg(\sd)_{\AAA'} \subseteq \bUpClAlg(\sd)_{\AAA'}$ the corresponding partially compactified quantum cluster algebra and its upper counterpart, defined over $\AAA'$, see Section \ref{sec:cluster-alg} for details.

Berenstein and Zelevinsky \cite{BerensteinZelevinsky05} constructed a candidate of the quantum seed $\sd^\BZ$ of $R_q[G]$ associated
to every reduced expression of $(w_0, w_0) \in W \times W$, where $w_0$ is the longest element of the Weyl group $W$ of $G$, see Section \ref{sec:BZ-seed}; the necessary property that was not proved in \cite{BerensteinZelevinsky05} is that the skew field of fractions of the corresponding quantum torus is precisely the skew field of fractions of $R_q[G]$. For unshuffled words, this was proved in \cite{goodearl2016berenstein}. In \cite{QY2025partially} it was proved that those seeds are related to each other by mutations, which also showed the necessary property for all of them. 
Our first main result is the following:
\begin{thmintro}[{= Theorems \ref{thm:upcluster-G-U}, \ref{thm:cluster-G-A-U}, and \ref{thm:integral-cluster-G}}]\label{ti:cluster-G-A-U}
Let $G$ be a connected, simply-connected complex simple algebraic group and $\sd^\BZ$ be any Berenstein--Zelevinsky quantum seed of $R_q[G]$ for a reduced word of $(w_0,w_0) \in W \times W$. Then,
\[
R_q[G]_{\AAA}=\bUpClAlg(\sd^\BZ)_{\AAA}.
\]
Moreover, if $G$ is not of type $F_4$, then 
\[
R_q[G]=\bClAlg(\sd^\BZ),
\]
and this equality can be restricted to the equality on the integral forms as follows: 
\begin{enumerate} 
\item If $G$ is of type $A_r$, $B_r$, $C_r$, $D_r$, $E_6$, or $E_7$, then 
\[
R_q[G]_\AAA=\bClAlg(\sd^\BZ)_\AAA.
\]
\item If $G$ is of type $G_2$, then 
\[
R_q[G]_{\AAA_{1/2}}=\bClAlg(\sd^\BZ)_{\AAA_{1/2}}.
\]
\end{enumerate}
\end{thmintro}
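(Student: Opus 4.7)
The plan is to descend the equality $R_q[G]_\KK = \bUpClAlg(\sd^\BZ)_\KK$ from \cite{QY2025partially} to the $\AAA$-level, then to use the classical result of Oya \cite{Oya} together with a quantum lifting argument through common triangular bases to upgrade upper to lower cluster algebra, first over $\KK$ and finally, type-by-type, over the appropriate integral base $\AAA'$.

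For the first equality $R_q[G]_\AAA = \bUpClAlg(\sd^\BZ)_\AAA$, I would begin by noting that the cluster variables of any BZ seed are (products of) quantum unipotent minors, hence elements of $R_q[G]_\AAA$. Since \cite{QY2025partially} proves mutation-equivalence of all BZ seeds and the exchange relations are $\AAA$-integral, every BZ cluster variable, in every BZ cluster, lies in $R_q[G]_\AAA$. This already yields $\bClAlg(\sd^\BZ)_\AAA \subseteq R_q[G]_\AAA$ and, once one checks compatibility of the $\AAA$-structures on the partially compactified quantum torus and on $R_q[G]_\AAA$, the inclusion $\bUpClAlg(\sd^\BZ)_\AAA \subseteq R_q[G]_\AAA$. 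The reverse inclusion $R_q[G]_\AAA \subseteq \bUpClAlg(\sd^\BZ)_\AAA$ I would obtain by expanding each element of $R_q[G]_\AAA$ in every BZ cluster with $\AAA$-Laurent coefficients. The natural tool here is a common triangular basis of $\bUpClAlg(\sd^\BZ)_\AAA$ matching (a suitable rescaling of) the dual canonical basis of $R_q[G]_\AAA$; this provides simultaneous $\AAA$-integral expansions in every cluster and identifies the two $\AAA$-modules basis-by-basis.

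For the equality $R_q[G] = \bClAlg(\sd^\BZ)$ when $G$ is not of type $F_4$, I would lift Oya's classical statement quantumly by a filtration-plus-specialization argument. The algebra $R_q[G]_\KK$ carries a natural filtration (e.g.\ by weights or a dual PBW-type degree) whose $q^{1/2}=1$ specialization recovers $\C[G]$, and the cluster and upper cluster algebras $\bClAlg(\sd^\BZ)$, $\bUpClAlg(\sd^\BZ)$ inherit compatible filtrations specializing to their classical counterparts. Oya's equality $\C[G] = \bClAlg$ then upgrades to $\bClAlg(\sd^\BZ) = \bUpClAlg(\sd^\BZ) = R_q[G]$ by inductively lifting each classical cluster-monomial expression to a quantum cluster-monomial expression, with the common triangular basis furnishing a canonical quantum counterpart to each classical cluster monomial.

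For the integral refinement, I would run the preceding lifting argument at the $\AAA'$-level. For types $A_r, B_r, C_r, D_r, E_6, E_7$ the quantum integers appearing in exchange relations and triangular-basis coefficients lie in $\AAA$, so the lift is fully $\AAA$-integral. For type $G_2$, the short/long root length ratio of $3$ and the presence of $[2]_q = q^2+1$ in the relevant coefficients force one to invert $[2]_q$, paralleling the $\tfrac{1}{2}$ that Oya's classical result requires in this type; the precise combinatorics of which quantum integers appear ultimately reduces to an analysis of the mutable exchange matrices and their $d$-symmetrizers. \textbf{The main obstacle} throughout is controlling the denominators that arise when expanding dual canonical basis elements in each BZ cluster: while each individual exchange relation is manifestly $\AAA$-integral, iterated mutations and triangular-basis expansion coefficients can in principle produce quantum-integer denominators, and the common triangular basis is precisely the tool needed to certify, type-by-type, which denominators survive (forcing the weaker base rings for $G_2$, $E_8$, $F_4$) and which are eliminated.
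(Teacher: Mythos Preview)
Your approach diverges substantially from the paper's and has real gaps at both stages.

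\textbf{For $R_q[G]_\AAA=\bUpClAlg(\sd^\BZ)_\AAA$.} Your claimed inclusion $\bUpClAlg(\sd^\BZ)_\AAA\subseteq R_q[G]_\AAA$ does not follow from the integrality of cluster variables: an element of $\bUpClAlg_\AAA$ is a priori only a Laurent polynomial in the initial cluster over $\AAA$, and the quantum torus $\LP(\sd)_\AAA$ is not contained in $R_q[G]_\AAA$. For the reverse inclusion you invoke an identification of the common triangular basis of $\bUpClAlg_\AAA$ with the dual canonical basis of $R_q[G]_\AAA$; the paper only asserts that the triangular basis is an \emph{analog} of the dual canonical basis, and no such identification is available as input. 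The paper instead proves the equality by a localization--intersection argument: the irreducible elements $\Delta_j$ of $\AAA$ are shown to be prime in $R_q[G]_\AAA$ (so $R_q[G]_\AAA[\Delta_j^{-1}]=R_q[G]$), one constructs a specific BZ seed $\sd^\BZ_\bullet$ for which $R_q[G]_\AAA[E_\bullet^{-1}]=\LP(\sd^\BZ_\bullet)_\AAA$ (using the integral cluster structure on $R_q[U^+]_\AAA$), and then a general prime-element intersection theorem from \cite{QY2025partially} combined with $\bUpClAlg\cap\LP(\sd)_\AAA=\bUpClAlg_\AAA$ gives both inclusions at once.

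\textbf{For $R_q[G]=\bClAlg(\sd^\BZ)$ and its integral refinement.} The paper does \emph{not} lift Oya's classical theorem by filtration/specialization. Such a lift would need to show that an element lying in $\bClAlg$ after specialization at $q^{1/2}=1$ already lies in $\bClAlg$ over $\KK$, and your sketch does not address the obvious obstruction that specialization is not injective. Instead the paper proves directly, by quantum representation theory, that the generalized quantum minors generate $R_q[G]_{\AAA'}$ (Theorem~\ref{thm:integral-coord-ring}): one first shows (via Lusztig's based-module formalism for tensor powers of the small module $M_q$) that the matrix coefficients of $M_q$ generate $R_q[G]_\AAA$; then, since $M_q$ is minuscule in types $A$--$D$, $E_6$, $E_7$, these matrix coefficients are already quantum minors. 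For $G_2$ an explicit calculation with the $7$-dimensional module produces the factor $[2]_q^{-1}$, and for $E_8$ an $R$-matrix computation on the adjoint representation works only over $\KK$. The base ring $\AAA'$ thus arises from representation theory of $M_q$, not from denominators in exchange relations or triangular-basis expansions.
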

We note that the theorem does not address the integral form of $R_q[G]$ in the $E_8$ case.

The algebras $\bClAlg(\sd^\BZ)_{\AAA}$ and $\bUpClAlg(\sd^\BZ)_{\AAA}$ can be straightforwardly defined as the quantum cluster algebras over $\AAA$, while the integral form $R_q[G]_\AAA$ (or $R_q[G]_{\AAA_{1/2}}$) of $R_q[G]$ arises from representation theory and is often challenging to study. Theorem \ref{ti:cluster-G-A-U} shows that, despite their very different constructions, these two integral forms actually coincide when $G$ is not of type $E_8$ or $F_4$. This provides supporting evidence for the idea that integral forms in cluster theory can serve as integral forms for the corresponding quantized coordinate rings in Lie theory, see \cite[Remark 1.4]{qin2023analogs} and \cite[Theorem A]{goodearl2020integral}.

The first ingredient of the proof of Theorem \ref{ti:cluster-G-A-U} is a bootstrap method to improve the result $R_q[G]_\KK=\bUpClAlg(\sd^\BZ)_\KK$ 
from \cite{QY2025partially}
to $\AAA$. This is achieved by a localization technique with respect to prime elements of noncommutative algebras. 

The second ingredient of the proof of Theorem \ref{ti:cluster-G-A-U} is the following result which might be important in its own right. This is a quantum analog and integral refinement of \cite[Theorem 3.2]{Oya}.
\begin{thmintro}[{= Theorem \ref{thm:integral-coord-ring}}]\label{ti:integral-coord-ring}
Assume $G$ is a connected, simply-connected complex simple algebraic group which is not of type $F_4$. We choose 
\[
\base=
\begin{cases}
    \KK&\text{when $G$ is of type $E_8$},\\
    \AAA_{1/2}=\AAA[(q^2+1)^{-1}]&\text{when $G$ is of type $G_2$},\\
    \AAA&\text{otherwise.}
\end{cases}
\]
Then the generalized quantum minors generate $R_q[G]_{\base}$ as an $\base$-algebra. 
\end{thmintro}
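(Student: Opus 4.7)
The strategy is to directly quantize the proof of \cite[Theorem 3.2]{Oya}, carefully tracking any coefficients that appear so as to stay inside $\base$. Write $S \subseteq R_q[G]_{\base}$ for the $\base$-subalgebra generated by all generalized quantum minors; the goal is to prove $S = R_q[G]_{\base}$. By the quantum Peter--Weyl decomposition,
\[
R_q[G]_{\base} \;=\; \bigoplus_{\lambda \in P^+} V_q(\lambda)^*_{\base} \otimes V_q(\lambda)_{\base},
\]
phrased with respect to Lusztig's $\base$-form, it suffices to show that every matrix coefficient $c^{\lambda}_{v,u}$, with $v,u$ running over a basis of the integral form, lies in $S$.

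The first step is a reduction to fundamental representations. For a dominant weight $\lambda = \vpi_{i_1} + \cdots + \vpi_{i_n}$, the module $V_q(\lambda)_{\base}$ is realized as the cyclic $U_q(\frg)_{\base}$-submodule of $V_q(\vpi_{i_1})_{\base} \otimes \cdots \otimes V_q(\vpi_{i_n})_{\base}$ generated by the tensor product of highest weight vectors. Via the quantum coproduct this rewrites every $c^{\lambda}_{v,u}$ as an $\base$-linear combination of products of matrix coefficients of the fundamental modules $V_q(\vpi_i)$. It is therefore enough to prove that every quantum matrix coefficient of every fundamental representation belongs to $S$.

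For each fundamental module $V_q(\vpi_i)$, the matrix coefficients $c^{\vpi_i}_{v,u}$ in which both $v$ and $u$ are extremal weight vectors are, by definition, generalized quantum minors. The remaining matrix coefficients, involving non-extremal weight vectors, are handled by the following quantum analog of Oya's key construction: any non-extremal weight vector $u \in V_q(\vpi_i)_{\base}$ is an $\base$-linear combination of images of extremal vectors sitting in a suitable tensor product $V_q(\vpi_{j_1})_{\base} \otimes \cdots \otimes V_q(\vpi_{j_m})_{\base}$ under a composition of divided-power actions $F_{k_1}^{(a_1)} \cdots F_{k_s}^{(a_s)}$, and passing this identity through the coproduct expresses $c^{\vpi_i}_{v,u}$ as an $\base$-combination of products of generalized minors. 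The hard step, and the origin of the case distinctions in the statement, is the verification that the branching coefficients arising from the projection onto $V_q(\vpi_i)$ inside the ambient tensor product actually lie in $\base$: for types $A_r$, $B_r$, $C_r$, $D_r$, $E_6$, $E_7$ they already lie in $\AAA$; for type $G_2$ they involve the single denominator $[2]_q = q^2+1$, forcing $\base = \AAA_{1/2}$; for type $E_8$ further quantum denominators appear that cannot be controlled within any proper localization of $\AAA$, forcing $\base = \KK$; and for type $F_4$ no such expression exists even over $\KK$, which is precisely why $F_4$ is excluded from the theorem.

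The main obstacle is this last, type-by-type integral branching analysis: one must exhibit, for each non-minuscule fundamental representation of each remaining type, an explicit expression for the projector onto $V_q(\vpi_i)$ inside the relevant tensor product of fundamentals, with coefficients in $\base$, and verify that no further $q$-denominators creep in during the quantum deformation beyond those already present classically. Once this verification is carried out, combining it with the reduction to fundamental representations and with the Peter--Weyl decomposition gives $S = R_q[G]_{\base}$ as claimed.
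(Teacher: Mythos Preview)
Your proposal is a sketch of a strategy, not a proof, and it glosses over exactly the place where the quantum case departs from the classical one. You write that the plan is to ``directly quantize the proof of \cite[Theorem 3.2]{Oya}'', but the paper's own remarks (after Theorem~\ref{t:EG-gamma} and Remark~\ref{r:A=O-classical}) explain why a direct quantization fails: in the quantum setting the relevant tensor-product expansions acquire extra terms that are absent classically. For $G_2$ the term $b_{(0,0)}\otimes b_{(0,0)}$ appears with a coefficient involving $q-q^{-1}$, and one must invoke the trivial summand of $M_q^{\otimes 2}$ to dispose of it. For $E_8$ the matrix $A_\iota^{(i)}$ recording the zero-weight part of the quantum comultiplication map $m_\AAA^*$ is no longer zero (classically it vanishes because $[\h,\h]=0$), and the paper has to prove separately that it is symmetric (Lemma~\ref{l:symm}) and then use an $R$-matrix argument to cancel the resulting contributions. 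Your outline contains no analog of either step, and ``verifying that no further $q$-denominators creep in'' is precisely the content of these arguments, not a routine check.

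There are two further issues. First, your reduction to all fundamental representations is both unjustified and inefficient. Expressing a matrix coefficient of $V_q(\lambda)$ as an $\base$-combination of products of matrix coefficients of the $V_q(\varpi_i)$ requires an $\base$-splitting of the inclusion $V_q(\lambda)_\base\hookrightarrow\bigotimes V_q(\varpi_{i_j})_\base$, which is exactly what Lusztig's based-module filtration provides; you should say so. The paper avoids treating all fundamentals by instead showing (Theorem~\ref{t:integral-gen}, via Proposition~\ref{p:Lusgen} and the based-module structure of tensor powers) that the matrix coefficients of a single small module $M_q$ already generate $R_q[G]_\AAA$, so only the non-extremal matrix coefficients of that one module need to be expressed in minors. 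Second, your assertion that for $F_4$ ``no such expression exists even over $\KK$'' is unsupported: the $F_4$ case is excluded because the argument does not go through, not because the statement is known to be false.
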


We also prove classical versions of Theorems \ref{ti:cluster-G-A-U} and \ref{ti:integral-coord-ring}. In the introduction we state the analog of Theorem \ref{ti:cluster-G-A-U}, which in the $E_8$ case is stronger than the quantum result. For the full statement of the analog of Theorem \ref{ti:integral-coord-ring}, we refer to Theorem \ref{thm:integral-coord-ring-classical}, where in the $E_8$ case we show that the generalized minors generate the integral form over $\Z$ of the coordinate ring of $G$. Denote by $R[G]_{\AAA'}$ the integral form of the coordinate ring of $G$ over an intermediate ring $\Z \subseteq \AAA' \subseteq \C$. Recall that Berenstein, Fomin and Zelevinsky \cite{BerensteinFominZelevinsky05} constructed a classical seed of the coordinate ring of $G$ over $\C$ associated to every reduced expression of $(w_0, w_0) \in W \times W$. Any such seed will be denoted by $\sd^\BFZ$ suppressing the dependence on reduced expression of $(w_0,w_0)$. 
\begin{thmintro}[{= Corollary \ref{cor:upcluster-G-U} and Theorem \ref{thm:integral-cluster-G-classical}}]\label{ti:integral-cluster-G-classical}
Let $G$ be a connected, simply-connected complex simple algebraic group. 
For every Berenstein--Fomin--Zelevinsky classical seed $\sd^\BFZ$ for $G$ associated with a reduced word of $(w_0,w_0) \in W \times W$, 
\[
R[G]_{\Z}=\bUpClAlg(\sd^\BFZ)_{\Z}.
\]
Moreover, if $G$ is not of type $F_4$, we have the following cluster algebra structures:  
\begin{enumerate} 
\item If $G$ is of type $A_r$, $B_r$, $C_r$, $D_r$, $E_6$, $E_7$, or $E_8$, then 
\[
R[G]_\Z=\bClAlg(\sd^\BFZ)_\Z.
\]
\item If $G$ is of type $G_2$, then 
\[
R_q[G]_{\Z\left[\frac{1}{2}\right]}=\bClAlg(\sd^\BFZ)_{\Z\left[\frac{1}{2}\right]}.
\]
\end{enumerate}
\end{thmintro}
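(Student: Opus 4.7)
The plan is to derive Theorem \ref{ti:integral-cluster-G-classical} from its quantum counterpart Theorem \ref{ti:cluster-G-A-U} by specialization at $q^{\Hf}=1$, supplemented in type $E_8$ by the classical form of Theorem \ref{ti:integral-coord-ring}, namely Theorem \ref{thm:integral-coord-ring-classical}. The Berenstein--Zelevinsky quantum seed $\sd^\BZ$ specializes at $q^{\Hf}=1$ to the Berenstein--Fomin--Zelevinsky classical seed $\sd^\BFZ$ by construction, and Lusztig's integral form satisfies $R_q[G]_{\AAA}\otimes_{\AAA}\Z\cong R[G]_{\Z}$ under the map $q^{\Hf}\mt 1$. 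The goal is thus to show that this tensoring commutes with the formation of integral (upper) cluster algebras, i.e., that
\[
\bUpClAlg(\sd^\BZ)_{\AAA}\otimes_{\AAA}\Z=\bUpClAlg(\sd^\BFZ)_{\Z}
\quad\text{and}\quad
\bClAlg(\sd^\BZ)_{\AAA}\otimes_{\AAA}\Z=\bClAlg(\sd^\BFZ)_{\Z},
\]
together with the $G_2$ analogues in which the ground ring $\AAA_{1/2}$ reduces to $\AAA_{1/2}/(q^{\Hf}-1)=\Z[\tfrac{1}{2}]$, since $q^2+1$ becomes $2$ at $q=1$.

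The central tool for establishing these specialization identities is the common triangular basis of the integral (quantum) upper cluster algebra produced in the last part of the paper. It yields an $\AAA$-basis of $\bUpClAlg(\sd^\BZ)_{\AAA}$ that is simultaneously adapted to all mutated quantum cluster tori and whose reduction modulo $(q^{\Hf}-1)$ is a $\Z$-basis of $\bUpClAlg(\sd^\BFZ)_{\Z}$, and likewise for the cluster subalgebra. Tensoring Theorem \ref{ti:cluster-G-A-U} with $\Z$ over $\AAA$ (respectively with $\Z[\tfrac{1}{2}]$ over $\AAA_{1/2}$ in type $G_2$) then yields the classical equalities stated in Theorem \ref{ti:integral-cluster-G-classical} in every case except the cluster subalgebra equality for type $E_8$.

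The type $E_8$ cluster equality must be handled separately, since Theorem \ref{ti:cluster-G-A-U} provides no integral quantum cluster equality in that case. Here Theorem \ref{thm:integral-coord-ring-classical} asserts that the generalized minors generate $R[G]_{\Z}$ as a $\Z$-algebra even for $E_8$, while by \cite{Oya} each such minor is a cluster monomial in some seed mutation-equivalent to $\sd^\BFZ$. Combined with the classical upper cluster equality already obtained, this forces the chain
\[
R[G]_{\Z}\subseteq\bClAlg(\sd^\BFZ)_{\Z}\subseteq\bUpClAlg(\sd^\BFZ)_{\Z}=R[G]_{\Z}
\]
to collapse to equalities throughout. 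The principal technical obstacle is the interchange of base change with the infinite intersection of quantum Laurent rings defining the integral upper cluster algebra; the common triangular basis is precisely what recasts this intersection as a combinatorial enumeration compatible with the specialization $\AAA\to\Z$, so establishing the existence, flatness, and specialization compatibility of this basis for the Berenstein--Zelevinsky seeds is the core technical step underlying the whole proof.
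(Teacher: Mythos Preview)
Your proposal is essentially correct and closely follows the paper's route: the upper cluster equality over $\Z$ is obtained exactly as in Corollary~\ref{cor:upcluster-G-U} by specializing Theorem~\ref{thm:upcluster-G-U} via the common triangular basis (Theorem~\ref{thm:specialize_bUpClAlg}), and the $E_8$ cluster equality is handled precisely as the paper does in Theorem~\ref{thm:integral-cluster-G-classical}, namely by the sandwich argument using Theorem~\ref{thm:integral-coord-ring-classical} and the fact that generalized minors are cluster variables.

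There are two small points worth tightening. First, your phrase ``and likewise for the cluster subalgebra'' is not an independent fact: the common triangular basis is established as a basis of $\bUpClAlg_\AAA$, and it only becomes a basis of $\bClAlg_\AAA$ \emph{because} Theorem~\ref{ti:cluster-G-A-U} already gives $\bClAlg_\AAA=\bUpClAlg_\AAA$ in the relevant types. So your specialization argument for the non-$E_8$ cluster equality really runs as: $\bClAlg(\sd^\BZ)_\AAA=R_q[G]_\AAA$ is free over $\AAA$, hence applying $\alpha$ gives $\bClAlg(\sd^\BFZ)_\Z=\alpha(\bClAlg(\sd^\BZ)_\AAA)=\alpha(R_q[G]_\AAA)=R[G]_\Z$, where the first equality is Remark~\ref{rem:image_specialization}. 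This is valid but slightly different from the paper, which instead argues directly in $\kk$ via Theorem~\ref{thm:integral-cluster-G-k}: one has $\bClAlg(\sd^\BFZ)_\Z\subset\bUpClAlg(\sd^\BFZ)_\Z\simeq R[G]_\Z$, and the (specialized) generalized minors, which are cluster variables by Theorem~\ref{thm:up-cluster-k-G}, generate $R[G]_\Z$ by Theorem~\ref{thm:integral-coord-ring}. Second, the fact that minors are cluster variables comes from \cite{QY2025partially} (Theorem~\ref{thm:up-cluster-k-G}), not from \cite{Oya}.
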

\subsection{Results over arbitrary commutative unital rings}
In Section \ref{sec:cluster-alg}, we define versions of (quantum) cluster algebras and upper (quantum) cluster algebras over arbitrary commutative unital rings $\kk$ with any chosen quantum parameter $q^\Hf\in\kk^\times$ under a very mild condition (Assumption \ref{asm:bad_case}). The definitions are natural, but one has to be very careful about specializations.  To the best of our knowledge, this level of generality has not been established in the literature. Using common triangular bases of (quantum) cluster algebras associated with $\sd^\BZ$, we also prove that the corresponding partially 
compactified upper (quantum) cluster algebras over arbitrary commutative unital rings $\kk$ behave well under specialization under natural assumptions that are broadly satisfied, see Theorem \ref{thm:specialize_bUpClAlg}.

\begin{thmintro}[{= Corollary \ref{cor:upcluster-G-U}, Theorems \ref{thm:integral-cluster-G-k} and \ref{thm:integral-cluster-G-classical}}]\label{thm:D}
Let $\kk$ be a commutative unital ring and $q^\hf \in \kk^\times$ be a unit ($q^\hf =1$ in the classical case). If the assumptions of Theorem \ref{ti:cluster-G-A-U} or \ref{ti:integral-cluster-G-classical} are satisfied so that the (quantized) coordinate ring of $G$ over $\base$ is an (upper) cluster algebra over $\base$ and there is a specialization map from $\base$ to $\kk$ (see Definition \ref{def:specialization}), then the (quantized) coordinate ring of $G$ over $\kk$ is isomorphic to the corresponding partially compactified (quantum, upper) cluster algebra over $\kk$. 
\end{thmintro}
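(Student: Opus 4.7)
The plan is to reduce Theorem \ref{thm:D} to its $\base$-version, which is one of Theorems \ref{ti:cluster-G-A-U}, \ref{ti:integral-coord-ring}, or \ref{ti:integral-cluster-G-classical}, and then transport the resulting identification of algebras along the given specialization $\varphi\colon\base\to\kk$ using the general specialization result for partially compactified (upper) (quantum) cluster algebras established earlier in the paper (Theorem \ref{thm:specialize_bUpClAlg}).

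First I would invoke the hypothesis to obtain, in the quantum case, one of the $\base$-level equalities
\[
R_q[G]_\base=\bUpClAlg(\sd^\BZ)_\base\quad\text{or}\quad R_q[G]_\base=\bClAlg(\sd^\BZ)_\base,
\]
or their classical analogs with $\sd^\BFZ$ and $R[G]_\base$ coming from Theorem \ref{ti:integral-cluster-G-classical}. Tensoring each equality with $\kk$ along $\varphi$ turns it into an isomorphism of $\kk$-algebras between the two tensor products. It then remains to identify both sides with the corresponding $\kk$-objects.

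The representation-theoretic side is unproblematic: the integral forms $R_q[G]_\base$ and $R[G]_\base$ are by construction $\base$-forms whose base change to $\kk$ yields $R_q[G]_\kk$ and $R[G]_\kk$ respectively. The cluster-theoretic side is the substantive point. It is handled by Theorem \ref{thm:specialize_bUpClAlg}: the common triangular basis attached to the seed $\sd^\BZ$ (respectively $\sd^\BFZ$) provides a free $\base$-basis of both $\bClAlg(\sd^\BZ)_\base$ and $\bUpClAlg(\sd^\BZ)_\base$ that specializes along $\varphi$ to a $\kk$-basis of the corresponding $\kk$-algebras, yielding
\[
\bUpClAlg(\sd^\BZ)_\base\otimes_\base\kk\cong\bUpClAlg(\sd^\BZ)_\kk,\qquad \bClAlg(\sd^\BZ)_\base\otimes_\base\kk\cong\bClAlg(\sd^\BZ)_\kk,
\]
under Assumption \ref{asm:bad_case}. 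Combining these with the tensored $\base$-level equality produces the desired identification over $\kk$.

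The main obstacle, already absorbed into Theorem \ref{thm:specialize_bUpClAlg}, is the fact that base change does not in general commute with the infinite intersection defining the upper cluster algebra, nor with the subalgebra generated by all cluster variables defining the partially compactified cluster algebra; naive flatness arguments are therefore insufficient. The role of the common triangular basis is precisely to replace these intersections and generation statements by a single concrete free $\base$-module, so that specialization becomes exact on the nose. Once this compatibility is established, the argument goes through uniformly in all the cases of Theorems \ref{ti:cluster-G-A-U} and \ref{ti:integral-cluster-G-classical}, and for every Berenstein--Zelevinsky (respectively Berenstein--Fomin--Zelevinsky) seed associated to a reduced expression of $(w_0,w_0)\in W\times W$.
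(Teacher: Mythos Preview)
For the upper cluster algebra part your argument is correct and matches the paper's proof of Corollary \ref{cor:upcluster-G-U}.

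For the ordinary cluster algebra part there is a gap. Theorem \ref{thm:specialize_bUpClAlg} concerns only $\bUpClAlg$; the paper explicitly leaves the general specialization question for $\bClAlg$ open (Remark \ref{rem:image_specialization}). Your displayed isomorphism $\bClAlg(\sd^\BZ)_\base\otimes_\base\kk\cong\bClAlg(\sd^\BZ)_\kk$ is therefore not a consequence of the reference you cite. That the triangular basis $\overline{\can}$ is a basis of $\bClAlg(\sd^\BZ)_\base$ follows only from the hypothesis $\bClAlg_\base=\bUpClAlg_\base$, not from Theorem \ref{thm:specialize_bUpClAlg}; and that its specialization lands in $\bClAlg(\sd^\BZ)_\kk$ requires the separate observation that elements of $\bClAlg_\base$, being $\base$-polynomials in cluster variables, specialize to $\kk$-polynomials in cluster variables. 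With these two points added your route does go through: $\alpha(\overline{\can})\subset\bClAlg_\kk\subset\bUpClAlg_\kk$, and $\alpha(\overline{\can})$ spans $\bUpClAlg_\kk$ by Theorem \ref{thm:specialize_bUpClAlg}, forcing equality.

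The paper argues differently for $\bClAlg$, bypassing the triangular basis and working directly over $\kk$. One has $\bClAlg(\sd^\BZ)_\kk\subset\bUpClAlg(\sd^\BZ)_\kk\simeq R_q[G]_\kk$ by the Laurent phenomenon and Corollary \ref{cor:upcluster-G-U}; all generalized quantum minors lie in $\bClAlg(\sd^\BZ)_\kk$ since they are quantum cluster variables or products thereof (by \eqref{DeDe} and Theorem \ref{thm:up-cluster-k-G}); and since the quantum minors generate $R_q[G]_\base$ (Theorem \ref{thm:integral-coord-ring}, or Theorem \ref{thm:integral-coord-ring-classical} classically for $E_8$), their images generate $R_q[G]_\kk$. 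This yields $\bClAlg(\sd^\BZ)_\kk=R_q[G]_\kk$ without needing any specialization statement for $\bClAlg$.
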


\subsection{Convention} For an integer $a$, denote, 
$[a]_+ = \max(a,0)$. All vectors are column vectors unless otherwise specified.

For two finite subsets $I$ and $J$ of $\Z$, and an $I\times J$ matrix $X$, denote by $X^T$ its transpose. For $I'\subset I$ and $J'\subset J$, $X_{I',J'}$ will denote the submatrix of $X$ with rows indexed by $I'$ and columns indexed by $J'$, respectively. We will view $\Z^{I'}$ as a subset of $\Z^I$, and use $\pr_{I'}$ to denote the natural projection from $\Z^I$ to $\Z^{I'}$.

\noindent
{\bf{Acknowledgments.}} The authors would like to thank Tsukasa Ishibashi and Wataru Yuasa for fruitful discussions on the calculation of the $G_2$ case. The authors are grateful to Yoshiyuki Kimura for valuable comments. 
The research of Hironori Oya was supported by JSPS KAKENHI Grant-in-Aid for Early-Career Scientists, Grant Number 23K12950. 
The research of Fan Qin was supported by the National Natural Science Foundation of China (Grant No.~12422102). The research of Milen Yakimov was supported by the Bulgarian Science Fund grant KP-06-N92/5 and the Ministry of Education and Science grant DO1-239/10.12.2024, the Simons Foundation grant SFI-MPS-T-Institutes-00007697 and the USA National Science Foundation grant DMS–2200762.
\section{Background on quantized coordinate rings}
\subsection{Complex simple Lie algebras and quantum groups}
\label{2.1}
Let $\g$ be a complex simple Lie algebra with Cartan matrix $(c_{ij})_{i,j=1}^r$ with symmetrizing integers $(d_i)_{i=1}^r$. We fix the labeling of the corresponding Dynkin diagram as displayed in Figure \ref{fig:placeholder}. This labeling is slightly different from \cite{Humphreys72,Kac90}, but it simplifies the expression in Eq. \eqref{eq:Mq}.

Let $\Pi= \{ \alpha_1, \ldots, \alpha_r \}$, $\Phi$, $\Phi^+$, $\{\alpha_i\spcheck 
\}$, $\{\varpi_i\}$ and $\{\varpi_i\spcheck\}$ be the sets of its simple roots, roots, positive roots,  simple coroots, fundamental weights, and fundamental coweights. Denote by $P$ and $Q$ the weight and root lattices of $\g$. Let $P^+$ be the set of dominant integral weights of $\g$ and
$Q^+:= \sum_i \Z_{\geq 0} \alpha_i$. Define a partial ordering on $P$ by $\mu\leq \lambda$ if and only if $\lambda-\mu\in Q^+$. We write $\mu <\lambda$ if $\mu\leq \lambda$ and $\mu\neq \lambda$. 

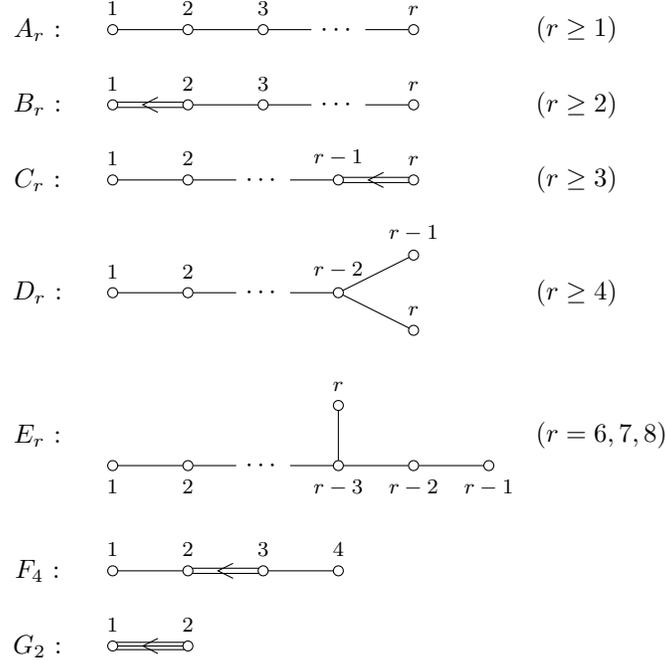
\begin{figure}
    \centering
\begin{tikzpicture}
\node at (-1,0) {$A_{r}$ : };
\node[Dynkin, label={above:\footnotesize$1$}] (A1) at (0,0) {};
\node[Dynkin, label={above:\footnotesize$2$}] (A2) at (1,0) {};
\node[Dynkin, label={above:\footnotesize$3$}] (A3) at (2,0) {};
\node (Ad) at (3,0) {$\cdots$};
\node[Dynkin, label={above:\footnotesize$r$}] (An) at (4,0) {};
\path[-]
 (A1) edge (A2)
 (A2) edge (A3)
 (A3) edge (Ad)
 (Ad) edge (An);
\node[right] at (5.5,0) {$(r\geq 1)$}; 
\node at (-1,-1) {$B_{r}$ : };
\node[Dynkin, label={above:\footnotesize$1$}] (B1) at (0,-1) {};
\node[Dynkin, label={above:\footnotesize$2$}] (B2) at (1,-1) {};
\node[Dynkin, label={above:\footnotesize$3$}] (B3) at (2,-1) {};
\node (Bd) at (3,-1) {$\cdots$};
\node[Dynkin, label={above:\footnotesize$r$}] (Bn) at (4,-1) {};
\draw[-] (B1.30) -- (B2.150);
\draw[-] (B1.330) -- (B2.210);
\draw[-] (0.6,-0.9) -- (0.4,-1) -- (0.6,-1.1);
\path[-]
 (B2) edge (B3)
 (B3) edge (Bd)
 (Bd) edge (Bn);
\node[right] at (5.5,-1) {$(r\geq 2)$}; 
\node at (-1,-2) {$C_{r}$ : };
\node[Dynkin, label={above:\footnotesize$1$}] (C1) at (0,-2) {};
\node[Dynkin, label={above:\footnotesize$2$}] (C2) at (1,-2) {};
\node (Cd) at (2,-2) {$\cdots$};
\node[Dynkin, label={above:\footnotesize$r-1$}] (Cm) at (3,-2) {};
\node[Dynkin, label={above:\footnotesize$r$}] (Cn) at (4,-2) {};
\draw[-] (Cm.30) -- (Cn.150);
\draw[-] (Cm.330) -- (Cn.210);
\draw[-] (3.6,-1.9) -- (3.4,-2) -- (3.6,-2.1);
\path[-]
 (C1) edge (C2)
 (C2) edge (Cd)
 (Cd) edge (Cm);
\node[right] at (5.5,-2) {$(r\geq 3)$}; 
 \node at (-1,-3.5) {$D_{r}$ : }; 
 \node[Dynkin, label={above:\footnotesize$1$}] (D1) at (0,-3.5) {};
\node[Dynkin, label={above:\footnotesize$2$}] (D2) at (1,-3.5) {};
\node (Dd) at (2,-3.5) {$\cdots$};
\node[Dynkin, label={above:\footnotesize$r-2$}] (Dn-2) at (3,-3.5) {};
\node[Dynkin, label={above:\footnotesize$r-1$}] (Dn-1) at (4,-3) {};
\node[Dynkin, label={above:\footnotesize$r$}] (Dn) at (4,-4) {};
\path[-]
 (D1) edge (D2)
 (D2) edge (Dd)
 (Dd) edge (Dn-2)
 (Dn-2) edge (Dn-1)
 (Dn-2) edge (Dn);
\node[right] at (5.5,-3.5) {$(r\geq 4)$}; 
\node at (-1,-5.4) {$E_{r}$ : }; 
 \node[Dynkin, label={below:\footnotesize$1$}] (E1) at (0,-5.8) {};
\node[Dynkin, label={below:\footnotesize$2$}] (E2) at (1,-5.8) {};
\node (Ed) at (2,-5.8) {$\cdots$};
\node[Dynkin, label={below:\footnotesize$r-3$}] (En-3) at (3,-5.8) {};
\node[Dynkin, label={below:\footnotesize$r-2$}] (En-2) at (4,-5.8) {};
\node[Dynkin, label={below:\footnotesize$r-1$}] (En-1) at (5,-5.8) {};
\node[Dynkin, label={above:\footnotesize$r$}] (En) at (3,-5) {};
\path[-]
 (E1) edge (E2)
 (E2) edge (Ed)
 (Ed) edge (En-3)
 (En-3) edge (En-2)
 (En-2) edge (En-1)
 (En-3) edge (En);
\node[right] at (5.5,-5.4) {$(r=6,7,8)$};
\node at (-1,-7.2) {$F_{4}$ : };
\node[Dynkin, label={above:\footnotesize$1$}] (F1) at (0,-7.2) {};
\node[Dynkin, label={above:\footnotesize$2$}] (F2) at (1,-7.2) {};
\node[Dynkin, label={above:\footnotesize$3$}] (F3) at (2,-7.2) {};
\node[Dynkin, label={above:\footnotesize$4$}] (F4) at (3,-7.2) {};
\draw[-] (F2.30) -- (F3.150);
\draw[-] (F2.330) -- (F3.210);
\draw[-] (1.6,-7.1) -- (1.4,-7.2) -- (1.6,-7.3);
\path[-]
 (F1) edge (F2)
 (F3) edge (F4);
 \node at (-1,-8.2) {$G_{2}$ : };
\node[Dynkin, label={above:\footnotesize$1$}] (G1) at (0,-8.2) {};
\node[Dynkin, label={above:\footnotesize$2$}] (G2) at (1,-8.2) {};
\draw[-] (G1.45) -- (G2.135);
\draw[-] (G1.315) -- (G2.225);
\draw[-] (G1.0) -- (G2.180);
\draw[-] (0.6,-8.1) -- (0.4,-8.2) -- (0.6,-8.3);
\end{tikzpicture}
    \caption{Dynkin diagrams}
    \label{fig:placeholder}
\end{figure}

Let $W$ be the Weyl group of $\g$, generated by the simple reflections $\{s_i \mid i\in [1, r]\}$. The identity element of $W$ is denoted by $e$. 
The Weyl group $W$ acts on $P$ by 
\[
s_i(\mu)=\mu-\langle \alpha_i\spcheck, \mu\rangle\alpha_i
\]
for $\mu\in P$ and $i\in [1, r]$. For $w\in W$, let
\[
\ell(w)\coloneqq\min \{\ell\in \Z_{\geq 0} \mid w=s_{i_1}\cdots s_{i_\ell}\},
\]
be the length of $w$. The unique longest element of $W$ is denoted by $w_0$. Denote by $(.,. )$ the $W$-invariant bilinear form on $\R \Pi$
normalized by $(\alpha_i, \alpha_i )= 2$ for short roots $\alpha_i$. 

The connected simply-connected algebraic group with Lie algebra $\g$ will be denoted by $G$. Denote by 
$\frb_\pm$ a pair of opposite Borel subalgebras and by $\h := \frb_+ \cap \frb_-$ the corresponding Cartan subalgebra. 
Let $B^\pm$ be the opposite Borel subgroups of $G$ with Lie algebras $\frb_\pm$ and $U^\pm$ be their unipotent radicals. 

Throughout the paper we will denote 
\[
\KK := \Q(q^{\Hf}), 
\quad \mbox{and respectively},   
\quad \AAA := \Z[q^{\pm \Hf}].
\]
In the following, we simply write $\otimes_{\KK}$ as $\otimes$. Set 
	\begin{align*}
	    &q_{i}:=q^{d_i}\text{ for }i\in [1,r],
        \qquad[n]_q:=\frac{q^{n}-q^{-n}}{q-q^{-1}}\ \text{ for $n\in\Z$},\\
		&[n]_q!:=[n]_q[n-1]_q\cdots[1]_q\text{\ for $n\in\Z_{>0}$},\ [0]_q!:=1,\\
		&{\displaystyle \left[\begin{array}{c}
				n\\
				k
			\end{array}\right]_q:=\frac{[n]_q!}{[k]_q![n-k]_q!}\ \text{ for $n, k\in\Z$ with $n\geq k\geq 0$}}.
	\end{align*}
	For a rational function $F\in\KK$, we define $F_{q_i}$ as
	the rational function obtained from $F$ by substituting $q$ by $q_{i}$	($i\in [1,r]$).

The quantized enveloping algebra $U_q(\g)$ associated with $\g$ is the unital associative $\KK$-algebra defined by the
	generators 
	\[
	\{X_{i}^+,X_{i}^-, K_i, K_i^{-1}\mid i\in [1, r]\}
	\]
	and the relations (i)--(iv) below: 
	\begin{enumerate}
		\item[(i)] $K_iK_i^{-1}=1=K_i^{-1}K_i,\;K_iK_j=K_jK_i$ for $i, j\in [1,r]$,
		\item[(ii)] $K_iX_{j}^+=q_i^{c_{ij}}X_{j}^+K_i,\;K_iX_{j}^-=q_i^{-c_{ij}}X_{j}^-K_i$ for $i, j\in [1,r]$,
		\item[(iii)] ${\displaystyle \left[X_{i}^+,X_{j}^-\right]=\delta_{ij}\frac{K_{i}-K_{i}^{-1}}{q_{i}-q_{i}^{-1}}}$ for $i, j\in [1,r]$,
		\item[(iv)] ${\displaystyle \sum_{k=0}^{1-c_{ij}}(-1)^{k}\left[\begin{array}{c}
				1-c_{ij}\\
				k
			\end{array}\right]_{q_i}(X_{i}^{\epsilon})^{k}X_{j}^{\epsilon}(X_{i}^{\epsilon})^{1-c_{ij}-k}=0}$ for $i, j\in [1,r]$ with $i\neq j$, and $\epsilon=+,-$. 
	\end{enumerate}
    
The algebra $U_q(\g)$ is a Hopf algebra with coproduct 
\[
\Delta(K_i) = K_i \otimes K_i, \enspace 
\Delta(X_i^+) = X_i^+ \otimes 1 + K_i \otimes X_i^+, \enspace 
\Delta(X_i^-) = X_i^- \otimes K_i^{-1} + 1 \otimes X_i^-.
\]
and with counit 
\[
\varepsilon(K_i)=1,\enspace \varepsilon(X_i^+)=\varepsilon(X_i^-)=0.
\]
Its antipode $S$ is defined by 
\[
S(K_i) = K_i^{-1},\quad S(X_i^+) = - K_i^{-1} X_i^+,\quad 
S(X_i^-) = - X_i^- K_i.
\]
Denote by $U_q^\pm(\g)$ and $U_q^0(\g)$ the 
subalgebras of $U_q(\g)$ generated by $\{X^\pm_i \mid 
i \in [1,r] \}$ and $\{K_i^{\pm1} \mid i \in [1,r] \}$, respectively. Set
\[
U^{\geq}_q(\g) := U_q^0(\g) U_q^+(\g) \quad \mbox{and} \quad U^{\leq}_q(\g) := U_q^-(\g)U_q^0(\g) .
\]
The algebra $U_q(\g)$ is $Q$-graded 
by setting $\wt X_i^\pm := \pm \al_i$ and $\wt K_i^{\pm 1} := 0$. The component of weight $\gamma \in Q$ of a graded subalgebra $A$ of $U_q(\g)$ will be denoted by $A_\gamma$. 

Denote by $U_q^-(\g)_\AAA$ the divided power integral form of $U_q^-(\g)$, i.e., the $\AAA$-subalgebra generated by 
\[
X_i^{-(k)} := \frac{(X_i^-)^k}{[k]_{q_i}!} \quad 
\mbox{for} \quad i \in [1,r], k \in \Z_{\geq 0}.
\]
The integral form $U_q(\g)_\AAA$ is the $\AAA$-Hopf 
subalgebra of $U_q(\g)$ generated by the elements $(X_i^\pm)^{(k)}$ and $K_i^{\pm1}$ for $i \in [1,r], k \in \Z_{\geq 0}$.

\subsection{Modules over quantum groups}
\label{2.2}
In this paper, $U_q(\g)$-modules (resp.~$\g$-modules) mean left $U_q(\g)$-modules (resp.~$\g$-modules). For a $U_q(\g)$-module (resp.~$\g$-module) $V$, the dual space $V^{\ast}$ is regarded as a $U_q(\g)$-module (resp.~$\g$-module) by 
\[
\langle x.\xi, v\rangle:= \langle \xi, S(x).v\rangle\quad (\text{resp.~}\langle X.\xi, v\rangle:= -\langle \xi, X.v\rangle)
\]
for $\xi\in V^{\ast}, x\in U_q(\g)$ (resp.~$X\in \g$) and $v\in V$. 
\begin{rem}\label{r:dual}
Let $V_1, V_2$ be finite dimensional $U_q(\g)$-module. Then we have an isomorphism of $U_q(\g)$-modules
\[
V_2^{\ast}\otimes V_1^{\ast}\xrightarrow[]{\sim} (V_1\otimes V_2)^{\ast},\ \xi_2\otimes \xi_1\mapsto (v_1\otimes v_2\mapsto \xi_1(v_1)\xi_2(v_2)). 
\]
We always identify these two spaces by this isomorphism. In particular, if $\{v^{(i)}_j\}_j$ is a basis of $V_i$ and $\{\xi^{(i)}_j\}_j$ is its dual basis of $V_i^{\ast}$ $(i=1,2)$, then we have 
\[
\langle \xi^{(2)}_{k_2}\otimes \xi^{(1)}_{k_1}, v^{(1)}_{l_1}\otimes v^{(2)}_{l_2}\rangle=\delta_{k_1, l_1} \delta_{k_2, l_2}.
\] 
\end{rem}
The {\em{weight spaces}} of a $U_q(\g)$-module $V$ are given by
\[
V_\nu := \{ v \in V \mid K_i. v = q^{ ( \nu, \al_i )} v, \enspace 
\forall\,  i \in [1,r] \}, \quad \nu \in P.
\]

The {\em{classical weight spaces}} of a finite dimensional $\g$-module $V$ are denoted by 
\[
V_\nu := \{ v \in V \mid h. v = \lcor \nu, h \rcor v, \enspace 
\forall\,  h \in \h \}, \quad \nu \in P.
\]

For $v\in V_{\mu}$, we write $\wt v=\nu$. A $U_q(\g)$-module $V$ is called a \emph{type one} module if it is a direct sum of its weight spaces. The finite dimensional type one $U_q(\g)$-modules form a semisimple braided tensor category. 

For a dominant integral weight $\la \in P^+$, 
denote by $V(\la)$ and $V_q(\la)$ the finite dimensional irreducible highest 
weight $\g$-module and type one $U_q(\g)$-module
with highest weights $\lambda$, respectively. 

Fix a highest weight vector $v_\la \in V_q(\la)$. The {\em{integral form}} of the module $V_q(\lambda)$ is the $U_q(\g)_\AAA$-module
\[
V_q(\lambda)_\AAA := U_q^-(\g)_\AAA v_\lambda. 
\]
Its weight spaces are 
\[
\big( V_q(\lambda)_\AAA \big)_\nu := V_q(\lambda)_\nu 
\cap V_q(\lambda)_\AAA. 
\]
Its {\em{dual integral form}} is defined by 
\[
V_q(\lambda)\spcheck_\AAA := \{ \xi \in V(\lambda)^* \mid 
\langle \xi, V_q(\lambda)_\AAA \rangle \subseteq \AAA\}
\]
having the weight spaces
\[
\big( V_q(\lambda)\spcheck_\AAA \big)_\nu := (V_q(\lambda)^*)_\nu 
\cap V_q(\lambda)\spcheck_\AAA.
\] 

Denote by $\theta$ the highest root of $\g$. The {\em{classical adjoint representation}} $\ad$ of $\g$ is isomorphic 
to $V(\theta)$. The representation $V_q(\theta)$ is called the {\em{quantum adjoint representation}} \cite{Lusztig-qadjoint}. 

Let $\{T_i \mid i \in [1,r] \}$ be the generators of the braid group $\Br$ associated with $W$. For $w \in W$, set $T_w := T_{i_1} \cdots T_{i_N}$ for any reduced expression $s_{i_1} \cdots s_{i_N}$ of $w$ (independent on the choice of reduced expression). The Lusztig braid group actions \cite{Lus:intro} on $U_q(\g)$ and on finite dimensional type one $U_q(\g)$-modules will be denoted by the same symbols; those actions are denoted by $T''_{i, 1}$ in \cite{Lus:intro}. 

The {\em{extremal weight vectors}}, 
\[
v_{w \lambda} := (T_{w^{-1}})^{-1} v_\lambda \in \big(V_q(\lambda)_\AAA\big)_{w\lambda}, \quad 
\forall \lambda \in P^+, w \in W,
\]
only depend on $w \la$, \cite[Lemma 39.1.2, Proposition 39.3.7]{Lus:intro} and not on the choice of $w$. Taking into account that 
the extremal weight spaces $V(\lambda)_{w\lambda}$ are one dimensional, let
\[
\xi_{w \lambda} \in \big(V_q(\lambda)\spcheck_\AAA\big)_{- w\lambda}
\]
be the unique vector such that
$\lcor \xi_{w \lambda}, v_{w\lambda} \rcor =1$.

It is well known that classical and quantum multiplicities are equal:
\begin{equation}
    \label{mult}
    [V(\lambda) \otimes V(\lambda') : V(\mu)]=
    [V_q(\lambda) \otimes V_q(\lambda') : V_q(\mu)], 
    \quad \forall \lambda, \lambda', \mu \in P^+.
\end{equation}
\subsection{Quantized coordinate rings} 
\label{2.3}
Let $V$ be a finite dimensional $U_q(\g)$-module. 
The {\em{matrix coefficient}} associated to a pair of vectors $v \in V$ and $\xi \in V^*$ is denoted by  
\begin{equation} 
\label{c-notation}
c^V(\xi, v) \in U_q(\g)^*, \quad 
\mbox{where} \quad c^V(\xi, v)(x)\coloneqq  \langle \xi, x. v \rangle,  \quad \forall x \in U_q(\g). 
\end{equation}
When the representation $V$ is clear from the context, we will write $c(\xi, v)$ instead of $c^V(\xi, v)$ for short. 

The {\em{quantized coordinate ring $R_q[G]$ of $G$}} is the subspace of $U_q(\g)^*$ spanned by  
\[
\{ c(\xi, v) \mid v \in V_q(\lambda), \xi \in V_q(\lambda)^* \}. 
\]
The quantized coordinate ring $R_q[G]$ has a Hopf algebra structure, where its product and coproduct are induced from the coproduct and the product of $U_q(\g)$, respectively. Moreover, there is an isomorphism of $U_q(\g)\otimes U_q(\g)$-modules 
\[
\bigoplus_{\lambda\in P^+} V(\lambda)^{\ast}\boxtimes V(\lambda)\xrightarrow{\sim} R_q[G],\ \xi\otimes v\mapsto c(\xi, v),  
\]
where $V(\lambda)^{\ast}\boxtimes V(\lambda)$ is the tensor product $V(\lambda)^{\ast}\otimes V(\lambda)$ with the $U_q(\g)\otimes U_q(\g)$-module structure
\[
(x_1\otimes x_2).(\xi\otimes v)=(x_1.\xi)\otimes (x_2.v),\quad \forall \xi\in V(\lambda)^{\ast},\ \forall v\in V(\lambda),\ \forall x_1, x_2\in U_q(\g),
\]
and the $U_q(\g)\otimes U_q(\g)$-module structure on $R_q[G]$ is given by 
\[
\langle (x_1\otimes x_2).\phi, y\rangle=\langle \phi, S(x_1)yx_2\rangle,\quad \forall \phi\in R_q[G],\ \forall x_1, x_2, y\in U_q(\g).
\]
See \cite[Definition 7.2.1, Proposition 7.2.2]{Kashiwara93}. Let $V_1$ and $V_2$ be $U_q(\g)$-modules. Then, for any $\xi_i\in V_i^{\ast}$ and $v_i\in V_i$ $(i=1,2)$, we have 
\begin{equation}
c^{V_1}(\xi_1, v_1)c^{V_2}(\xi_2, v_2)=c^{V_1\otimes V_2}(\xi_2\otimes \xi_1, v_1\otimes v_2).\label{eq:product}
\end{equation}
Here $\xi_2\otimes \xi_1\in V_2^{\ast}\otimes V_1^{\ast}$ is considered as an element of $(V_1\otimes V_2)^{\ast}$. Recall Remark \ref{r:dual}.

The integral form of $R_q[G]$ is defined as 
\begin{align*}
    R_q[G]_\AAA = \{ \phi\in R_q[G] \mid \langle \phi, U_q(\g)_\AAA\rangle \subset \AAA \}.
\end{align*}
It is a Hopf algebra over $\AAA$. Let $\kk$ be a commutative unital ring with a unital ring homomorphism $\alpha\colon \AAA\to \kk$. Then $\kk$ is regarded as a $\AAA$-module via $\alpha$, and a Hopf algebra $R_q[G]_{\kk}$ over $\kk$ is defined as 
\begin{equation}
\label{AAA'}
R_q[G]_{\kk} := R_q[G]_\AAA \otimes_{\AAA} \kk. 
\end{equation}
Since $R_q[G]_\AAA$ is free over $\AAA$, $R_q[G]_{\kk}$ is free over $\kk$. See Appendix \ref{app:QCA} and \cite[Section 3]{Lus-Zform} for more details. In this paper, we adopt the following conventions unless otherwise specified. 
\begin{itemize}
    \item When $\kk$ contains $\AAA$ (e.g.~$\kk=\KK$), we take $\alpha$ as an inclusion map. 
    \item When $\kk$ is a subring of $\C$ (e.g.~$\kk=\Z$), we take $\alpha$ as a ring homomorphism defined by $\alpha(q^{\frac{1}{2}})=1$. This case is referred to as the classical case, and we write $R_q[G]_{\kk}$ as $R[G]_{\kk}$ in this case. 
\end{itemize}
Note that $R_q[G]_{\KK}=R_q[G]$.
It is shown in \cite[Section 4]{Lus-Zform} that $R[G]_{\C}$ is isomorphic to the coordinate ring $\C[G]$ of $G$ over $\C$. 

For $\lambda\in P^+$, denote by $\BBB(\lambda)$ the canonical basis of $V_q(\lambda)$ in the sense of \cite{Lus:intro}. The dual basis of $\BBB(\lambda)$ in $V_q(\lambda)^{\ast}$ is denoted by $\BBB(\lambda)^*\coloneqq \{b^{\ast}\mid b\in \BBB(\lambda)\}$, where $\langle b^{\ast}, b'\rangle=\delta_{b, b'}$ for $b, b'\in \BBB(\lambda)$. We have the following properties:
\begin{itemize}
    \item[(i)] $\BBB(\lambda)$ is an $\AAA$-basis of $V_q(\lambda)_{\AAA}$, and $\BBB(\lambda)^{\ast}$ is an $\AAA$-basis of $V_q(\lambda)\spcheck_{\AAA}$.
    \item[(ii)] $v_{w\lambda}\in \BBB(\lambda)$ and $\xi_{w\lambda}\in \BBB(\lambda)^{\ast}$ for all $w\in W$ and $\lambda\in P^+$.
    \item[(iii)] $c^{V_q(\lambda)}(b^{\ast}, b')\in R_q[G]_\AAA$ for all $\lambda\in P^+$ and $b, b'\in \BBB(\lambda)$.
\end{itemize}

\begin{proposition}[{\cite[Proposition 3.3]{Lus-Zform}}]\label{p:Lusgen}
    The $\AAA$-algebra $R_q[G]_\AAA$ is generated by 
    \[
    \widetilde{\Gamma}\coloneqq \{c^{V_q(\varpi_i)}(b^{\ast}, v_{w_0\varpi_i}), c^{V_q(\varpi_i)}(b^{\ast}, v_{\varpi_i})\mid i\in [1,r], b\in \BBB(\varpi_i)\}.
    \]
\end{proposition}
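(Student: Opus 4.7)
The plan is to show that every matrix coefficient $c^{V_q(\lambda)}(b^*, b'')$ with $\lambda \in P^+$ and $b, b'' \in \BBB(\lambda)$ lies in the $\AAA$-subalgebra $A \subseteq R_q[G]_\AAA$ generated by $\widetilde{\Gamma}$. Since these matrix coefficients span $R_q[G]_\AAA$ as an $\AAA$-module, this yields $A = R_q[G]_\AAA$.

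The main tool is Lusztig's theorem on the compatibility of canonical bases with tensor products (\cite{Lus:intro}): for $V := V_q(\varpi_{i_1}) \otimes \cdots \otimes V_q(\varpi_{i_N})$, every irreducible summand $V_q(\mu) \subseteq V$ is a based submodule, and the canonical basis of $V$ is $\AAA$-triangularly related to tensor products of canonical basis elements of the factors. Writing $\lambda = \varpi_{i_1} + \cdots + \varpi_{i_N}$ and iterating the product formula \eqref{eq:product}, for each choice of $\eta_k \in \{\varpi_{i_k}, w_0 \varpi_{i_k}\}$ and $b_k \in \BBB(\varpi_{i_k})$ we obtain
\[
c^{V_q(\varpi_{i_N})}(b_N^*, v_{\eta_N}) \cdots c^{V_q(\varpi_{i_1})}(b_1^*, v_{\eta_1}) = c^V\bigl(b_N^* \otimes \cdots \otimes b_1^*,\ v_{\eta_1} \otimes \cdots \otimes v_{\eta_N}\bigr) \in A.
\]
In the pure case $\eta_k = \varpi_{i_k}$ for all $k$, the generating vector is a highest weight vector of weight $\lambda$ generating the Cartan component $\iota : V_q(\lambda) \hookrightarrow V$ with $\iota(v_\lambda) = v_{\varpi_{i_1}} \otimes \cdots \otimes v_{\varpi_{i_N}}$; Lusztig's based compatibility then yields that every $b^* \in \BBB(\lambda)^*$ arises as $\iota^*$ applied to an $\AAA$-linear combination of the tensor dual canonical basis elements $b_N^* \otimes \cdots \otimes b_1^*$, so $c^{V_q(\lambda)}(b^*, v_\lambda) \in A$. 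By symmetry with $\eta_k = w_0 \varpi_{i_k}$, one obtains $c^{V_q(\lambda)}(b^*, v_{w_0 \lambda}) \in A$.

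To reach an arbitrary $b'' \in \BBB(\lambda)$, one exploits \emph{mixed} choices of $\eta_k$. The resulting generating vectors $v_{\eta_1} \otimes \cdots \otimes v_{\eta_N}$ have non-extremal weight, and their expansions in the canonical basis of $V$ involve non-extremal canonical basis elements of the various summands $V_q(\mu) \subseteq V$. Varying the fundamental decomposition of $\lambda$ (and allowing tensor products of higher length, possibly involving different $\varpi_i$'s) produces enough such mixed vectors to reach every canonical basis element of $V_q(\lambda)$ after the based projection. A representative illustration: a non-extremal canonical basis vector of $V_q(\varpi_1)$ for $\mathfrak{sl}_3$ is accessed via the inclusion $V_q(\varpi_1) \hookrightarrow V_q(\varpi_2)^{\otimes 2}$ using mixed tensors of the form $v_{\varpi_2} \otimes v_{w_0 \varpi_2}$ and $v_{w_0 \varpi_2} \otimes v_{\varpi_2}$.

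The principal obstacle is this last step: systematically realizing every $c^{V_q(\lambda)}(b^*, b'')$ via mixed-choice products combined with Lusztig's based projections, while maintaining $\AAA$-integrality throughout. Over $\KK$ the generation follows from the classical Clebsch--Gordan decomposition of tensor products of fundamentals together with irreducibility, but the $\AAA$-integral refinement rests on Lusztig's deep compatibility theorem for canonical bases under embeddings into tensor products of fundamental representations and the resulting $\AAA$-triangular structure of their decompositions.
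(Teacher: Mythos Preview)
The paper does not prove this proposition; it is quoted directly from \cite[Proposition 3.3]{Lus-Zform}. So there is no in-paper proof to compare against, but your proposal can be assessed on its own, and it has a genuine gap.

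Your pure-case argument is sound and does yield $c^{V_q(\lambda)}(b^*, v_\lambda),\ c^{V_q(\lambda)}(b^*, v_{w_0\lambda}) \in A$ for all $\lambda \in P^+$. The failure is in the mixed-case step. First, your $\mathfrak{sl}_3$ illustration is vacuous: $V_q(\varpi_1)$ is minuscule, so every canonical basis vector is already extremal and there is nothing to ``access''. Second, and more seriously, for a general target $b'' \in \BBB(\lambda)$ your strategy requires the image of $b''$ under some based embedding $\iota: V_q(\lambda) \hookrightarrow \bigotimes_k V_q(\varpi_{i_k})$ to be an $\AAA$-combination of tensors $v_{\eta_1} \otimes \cdots \otimes v_{\eta_N}$ with each $\eta_k \in \{\varpi_{i_k}, w_0\varpi_{i_k}\}$. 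But $\iota(b'')$ is in fact an $\AAA$-combination of tensors $b'_1 \otimes \cdots \otimes b'_N$ with \emph{arbitrary} $b'_k \in \BBB(\varpi_{i_k})$, and you offer no mechanism to reduce each factor to one of the two distinguished extremal vectors without already knowing the statement for the fundamental modules themselves --- which is the very thing to be proved. Your final paragraph effectively concedes this, naming the step ``the principal obstacle'' and deferring to an unspecified compatibility theorem.

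For context, note that the paper's Theorem~\ref{t:integral-gen} proves an analogous generation statement using exactly your tensor-product/based-module machinery, but its generating set $\Gamma$ contains $c^{M_q}(b^*, b')$ for \emph{every} $b' \in \BBB_M$, which is precisely what lets that argument close. The restriction in $\widetilde{\Gamma}$ to $b' \in \{v_{\varpi_i}, v_{w_0\varpi_i}\}$ is what makes Proposition~\ref{p:Lusgen} genuinely harder. Lusztig's actual proof goes by a different route: the injection $R_q[G]_\AAA \hookrightarrow R_q[B^-]_\AAA \otimes_\AAA R_q[B^+]_\AAA$ and the description of $R_q[B^\pm]_\AAA$ via the dual canonical basis of the modified quantum group (cf.\ the proof of Lemma~\ref{l:domain} in this paper), under which the two families in $\widetilde{\Gamma}$ correspond to the dual canonical basis of $U_q^\pm$ and the torus characters.
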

 
The {\em{generalized quantum minors}} of $G$ are the elements 
\[
\De_{w \lambda, u \lambda} := c^{V_q(\lambda)}(\xi_{w\lambda}, v_{u\lambda}) \in R_q[G]_\AAA, 
\quad \forall \lambda \in P^+, w, u \in W.
\]
We have \cite[Eq. (9.13)]{BerensteinZelevinsky05}
\begin{equation}
    \label{DeDe}
    \Delta_{w \lambda, u \lambda} \Delta_{w \mu, u \mu} = \Delta_{w(\lambda + \mu), u(\lambda + \mu)}, 
    \quad \forall \lambda, \mu \in P^+.
\end{equation}

The following property is stated for $R_q[G]$ in \cite[Lemma 3.1]{joseph-paper}, \cite[Lemma 9.1.9]{joseph2012quantum}, and for $R_q[G]_{\kk}$ with $\alpha(q^{\frac{1}{2}})=1$ in \cite[Theorem 3.15]{Lus-Zform}. 
\begin{lem}\label{l:domain}
Let $\kk$ be an integral domain and $\alpha\colon \AAA\to \kk$ a unital ring homomorphism. Then $R_q[G]_{\kk}$ is a domain, i.e., $0$ is the only zero divisor of $R_q[G]_{\kk}$. 
\end{lem}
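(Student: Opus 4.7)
The plan is to adapt the leading-term argument of Joseph \cite{joseph-paper,joseph2012quantum} to the integral setting, exploiting the $P\times P$-bigrading on $R_q[G]_\kk$ induced by the left and right $U_q^0(\g)$-actions together with the $\AAA$-freeness of the integral form. By the Peter--Weyl decomposition recalled above, $R_q[G]_\kk=\bigoplus_{\lambda\in P_+}V_q(\lambda)^{\ast}_\kk\boxtimes V_q(\lambda)_\kk$, and the product formula \eqref{eq:product} shows that this bigrading is multiplicative: matrix coefficients of bi-weights $(\wt\xi_1,\wt v_1)$ and $(\wt\xi_2,\wt v_2)$ multiply to one of bi-weight $(\wt\xi_1+\wt\xi_2,\wt v_1+\wt v_2)$. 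Each bigraded piece $R_q[G]_{\kk;\mu,\nu}$ is $\kk$-free, being base-changed from the $\AAA$-free piece whose basis consists of bi-weight homogeneous matrix coefficients of Lusztig's canonical basis.

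For nonzero $\phi,\psi\in R_q[G]_\kk$, fix a total order $\preceq$ on $P\times P$ compatible with addition, and let $(\mu_\phi,\nu_\phi),(\mu_\psi,\nu_\psi)$ be the $\preceq$-maximal bi-weights of the nonzero bigraded components of $\phi,\psi$, with leading parts $\phi_\top,\psi_\top$. By maximality, the leading bigraded piece of $\phi\psi$ equals $\phi_\top\cdot\psi_\top$, so it suffices to show this product is nonzero. Decomposing further by the $\lambda$-isotypic grading, choose dominance-maximal $\lambda_\phi,\lambda_\psi\in P_+$ with $\phi^{(\lambda_\phi)}\neq0$ and $\psi^{(\lambda_\psi)}\neq0$. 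The Cartan projection $V_q(\lambda_\phi)\otimes V_q(\lambda_\psi)\twoheadrightarrow V_q(\lambda_\phi+\lambda_\psi)$ sending $v_{\lambda_\phi}\otimes v_{\lambda_\psi}\mapsto v_{\lambda_\phi+\lambda_\psi}$ and the corresponding dual injection provide a contribution of $\phi^{(\lambda_\phi)}\psi^{(\lambda_\psi)}$ to the $V_q(\lambda_\phi+\lambda_\psi)^{\ast}_\kk\boxtimes V_q(\lambda_\phi+\lambda_\psi)_\kk$ summand; other pairs $(\lambda,\lambda')$ with $\phi^{(\lambda)}\psi^{(\lambda')}\neq0$ contribute only to Clebsch--Gordan summands of dominance-strictly-smaller highest weight (using the multiplicity identity \eqref{mult}), so no cancellation occurs.

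The main obstacle is verifying nonvanishing of this Cartan contribution over a general integral domain $\kk$. Over $\KK$, the result follows from semisimplicity and the $U_q(\g)\otimes U_q(\g)$-equivariant identification $V_q(\lambda)^{\ast}\otimes V_q(\lambda)\cong\End_\KK V_q(\lambda)$: any nonzero element in the leading bi-weight component has a nontrivial matrix entry that pairs nontrivially with extremal-weight test tensors built from $v_{\lambda_\phi+\lambda_\psi}$ and $\xi_{\lambda_\phi+\lambda_\psi}$ under the Cartan projection. Descent to $\kk$ uses that (i) the Cartan surjection $V_q(\lambda_\phi)_\AAA\otimes V_q(\lambda_\psi)_\AAA\twoheadrightarrow V_q(\lambda_\phi+\lambda_\psi)_\AAA$ admits an $\AAA$-integral splitting coming from Lusztig's tensor-product canonical basis construction, and (ii) the structure constants of multiplication in the bi-weight homogeneous canonical basis of $R_q[G]_\AAA$ lie in $\AAA$, so that nonvanishing of a $\KK$-coefficient of the leading Cartan contribution descends to nonvanishing of the corresponding $\kk$-coefficient under the unital ring map $\alpha\colon\AAA\to\kk$ whenever $\kk$ is an integral domain.
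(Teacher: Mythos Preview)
Your leading-term strategy is in the spirit of Joseph's argument, but the descent step in the last paragraph is a genuine gap. You assert that ``nonvanishing of a $\KK$-coefficient of the leading Cartan contribution descends to nonvanishing of the corresponding $\kk$-coefficient under the unital ring map $\alpha\colon\AAA\to\kk$ whenever $\kk$ is an integral domain''. This is false as stated: nonzero elements of $\AAA$ routinely die under $\alpha$ (e.g.\ $q-1\mapsto 0$ when $q^{1/2}\mapsto 1$), and the hypothesis that $\kk$ is a domain says nothing about this. More basically, your elements $\phi,\psi$ live in $R_q[G]_\kk$, not in $R_q[G]_\AAA$, so there is no $\KK$- or $\AAA$-statement to descend from; you must argue directly over $\kk$. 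What is actually required is that for nonzero $a\in V_q(\lambda_\phi)^*_{\kk,\mu_\phi}\otimes V_q(\lambda_\phi)_{\kk,\nu_\phi}$ and nonzero $b\in V_q(\lambda_\psi)^*_{\kk,\mu_\psi}\otimes V_q(\lambda_\psi)_{\kk,\nu_\psi}$, the image of $a\cdot b$ under the Cartan projection to $V_q(\lambda_\phi+\lambda_\psi)^*_\kk\otimes V_q(\lambda_\phi+\lambda_\psi)_\kk$ is nonzero. The existence of an $\AAA$-integral splitting of the Cartan projection, which you invoke, does not give this: the projection has a large kernel, and nothing you have said prevents a nonzero simple tensor of weight vectors from landing in it after base change. (There is also a smaller slip: dominance-maximality of $\lambda_\phi$ and $\lambda_\psi$ separately does not guarantee that no other pair $(\lambda,\lambda')$ satisfies $\lambda+\lambda'=\lambda_\phi+\lambda_\psi$; one must refine dominance to a monoid total order first.)

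The paper sidesteps this difficulty by a different route. Rather than analyse the Cartan multiplication, it uses Lusztig's injective $\kk$-algebra map $R_q[G]_\kk\hookrightarrow R_q[B^-]_\kk\otimes_\kk R_q[B^+]_\kk$ and proves the target is a domain. Each factor $R_q[B^\pm]_\kk$ is assembled from a torus part obeying \eqref{eq:borel} and the quantum unipotent coordinate ring $R_q[U^\pm]_\AAA$, which carries the dual PBW basis governed by the dual Levendorskii--Soibelman formula. Those straightening relations put an exhaustive filtration on the target whose associated graded is a skew polynomial ring over $\kk$, manifestly a domain whenever $\kk$ is. This filtered/PBW argument is uniform in $\kk$ and never requires the delicate Cartan-component nondegeneracy that blocks your approach.
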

\begin{proof}
Let $\pi_{+}\colon U_q(\g)^{\ast}\to U^{\geq}_q(\g)^{\ast}$ and $\pi_{-}\colon U_q(\g)^{\ast}\to U^{\leq}_q(\g)^{\ast}$ be the restriction maps. Note that these are $\KK$-algebra homomorphisms with respect to the algebra structure induced from the coproduct $\Delta$. Set $R_q[B^{\pm}]_\AAA:=\pi_{\pm}(R_q[G]_\AAA)$ and $R_q[B^{\pm}]_{\kk}:=R_q[B^{\pm}]_\AAA \otimes_{\AAA} \kk$. 

For $\mu\in P$, define  $c(\mu)\in U^{\geq}_q(\g)^{\ast}$ as 
\[
\langle c(\mu), xK_{\alpha}\rangle =\varepsilon(x)q^{(\mu, \alpha)}
\]
for $x\in U^+_q(\g)$ and $\alpha\in Q$, where $K_{\alpha}:=\prod_{i\in [1, r]}K_i^{m_i}$ for $\alpha=\sum_{i\in [1, r]}m_i\alpha_i$. Let $\BBB(\infty)$ be the canonical basis of $U^-_q(\g)$ in the sense of \cite{Lus:intro}. For $\widetilde{b}\in \BBB(\infty)$, define $c(\widetilde{b})\in U^{\geq}_q(\g)^{\ast}$ by 
\[
\langle c(\widetilde{b}), \omega(\widetilde{b}')K_{\alpha}\rangle =\delta_{\widetilde{b}, \widetilde{b}'}
\]
for $\widetilde{b}'\in \BBB(\infty)$ and $\alpha\in Q$. Here $\omega$ is the algebra involution on $U_q(\g)$ defined by $\omega(X_i^-)=X_i^+$ and $\omega(K_i)=K_i^{-1}$ for $i\in [1, r]$. 
We have $c(\mu), c(\widetilde{b})\in R_q[B^+]_\AAA$ for $\mu\in P$ and $\widetilde{b}\in \BBB(\infty)$. We can directly show that $c(\mu)=\pi_{+}(c(b^{\ast}, b))$ for some $b\in \BBB(\lambda)$ with $\wt b=\mu$, and $c(\widetilde{b})=\pi_{+}(c(b^{\ast}, v_{w_0\lambda}))c(-w_0\lambda)$ for some $b\in \BBB(\lambda)$ with $\omega(\widetilde{b}).v_{w_0\lambda}=b$, because 
\[
\{\omega(\widetilde{b}).v_{w_0\lambda}\mid \widetilde{b}\in \BBB(\infty)\text{ such that  }\omega(\widetilde{b}).v_{w_0\lambda}\neq 0\}=\BBB(\lambda)
\]
for $\lambda\in P^+$ \cite[Theorem 14.4.11, Proposition 21.1.2]{Lus:intro}. Then, $\{c(\widetilde{b})c(\mu)\mid \mu\in P, \widetilde{b}\in \BBB(\infty)\}$ is an $\AAA$-basis of $R_q[B^+]_\AAA$. 

By \cite[Lemma 3.7]{Lus-Zform}, there is an injective $\kk$-algebra homomorphism $R_q[G]_{\kk}\to R_q[B^-]_{\kk}\otimes_{\kk}R_q[B^+]_{\kk}$. Hence it suffices to show that $R_q[B^-]_{\kk}\otimes_{\kk}R_q[B^+]_{\kk}$ is a domain. 
Note that $R_q[B^{\pm}]_{\kk}$ is isomorphic to $\mathbf{O}^{\pm}_\kk$ in \cite[Section 3.5]{Lus-Zform}, where $(\widetilde{b}^+1_{\mu})^{\ast}$ in $\mathbf{O}^+_\kk$ corresponds to $c(\widetilde{b})c(\mu)$ in $R_q[B^+]_{\kk}$ for $\mu\in P, \widetilde{b}\in \BBB(\infty)$, and similar for $\mathbf{O}^-_\kk$ and $R_q[B^-]_{\kk}$ (cf.~Appendix \ref{app:QCA}). 

In $R_q[B^+]_{\AAA}$, we have the following. 
\begin{itemize}
    \item For $\mu, \nu\in P$ and $\widetilde{b}\in \BBB(\infty)$, 
    \begin{equation}
    c(\mu)c(\nu)=c(\mu+\nu),\quad c(\widetilde{b})c(\mu)=q^{(\wt \widetilde{b}, \mu)}c(\mu)c(\widetilde{b}). \label{eq:borel}  
    \end{equation}
    \item The $\AAA$-submodule $R_q[U^+]_{\AAA}:=\sum_{\widetilde{b}\in \BBB(\infty)}\AAA c(\widetilde{b})$ of $R_q[B^+]_{\AAA}$ is a subalgebra. 
\end{itemize}
The algebra $R_q[U^+]_{\AAA}$ is called a quantized coordinate ring of $U^+$ or quantum unipotent subgroup. See~\cite[Section 6]{goodearl2020integral} and references therein. 
It is known that $R_q[U^+]_{\AAA}$ has an $\AAA$-basis, called the dual PBW-basis, and the algebra structure with respect to the dual PBW-basis is governed by the dual Levendorskii--Soibelman formula in \cite[Theorem 4.27]{Kimura10} (cf.~\cite[Theorem 5.2]{goodearl2020integral}). By the form of the dual Levendorskii--Soibelman formula together with \eqref{eq:borel} and the antiisomorphism $\omega^{\ast}\colon R_q[B^+]_{\kk}\xrightarrow[]{\sim} R_q[B^-]_{\kk}, \phi\mapsto \phi\circ\omega$ (\cite[Section 3.5]{Lus-Zform}), the standard argument (focusing on the ``leading term'') shows that  $R_q[B^-]_{\kk}\otimes_{\kk}R_q[B^+]_{\kk}$ is a domain when $\kk$ is an integral domain.  
\end{proof}
\section{Quantum cluster algebras over commutative rings}
\label{sec:cluster-alg}
Through out this section we fix a commutative unital ring $\kk$ and a unit $q^\Hf\in \kk^\times$. To study the cluster structure on the quantized coordinate ring $R_q[G]_{\kk}$ over $\kk$, we formulate the quantum cluster algebras over $\kk$. All notions and results in this section are already known over $\base$, where $\base$ is any intermediate ring between $\Z$ and $\C$ and $q^\Hf=1$ in the classical case and any intermediate ring between $\Z[q^{\pm \Hf}]$ and $\Q(q^\Hf)$ in the quantum case.

\subsection{Basics of cluster algebras}
\subsubsection*{Seeds}
Choose and fix a finite set $I$ and a partition $I=I_{\ufv}\sqcup I_{\fv}$. The elements in $I_{\ufv}$ and $I_{\fv}$ are called the unfrozen vertices and the frozen vertices respectively. Choose and fix $d_i\in \Z_{>0}$, $i\in I$. 

Let $\tB=(b_{ik})_{i\in I,k\in I_{\ufv}}$ denote a $\Z$-matrix such that $d_i b_{ij}=-d_j b_{ji}$ for all $i,j\in I_{\ufv}$. Let $x_i$, $i\in I$, denote indeterminates. The colletion $\sd:=(\tB,(x_i)_{i\in I})$ is called a \emph{(classical) seed} \cite{fomin2002cluster}, and $B:=\tB_{I_\ufv,I_\ufv}=(b_{ik})_{i, k\in I_{\ufv}}$ is called the \emph{principal $B$-matrix}. We say $\sd$ is \emph{of full rank} if $\tB$ is of full rank.

A vertex $k\in I_{\ufv}$ is said to be \emph{isolated in $\sd$} if $b_{ik}=0$ for all $i\in I$. We will make the following mild assumption in the rest of this paper.
\begin{asm}\label{asm:bad_case}
Either $2$ is not a zero divisor of $\kk$, or there is no isolated unfrozen vertex $k\in I_{\ufv}$.
\end{asm}

We associate to the seed $\sd$ the Laurent polynomial ring $\LP(\sd)_\kk:=\kk[x_i^{\pm 1}]_{i\in I}$ and its subring $\bLP(\sd)_\kk:=\kk[x_j]_{j\in I_{\fv}}[x_k^{\pm 1}]_{k\in I_{\ufv}}$. Denote the commutative product by $\cdot$.
When the base ring $\kk$ is clear from the discussion, the subscript $\kk$ will be omitted. 

Let $f_i$, $i\in I$, denote the standard basis vectors in $\Z^I$, and $e_k$, $k\in I_{\ufv}$, denote those in $\Z^{I_\ufv}$. We have the Laurent monomials $x^m:=\prod_{i\in I} x_i^{m_i}$ for $m=\sum_i m_i f_i=(m_i)_{i\in I}\in \Z^I$. We also introduce Laurent monomials $y_k:=x ^{\sum_i b_{ik}f_i}$ for $k\in I_{\ufv}$ and $y^n:=\prod y_k^{n_k}$ for $n=\sum_k n_k e_k =(n_k)\in \Z^{I_{\ufv}}$. 

Denote $\kk[N^\oplus]=\kk[y_k]_{k\in I_{\ufv}}$ and let $\widehat{\kk[N^\oplus]}=\kk\llb y_k\rrb_{k\in I_{\ufv}}$ be the ring of formal power series in $y_k$. When $\tB$ is of full rank, we further define the formal completion $\hLP(\sd):=\LP(\sd)\otimes_{\kk[N^\oplus]} \widehat{\kk[N^\oplus]}$, whose elements are called formal Laurent series.

The indeterminates $x_i$, $i\in I$, are called the \emph{cluster variables of $\sd$}. The \emph{cluster monomials of $\sd$} are $x^m$, $m\in \N^I$, and the \emph{localized cluster monomials} are $x^m$, $m\in \N^{I_\ufv}\oplus\Z^{I_\fv}$. The indeterminates $x_j$, $j\in I_{\fv}$ are called \emph{frozen variables}, and their Laurent monomials are called the \emph{frozen factors}.

\subsubsection*{Quantization}
Let $\Lambda=(\Lambda_{ij})_{i,j\in I}$ denote a skew-symmetric $\Z$-matrix. We make $\Lambda$ into a skew-symmetric bilinear form on $\Z^I$ such that $\Lambda(m,m'):=m^T \Lambda m'$. We endow $\LP(\sd)$ and $\hLP(\sd)$ with a $q$-twisted product $*$, such that
\begin{align*}
    x^m*x^{m'}=q^{\Hf \Lambda(m,m')}x^{m+m'},\ \forall m,m'\in \Z^I.
\end{align*} 
The algebra $(\LP(\sd), *)$ is the \emph{quantum torus algebra} associated with $\sd$. 
We will use $*$ as the multiplication of $\LP(\sd)$ and $\hLP(\sd)$ unless otherwise specified. We denote $\LP(\sd)_\kk$ when we want to emphasize our choice of $\kk$.  
We will take $q^\Hf=1$ and arbitrary $\Lambda$ when we work in the classical case. 

All Laurent monomials are units of $\LP(\sd)$ and $\hLP(\sd)$. This, in particular, implies that the elements  $x^m$, $m\in \Z^{I_\ufv}\oplus\N^{I_\fv}$ are regular elements of $\bLP(\sd)$, i.e., they are not zero divisors.

We say that $\Lambda$ is \emph{compatible} with $\tB$ if there exist positive integers $\ddiag_k\in \Z_{>0}$, $k\in I_{\ufv}$, such that $\sum _{j\in I}\Lambda_{ij}b_{jk}=-\delta_{ik}\ddiag_k$, $\forall i\in I,k\in I_{\ufv}$. A \emph{quantum seed} is a collection $\sd:=(\tB,\Lambda,(x_i)_{i\in I})$, where $\tB$ and $\Lambda$ are compatible, see \cite{BerensteinZelevinsky05}. Recall that a compatible $\Lambda$ for $\tB$ exists if and only if $\tB$ is of full rank \cite{gekhtman2003cluster,GekhtmanShapiroVainshtein05}.

For any permutation $\sigma$ of $I$, we can construct a seed $\sigma \sd$ such that $b_{\sigma i,\sigma k}(\sigma \sd)=b_{ik}$, $\Lambda_{\sigma i,\sigma j}(\sigma \sd)=\Lambda_{ij}$, and $x_{\sigma i}(\sigma \sd)=x_i$. We will apply permutations on seeds if necessary and view $\sigma \sd$ and $\sd$ as the same seed, omitting the symbol $\sigma$.

Recall \cite[Definition on p.~112]{GWbook} that a \emph{(left) Ore domain} is an associative unital ring $R$ with no zero divisors whose nonzero elements form a left Ore set (i.e., it satisfies the left Ore condition: $a R\cap b R\neq\{0\}$ for any nonzero $a,b\in R$). By Ore's theorem \cite[Theorem 6.8]{GWbook}, this is equivalent to saying that $R$ has a skew-field of fraction constructed by the left localization of $R$ by the set of its nonzero elements. If $\kk$ is an integral domain, then $\LP(\sd)$ is a left and right Ore domain. (For example, this condition is satisfied when $\kk=\Z$ in the classical situation and  $\kk=\Z[q^{\pm\Hf}]$ in the quantum situation.) To see the stated implication, denote by $F$ the field of fractions of $\kk$. Then $\LP(\sd)_F$ is a Noetherian domain, and thus a left and right Ore domain. Its skew-field of fractions $\cF(\sd)$ is the left and right localization of $\LP(\sd)$ by its nonzero elements; so $\LP(\sd)$ is an Ore domain with the same skew-field of fractions $\cF(\sd)$.

For the rest of this subsection, we assume that $\kk$ is an integral domain.

\subsubsection*{Mutation of seeds}
Let $\sd$ be a classical seed. For any unfrozen vertex $k$, the \emph{mutation $\mu_k$} at the vertex $k$ produces a new seed $\mu_k \sd:=\sd':=(\tB',(x'_i))$, such that $\tB'=(b'_{ij})$ is given by
\begin{align*}
    b'_{ij}&=-b_{ij}\text{ if }k\in \{i,j\},\\
    b'_{ij}&=b_{ij}+[b_{ik}]_+[b_{kj}]_+ - [-b_{ik}]_+[-b_{kj}]_+  \text{ if }k\notin \{i,j\},
\end{align*}
where $[a]_+:=\max(a,0)$. Next, define the linear map $\psi_k:=\psi_{\sd,\sd'}:\Z^I\rightarrow\Z^I$ by
\begin{align*}
    \psi_k(f_k)&=-f_k+\sum_j[-b_{jk}]_+f_j,\\
    \psi_k(f_i)&=f_i,\ \text{if }i\neq k.
\end{align*}
 When $\sd$ is a quantum seed, we quantize $\sd'$ by introducing the matarix $\Lambda'$ such that
\begin{align*}
    \Lambda'_{ij}=\Lambda(\psi_k(f_i),\psi_k(f_j)).
\end{align*}

 We use the decoration $(\sd)$ to denote data associated with $\sd$. We will often omit this decoration when the context is clear.

For any finite sequence $\uk:=(k_1,\ldots,k_r)$ of unfrozen vertices, we use $\seq:=\seq_{\uk}$ to denote the mutation sequence $\mu_{k_r}\cdots\mu_{k_1}$. Note that $j\in I$ is isolated in $\sd$ if and only if it is isolated in $\seq\sd$.

\subsubsection*{Cluster variables in Ore domains}

Since we assume $\kk$ is an integral domain, $\LP(\sd)$ and $\LP(\sd')$ are Ore domains. We introduce the algebra isomorphism $\mu_k^*:\cF(\sd')\simeq \cF(\sd)$, called the mutation map, such that
\begin{align}\label{eq:mutation_cluster_variable}
    \begin{split}
        \mu_k^*(x'_k)&=x^{-f_k+\sum_j [-b_{jk}]_+f_j}+x^{-f_k+\sum_j [b_{jk}]_+f_j},\\
    \mu_k^*(x'_i)&=x_i \text{ if }i\neq k.
    \end{split}
\end{align}
Note that $\mu_k (\mu_k \sd)=\sd$ and the composition $\cF(\sd)=\cF(\mu_k(\mu_k\sd))\xrightarrow{\mu_k^*}\cF(\sd')\xrightarrow{\mu_k^*}\cF(\sd)$ is the identity. We often identify $\cF(\sd')$ and $\cF(\sd)$ via $\mu_k^*$ and omit the symbol $\mu_k^*$.

Choose any initial seed $\sd$. Let $\Delta^+:=\Delta^+_\sd$ denote the set of seeds obtained from $\sd$ by finite sequences of mutations $\seq$. Then for any $\sdt\in \Delta^+_\sd$ and $\sdt'=\mu_k \sdt$, their cluster variables satisfy the following exchange relations in $\LP(\sd)$, where $\Lambda$ and $b_{ij}$ are associated with $\sdt$:
\begin{align}\label{eq:exchange_relation}
    \begin{split}
    x_k(\sdt) * x_k(\sdt')&=q^{\Hf\Lambda(f_k,\sum_j [-b_{jk}]_+f_j)}x(\sdt)^{\sum_j [-b_{jk}]_+f_j}+q^{\Hf\Lambda(f_k,\sum_j [b_{jk}]_+f_j)} x(\sdt)^{\sum_j [b_{jk}]_+f_j},\\
    x_i(\sdt')&=x_i(\sdt) \text{ if }i\neq k.
    \end{split}
\end{align}

\begin{thm}[\cite{FominZelevinsky07}\cite{Tran09}\cite{gross2018canonical}]
Take $(\kk,q^\Hf)$ to be $(\Z,1)$ in the classical case or $(\Z[q^{\pm\Hf}],q^\hf)$ in the quantum case. Then for any mutation sequence $\seq$ and $i\in I$, the (quantum) cluster variable $x_i(\seq \sd)$ takes the following form in $\LP(\sd)$:
\begin{align}\label{eq:cluster_expansion}
    x_i(\seq \sd)=x^{g_i(\seq \sd)}\cdot \sum_{n\in \N^{I_\ufv}}c_n x^{\tB n},
\end{align}
such that $g_i(\seq \sd)\in \Z^{I_\ufv}\oplus \N^{I_\fv}$, $c_n\in \kk$, and $c_0=1$. Moreover, by evaluating $c_0$ at $q^\Hf=1$, the quantum cluster variable becomes the corresponding classical cluster variable.
\end{thm}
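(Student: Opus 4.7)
The plan is to assemble three ingredients from the cited literature: the (quantum) Laurent phenomenon, the separation-of-additions / $F$-polynomial decomposition, and the non-negativity of the frozen entries of the $g$-vector. The classical specialization then follows by direct comparison of the mutation recursions.

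First I would establish that $x_i(\seq \sd)\in \LP(\sd)$. Classically this is the Laurent phenomenon of Fomin and Zelevinsky, and in the quantum setting it is due to Berenstein-Zelevinsky; both proofs proceed by induction on the length of $\seq$ and reduce to checking that two adjacent quantum tori $\LP(\sdt)$ and $\LP(\mu_k \sdt)$ agree on a sufficiently large subalgebra, which is the so-called ``upper bound'' comparison. This is the main technical step, and although classical, it is the most substantial part of the argument.

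Second, to obtain the decomposition~\eqref{eq:cluster_expansion}, I would argue by induction on $|\seq|$ using the exchange relation~\eqref{eq:exchange_relation}. The two monomial summands on the right-hand side differ by $y_k^{\pm 1}$, so if each cluster variable of $\sdt$ already has the inductive form $x^{g_j(\sdt)} \cdot F_j(y)$ with $F_j\in \kk[y_1,\ldots,y_{|I_\ufv|}]$ of constant term $1$, then expanding both terms and clearing denominators yields an expression of the same shape for the new cluster variable. The constant term remains $1$ because the two leading monomials in the exchange relation differ by a non-constant monomial in $y$, so exactly one of them contributes the normalizing factor. This is the recursive procedure of \cite{FominZelevinsky07} in the classical case and of \cite{Tran09} in the quantum case, refined by \cite{gross2018canonical} via scattering diagrams in full generality.

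Finally, the non-negativity of the frozen coordinates of $g_i(\seq \sd)$ follows by the same induction. In the mutation formula~\eqref{eq:mutation_cluster_variable} the only negative exponent is $-f_k$, which lies in the unfrozen part, while the exponents $[\pm b_{jk}]_+ f_j$ with $j\in I_\fv$ are non-negative; consequently mutation preserves the property that every Laurent expansion of a cluster variable has non-negative frozen exponents, and hence so does the leading exponent $g_i$. Setting $q^\hf = 1$ converts the quantum exchange relation~\eqref{eq:exchange_relation} into the classical one, and the inductive construction commutes with this substitution, so each quantum cluster variable specializes to its classical counterpart and the decomposition~\eqref{eq:cluster_expansion} descends coefficient-by-coefficient.
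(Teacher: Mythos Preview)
The paper does not supply its own proof of this theorem; it is quoted directly from \cite{FominZelevinsky07}, \cite{Tran09}, and \cite{gross2018canonical} and used as a black box, so there is no in-paper argument to compare against. Your outline is a fair summary of how those references fit together: the Laurent phenomenon puts the cluster variable into $\LP(\sd)$, the separation-of-additions machinery of \cite{FominZelevinsky07} (quantized in \cite{Tran09}) yields the monomial-times-$F$-polynomial shape with constant term $1$, the non-negativity of the frozen part of $g$ then drops out because the term $x^{g}$ itself occurs in the Laurent expansion and the Laurent phenomenon already forbids negative frozen exponents, and the classical limit follows since the exchange recursions specialize termwise.

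One caution on your inductive step for the $F$-polynomial form: the sentence ``expanding both terms and clearing denominators yields an expression of the same shape'' hides a genuine difficulty. After substituting $x_j(\sdt)=x^{g_j}F_j(y)$ into the exchange relation and dividing by $x_k(\sdt)=x^{g_k}F_k(y)$, it is not at all transparent that $F_k(y)$ divides the numerator inside $\kk[y_k]_{k\in I_\ufv}$; naively one only gets a formal power series. The literature handles this either by passing to principal coefficients and invoking the separation formula (\cite{FominZelevinsky07}, \cite{Tran09}), or by scattering-diagram/sign-coherence arguments (\cite{gross2018canonical}), rather than by a direct induction on $|\seq|$ in the original seed. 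Your sketch should flag that this is where the real work in the cited papers lies.
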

\begin{rem}\label{rem:cluster-exp-data}
The vector $g_i(\seq \sd)$ is called the \emph{$i$-th extended $g$-vector of $\seq \sd$}. By \cite{FominZelevinsky07}, its restriction $\pr_{I_\ufv}g_i(\seq \sd)$, called the \emph{principal $g$-vector}, only depends on $i$, $\seq$, and principal $B$-matrix $B$. 

Similarly, the coefficients $c_n$ only depends on $n$, $i$, $\seq$, $B$, and the quantization scaling factors $\ddiag_k$ for $k\in I_{\ufv}$. Therefore, the $F$-polynomial $\sum_{n} c_n y^n$, viewed as an element in the algebra $\kk[y_k^{\pm 1}]_{k\in I_{\ufv}}$, has the same property. Note that this algebra only depends on $B$ and $\ddiag_k$ up to isomorphism.
\end{rem}

\subsection{(Quantum) cluster algebras over $\kk$}

\subsubsection*{Cluster variables and specialization} In this subsection, we consider the case of an arbitrary commutative unital ring $\kk$ under Assumption \ref{asm:bad_case}. 
As before, we set $q^\Hf=1$ for classical seeds. We denote $\AAA=\Z$ in the classical case and $\AAA=\Z[q^{\pm\Hf}]$ in the quantum case. 

\begin{defn}\label{def:specialization}
The \emph{specialization map} $\alpha$ from $(\AAA,q^\Hf)$ to $(\kk,q^\Hf)$ is the unital ring homomorphism from $\AAA$ to $\kk$ sending $q^\Hf\in \AAA$ to $q^\Hf\in \kk^\times$.

Let $\base$ denote any intermediate ring between $\AAA$ and $\Q(q^\Hf)$. If $\alpha$ extends to a homomorphism from $\base$ to $\kk$, the homormorphism is called the specialization map from $\base$ to $\kk$.
\end{defn}
Let $\alpha$ denote the specialization map $\alpha:\AAA\rightarrow\kk$. It turns $\kk$ into an $\AAA$-module. The tensor product $\LP(\sd)_\AAA\otimes_{\AAA}\kk$ is a unital ring and we have the canonical unital ring homomorphism 
\begin{align}\label{eq:specialization}
    \begin{split}
    \alpha:\LP(\sd)_\AAA\rightarrow &\LP(\sd)_\AAA\otimes_{\AAA}\kk\\
z\mapsto & z\otimes 1.
    \end{split}
\end{align}
Since $\LP(\sd)_\AAA$ is a free $\AAA$-module and $\LP(\sd)$ is a free $\kk$-module, we have the canonical $\kk$-module isomorphism between $\LP(\sd)_\AAA\otimes_\AAA \kk$ and $\LP(\sd)$. This is a $\kk$-algebra isomorphism, and we will identify these two algebras. Finally, $\alpha$ induces the following homomorphism, which by abuse of notation will be denoted by the same symbol:
\begin{align}\label{eq:specialization-LP}
\begin{split}
    \alpha:\LP(\sd)_\AAA&\rightarrow \LP(\sd)\\
   \sum_{m\in \Z^I} c_m x^m &\mapsto \sum_m \alpha(c_m)x^m,\ \forall c_m\in \AAA.
\end{split}
\end{align}

\begin{defn}\label{def:general_cluster_variable}
Take any $i\in I$ and mutation sequence $\seq$. Let $x_i(\seq \sd)_\AAA$ denote the $i$-th (quantum) cluster variable of $\seq \sd$ in $\LP(\sd)_\AAA$. Then the $i$-th (quantum) cluster variable of the seed $\seq \sd$ in $\LP(\sd)$ is defined to be the element $\alpha(x_i(\seq \sd)_\AAA)$.
\end{defn}

It follows from our construction that for $\sdt\in \Delta^+_\sd$ and $\sdt'=\mu_k \sdt$, their (quantum) cluster variables defined over $\kk$ satisfy the corresponding exchange relations \eqref{eq:exchange_relation} in $\LP(\sd)$. Using \eqref{eq:cluster_expansion}, we have
\begin{align}\label{eq:cluster_expansion_general}
    x_i(\seq \sd)=x^{g_i(\seq \sd)}\cdot \sum_{n\in \N^{I_\ufv}}\alpha(c_n) x^{\tB n},
\end{align}
where $c_n$ is the corresponding coefficients of $x_i(\seq \sd)_\AAA$. Note that $\alpha(c_0)=1$.

\begin{lem}\label{lem:adjacent-cluster-variable-regular}
Under Assumption \ref{asm:bad_case}, $x_i(\mu_k \sd)$ is a regular element of $\LP(\sd)$ (i.e., not a zero divisor) for all $i\in I$, $k\in I_\ufv$. 
\end{lem}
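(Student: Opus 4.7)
The plan is to analyze $x_i(\mu_k\sd)$ by cases. When $i\ne k$, formula \eqref{eq:mutation_cluster_variable} gives $x_i(\mu_k\sd)=x_i$, which is a unit in $\LP(\sd)$ and therefore regular; the remaining content of the lemma concerns $i=k$. By \eqref{eq:mutation_cluster_variable} applied to the $\AAA$-form and specialized via \eqref{eq:specialization-LP}, the mutated cluster variable takes the explicit binomial form
\[
x_k(\mu_k\sd)=x^{m_1}+x^{m_2},\qquad m_1:=-f_k+\sum_{j\in I}[-b_{jk}]_+f_j,\quad m_2:=-f_k+\sum_{j\in I}[b_{jk}]_+f_j,
\]
with coefficient $1\in\kk$ in front of each monomial (no $q$-factor appears, consistent with \eqref{eq:exchange_relation} since $\Lambda(f_k,\cdot)+\Lambda(-f_k,\cdot)=0$).

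Assume first that $k$ is not isolated in $\sd$, so $m_1\ne m_2$. Suppose $z\in\LP(\sd)$ satisfies $x_k(\mu_k\sd)*z=0$, and expand $z=\sum_n c_n x^n$ in the $\kk$-basis of Laurent monomials. Collecting the coefficient of each Laurent monomial $x^p$ in the product yields, for every $p\in\Z^I$, the two-term relation
\[
q^{\Hf\Lambda(m_1,\,p-m_1)}\,c_{p-m_1}+q^{\Hf\Lambda(m_2,\,p-m_2)}\,c_{p-m_2}=0.
\]
Choose a $\Z$-linear functional $\ell\colon\Z^I\to\R$ with $\ell(m_1-m_2)>0$. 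If $z\ne 0$, let $n_0$ maximize $\ell$ on the (finite) support of $z$ and take $p=n_0+m_1$; then $\ell(p-m_2)=\ell(n_0)+\ell(m_1-m_2)>\ell(n_0)$, so $c_{p-m_2}=0$ by maximality, and since $q^\Hf\in\kk^\times$ the relation then forces $c_{n_0}=0$, a contradiction. An analogous argument starting from $z*x_k(\mu_k\sd)=0$ takes care of the right annihilator.

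In the remaining case $k$ is isolated, so $b_{jk}=0$ for every $j\in I$ and the two exponents coincide: $m_1=m_2=-f_k$. The binomial collapses to
\[
x_k(\mu_k\sd)=2\,x^{-f_k}.
\]
Assumption \ref{asm:bad_case} excludes this configuration unless $2$ is a regular element of $\kk$. Since $\LP(\sd)$ is a free $\kk$-module on its Laurent monomials, multiplication by $2$ is injective on $\LP(\sd)$, and $x^{-f_k}$ is a unit; hence $2\,x^{-f_k}$ is regular.

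The main obstacle of the argument is the recognition of this degenerate case: it is precisely here that Assumption \ref{asm:bad_case} becomes essential, which underscores why the hypothesis was built into the section. Everything else is a clean leading-term argument that works uniformly in any commutative unital ring $\kk$ with $q^\Hf\in\kk^\times$.
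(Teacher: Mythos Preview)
Your proof is correct and takes essentially the same approach as the paper's: both identify $x_k(\mu_k\sd)$ as a binomial in Laurent monomials and then argue regularity via a grading/leading-term argument in the non-isolated case, and via the hypothesis on $2$ in the isolated case. The only difference is cosmetic: the paper factors the binomial as $x^{-d}*(1+q^{c}y)$ and appeals to inhomogeneity in a single $x_j$-degree, whereas you keep the two-monomial form and use a general linear functional $\ell$; these are the same argument in different clothing.
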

\begin{proof}
    It suffices to check that $x_k(\mu_k \sd)$ is a regular element of $\LP(\sd)$. We have $x_k(\mu_k \sd)=x^{-d}*(1 + q^{c}y)$, where $x^d, y$ are Laurent monomials in $x_i$, $i\in I$ and $c\in \Z\left[\frac{1}{2}\right]$. Note that $1 + q^{c}y$ is either $2$ (when $k$ is isolated) or a Laurent polynomial which is inhomgeneous in the degree of $x_j$ for some $j\in I$. In either case, it is a regular element of $\LP(\sd)$. The lemma follows from the fact that $x^{-d}$ is a unit of $\LP(\sd)$.
\end{proof}

\subsubsection*{Cluster algebras}
We define the \emph{(partially compactified) upper cluster algebra $\bUpClAlg(\sd)$} to be the subalgebra of $\LP(\sd)$ consisting of the elements $z$, such that for any mutation sequence $\seq$, $z$ has a Laurent expansion:
\begin{align*}
    x(\seq \sd)^d *z =\sum_{m\in \N^{I}} c_m x(\seq \sd)^m,
\end{align*}
for some $d\in \N^{I_\ufv}$ and $c_m\in\kk$. The \emph{(localized) upper cluster algebra $\upClAlg(\sd)$} is defined similarly but the condition on $d$ is changed to $d\in \N^I$.

The \emph{(partially compactified) ordinary cluster algebra $\bClAlg(\sd)$} is the $\kk$-subalgebra of $\bLP(\sd)$ generated by $x_i(\seq \sd)$. The \emph{(localized) ordinary cluster algebra $\clAlg(\sd)$} is the $\kk$-subalgebra of $\LP(\sd)$ generated by $x_i(\seq \sd)$ and $x_j^{-1}$ where $j\in I_{\fv}$. 

\begin{rem}\label{rem:image_specialization}
    We have $\bClAlg(\sd)=\langle \alpha(\bClAlg(\sd)_\AAA)\rangle_{\kk}\subset \langle \alpha(\bUpClAlg(\sd)_\AAA)\rangle_{\kk}\subset \bUpClAlg(\sd)$ and $\clAlg(\sd)=\langle \alpha(\clAlg(\sd)_\AAA)\rangle_{\kk}\subset \langle \alpha(\upClAlg(\sd)_\AAA)\rangle_{\kk}\subset \upClAlg(\sd)$, where $\langle X\rangle_{\kk}$ denotes the $\kk$-subalgebra generated by $X$. Let $\alg$ denote $\bClAlg,\clAlg,\bUpClAlg$, or $\upClAlg$. Clearly, $\alpha(\II \alg(\sd)_\AAA)=0$, where $\II$ is the kernel of $\alpha:\AAA\rightarrow \kk$. However, we do not know if $\alg(\sd)_\AAA/(\II\alg(\sd)_\AAA)\simeq \alpha(\alg(\sd)_\AAA)$, i.e. that $\alpha(\alg(\sd)_\AAA)$ is a specialization of $\alg(\sd)_\AAA$, in general, see Corollaries \ref{cor:specialize_trop_basis} \ref{cor:specialization-bUpClAlg} and \cite[Lemmas 3.1 and 3.3, Theorem 1.1]{geiss2020quantum} for special cases.
\end{rem}

Note that the frozen variables $x_j$, $j\in I_\fv$, are regular and normal elements (Definition \ref{normal-prime}) of $\bUpClAlg(\sd)$ and $\bClAlg(\sd)$, respectively. Hence, $\upClAlg(\sd)$ (resp. $\clAlg(\sd)$) is the localization of $\bUpClAlg(\sd)$ (resp. $\bClAlg(\sd)$) at the frozen variables.

\subsubsection*{Mutations}

Let $\alg$ denote $\bClAlg$, $\clAlg$, $\bUpClAlg$ or $\upClAlg$. We denote $\alg_\kk$ when we want to emphasize our choice of $\kk$. Then $\alpha:\LP(\sd)_\AAA\rightarrow \LP(\sd)$ restricts to a ring homomorphism $\alpha(\sd):\alg(\sd)_\AAA\rightarrow \alg(\sd)$.

For any $k\in I_\ufv$ and $\sd':=\mu_k \sd$, the mutation map restricts to the $\AAA$-algebra homomorphism $(\mu^*_{\sd',\sd})_\AAA:=\mu^*_k:\alg(\sd')_\AAA\simeq \alg(\sd)_\AAA$ such that $(\mu^*_{\sd',\sd})_\AAA x_i(\sd')_\AAA\in \LP(\sd)_\AAA$ is given by \eqref{eq:mutation_cluster_variable} for all $i\in I$.

\begin{lem}\label{lem:one-step-mutation}
    There is a unique $\kk$-algebra homomorphism $\mu^*_{\sd',\sd}:=\mu^*_k:\alg(\sd')\rightarrow \alg(\sd)$, called the mutation map, such that $\mu^*_{\sd',\sd}x_i(\sd')\in \LP(\sd)$ is given by \eqref{eq:mutation_cluster_variable}. Moreover, we have $\alpha(\sd) (\mu^*_{\sd',\sd})_\AAA= \mu^*_{\sd',\sd}\alpha(\sd')$. 
\end{lem}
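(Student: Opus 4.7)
The plan is to construct $\mu^*_{\sd',\sd}$ by base-changing the $\AAA$-mutation isomorphism $(\mu^*_{\sd',\sd})_\AAA$ and verifying both uniqueness and compatibility with specialization.

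For uniqueness, any candidate $\phi\colon \alg(\sd')\to\alg(\sd)$ agrees with the prescribed formula \eqref{eq:mutation_cluster_variable} on the initial cluster variables $x'_i=x_i(\sd')$. Given $z\in\alg(\sd')$, I choose $d\in\N^{I_\ufv}$ (respectively $d\in\N^I$ in the localized case) large enough that $x'^d * z = Q$ is a polynomial in $x'_i$, $i\in I$, with coefficients in $\kk$. Then $\phi(x'^d)*\phi(z)=\phi(Q)$ in $\LP(\sd)$, where the factor $\phi(x'^d)$ is a product of units $x_i$ for $i\neq k$ together with a power $x_k(\mu_k\sd)^{d_k}$. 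By Lemma \ref{lem:adjacent-cluster-variable-regular}, $x_k(\mu_k\sd)$ is a regular element of $\LP(\sd)$; consequently $\phi(x'^d)$ is regular and $\phi(z)$ is uniquely determined in $\LP(\sd)$.

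For existence, the key observation is that $\alg(\sd')_\AAA$ is torsion-free, hence flat, over the principal ideal domain $\AAA$, since it sits inside the free $\AAA$-module $\LP(\sd')_\AAA$. Therefore the base change $(\mu^*_{\sd',\sd})_\AAA\otimes_\AAA\id_\kk$ is a well-defined $\kk$-algebra isomorphism $\alg(\sd')_\AAA\otimes_\AAA\kk\simeq\alg(\sd)_\AAA\otimes_\AAA\kk$. In the cluster-algebra cases $\alg=\bClAlg,\clAlg$, the canonical $\kk$-algebra map $\alg(\sd')_\AAA\otimes_\AAA\kk\to\alg(\sd')$, $z\otimes c\mapsto c\cdot\alpha(z)$, is an isomorphism, since both sides equal the $\kk$-subalgebra of $\LP(\sd')$ generated by the specialized cluster variables $\alpha(x_i(\seq\sd')_\AAA)=x_i(\seq\sd')$ (and inverses of frozen variables in the localized case); combining with the corresponding identification on the target side yields $\mu^*_k$. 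For $\alg=\bUpClAlg,\upClAlg$ the identification $\alg(\sd')_\AAA\otimes_\AAA\kk\simeq \alg(\sd')$ may fail (see Remark \ref{rem:image_specialization}); I instead define $\mu^*_k(z)$ directly via the uniqueness formula above and verify $\mu^*_k(z)\in\alg(\sd)$ by checking the defining Laurent expansions seed by seed. Specifically, for any mutation sequence $\seq$ starting at $\sd$, the Laurent expansion of $z$ in the seed $\seq\mu_k\sd'$---existing because $z\in\alg(\sd')$---yields the required Laurent expansion of $\mu^*_k(z)$ in $\seq\sd$ after applying the substitution encoded by $\mu^*_k$.

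Finally, the compatibility $\alpha(\sd)\circ(\mu^*_{\sd',\sd})_\AAA=\mu^*_{\sd',\sd}\circ\alpha(\sd')$ is immediate on the initial cluster variables and extends to all of $\alg(\sd')_\AAA$ by the uniqueness argument applied over $\kk$. The principal obstacle is the well-definedness for $\bUpClAlg$ and $\upClAlg$, where the lack of a generator description forces the seed-by-seed verification sketched above; this crucially relies on the regularity of $x_k(\mu_k\sd)$ provided by Lemma \ref{lem:adjacent-cluster-variable-regular}, which in turn depends on Assumption \ref{asm:bad_case}.
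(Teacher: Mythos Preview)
Your uniqueness argument is fine. The gap is in existence for $\alg=\bClAlg,\clAlg$: you claim $\alg(\sd')_\AAA\otimes_\AAA\kk\simeq\alg(\sd')$, arguing that both sides equal the $\kk$-subalgebra of $\LP(\sd')$ generated by specialized cluster variables. But this presupposes that the natural map $\alg(\sd')_\AAA\otimes_\AAA\kk\to\LP(\sd')$ is injective, which is precisely the question Remark~\ref{rem:image_specialization} flags as open. Flatness of $\alg(\sd')_\AAA$ over $\AAA$ does not help: what would be needed is flatness of $\kk$ over $\AAA$, or torsion-freeness of the quotient $\LP(\sd')_\AAA/\alg(\sd')_\AAA$, neither of which you establish.

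The paper sidesteps this entirely and treats all four cases uniformly by expanding elements of $\alg(\sd')$ in the cluster variables $x_i(\sd)$ of the \emph{adjacent} seed $\sd$, rather than in the initial variables $x'_i=x_i(\sd')$. Every $z\in\alg(\sd')$ has such a Laurent expansion (by the Laurent phenomenon for $\bClAlg,\clAlg$; by definition for $\bUpClAlg,\upClAlg$), and it is unique because the $x_i(\sd)$ are regular in $\LP(\sd')$ by Lemma~\ref{lem:adjacent-cluster-variable-regular}. Defining $\iota$ via the substitution $x_i(\sd)\mapsto x_i\in\LP(\sd)$ then yields an honest element of $\LP(\sd)$, since the targets $x_i$ are \emph{units} there---this is the crucial point your expansion in $x'_i$ misses, as $\phi(x'_k)=x_k(\sd')$ is merely regular, not invertible, in $\LP(\sd)$. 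One checks $\iota$ is a $\kk$-algebra homomorphism because the $q$-commutation relations among the $x_i(\sd)$ in $\LP(\sd')$ match those among the $x_i$ in $\LP(\sd)$, and the image lies in $\alg(\sd)$ via the $\AAA$-level isomorphism together with $\alpha(\sd)\iota_\AAA=\iota\,\alpha(\sd')$. Your seed-by-seed sketch for the upper cases, with $\seq$ the empty sequence, is in fact this same idea; but your presentation obscures it by first invoking ``the uniqueness formula'' based on the expansion in $x'_i$, which by itself does not give existence in $\LP(\sd)$.
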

\begin{proof}
Since all cluster variables in $\alg(\sd')_\AAA$ are Laurent polynomials in $x_i(\sd)_\AAA\in \LP(\sd')_\AAA$, $i\in I$, any element $z$ of $\alg(\sd')$ is a Laurent polynomial in $x_i(\sd)\in \LP(\sd')$. Indeed, $z$ is uniquely determined by its Laurent expansion in $x_i(\sd)$ since $x_i(\sd)$ are regular in $\alg(\sd')$ by Lemma \ref{lem:adjacent-cluster-variable-regular}.

It follows that any $\kk$-algebra homomorphism $f$ from $\alg(\sd')$ to $\LP(\sd)$ such that the elements $f(x_i(\sd))$ are regular is determined by its values $f(x_i(\sd))$. It is straightforward to check that, by defining $\iota x_i(\sd)$ to be $x_i(\sd)\in \LP(\sd)$, we obtain a $\kk$-algebra homomorphism $\iota:\alg(\sd')\rightarrow\LP(\sd)$. It follows from definition that we have the commutativity $\alpha(\sd)\iota_\AAA=\iota \alpha(\sd')$. The remaining statements follow from the fact that $\iota_\AAA \alg(\sd')_\AAA=\alg(\sd)_\AAA$ and $\alpha(\sd)\alg(\sd)_\AAA \subset \alg(\sd)$, where $\mu^*_{\sd',\sd}$ is the restriction of $\iota$ in the image $ \alg(\sd)$.
\end{proof}

 For any mutation sequence $\seq=\mu_{k_r}\cdots \mu_{k_1}$ and $\sd'=\seq \sd$, denote the mutation map $\seq^*:=(\mu^*_{\sd',\sd}):=\mu^*_{k_1}\cdots \mu^*_{k_r}:\alg(\sd')\rightarrow \alg(\sd)$. We recursively deduce that 
\begin{align}\label{eq:mutation_specialization}
\alpha(\sd) (\mu^*_{\seq\sd,\sd})_\AAA= \mu^*_{\seq\sd,\sd}\alpha(\seq\sd).
\end{align}
 
 By \eqref{eq:mutation_specialization}, $\mu^*_{\seq\sd,\sd}$ sends the initial cluster variable $x_i(\seq \sd)\in\LP(\seq \sd)$ to the cluster variable $x_i(\seq \sd):=\alpha(\sd)(x_i(\seq\sd)_\AAA)\in\LP(\sd)$ in Definition \eqref{def:general_cluster_variable}. Therefore, $\mu^*_{\seq \sd,\sd}\mu^*_{\sd,\seq\sd}=\mu^*_{\mu_{k_1}\cdots \mu_{k_r}\mu_{k_r}\cdots\mu_{k_1}\sd,\sd}$ is the identity on $\alg(\sd)$. In particular,  $\mu^*_{\seq\sd,\sd}:\alg(\seq\sd)\rightarrow\alg(\sd)$ is an isomorphism.
 \begin{lem}\label{lem:regular_in_cluster_alg}
For any $i\in I$ and mutation sequence $\seq$, the cluster variable $x_i(\seq \sd)$ is a regular element $\alg(\sd)$.
 \end{lem}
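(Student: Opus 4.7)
\emph{Approach.} The plan is to transport the statement through the mutation isomorphism so that the cluster variable in question becomes an \emph{initial} cluster variable in a different seed, where regularity is immediate because an initial cluster variable is literally one of the Laurent generators of the ambient quantum torus.

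\textbf{Step 1 (Identification via mutation).} By Lemma \ref{lem:one-step-mutation} and the discussion immediately following it, the one-step mutation maps compose to a $\kk$-algebra isomorphism
\[
\mu^*_{\seq\sd,\sd}\colon \alg(\seq\sd)\xrightarrow{\sim} \alg(\sd).
\]
As explained after that lemma via the compatibility \eqref{eq:mutation_specialization}, this isomorphism sends the $i$-th \emph{initial} cluster variable of the seed $\seq\sd$, viewed inside $\alg(\seq\sd)\subset \LP(\seq\sd)$, to the element $x_i(\seq\sd)\in \alg(\sd)$ of Definition \ref{def:general_cluster_variable}. Let us denote that initial variable in $\alg(\seq\sd)$ temporarily by $y_i$, so that $x_i(\seq\sd)=\mu^*_{\seq\sd,\sd}(y_i)$ in $\alg(\sd)$.

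\textbf{Step 2 (Regularity in the mutated ambient torus).} In $\LP(\seq\sd)$, the element $y_i$ is by construction one of the Laurent monomial generators $x_i(\seq\sd)$, hence a unit of $\LP(\seq\sd)$. In particular $y_i$ is a regular element of $\LP(\seq\sd)$, and therefore regular in the subalgebra $\alg(\seq\sd)\subset \LP(\seq\sd)$ as well.

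\textbf{Step 3 (Transport).} Algebra isomorphisms preserve regularity of elements. Applying $\mu^*_{\seq\sd,\sd}$ to $y_i$, we conclude that $x_i(\seq\sd)=\mu^*_{\seq\sd,\sd}(y_i)$ is a regular element of $\alg(\sd)$, which is the desired statement.

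\textbf{Main obstacle.} There is really only one point that requires care, namely the identification used in Step 1: the element obtained from the initial variable $y_i\in\alg(\seq\sd)$ by the iterated mutation isomorphism $\mu^*_{\seq\sd,\sd}$ must be the same as the element $\alpha(\sd)\big(x_i(\seq\sd)_\AAA\big)$ appearing in Definition \ref{def:general_cluster_variable}. This compatibility is precisely what \eqref{eq:mutation_specialization} provides, so no additional work is needed. Note also that Assumption \ref{asm:bad_case} enters only indirectly: it was already used to make sense of the mutation framework through Lemma \ref{lem:adjacent-cluster-variable-regular} and Lemma \ref{lem:one-step-mutation}.
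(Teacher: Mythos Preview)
Your proof is correct and follows exactly the same approach as the paper: show that the element is regular in $\alg(\seq\sd)$ because there it is an initial cluster variable (hence a unit in $\LP(\seq\sd)$), and then transport regularity through the isomorphism $\mu^*_{\seq\sd,\sd}$. You have simply made explicit the identification of the two meanings of $x_i(\seq\sd)$ and the reason an initial variable is regular, which the paper leaves implicit.
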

 \begin{proof}
 Note that $x_i(\seq \sd)$ is regular in $\alg(\seq\sd)$. We deduce the claim by using the isomorphism $\mu^*_{\seq \sd,\sd}:\alg(\seq\sd)\simeq \alg(\sd)$.
 \end{proof}
 \begin{cor}\label{cor:regular_in_LP}
For any $i\in I$ and mutation sequence $\seq$, the cluster variable $x_i(\seq \sd)\in\alg(\sd)$ is regular in $\LP(\sd)$.
 \end{cor}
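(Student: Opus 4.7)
My plan is to bootstrap from the regularity of $x_i(\seq\sd)$ inside the cluster algebra $\alg(\sd)$ (which is Lemma \ref{lem:regular_in_cluster_alg}) to its regularity inside the ambient Laurent ring $\LP(\sd)$ by exploiting the explicit form of the cluster expansion, via a leading-term argument. By the Laurent phenomenon \eqref{eq:cluster_expansion_general} we factor
\[
x_i(\seq\sd) = x^{g_i(\seq\sd)} * P, \qquad P := 1 + \sum_{n \in F} \alpha(c_n)\, x^{\tB n},
\]
for a finite set $F \subset \N^{I_\ufv} \setminus \{0\}$, where the constant coefficient equals $\alpha(c_0) = 1 \in \kk$. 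Since $x^{g_i(\seq\sd)}$ is a unit in $\LP(\sd)$, it suffices to prove that $P$ is regular in $\LP(\sd)$.

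To prove the regularity of $P$, I would choose a $\Z$-linear functional $\omega \colon \Z^I \to \Z$ such that $\omega(\tB n) > 0$ for every $n \in F$; granting this, the exponent $0$ is the unique $\omega$-minimizer among $\{0\} \cup \{\tB n : n \in F\}$ and carries the unit coefficient $1$. Suppose then that $P * w = 0$ for some nonzero $w = \sum_m w_m x^m \in \LP(\sd)$, and let $m_0 \in \supp(w)$ be an $\omega$-minimizer. The coefficient of $x^{m_0}$ in $P * w$ receives the contribution $w_{m_0}$ (up to a unit $q$-twist) from the summand $1 * w_{m_0} x^{m_0}$. Any other contribution would come from a summand $\alpha(c_n) x^{\tB n} * w_{m'} x^{m'}$ with $n \in F$ and $\tB n + m' = m_0$, which would force $\omega(m') = \omega(m_0) - \omega(\tB n) < \omega(m_0)$, contradicting the minimality of $m_0$. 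Thus $w_{m_0} = 0$, a contradiction. A symmetric argument using an $\omega$-maximizer handles $w * P = 0$, and so $P$ is regular from both sides.

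The hard part will be the existence of the functional $\omega$. When the seed $\sd$ is of full rank, the columns $\tB e_k$ for $k \in I_\ufv$ are linearly independent in $\Z^I$ and thus generate a pointed cone in $\R^I$; one takes $\omega$ in the strict interior of its dual cone, which yields $\omega(\tB n) > 0$ for every nonzero $n \in \N^{I_\ufv}$, and in particular for every $n \in F$. Without full rank, some $n \in F$ could lie in $\ker \tB$, in which case the corresponding monomial collapses into the constant term of $P$ and the direct choice above breaks down. The fallback would be to reduce to the full-rank case by passing to the principal-coefficient extension of $\sd$, obtained by adjoining auxiliary frozen vertices so that the extended $B$-matrix has full column rank; one establishes regularity there and descends to $\LP(\sd)$ through the grading induced by the principal $B$-matrix, taking care that this descent preserves regularity (the specialization being merely a grading restriction rather than a ring-theoretic quotient that could manufacture zero divisors).
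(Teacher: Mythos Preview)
Your leading-term argument is correct when $\tB$ has full rank, and since every quantum seed in the paper's sense has full rank (compatibility of $\Lambda$ with $\tB$ forces it), this covers the quantum case completely. However, the corollary is stated and proved in Section~3.2, \emph{before} the full-rank hypothesis is imposed in Section~3.3, so the non-full-rank classical case must also be handled. There your identification of the constant term of $P$ with $\alpha(c_0)=1$ breaks down exactly as you note: several $n$ may satisfy $\tB n=0$, and the actual constant term $\sum_{n:\tB n=0}\alpha(c_n)$ need not be a unit.

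Your principal-coefficient fallback does not close this gap. The passage from $\LP(\sd^{\prin})$ to $\LP(\sd)$ via $z_j\mapsto 1$ is a genuine quotient, and regularity does not descend along quotients in general. The claim that the descent is ``merely a grading restriction'' is not borne out: under the natural $\Z^{I_\ufv}$-grading with $\deg z_j=e_j$ and $\deg x_i=0$, the element $P^{\prin}=\sum_n c_n z^n x^{\tB n}$ is inhomogeneous, and its degree-zero piece is just $1$, not $P$. I do not see a grading on $\LP(\sd^{\prin})$ under which $P^{\prin}$ is homogeneous and the relevant graded piece identifies with $\LP(\sd)$ carrying $P$. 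So the non-full-rank case remains open in your argument.

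The paper's proof is shorter and uniform, and it actually uses Lemma~\ref{lem:regular_in_cluster_alg} (which you cite but never invoke). Given $0\neq z\in\LP(\sd)$ with $x_i(\seq\sd)*z=0$, choose $d\in\N^I$ so that $z*x^d$ lies in $\kk[x_i]_{i\in I}\subset\alg(\sd)$; since $x^d$ is a unit, $z*x^d\neq 0$. Then $x_i(\seq\sd)*(z*x^d)=0$ contradicts regularity of $x_i(\seq\sd)$ in $\alg(\sd)$. The same argument on the other side handles left zero-divisors. No full-rank assumption, no auxiliary extension, no leading-term analysis is needed.
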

 \begin{proof}
 Assume that there exists $0\neq z\in\LP(\sd)$ such that $x_i(\seq \sd)*z=0$. We can find some $d\in\N^{I}$ and a finite decomposition $z*x^{d}=\sum_{m \in \N^I} c_m x^m$, for $c_m\in\kk$, $m\in \N^I$. Note that $z*x^{d}\neq 0$ since $x^d$ is a unit in $\LP(\sd)$. We deduce $x_i(\seq \sd)*(\sum_m c_m x^m)=0$, thus $x_i(\seq\sd)$ is a zero divisor of $\alg(\sd)$, which is impossible by Lemma \ref{lem:regular_in_cluster_alg}. 
 
 By the same argument, there is no $0\neq z\in\LP(\sd)$ such that $z*x_i(\seq\sd)=0$.
 \end{proof}

 When we do not need to emphasize the choice of initial seed $\sd$, we will use the notation $\alg$ and $\alpha$, identifying $\alg(\seq\sd)$ and $\alg(\sd)$ via $\seq^*$ and omitting the symbols $(\sd)$ and $\seq^*$.

\subsubsection*{Order of vanishing}
Following \cite{qin2023analogs}, for any $j\in I$, let $\nu_j$ denote the order of vanishing at $x_j=0$ on $\LP(\sd)$, i.e., for any $z\neq 0\in\LP(\sd)$ with a reduced Laurent expansion $z*Q=x_j^d*P$, where $P\in \kk[x_i]_{i\in I}$ is not divisible by $x_j$, $Q$ is a monomial in $\{x_i|i\neq j\}$, we have $\nu_j(z)=d$. Denote $\nu_j(0)=+\infty$. Note that $\nu_j(z*z')\geq \nu_j(z)+\nu_j(z')$ when $\kk$ is not necessarily a domain. 

\begin{lem}[{$\kk$-analog of \cite[Lemma 2.12]{qin2023analogs}}]\label{lem:semivaluation}
Let $k\in I_{\ufv}$, $\sd=\mu_k \sd'$, and $j\neq k$. Denote by $\nu_j$ and $\nu'_j$ the order of vanishing on $\LP(\sd)$ and $\LP(\sd')$ respectively. Then for any $z\in \upClAlg$, we have $\nu_j(z)=\nu'_j(\mu_k^* z)$.
\end{lem}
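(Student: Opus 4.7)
The approach is to compare the two Laurent expansions of $z$ in the cluster charts $\LP(\sd)$ and $\LP(\sd')$, exploiting that $x_j=x_j(\sd)=x_j(\sd')$ for $j\neq k$ and that the mutation formula \eqref{eq:mutation_cluster_variable} modifies only the $k$-th cluster variable. Under the convention of the excerpt, $\mu_k^*$ restricts to the isomorphism $\upClAlg(\sd)\to \upClAlg(\sd')$, and the lemma asserts the equality of the two semi-valuations $\nu_j$ and $\nu'_j\circ\mu_k^*$ on $\upClAlg$.

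The first step is to verify the claim on the initial cluster variables of $\sd$. For $i\neq k$, we have $\mu_k^* x_i(\sd)=x_i(\sd')$, and $\nu_j(x_i(\sd))=\delta_{ij}=\nu'_j(x_i(\sd'))$ is immediate. For $i=k$, the formula \eqref{eq:mutation_cluster_variable} writes $\mu_k^* x_k(\sd)$ as a binomial $M_1+M_2$ in $\LP(\sd')$ with $x_j$-exponents $[-b_{jk}(\sd')]_+$ and $[b_{jk}(\sd')]_+$, whose minimum is zero. The two monomials do not cancel thanks to Assumption \ref{asm:bad_case}: when $k$ is isolated, $2$ is not a zero divisor in $\kk$ and $M_1+M_2=2M_1\neq 0$; otherwise $M_1$ and $M_2$ differ in some coordinate $l$ with $b_{lk}(\sd')\neq 0$. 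Hence $\nu'_j(\mu_k^* x_k(\sd))=0=\nu_j(x_k(\sd))$.

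For general $z\in\upClAlg$, the plan is to embed both cluster charts into a common ring in which an extension of $\nu_j$ is defined. When $b_{jk}(\sd)\neq 0$, I would use the skew formal Laurent series ring $R_j:=\kk[x_i(\sd)^{\pm 1}]_{i\neq j}\lcbs x_j\rcbs$: the mutation binomial factors as $M_1(1+y_k)$ with $y_k=x(\sd)^{\sum_l b_{lk}(\sd) f_l}$ of nonzero $x_j$-degree, so $(1+y_k)^{-1}$ exists as a formal power series; this gives compatible embeddings of $\LP(\sd)$ and $\LP(\sd')$ into $R_j$, and the natural extension of $\nu_j$ to $R_j$ recovers both $\nu_j$ and $\nu'_j\circ\mu_k^*$ on $\upClAlg$. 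When $b_{jk}(\sd)=0$, both mutation monomials have $x_j$-degree zero, so the $x_j$-grading of $\LP(\sd)$ is preserved by the mutation isomorphism onto $\LP(\sd')$, and the equality follows by matching $x_j$-graded components.

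The main obstacle I anticipate is the case $b_{jk}(\sd)=0$ combined with $\kk$ having zero divisors, where one must rule out cancellations under the substitution $x_k(\sd)\mapsto M_1+M_2$ that could raise the minimum $x_j$-degree observed in one chart relative to the other. I expect this can be handled by reducing to the integral domain case via passing to quotients of $\kk$ by associated primes and the corresponding fields of fractions, invoking the isomorphism property of $\mu_k^*$ and the uniqueness of Laurent expansions in either cluster chart.
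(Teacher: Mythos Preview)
Your embedding into $R_j=\kk[x_i(\sd)^{\pm 1}]_{i\neq j}\lcbs x_j\rcbs$ for the case $b_{jk}(\sd)\neq 0$ is a clean alternative to the paper's direct computation. The point to spell out is why the $x_j$-adic valuation on $R_j$ restricts to $\nu'_j$ on $\LP(\sd')$: since $x_k(\sd')=M_1(1+y_k)$ with $M_1$ a Laurent monomial of $x_j$-degree $0$ and $y_k$ of positive $x_j$-degree, the ``leading term'' substitution $x_k(\sd')\mapsto M_1$ gives a $\kk$-algebra isomorphism $\kk[x_i(\sd')^{\pm1}]_{i\neq j}\simeq\kk[x_i(\sd)^{\pm1}]_{i\neq j}$ (the $q$-commutation matches by the definition of $\Lambda'$), so the $x_j^0$-coefficient of any $z'\in\LP(\sd')$ is nonzero in $R_j$ iff it is nonzero in $\LP(\sd')$.

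The $b_{jk}=0$ case has a genuine gap. The sentence ``the $x_j$-grading of $\LP(\sd)$ is preserved by the mutation isomorphism onto $\LP(\sd')$'' is not well-posed: there is no ring isomorphism $\LP(\sd)\to\LP(\sd')$; the mutation map lives only on $\upClAlg$, and the substitution $x_k(\sd)\mapsto M_1+M_2$ is defined only on the subring $S=\kk[x_i(\sd)^{\pm1}]_{i\neq k}[x_k(\sd)]$. This substitution $\sigma:S\to\LP(\sd')$ is indeed $x_j$-graded, but that only yields $\nu'_j(\sigma(u))\geq\nu_j(u)$; one must still rule out that the bottom $x_j$-piece of $u$ lies in $\ker\sigma$. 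Your proposed fix via associated primes does not close this: for an arbitrary commutative ring $\kk$ (no Noetherian hypothesis is assumed), if the nonzero coefficients of the bottom piece $P_0$ happen to be nilpotent (e.g.\ $\kk=\Z/4$, coefficient $2$), then \emph{every} prime of $\kk$ kills them, and no reduction to a domain recovers $\nu_j(\bar z)=d$.

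The paper avoids this detour entirely. Writing $P_0=\sum_{r\geq 0}c_{r,0}*(x_k(\sd))^r$ with $c_{r,0}\in\kk[x_i]_{i\neq j,k}$, one has $\sigma(P_0)=\sum_r c_{r,0}*\mu_k^*(x_k)^r$, where $\mu_k^*(x_k)^r$ is homogeneous of $x_k(\sd')$-degree $-r$; hence the summands lie in distinct $x_k(\sd')$-degrees and cannot cancel. Since $\mu_k^*(x_k)^r$ is regular in $\LP(\sd')$ (Lemma \ref{lem:adjacent-cluster-variable-regular}), $c_{r,0}\neq0$ forces $c_{r,0}*\mu_k^*(x_k)^r\neq0$, so $\sigma(P_0)\neq0$. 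Combined with the regularity of $(\mu_k^*Q)|_{x_j=0}$, this gives $\nu'_j(\mu_k^*z)=d$ directly for arbitrary $\kk$ under Assumption \ref{asm:bad_case}. This is exactly the ``no cancellation'' check you flagged as the obstacle, but it is handled by the $x_k(\sd')$-grading rather than by passing to quotient domains.
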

\begin{proof}
Consider an element $z\in\upClAlg(\sd)$ with a reduced Laurent expansion $z*Q=x_j^d* P$ as described above. We need to show that $\nu'_j(\mu_k^* z)=d$. When $i\neq k$, we will denote $x'_i=x_i$ and thus $\mu_k^*(x_i)=x_i$.

We claim $\nu'_j(\mu_k^* P)=0$. Write $P=\sum_{s\geq 0}x_j^s*P_s:=\sum_{s\geq 0}x_j^s*\left(\sum_{r\geq 0}c_{r,s}*(x_k)^r\right)$, where $c_{r,s}\in\kk[x_i]_{i\neq j,k}$. Then $P_0\neq 0$ since $P$ is not divisible by $x_j$. Since $P_0$ is an inhomogenous sum with respect to the degree of $x_k$ (if it is sum of multiple terms), we have $c_{r,0}\neq 0$ for some $r$. We need to show that the constant term of $\mu_k^* P_0$ is nonzero as a polynomial in $x_j$.

Note that $\mu_k^* c_{r,s}=c_{r,s}$ and $\mu_k^*(x_k)^r=M_r*(x'_k)^{-r}$, where $M_r\in \kk[x_i]_{i\neq k}$ either does not contain $x_j$ (when $b_{jk}=0$), or exactly one of its monomial does not contains $x_j$ (when $b_{jk}\neq 0$). Moreover, this monomial in the latter case, denoted $M'_r$, has a coefficient in $q^{\frac{\Z}{2}}\subset\kk^\times$. In the first case, we have $\mu_k^*(P_0)|_{x_j=0}=\mu_k^*(P_0)=\sum_r c_{r,0}*\mu^*_k(x_k)^r$, which is an inhomogenous sum with respect to the degree of $x'_k$. Note that $\mu^*_k(x_k)^r$ is regular (Lemma \ref{lem:adjacent-cluster-variable-regular}) and some $c_{r,0}\neq 0$. Therefore, $\mu_k^*(P_0)\neq 0$. In the second case, we have $\mu_k^*(P_0)|_{x_j=0}=\sum_{r} c_{r,0}* M'_r*(x'_k)^{-r}$, which is an inhomogenous sum with respect to the degree of $x'_k$. Then $\mu_k^*(P_0)|_{x_j=0}\neq 0$ since $c_{r,0}\neq 0$ for some $r$. The desired claim follows.

Denote $Q=c*(x_k)^r$, where $c$ is a monomial in $x_i$, $i\neq j,k$. 
Note that $\nu'_j(\mu_k^* Q)=0$. When $b_{jk}=0$, $(\mu_k^* Q)|_{x_j=0}=\mu^*_k Q$ is regular in $\LP(\sd')$ (Lemma \ref{lem:adjacent-cluster-variable-regular}). When $b_{jk}\neq 0$, $(\mu_k^* Q)|_{x_j=0}=c*M'_r*(x'_k)^{-r}$ is still regular in $\LP(\sd')$. We deduce that $\nu'_j(\mu^*_k z* \mu^*_k Q)=\nu'_j(\mu^*_k z* (\mu^*_k Q)|_{x_j=0})=\nu'_j(\mu^*_k z)$. Therefore, we have $d=\nu'_j(x_j^d*(\mu_k^*P))=\nu'_j(\mu^*_k z*\mu^*_k Q)=\nu'_j(\mu^*_k z)$.
\end{proof}

By Lemma \ref{lem:semivaluation}, for $j\in I_\fv$, $\nu_j$ defined on $\upClAlg$ is independent of the choice of the initial seed $\sd$. Consequently, we have 
\begin{align}\label{eq:valuation_compactified_upper}
    \bUpClAlg=\{z\in\upClAlg\mid \nu_j(z)\geq 0,\forall j\in I_{\fv}\}.
\end{align}

\subsection{Tropical properties and bases}
We assume $\tB$ of $\sd$ is of full rank from now on unless otherwise specified. This property is preserved under mutations.
\subsubsection*{Degrees and pointedness}
For any seed $\sd$, we introduce a partial order $\prec_\sd$ on $\Z^I$, called the \emph{dominance order}, such that $m'\prec_\sd m$ if $m'=m+\tB n$ for some $0\neq n\in \N^{I_{\ufv}}$, see \cite[Definition 3.1.1]{qin2017triangular} and \cite[Proof of Proposition 4.3]{cerulli2015caldero}.

A formal Laurent series $z=\sum c_m x_m\in \hLP(\sd)$, $c_m\in \kk$, is said to have \emph{degree $g$} if $g$ is the unique $\prec_\sd$-maximal elements of $\{m|c_m\neq 0\}$, equivalently, we have
\begin{align*}
    z=c_g x^g+\sum_{m\prec_\sd g} c_m x^m,\ c_g\neq 0.
\end{align*}
Write $\deg z:=\deg^\sd z:=g$ in this case. It is said to be \emph{$g$-pointed} or \emph{pointed at $g$}, if further $c_g=1$. By \cite{FominZelevinsky07}\cite{gross2018canonical}, all cluster monomials of $\upClAlg(\sd)$ are pointed at distinct degrees.

We say $\sd$ is \emph{injective-reachable} if there exists a seed $\sd[1]\in \Delta^+$ and a permutation $\sigma$ of $I_{\ufv}$ such that $\deg^{\sd}x_{\sigma k}(\sd[1])\in -f_k+\sum_{j\in I_{\fv}} \Z f_j$, $\forall k\in I_{\ufv}$. Equivalently, $\sd$ has a green to red sequence \cite{keller2011cluster}, see \cite[Section 2.3.1]{qin2019bases}. This property is preserved under mutations \cite{qin2017triangular}\cite{muller2015existence}. In this case, define $\sd[d]$ recursively such that $\sd[d+1]=\sd[d][1]$, $\forall d\in \Z$.

\subsubsection*{Formal Laurent expansions and mutations of $y$-variables}

Every $g$-pointed element $z\in\hLP(\sd)$ is a unit and its inverse $z^{-1}\in \hLP(\sd)$ is $(-g)$-pointed. In particular, $g$-pointed elements of 
$\hLP(\sd)$ are regular.

For any mutation sequence $\seq$ and $\sd'=\seq \sd$, we define the $\kk$-algebra homomorphism $\iota:\LP(\sd')\rightarrow\hLP(\sd)$ such that $\iota(x_i(\sd')^{\pm 1})=(\seq^* x_i(\sd'))^{\pm 1}$, where $\seq^*$ is the mutation map considered in \eqref{eq:mutation_specialization}.
\begin{lem}[{\cite[Lemma 3.3.7]{qin2019bases}}]\label{lem:mutation_formal_expansion}
$\iota$ is injective. Moreover, if $z\in \upClAlg(\sd')$, then $\iota(z)=\seq^*(z)$.
\end{lem}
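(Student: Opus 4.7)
\medskip

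The plan is to handle the two assertions separately, with both relying on the pointedness of the images $\seq^*x_i(\sd')$ inside $\hLP(\sd)$. First I would verify the well-definedness of $\iota$ itself: by the cluster expansion formula \eqref{eq:cluster_expansion_general}, each $\seq^*x_i(\sd')$ is $g_i(\seq\sd)$-pointed with leading coefficient equal to $1$ (after specialization of $\alpha(c_0)=1$), so it is a unit in $\hLP(\sd)$ (see the paragraph on formal Laurent expansions). Moreover, since $\seq^*:\alg(\sd')\to\alg(\sd)$ is a $\kk$-algebra homomorphism, the $q$-quasi-commutation relations $x_i(\sd')*x_j(\sd')=q^{\Lambda_{ij}(\sd')}x_j(\sd')*x_i(\sd')$ are preserved among the images $\seq^*x_i(\sd')$ inside $\LP(\sd)\subset \hLP(\sd)$. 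Thus $\iota$ extends uniquely to a well-defined $\kk$-algebra homomorphism $\LP(\sd')\to\hLP(\sd)$.

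For the injectivity of $\iota$, the key observation is that the extended $g$-vectors $\{g_i(\seq\sd)\}_{i\in I}$ form a $\Z$-basis of $\Z^I$. This follows from the full-rank assumption on $\tB$: the frozen $g$-vectors satisfy $g_j(\seq\sd)=f_j$ for $j\in I_\fv$, while the principal parts $\pr_{I_\ufv}g_k(\seq\sd)$ for $k\in I_\ufv$ form a basis of $\Z^{I_\ufv}$ by the standard theory of $g$-vectors under mutation. Consequently, for any two distinct $m,m'\in\Z^I$ one has $\sum_i m_i g_i(\seq\sd)\neq \sum_i m'_i g_i(\seq\sd)$, so the family $\{(\seq^*x(\sd'))^m\}_{m\in\Z^I}$ consists of elements of $\hLP(\sd)$ pointed at pairwise distinct degrees with leading coefficients in $q^{\frac{\Z}{2}}\subset \kk^\times$. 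Given a nonzero $z=\sum_m c_m x(\sd')^m\in\LP(\sd')$ with only finitely many $c_m\neq 0$, choose $m_0$ so that $\sum_i (m_0)_ig_i(\seq\sd)$ is $\prec_\sd$-maximal among $\{\sum_i m_i g_i(\seq\sd):c_m\neq 0\}$; then the coefficient of $x^{\sum_i(m_0)_i g_i(\seq\sd)}$ in $\iota(z)$ is a unit of $\kk$ times $c_{m_0}\neq 0$, so $\iota(z)\neq 0$.

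For the second assertion, take $z\in\upClAlg(\sd')$. By the definition of $\upClAlg(\sd')$, there exist $d\in\N^I$ such that $x(\sd')^d * z = P$ for some polynomial $P\in\kk[x_i(\sd')]_{i\in I}$. Since $P$ and $x(\sd')^d$ are both polynomial expressions in the cluster variables of $\sd'$, they lie in $\upClAlg(\sd')$. Applying $\seq^*:\upClAlg(\sd')\to\upClAlg(\sd)$ gives the identity
\[
\seq^*(x(\sd')^d)*\seq^*(z)=\seq^*(P)
\]
in $\upClAlg(\sd)\subset\hLP(\sd)$. Applying $\iota$ to the same equation and using the fact that both $\iota$ and $\seq^*$ send $x_i(\sd')$ to the same element $\seq^*x_i(\sd')$ (so that $\iota(P)=\seq^*(P)$ and $\iota(x(\sd')^d)=\seq^*(x(\sd')^d)$ since these are positive polynomial expressions), we obtain
\[
\seq^*(x(\sd')^d)*\iota(z)=\iota(P)=\seq^*(P).
\]
Since $\seq^*(x(\sd')^d)$ is (up to a power of $q^\Hf$) a product of pointed elements with unit leading coefficients, it is a unit in $\hLP(\sd)$, hence regular. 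Comparing the two displays yields $\iota(z)=\seq^*(z)$.

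The main obstacle in making the argument clean is the first part: one must be careful that the $g$-vector distinctness argument goes through for \emph{Laurent} monomials (not just polynomial cluster monomials) and that it works over an arbitrary unital $\kk$ satisfying Assumption \ref{asm:bad_case}, where we cannot invoke cancellation from a field. The choice to extract leading coefficients (which are units in $\kk$) rather than arbitrary coefficients is what makes the argument $\kk$-robust.
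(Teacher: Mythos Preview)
Your proof is correct. The paper itself does not reprove the lemma but simply cites \cite[Lemma 3.3.7]{qin2019bases}, remarking elsewhere that the arguments of that reference carry over verbatim from $\AAA$ to general $\kk$; your proposal reconstructs precisely that standard argument---injectivity via the fact that the $\iota(x(\sd')^m)$ are pointed at pairwise distinct degrees (since the extended $g$-vectors form a $\Z$-basis of $\Z^I$), and the second claim by clearing denominators and using that pointed elements are units in $\hLP(\sd)$. One small imprecision: the unimodularity of the matrix of principal $g$-vectors is a consequence of sign-coherence (via \cite{gross2018canonical}), not of the full-rank hypothesis on $\tB$; the full-rank assumption is what makes the dominance order $\prec_\sd$ well-defined so that ``pointed'' is meaningful.
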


Consider the specialization map $\alpha(\sd):\hLP(\sd)_\AAA\rightarrow \hLP(\sd)$, which is the ring homomorphism extending $\alpha:\AAA\rightarrow \kk$ and sending $x(\sd)_\AAA^m$ to $x(\sd)^m$. It is straightforward to check that $\iota \alpha(\sd')=\alpha(\sd) \iota_\AAA\colon \LP(\sd')_\AAA\to \hLP(\sd)$.

We call $\iota(z)$ the \emph{formal Laurent expansion} of $z$ in $\hLP(\sd)$. In view of Lemma \ref{lem:mutation_formal_expansion}, we could denote $\iota=\seq^*$ by abuse of notation. 

Now consider the case $\seq=\mu_k$ and denote $\iota$ by $\mu_k^*$. Using the specialization map $\alpha(\sd)$ and $\alpha(\sd')$, we deduce that the $y$-variables of $\sd$ and $\sd'$ satisfy the mutation rules in \cite[Eq. (2.6)]{qin2019bases}.

\subsubsection*{Tropical points}
We associate to each seed $\sd$ a lattice $\cM(\sd)=\Z^I$. For any $k\in I_{\ufv}$ and $\sd'=\mu_k \sd$, define the adjacent tropical transformation  $\varphi_{\sd',\sd}:\cM(\sd)\simeq \cM(\sd')$ such that for any $m=(m_i)_{i\in I}\in\cM(\sd)$, its image $m'=\varphi_{\sd',\sd}(m)=(m'_i)_{i\in I}$ is given by 
\begin{align*}
    m'_k=&-m_k,\\
    m'_i=&m_i+[b_{ik}]_+[m_k]_+-[-b_{ik}]_+[-m_k]_+,\ \text{if }i\neq k.
\end{align*}
The tropical transformation $\varphi_{\sd',\sd}$ for any $\sd',\sd\in \Delta^+$ is defined as a composition of adjacent tropical transformations along any mutation sequence $\seq$ such that $\sd'=\seq\sd$. By \cite{gross2013birational},  $\varphi_{\sd',\sd}$ is independent of the choice of $\seq$. 

\begin{defn}[{\cite[Section 2.1]{qin2019bases}}]
We define the set of tropical points to be $M^{\trop}=\sqcup_{\sd\in \Delta^+}\cM(\sd)/\sim$, where $\sim$ is the equivalence relation such that $m\sim \varphi_{\sd',\sd}(m)$, $\forall m\in \cM(\sd)$, $\sd,\sd'\in\Delta^+$. The equivalence class $[m]$ of any $m\in\cM(\sd)$ is called a \emph{tropical point}.
\end{defn}

\begin{defn}
Take any element $z\in \upClAlg$ which is $m$-pointed in $\LP(\sd)$. If it is $\varphi_{\sd',\sd}(m)$-pointed in $\LP(\sd')$ for some $\sd'\in\Delta^+$, it is said to be \emph{compatibly pointed at $\sd,\sd'$}. It is said to be \emph{$[m]$-pointed} if it is compatibly pointed at $\sd,\sd'$ for any $\sd'\in\Delta^+$. A subset $Z$ of $\upClAlg$ is called \emph{$M^{\trop}$-pointed} if it takes the form $\{Z_{[m]}|[m]\in M^{\trop} \}$ such that $Z_{[m]}$ are $[m]$-pointed.
\end{defn}

\subsubsection*{Properties of pointed elements}
We next recall important properties of pointed elements following \cite{qin2019bases}. Although \cite{qin2019bases} deals with the case $\kk=\AAA$, all definitions and results in \cite[Sections 3, 4, and Lemma 5.1.1]{qin2019bases} only depend on consideration of linear combinations of pointed Laurent polynomials with distinct degrees and products of pointed Laurent polynomials; no special properties of $(\kk,q^\Hf)$ are used. Hence, the statements and proofs carry over verbatim to general pairs $(\kk,q^\Hf)$. The single place where \cite[Section 3, 4]{qin2019bases} used $\AAA$ is an explicit basis of any type $A_1$ upper cluster algebra in \cite[Section 4.2]{qin2019bases}. We therefore verify the corresponding statement over general commutative unital rings $\kk$ below.

\begin{lem}\label{lem:basis_A1}
Let $\sd$ be a classical or quantum seed such that $k$ is its only unfrozen vertex, whose exchange matrix $\tB$ is not necessarily of full rank. Then the localized cluster monomials form a $\kk$-basis of $\upClAlg(\sd)$ under Assumption \ref{asm:bad_case}.
\end{lem}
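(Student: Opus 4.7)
The plan is to give an explicit characterization of $\upClAlg(\sd)$ in this rank-one setting via a divisibility condition on expansions in $\LP(\sd)$, and then extract both the spanning and linear-independence parts of the basis statement. Write $R\subset \LP(\sd)$ for the $q$-twisted sub-Laurent-polynomial algebra in the frozen variables $\{x_j\}_{j\in I_\fv}$, set $x'_k:=\mu_k^*(x_k(\mu_k\sd))\in\LP(\sd)$, and introduce the exchange element
\[
P:=x_k * x'_k = q^{c_-}\,x^{\sum_j[-b_{jk}]_+f_j}+q^{c_+}\,x^{\sum_j[b_{jk}]_+f_j}\in R
\]
for the appropriate half-integers $c_\pm$ coming from \eqref{eq:mutation_cluster_variable}. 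The localized cluster monomials to be identified are $x_k^a*x^m$ with $a\geq 0$, $m\in\Z^{I_\fv}$, together with $(x'_k)^b*x^m$ with $b\geq 1$, $m\in\Z^{I_\fv}$; all lie in $\upClAlg(\sd)$, so only spanning and linear independence need argument.

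The first step I would carry out is to show that $P$ is a regular element of $R$. If $k$ is not isolated, the exponents in the two terms of $P$ differ, so $P$ is an inhomogeneous sum of two distinct $q$-monomials and a standard leading-monomial argument in the quantum torus (using $q^{\Hf}\in\kk^\times$) gives both left and right regularity. If $k$ is isolated, both exponents vanish and $\Lambda(f_k,-f_k)=0$ forces $c_\pm=0$, so $P=2$; Assumption \ref{asm:bad_case} is precisely the hypothesis required to ensure this $2$ is a non-zero-divisor in $\kk$, and hence in $R$.

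With $P$ regular, the next step is the following characterization: for $z\in \LP(\sd)$ with its unique expansion $z=\sum_{n\in\Z} z_n * x_k^n$, $z_n\in R$, one has $z\in \upClAlg(\sd)$ if and only if $z_n$ is right-divisible by $P^{|n|}$ in $R$ for every $n<0$. The idea is to expand $z$ in the mutated basis $\{(x'_k)^a * x^\alpha\}$ using $x_k=P*(x'_k)^{-1}$ and $x_k^{-1}=x'_k*P^{-1}$ inside a suitable Ore localization of $\LP(\sd)$, collect the coefficient of each $(x'_k)^a$, and demand that it lie in $R$; the constraint is vacuous for $n\geq 0$ and reduces to divisibility by $P^{|n|}$ for $n<0$. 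Once this is in hand, spanning is immediate: for $n\geq 0$, expanding $z_n$ in the monomial basis of $R$ writes $z_n*x_k^n$ as a $\kk$-linear combination of the cluster monomials $x^\alpha*x_k^n$; for $n<0$, writing $z_n=\tilde z_n*P^{|n|}$ with $\tilde z_n\in R$ and manipulating $\tilde z_n*P^{|n|}*x_k^n$ via $P=x_k*x'_k$ (the $q$-factors arising during the manipulation are all units in $\kk$) expresses it as a $\kk$-linear combination of the cluster monomials $x^\alpha*(x'_k)^{|n|}$.

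Linear independence then reduces to distinctness of leading Laurent data in $\LP(\sd)$. Every monomial in the expansion of $x_k^a*x^m$ has $x_k$-exponent $a\geq 0$, while every monomial in the expansion of $(x'_k)^b*x^m$ has $x_k$-exponent $-b\leq -1$; this separates the two families, and within each family distinct parameters $(a,m)$ (resp.~$(b,m)$) give Laurent expansions with distinct leading $I_\fv$-parts. The leading coefficients are powers of $q^{\Hf}$ in the non-isolated case and powers of $2$ in the isolated case; the standard peel-off argument then applies once one knows that these coefficients are non-zero-divisors in $\kk$. The main obstacle, and the sole place where Assumption \ref{asm:bad_case} is invoked, is the isolated case: when $P$ collapses to the scalar $2$, both the divisibility step in the spanning argument and the non-vanishing of the leading coefficients in the linear-independence argument depend on $2$ not being a zero divisor in $\kk$.
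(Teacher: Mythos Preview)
Your strategy---grade by powers of $x_k$, establish a divisibility condition on the negative-degree coefficients, and deduce spanning and linear independence---is exactly the one the paper uses. There are, however, two gaps in the execution, both concentrated in the divisibility step.

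First, ``a suitable Ore localization of $\LP(\sd)$'' does not obviously exist. Over a general $\kk$ the ring $\LP(\sd)$ need not be a domain, and in the quantum non-isolated case your element $P$ fails to quasi-commute with $x_k$ (one computes $\Lambda(f_k,m_+-m_-)=\Lambda(f_k,\tB e_k)=-\ddiag_k\neq 0$), so $P$ is not normal and the Ore condition for $\{P^n\}$ is unclear. The paper sidesteps this entirely: it builds an explicit algebra homomorphism $\mu_k^*\colon\LP(\sd)\to\mathcal{Q}$, taking $\mathcal{Q}=\hLP(\sd')$ in the non-isolated case (where $\tB(\sd')$ has full rank, so the completion is defined) and $\mathcal{Q}=\LP(\sd')[\tfrac{1}{2}]$ in the isolated classical case; the $\Z$-grading $\gr x_k=1$, $\gr x'_k=-1$, $\gr x_j=0$ for $j\neq k$ extends to $\mathcal{Q}$ and lets one read off each graded piece.

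Second, the divisor you name is not the correct one. The homogeneous piece of degree $-r$ must satisfy $\beta_r*x_k^{-r}=\beta'_r*(x'_k)^r$ for some $\beta'_r\in R$, which gives $\beta_r=\beta'_r*(x'_k)^r*x_k^r=:\beta'_r*M_r$; so the condition is right-divisibility by $M_r$, not by $P^r=(x_k*x'_k)^r$. In the quantum case these differ: with one frozen variable $x_2$, $b_{21}=1$, and compatible $\Lambda$ with $\Lambda_{12}=-1$, one finds $P=1+q^{-1/2}x_2$ while $M_1=1+q^{1/2}x_2$, and these are not associates in $\kk[x_2^{\pm1}]$ when $q\neq 1$, so the two divisibility conditions are genuinely different. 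Once you replace $P^{|n|}$ by $M_{|n|}$ and use the target ring $\mathcal{Q}$ above, your argument goes through and coincides with the paper's proof.
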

\begin{proof}
Denote $\sd=\mu_k \sd'$. Regard $\LP(\sd)$ and $\LP(\sd')$ as $\Z$-graded $\kk$-algebras by setting $\gr x_i=0$ when $i\neq k$, $\gr x_k=1$, $\gr x'_k=-1$. 
For all $r\in \N$, write $(x'_k)^r*(x_k)^r=M_r$, where $M_r$ is a polynomial in $x_i$, $i\neq k$.

For any $z\in \upClAlg$, we have the Laurent expansion $z=\sum_{r\geq 0} \alpha_r*x_k^r+ \sum_{r>0}\beta_r*x_k^{-r}$ in $\LP(\sd)$, where $\alpha_r,\beta_r$ are Laurent polynomials in $R:=\kk[x_i^{\pm 1}]_{i\neq k}$. Then $z':=\sum_{r>0}\beta_r*x_k^{-r}\in \LP(\sd)$ is contained in $\upClAlg$. We claim that, for all $r\in \Z_{>0}$, there exists $\beta'_r\in R$ such that $\beta_r=\beta'_r*M_r$ in $R$ or, equivalently, $z'=\sum_{r>0}\beta'_r*(x'_k)^r$ in $\upClAlg$.

(i) First, assume that $k$ is not isolated. Then $\tB(\sd')$ is of full rank, $M_r$ are pointed in $\LP(\sd')$ up to some $q$-multiples, and we can define $\mathcal{Q}:=\hLP(\sd')$. The grading on $\LP(\sd')$ extends to a grading on $\mathcal{Q}$. Recall that we have the homomorphism $\mu_k^*:\LP(\sd)\rightarrow \mathcal{Q}$, which restricts to the usual mutation map $\upClAlg(\sd)\simeq\upClAlg(\sd')$ and sends any element to its formal Laurent expansion (Lemma \ref{lem:mutation_formal_expansion}). Note that $\mu_k^*$ respects the grading. We have $\mu^*_k(z')=\sum_{r} \beta_r*\mu^*_k((x_k)^{-r})$ in $\mathcal{Q}$, whose homogeneous component of grading $-r$ is $\beta_r*\mu^*_k((x_k)^{-r})$. Since $\mu^*_k(z')\in \upClAlg(\sd')\subset \LP(\sd')$, these homogeneous components belong to $\LP(\sd')$ as well. We deduce that $\beta_r*\mu^*_k((x_k)^{-r})=\beta'_r*(x'_k)^r$ in $\mathcal{Q}$ for some $\beta'_r\in R$. Note that $M_r$ and $(x'_k)^r$ are invertible in $\mathcal{Q}$. We obtain $\beta_r*(M_r)^{-1}*(x'_k)^{r}=\beta'_r*(x'_k)^r$ in $\mathcal{Q}$, and thus $\beta_r=\beta'_r*(M_r)$ as claimed.

(ii) Next, assume that $k$ is isolated, then $\sd$ must be a classical seed and $M_r=2^r$. Since $2$ is a regular central element of $\LP(\sd')$, we have the localization $\mathcal{Q}:=\LP(\sd')[\Hf]$. We then construct the $R$-algebra homomorphism $\mu^*_k:\LP(\sd)\rightarrow \mathcal{Q}$ such that $x_k$ is sent to the cluster variable $x_k(\sd)=\frac{2}{x_k(\sd')}\in \LP(\sd')$. The grading on $\LP(\sd')$ extends to a grading on $\mathcal{Q}$. We then repeat the arguments in (i) and verify the desired claim.  

By this claim, $B=\{x_k^r,(x'_k)^r\mid r\in \N\}$ is an $R$-spanning set of $\upClAlg$ which consist of regular elements. Moreover, it is $R$-linearly independent in $\LP(\sd)$ by using the $\Z$-grading. In addition, the Laurent monomials in $x_i$, where $i\neq k$, are $\kk$-linearly independent in $\LP(\sd)$. Hence, we deduce that the localized cluster monomials, which span $\upClAlg$ over $\kk$, are $\kk$-linearly independent in $\LP(\sd)$ and thus form a $\kk$-basis of $\upClAlg$.
\end{proof}

\begin{lem}[{\cite[Lemma 3.4.12]{qin2019bases}}]\label{lem:tropical_cluster_mono}
Assume $\sd$ is injective-reachable and $z\in\LP(\sd)$ has degree $\deg z=\deg x(\sd')^m$ for some localized cluster monomial $ x(\sd')^m$. If it is compatibly pointed at $\sd,\sd',\sd'[-1]$, then $z=x(\sd')^m$.
\end{lem}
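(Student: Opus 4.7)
The plan is to argue by contradiction: set $w := z - x(\sd')^m$ and suppose $w \neq 0$. The localized cluster monomial $x(\sd')^m$ is automatically compatibly pointed at every pair of seeds, while by hypothesis $z$ is compatibly pointed at $\sd, \sd', \sd'[-1]$ with the same initial degree. Both therefore have leading coefficient $1$ at the same degree in each of $\LP(\sd)$, $\LP(\sd')$, and $\LP(\sd'[-1])$. Hence $w$, if nonzero, has strictly smaller $\prec_{\sd'}$-degree and strictly smaller $\prec_{\sd'[-1]}$-degree:
\[
\deg^{\sd'} w = m + \tB(\sd')\, n, \quad \deg^{\sd'[-1]} w = \varphi_{\sd'[-1],\sd'}(m) + \tB(\sd'[-1])\, n',
\]
for some $0 \neq n, n' \in \N^{I_\ufv}$.

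The main step is to derive a contradiction between these two descriptions via the formal Laurent expansion in Lemma \ref{lem:mutation_formal_expansion}. Substituting the formal expansions of the cluster variables $\iota(x_k(\sd'))$ into the $\sd'$-Laurent expansion of $w$ produces the $\sd'[-1]$-expansion; the injective-reachability of $\sd'[-1]$ pins the extended $g$-vector of each $x_{\sigma k}(\sd')$ in $\LP(\sd'[-1])$ to the shape $-f_k + \sum_{j \in I_\fv} \Z f_j$, which guarantees that the leading $\sd'[-1]$-degree of the substitution is exactly $\varphi_{\sd'[-1],\sd'}(\deg^{\sd'} w)$. Comparing with the $\prec_{\sd'[-1]}$-constraint above yields
\[
\varphi_{\sd'[-1],\sd'}(m + \tB(\sd')\, n) = \varphi_{\sd'[-1],\sd'}(m) + \tB(\sd'[-1])\, n'.
\]
Unwinding the piecewise-linear formula for $\varphi_{\sd'[-1],\sd'}$ along the mutation sequence from $\sd'[-1]$ to $\sd'$, together with the sign pattern imposed by $n \in \N^{I_\ufv}$ and the explicit shape of the injective-reachable $g$-vectors, then forces $n = 0$, a contradiction.

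The main obstacle is the book-keeping of how formal Laurent expansions interact with the dominance order under mutation. Tropical transformation is only piecewise linear in $g$-coordinates, so the identity above with $n, n' \in \N^{I_\ufv}$ requires $m$ and $m + \tB(\sd')\,n$ to lie in a common chamber of linearity for $\varphi_{\sd'[-1],\sd'}$; this is precisely what the injective-reachable structure of $\sd'[-1]$ provides. Since the argument only manipulates pointed Laurent polynomials and their products, with no special feature of $(\AAA, q^\Hf)$ used, it transfers verbatim from \cite[Lemma 3.4.12]{qin2019bases} to arbitrary $(\kk, q^\Hf)$, as already noted in the paper.
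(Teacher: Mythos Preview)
The paper does not supply a proof here; it simply cites \cite[Lemma 3.4.12]{qin2019bases} and remarks, in the paragraph preceding the lemma, that the argument there uses only manipulations of pointed Laurent polynomials and hence carries over verbatim from $(\AAA,q^\Hf)$ to arbitrary $(\kk,q^\Hf)$. So there is no proof in the paper to compare against; your proposal is an attempt to reconstruct the cited argument.

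Your overall strategy---cancel leading terms, study $w=z-x(\sd')^m$, and derive a degree contradiction between the $\sd'$- and $\sd'[-1]$-expansions---is the right one, but the execution has genuine gaps. First, you repeatedly write ``$\deg^{\sd'} w$'' and ``$\deg^{\sd'[-1]} w$'', but $w$ need not have a unique $\prec$-maximal term in either seed; the support of $w$ in $\hLP(\sd')$ lies in $\{m+\tB(\sd')n:0\neq n\in\N^{I_\ufv}\}$, which can have several maximal elements. One must work with an arbitrary maximal element of the support rather than a single degree. Second, you conflate the piecewise-linear tropical map $\varphi_{\sd'[-1],\sd'}$ with the \emph{linear} map $\psi$ that records extended $g$-vectors, i.e.\ $\psi(f_i)=\deg^{\sd'[-1]}x_i(\sd')$. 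It is $\psi$, not $\varphi$, that governs the leading degree under formal Laurent expansion; the two agree on the degree $m$ of a localized cluster monomial of $\sd'$, but there is no reason for them to agree at $m+\tB(\sd')n$, and the ``common chamber of linearity'' claim is not what injective-reachability supplies. Third, and most importantly, the final step ``forces $n=0$'' is asserted without argument. What is actually needed is the identity $\psi(\tB(\sd')e_k)=-\tB(\sd'[-1])e_{\sigma^{-1}k}$ (equivalently, $\deg^{\sd'[-1]}y_k(\sd')\succ_{\sd'[-1]}0$), which encodes the sign reversal of $c$-vectors along the green-to-red sequence from $\sd'[-1]$ to $\sd'$; this is what turns the equation $\psi(\tB(\sd')n)=\tB(\sd'[-1])n'$ with $n,n'\in\N^{I_\ufv}\setminus\{0\}$ into an impossibility. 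That identity is the substantive content of \cite[Section 3.3--3.4]{qin2019bases}, and your sketch only gestures at it.
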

Lemma \ref{lem:tropical_cluster_mono} plays a key role in the proof of Theorem \ref{thm:tropical-basis}, and it is also of independent interest.

\begin{thm}[{\cite[Theorem 4.3.1]{qin2019bases}}]\label{thm:tropical-basis}
Assume that $\sd$ is injective-reachable with $\sd[1]=\seq \sd$ for some mutation sequence $\seq$. If $Z=\{z_m\mid m\in \cM(\sd)\}$ is a subset of $\upClAlg$ such that $z_m$ are $\varphi_{\sdt,\sd}(m)$-pointed in $\LP(\sdt)$ for any seed $\sdt$ appearing along the mutation sequence $\seq$ starting from $\sd$, then $Z$ is a $\kk$-basis of $\upClAlg$. 
\end{thm}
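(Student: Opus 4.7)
The plan is to adapt the proof of \cite[Theorem 4.3.1]{qin2019bases} to the present setting $(\kk,q^\Hf)$. The argument there is built entirely from linear-combination manipulations of pointed Laurent polynomials with distinct degrees and from products of such elements, and, as noted in the paragraph preceding Lemma \ref{lem:basis_A1}, these techniques transfer without change once the rank-one case has been settled over $(\kk,q^\Hf)$, which is now available from Lemma \ref{lem:basis_A1}. My job is therefore to organize the proof so that only those admissible manipulations are used and to invoke the rank-one basis exactly where \cite{qin2019bases} does.

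Linear independence will be the short step. Starting from a hypothetical finite dependence $\sum_{m\in S}c_m z_m=0$, I will pick $m_0\in S$ maximal under the dominance order $\prec_\sd$ and observe that the monomial $x^{m_0}$ occurs with coefficient $1$ in $z_{m_0}$ and with coefficient $0$ in every other $z_m$, $m\in S\setminus\{m_0\}$ (since $x^{m_0}$ could appear in $z_m$ only if $m_0\prec_\sd m$, contradicting maximality). Reading off the coefficient of $x^{m_0}$ in $\LP(\sd)$ forces $c_{m_0}=0$, and the same maximality argument iterated kills all remaining coefficients.

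The substantive work is the spanning statement. Given $z\in\upClAlg$, the proof will run a greedy reduction driven by the expansion of $z$ in $\LP(\sd[1])$: let $\widetilde g$ be the $\prec_{\sd[1]}$-maximal degree of the (finite) Laurent expansion of $z$ in $\LP(\sd[1])$, let $\widetilde c\in\kk$ be the corresponding leading coefficient, and set $m:=\varphi_{\sd,\sd[1]}(\widetilde g)\in\cM(\sd)$. The hypothesis on $Z$ makes $z_m$ exactly $\widetilde g$-pointed in $\LP(\sd[1])$, so $z-\widetilde c\cdot z_m\in\upClAlg$ has strictly smaller $\prec_{\sd[1]}$-maximal degree than $z$. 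Iterating this step and pairing the extracted coefficients with the corresponding basis elements should express $z$ as a finite $\kk$-linear combination of elements of $Z$.

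The hard part will be proving that this greedy procedure terminates. A priori, replacing $z$ by $z-\widetilde c\cdot z_m$ injects the lower-degree tail of $z_m$ into the residue, and such tails could in principle accumulate without bound. The resolution, following \cite{qin2019bases}, is that the pointedness hypothesis holds at \emph{every} intermediate seed $\sdt$ appearing along $\seq$, not merely at the endpoints $\sd$ and $\sd[1]$, and this uniformity together with Lemma \ref{lem:tropical_cluster_mono} tightly constrains where the residue's support in $\LP(\sd[1])$ can live. Setting up the induction so that the residue's support strictly shrinks at each step, and remains contained in a fixed finite reservoir determined by the initial support of $z$, is the crux of the argument; once this bookkeeping is in place, the procedure terminates after finitely many steps and produces the required finite expansion, verbatim as in \cite[Section~4]{qin2019bases}.
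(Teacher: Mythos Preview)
Your proposal is correct and takes essentially the same approach as the paper: both reduce the claim to the argument of \cite[Section~4]{qin2019bases}, observing that the only place the base ring enters is the type $A_1$ basis statement, which is supplied by Lemma~\ref{lem:basis_A1}. The paper states this in two sentences without unpacking the spanning argument, while you sketch the greedy reduction and correctly identify Lemma~\ref{lem:tropical_cluster_mono} as the ingredient controlling termination; both are faithful summaries of the same underlying proof.
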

\begin{proof}
    The result was proved in \cite[Section 4]{qin2019bases} over $\AAA$ based on the fact that the localized cluster monomials form a basis of a type $A_1$ cluster algebra with full rank seeds. This fact is still true over $\kk$ by Lemma \ref{lem:basis_A1}. The rest statements in \cite[Section 4]{qin2019bases} carry over verbatim to $\kk$.
\end{proof}

Following \cite{gross2018canonical}, we say $\sd$ can be \emph{optimized} if for any frozen vertex $j$, there exist a seed $\sd_j\in \Delta^+$ such that $b_{jk}(\sd_j)\geq 0$ for all $k\in I_{\ufv}$. Using the description of $\bUpClAlg$ in \eqref{eq:valuation_compactified_upper}, we deduce the following results.

\begin{prop}[{\cite[Proposition 2.15]{qin2023analogs}}]\label{prop:partial-basis}
Assume that $\sd$ can be optimized. If $Z$ is an $M^{\trop}$-pointed basis of $\upClAlg$, then $Z\cap \bUpClAlg$ is a basis of $\bUpClAlg$.
\end{prop}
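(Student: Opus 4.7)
My plan is to reduce the statement to a clean valuation calculation on $[m]$-pointed elements by exploiting the optimization hypothesis, and then use distinctness of degrees to rule out cancellations in a basis expansion.

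First I would establish the characterization: a pointed element $z_{[m]} \in Z$ lies in $\bUpClAlg$ if and only if $m_j(\sd_j) \geq 0$ for every $j \in I_\fv$, where $m(\sd_j) \in \cM(\sd_j)$ is the representative of $[m]$ in the optimized seed $\sd_j$. Indeed, $z_{[m]}$ is pointed at $m(\sd_j)$ in $\LP(\sd_j)$, so every term in its Laurent expansion has exponent of the form $m(\sd_j) + \tB(\sd_j) n$ with $n \in \N^{I_\ufv}$. Since $b_{jk}(\sd_j) \geq 0$ for all $k \in I_\ufv$, each such exponent has $j$-coordinate $\geq m_j(\sd_j)$, with equality realized by the leading term (whose coefficient is $1$). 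Therefore $\nu_j(z_{[m]}) = m_j(\sd_j)$, and Lemma \ref{lem:semivaluation} together with \eqref{eq:valuation_compactified_upper} yields the claim.

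Second, I would show that $Z \cap \bUpClAlg$ spans $\bUpClAlg$. Take $z \in \bUpClAlg$ and write $z = \sum_{[m] \in S} c_{[m]} z_{[m]}$ with $S$ finite and each $c_{[m]} \neq 0$. Fix $j \in I_\fv$ and set $\mu = \min_{[m] \in S} m_j(\sd_j)$. Expanded in $\LP(\sd_j)$, only the $z_{[m]}$ with $m_j(\sd_j) = \mu$ contribute terms at $j$-coordinate $\mu$; among those contributions, the leading terms $c_{[m]} x^{m(\sd_j)}$ are strictly maximal in the dominance order $\prec_{\sd_j}$ (any non-leading term at $j$-coordinate $\mu$ satisfies $\tB(\sd_j) n \neq 0$ with $j$-component $0$, hence has strictly smaller dominance degree than its own leading term). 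The degrees $\{m(\sd_j) : [m] \in S\}$ are pairwise distinct, so these leading terms are $\kk$-linearly independent monomials and their $\kk$-linear combination with coefficients $c_{[m]}$ is nonzero. If $\mu < 0$ this would force $\nu_j(z) \leq \mu < 0$, contradicting $z \in \bUpClAlg$; hence $m_j(\sd_j) \geq 0$ for every $[m] \in S$. Doing this for all $j \in I_\fv$ places every $z_{[m]}$ appearing in the expansion into $\bUpClAlg$. Since $Z \cap \bUpClAlg$ is automatically $\kk$-linearly independent as a subset of the basis $Z$, this shows it is a $\kk$-basis of $\bUpClAlg$.

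The main obstacle is the no-cancellation argument in the second step: a priori, the minimal $j$-coordinate contributions from different $z_{[m]}$'s could cancel, which would break the valuation comparison. The key is the two-layer use of the optimization hypothesis (to control which monomials can appear at $j$-coordinate $\mu$) combined with distinctness of pointed degrees (to guarantee linear independence of the dominant monomials at that layer). Everything else is bookkeeping from Lemma \ref{lem:semivaluation}, the description \eqref{eq:valuation_compactified_upper}, and the pointedness definitions.
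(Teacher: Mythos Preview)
Your proof is correct and follows the approach the paper indicates (the paper itself gives no detailed argument, only citing \cite[Proposition 2.15]{qin2023analogs} and pointing to the valuation description \eqref{eq:valuation_compactified_upper}); your first step is exactly Lemma~\ref{lem:optimized-tropical-pt}, and your second step supplies the intended triangularity argument. For the no-cancellation step, the cleanest phrasing is to pick a $\prec_{\sd_j}$-maximal leading degree $m^*(\sd_j)$ among those $m(\sd_j)$ with $m_j(\sd_j)=\mu$ and note that the coefficient of $x^{m^*(\sd_j)}$ in $z$ is precisely $c_{[m^*]}\neq 0$, since by your parenthetical observation no non-leading term of any other $z_{[m']}$ at $j$-level $\mu$ can reach that exponent.
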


\begin{lem}[{\cite[Lemma 2.17]{qin2023analogs}}]\label{lem:optimized-tropical-pt}
Assume that $\sd$ can be optimized. Then, for any $[m]$-pointed $z\in\upClAlg$, we have $z\in \bUpClAlg$ if and only if $(\deg^{\sd_j}z)_j\geq 0$, $\forall j\in I_{\fv}$.
\end{lem}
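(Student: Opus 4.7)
The plan is to reduce the claim to the characterization of $\bUpClAlg$ given in \eqref{eq:valuation_compactified_upper}, namely $\bUpClAlg=\{z\in\upClAlg\mid \nu_j(z)\geq 0,\ \forall j\in I_{\fv}\}$. Granting this, it suffices to show, for each $j\in I_{\fv}$, the identity
\[
\nu_j(z) \;=\; (\deg^{\sd_j} z)_j
\]
whenever $\sd_j$ is optimized at $j$ and $z$ is pointed at $\sd_j$ (which holds here since $z$ is $[m]$-pointed).

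To establish this identity, fix $j\in I_{\fv}$ and set $g:=\deg^{\sd_j} z$. Since $z\in\upClAlg\subset \LP(\sd_j)$ is a genuine Laurent polynomial, pointedness and the definition of $\prec_{\sd_j}$ allow me to write
\[
z \;=\; x(\sd_j)^{g} \;+\; \sum_{0\neq n\in\N^{I_{\ufv}}} c_n\, x(\sd_j)^{g+\tB(\sd_j)n},
\]
as a finite sum with $c_n\in\kk$. The $j$-coordinate of the exponent $g+\tB(\sd_j)n$ is
\[
g_j + \sum_{k\in I_{\ufv}} b_{jk}(\sd_j)\, n_k .
\]
The optimization hypothesis $b_{jk}(\sd_j)\geq 0$ for all $k\in I_{\ufv}$, together with $n_k\geq 0$, forces this quantity to be $\geq g_j$, with equality precisely at $n=0$.

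Next I would upgrade the inequality $\nu_j(z)\geq g_j$ to an equality. The key observation is that distinct Laurent monomials $x(\sd_j)^{m}$, $m\in\Z^{I}$, are $\kk$-linearly independent in the quantum torus $\LP(\sd_j)$, so no collapse among monomials with different exponents can occur, and the leading monomial $x(\sd_j)^{g}$ appears with coefficient $1$. Hence the definition of $\nu_j$ yields $\nu_j(z)=g_j=(\deg^{\sd_j}z)_j$. Applying this for every $j\in I_{\fv}$ and combining with \eqref{eq:valuation_compactified_upper} gives the desired equivalence.

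The step that I expect to require the most care is the passage from the expansion to the actual value of $\nu_j(z)$: the minimal $j$-coordinate read off from the dominance expansion could a priori be exceeded by the true order of vanishing if the relevant monomials cancelled. The resolution is the $\kk$-linear independence of Laurent monomials in the quantum torus together with the normalization $c_0=1$ built into pointedness. The essential geometric content is that positivity of the entries $b_{jk}(\sd_j)$, guaranteed by optimization, is exactly what makes the tropical datum $(\deg^{\sd_j}z)_j$ measure the order of vanishing along $x_j$.
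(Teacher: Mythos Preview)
Your proof is correct and follows exactly the route the paper indicates: the paper does not spell out a proof but simply states that the lemma is deduced from the description \eqref{eq:valuation_compactified_upper}, which is precisely your reduction.

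One small inaccuracy worth flagging: the claim ``with equality precisely at $n=0$'' is not justified, since if $b_{jk}(\sd_j)=0$ for every $k$ in the support of $n$ then the $j$-coordinate of $g+\tB(\sd_j)n$ is again $g_j$. Fortunately this does not matter for the argument, and you effectively handle it yourself in the last paragraph: because $\tB(\sd_j)$ has full rank, $\tB(\sd_j)n\neq 0$ for $n\neq 0$, so the monomials $x(\sd_j)^{g+\tB(\sd_j)n}$ are pairwise distinct and hence $\kk$-linearly independent; the leading monomial $x(\sd_j)^g$ therefore survives with coefficient $1$, giving $\nu_j(z)=g_j$ as claimed.
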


\subsubsection{Algebras under specialization}
Let us consider the specialization homomorphism $\alpha:\upClAlg_\AAA\rightarrow \upClAlg$, which can be realized as the restriction of $\alpha(\sdt):\LP(\sdt)_\AAA\rightarrow\LP(\sdt)$ for any $\sdt\in\Delta^+$.

\begin{lem}\label{lem:specialization_tropical_point}
If $z\in \upClAlg_\AAA$ is $[m]$-pointed, then $\alpha(z)\in \upClAlg$ is $[m]$-pointed too.
\end{lem}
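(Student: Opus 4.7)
The plan is to reduce the statement to the elementary observation that the distinguished leading coefficient of a pointed element is $1 \in \AAA$, which maps to $1 \in \kk$ under any specialization map $\alpha: \AAA \to \kk$. First, I would fix an arbitrary seed $\sd'' \in \Delta^+$ and invoke the $[m]$-pointedness of $z \in \upClAlg_\AAA$ to write its Laurent expansion in $\LP(\sd'')_\AAA$ as
$$z \;=\; x(\sd'')^{g'} \;+\; \sum_{m' \prec_{\sd''} g'} c_{m'}\, x(\sd'')^{m'},$$
with $g' = \varphi_{\sd'',\sd}(m)$ and $c_{m'} \in \AAA$, where the leading coefficient is $1$ and every other term has degree strictly below $g'$ in $\prec_{\sd''}$.

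Next, I would apply the coefficient-wise specialization map $\alpha(\sd''): \LP(\sd'')_\AAA \to \LP(\sd'')$ defined in \eqref{eq:specialization-LP}. The leading coefficient is preserved, so the image reads
$$\alpha(\sd'')(z) \;=\; x(\sd'')^{g'} \;+\; \sum_{m' \prec_{\sd''} g'} \alpha(c_{m'})\, x(\sd'')^{m'}.$$
Some of the $\alpha(c_{m'})$ may vanish, but that only removes subordinate terms; no term can acquire degree exceeding $g'$. Therefore $\alpha(\sd'')(z)$ is still $g'$-pointed in $\LP(\sd'')$.

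Finally, to translate this into a statement about $\alpha(z) \in \upClAlg$, I would invoke the commutativity \eqref{eq:mutation_specialization}, which ensures that applying $\alpha(\sd'')$ to the representative of $z$ in $\upClAlg(\sd'')_\AAA$ coincides with the corresponding representative of $\alpha(z) \in \upClAlg(\sd'')$ after identification via mutation maps. Since $\sd'' \in \Delta^+$ was arbitrary, pointedness at $\varphi_{\sd'',\sd}(m)$ holds in every seed, which by definition means $\alpha(z)$ is $[m]$-pointed.

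There is no substantive obstacle here: the content of the lemma is essentially the fact that ``monic'' Laurent leading terms survive any unital ring homomorphism, and the only bookkeeping required is the coherence between the mutation isomorphisms over $\AAA$ and over $\kk$, which has already been recorded in \eqref{eq:mutation_specialization}.
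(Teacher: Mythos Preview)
Your proposal is correct and takes essentially the same approach as the paper: both rely on the commutativity \eqref{eq:mutation_specialization} between specialization and mutation, with your version simply unpacking explicitly what the paper states in one line (namely, that the monic leading term survives under $\alpha$ in every seed).
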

\begin{proof}
The claim follows from the commutativity between the specializations $\alpha(\sdt):\LP(\sdt)_\AAA\rightarrow\LP(\sdt)$, $\forall\sdt\in\Delta^+$, and mutations in \eqref{eq:mutation_specialization}.
\end{proof}

Let $\II$ denote the kernel of $\alpha:\AAA\rightarrow\kk$. Lemma \ref{lem:specialization_tropical_point} and Theorem \ref{thm:tropical-basis} imply the following.
\begin{cor}\label{cor:specialize_trop_basis}
Assume that $\sd$ is injective-reachable. If $Z$ is an $M^{\trop}$-pointed basis of a  (quantum) upper cluster algebra $\upClAlg_\AAA$, then its specialization $\alpha(Z)$ is an $M^{\trop}$-pointed basis of the (quantum) upper cluster algebra $\upClAlg$. Consequently, we have $\upClAlg_\AAA/(\II \upClAlg_\AAA)\simeq \alpha(\upClAlg_\AAA)$ and $\upClAlg_\AAA \otimes_\AAA \kk$ is canonically isomorphic to $\upClAlg$.
\end{cor}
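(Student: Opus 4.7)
The plan is to derive both isomorphisms from Theorem \ref{thm:tropical-basis} applied to the specialized family $\alpha(Z)$, once we verify that tropical pointedness transfers across $\alpha$, and then to compare $\AAA$-free and $\kk$-free structures on matching bases.

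\textbf{Step 1 (Basis of the upper cluster algebra over $\kk$).} Write $Z = \{z_{[m]} \mid [m] \in M^{\trop}\}$ with each $z_{[m]} \in \upClAlg_\AAA$ being $[m]$-pointed. By Lemma \ref{lem:specialization_tropical_point}, each $\alpha(z_{[m]}) \in \upClAlg$ is again $[m]$-pointed; in particular, for every seed $\sdt$ appearing along any mutation sequence $\seq$ with $\sd[1] = \seq\sd$, the specialization $\alpha(z_{[m]})$ is $\varphi_{\sdt,\sd}(m)$-pointed in $\LP(\sdt)$. The hypotheses of Theorem \ref{thm:tropical-basis} are therefore met, and $\alpha(Z)$ is a $\kk$-basis of $\upClAlg$.

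\textbf{Step 2 (First isomorphism).} Because $Z$ is an $\AAA$-basis of $\upClAlg_\AAA$, every $z \in \upClAlg_\AAA$ has a unique expansion $z = \sum_{[m]} c_{[m]}\, z_{[m]}$ with $c_{[m]} \in \AAA$, and $\alpha(z) = \sum_{[m]} \alpha(c_{[m]})\, \alpha(z_{[m]})$. Since $\alpha(Z)$ is $\kk$-linearly independent by Step 1, $\alpha(z) = 0$ if and only if $c_{[m]} \in \II$ for every $[m]$. Thus the kernel of the restricted ring homomorphism $\alpha \colon \upClAlg_\AAA \to \upClAlg$ equals $\II\, \upClAlg_\AAA$, yielding the canonical identification $\upClAlg_\AAA/(\II\, \upClAlg_\AAA) \cong \alpha(\upClAlg_\AAA)$.

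\textbf{Step 3 (Tensor product isomorphism).} The ring homomorphism $\alpha$ makes $\upClAlg$ an $\AAA$-algebra, so there is a natural $\kk$-algebra map
\[
\upClAlg_\AAA \otimes_\AAA \kk \longrightarrow \upClAlg, \qquad z \otimes c \longmapsto c \cdot \alpha(z).
\]
Freeness of $\upClAlg_\AAA$ over $\AAA$ on the basis $Z$ implies that the domain is a free $\kk$-module on $\{z_{[m]} \otimes 1\}$, and the map sends this basis bijectively to the $\kk$-basis $\alpha(Z)$ of $\upClAlg$ from Step 1. Hence it is an isomorphism of $\kk$-algebras.

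\textbf{Main obstacle.} The crux is Step 1: the pointedness of each $\alpha(z_{[m]})$ must persist in \emph{every} intermediate seed along $\seq$, not merely in $\sd$. This relies on Lemma \ref{lem:specialization_tropical_point}, which in turn rests on the commutativity of specialization with mutation and with the formal Laurent expansion map recorded in \eqref{eq:mutation_specialization} and Lemma \ref{lem:mutation_formal_expansion}. Once this is in place, the remaining arguments are purely bookkeeping with bases.
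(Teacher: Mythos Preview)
Your proof is correct and follows the same approach as the paper, which simply cites Lemma \ref{lem:specialization_tropical_point} and Theorem \ref{thm:tropical-basis}; you have additionally spelled out the routine basis-comparison arguments for the two ``consequently'' isomorphisms that the paper leaves implicit.
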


\begin{cor}\label{cor:specialization-bUpClAlg}
Assume that $\sd$ is injective-reachable and can be optimized. If $\upClAlg_\AAA$ has an $M^{\trop}$-pointed basis $Z$, then $\alpha(Z\cap \bUpClAlg_\AAA)$ is a basis of $\bUpClAlg$.  Consequently, we have $\bUpClAlg_\AAA/(\II \bUpClAlg_\AAA)\simeq \alpha(\bUpClAlg_\AAA)$ and $\bUpClAlg_\AAA \otimes_\AAA \kk$ is canonically isomorphic to $\bUpClAlg$.
\end{cor}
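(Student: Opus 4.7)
The plan is to reduce everything to an ``$M^{\trop}$-indexed'' comparison of bases, and then match $\bUpClAlg$-membership between the $\AAA$-level and the $\kk$-level using Lemma \ref{lem:optimized-tropical-pt} together with the fact that specialization preserves tropical pointedness.

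First, by Corollary \ref{cor:specialize_trop_basis}, $\alpha(Z) = \{\alpha(z_{[m]}) \mid [m]\in M^{\trop}\}$ is an $M^{\trop}$-pointed basis of $\upClAlg$, and in particular the map $\alpha$ is bijective from $Z$ onto $\alpha(Z)$. Since $\sd$ can be optimized, Proposition \ref{prop:partial-basis} then yields that $\alpha(Z)\cap \bUpClAlg$ is a $\kk$-basis of $\bUpClAlg$; applying the same proposition over $\AAA$ shows that $Z\cap \bUpClAlg_\AAA$ is an $\AAA$-basis of $\bUpClAlg_\AAA$.

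The next step, which is really the crux of the argument, is to show the set-theoretic identity
\[
\alpha\bigl(Z\cap \bUpClAlg_\AAA\bigr) \;=\; \alpha(Z)\cap \bUpClAlg.
\]
For a fixed $z_{[m]}\in Z$, Lemma \ref{lem:specialization_tropical_point} implies that $\alpha(z_{[m]})$ is also $[m]$-pointed. By Lemma \ref{lem:optimized-tropical-pt}, membership in the partially compactified upper cluster algebra depends only on the numbers $(\deg^{\sd_j}\cdot)_j$ at the optimizing seeds $\sd_j$, which in turn depend only on the tropical point $[m]$. Therefore $z_{[m]}\in \bUpClAlg_\AAA$ if and only if $\alpha(z_{[m]})\in \bUpClAlg$. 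Combined with the injectivity of $\alpha$ on $Z$ noted above, this gives the claimed equality, so $\alpha(Z\cap \bUpClAlg_\AAA)$ is a $\kk$-basis of $\bUpClAlg$.

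Finally, the two consequences follow formally from having compatible bases on the two sides. Since $\bUpClAlg_\AAA$ is a free $\AAA$-module with basis $Z\cap \bUpClAlg_\AAA$ and $\alpha$ restricts to a bijection between this basis and a $\kk$-basis of $\bUpClAlg$, the $\AAA$-linear map $\alpha\colon \bUpClAlg_\AAA \to \bUpClAlg$ is surjective with kernel exactly $\II\cdot(\bUpClAlg_\AAA)$, giving $\bUpClAlg_\AAA/(\II\bUpClAlg_\AAA)\simeq \alpha(\bUpClAlg_\AAA)=\bUpClAlg$. Tensoring with $\kk$, the canonical map $\bUpClAlg_\AAA\otimes_\AAA \kk \to \bUpClAlg$ sends $(Z\cap \bUpClAlg_\AAA)\otimes 1$ to a $\kk$-basis of $\bUpClAlg$, so it is an isomorphism. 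The only place where care is needed is step three: one must verify that being in $\bUpClAlg$ is truly a condition on the tropical class alone, which is exactly what Lemmas \ref{lem:specialization_tropical_point} and \ref{lem:optimized-tropical-pt} deliver under the ``can be optimized'' hypothesis.
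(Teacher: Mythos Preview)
Your proof is correct and follows essentially the same approach as the paper's: invoke Corollary \ref{cor:specialize_trop_basis} to get the $M^{\trop}$-pointed basis $\alpha(Z)$ of $\upClAlg$, apply Proposition \ref{prop:partial-basis} on both sides, and use Lemmas \ref{lem:specialization_tropical_point} and \ref{lem:optimized-tropical-pt} to establish $\alpha(Z\cap \bUpClAlg_\AAA)=\alpha(Z)\cap \bUpClAlg$. You have simply spelled out in more detail the injectivity of $\alpha$ on $Z$ and the formal deduction of the quotient and tensor-product statements, which the paper leaves implicit.
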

\begin{proof}
By Corollary \ref{cor:specialize_trop_basis}, $\upClAlg_\AAA \otimes_\AAA \kk$ equals $\upClAlg$, and $\alpha(Z)$ is its $M^{\trop}$-pointed basis.

    Since the seed $\sd$ can be optimized, $Z\cap \bUpClAlg_\AAA$ is a basis of $\bUpClAlg_\AAA$ and $\alpha(Z)\cap \bUpClAlg$ is a basis of $\bUpClAlg$ by Proposition \ref{prop:partial-basis}. Moreover, Lemma \ref{lem:optimized-tropical-pt} and Lemma \ref{lem:specialization_tropical_point} imply that $\alpha(Z\cap \bUpClAlg_\AAA)=\alpha(Z)\cap \bUpClAlg$. The desired claims follow.
\end{proof}

\subsection{Berenstein--Zelevinsky quantum seeds}
\label{sec:BZ-seed}
Recall from Section \ref{2.1} that $C=(c_{ij})_{i,j=1}^r$ denotes the Cartan matrix of $G$ (what follows also applies to generalized Cartan matrices).
For any given element $(u,w)\in W\times W$, denote $\ell=\ell(u)+\ell(w)$. Choose a reduced word $\ubi=(\bi_1,\ldots,\bi_{\ell(w)+\ell(u)})$ for $(u,w)$ such that the simple reflections for the first component are enumerated by $[-r,-1]$ and those for the second component by $[1,r]$. Let $(u_{\leq k},w_{\leq k})\in W\times W$ denote the element corresponding to the product of the simple reflections associated with $(\bi_1,\ldots,\bi_k)$. Fix a permutation $(\bi_{-r},\ldots,\bi_{-1})$ of $[1,r]$. 

Define the set of vertices to be $I=[-r,-1]\sqcup [1,\ell]$. For $k\in I$, define
\begin{align*}
    \gamma_k:=\begin{cases}
        \varpi_{\bi_k},&\text{ if }k\in [-r,-1]\\
        u_{\leq k}\varpi_{|\bi_k|},&\text{ if }k\in [1,\ell]
    \end{cases},\  \delta_k:=\begin{cases}
        w^{-1}\varpi_{\bi_k},&\text{ if }k\in [-r,-1]\\
        w^{-1}w_{\leq k}\varpi_{|\bi_k|},&\text{ if }k\in [1,\ell].
    \end{cases}
\end{align*}
Consider the $I \times I$ skew-symmetric matrix $\Lambda$ such that $\Lambda_{kj}=(\gamma_k,\gamma_j)-(\delta_k,\delta_j),\ \forall k>j$.

For any $k\in I$, define $k[1]=\min(\{j\in I|j>k,|\bi_j|=|\bi_k| \}\sqcup \{\infty\})$. The set of unfrozen vertices is defined to be $I_{\ufv}:=\{k\in [1,l]|k[1]\leq \ell\}$. Denote $\varepsilon_k:=\sign(\bi_k)$. Define the matrix $\tB=(b_{jk})_{j\in I,k\in I_{\ufv}}$ with entries
\begin{align*}
    b_{jk}=
    \begin{cases}
        -\varepsilon_k & k=j[1] \\
        \varepsilon_j & j=k[1]\\
        -\varepsilon_k c_{|\bi_j|,|\bi_k|} & \varepsilon_k=\varepsilon_{j[1]},\quad j<k<j[1]<k[1]\\
        \varepsilon_j c_{|\bi_j|,|\bi_k|} & \varepsilon_j=\varepsilon_{k[1]},\quad k<j<k[1]<j[1]\\
        -\varepsilon_k c_{|\bi_j|,|\bi_k|} & \varepsilon_k=-\varepsilon_{k[1]},\quad j<k<k[1]<j[1]\\
        \varepsilon_j c_{|\bi_j|,|\bi_k|} & \varepsilon_j=-\varepsilon_{j[1]},\quad k<j<j[1]<k[1]\\
        0 & \text{otherwise.}       
    \end{cases}.
\end{align*}

The collection $\sd^\BZ:=(\tB,\Lambda,(x_k)_{k\in I})$ is a quantum seed associated with $\ubi$ by \cite{BerensteinZelevinsky05}. We note that the matrix $\tB$ is negative to the one in \cite{BerensteinFominZelevinsky05}, and the classical seed $(-\tB, (x_k)_{k\in I})$ is denoted by $\sd^\BFZ$. 
By construction, there is no isolated unfrozen vertex $k\in I_{\ufv}$, hence Assumption \ref{asm:bad_case} is always satisfied for $\sd^\BZ$ and $\sd^\BFZ$.

Write $\bUpClAlg(\sd):=\bUpClAlg(\sd)_{\KK}$, where $\KK=\Q(q^{\Hf})$. We have the following result.
\begin{thm}[{\cite{QY2025partially}}]\label{thm:up-cluster-k-G}
Let $\sd^\BZ$ be a Berenstein--Zelevinsky quantum seed associated to a reduced word of $(w_0, w_0) \in W \times W$, where $w_0$ is the longest element of $W$. Then there is an algebra isomorphism $\kappa:\bUpClAlg(\sd^\BZ)\simeq R_q[G]$, sending $x_k$ to the generalized quantum minor $\Delta_{\gamma_k,\delta_k}$, for all $k\in I$. 

Moreover, different choices of $\ubi$ produce quantum seeds of $R_q[G]$ related by mutations. In particular, all generalized quantum minors of the irreducible representations $V_q(\varpi_i), i\in [1, r],$ are quantum cluster variables.   
\end{thm}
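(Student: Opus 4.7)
The plan is to establish the isomorphism $\kappa$ by combining three ingredients: matching the $q$-commutation relations between generalized quantum minors and the Berenstein--Zelevinsky matrix $\Lambda$, a known base case for unshuffled reduced words, and a mutation-equivalence argument propagating the result to arbitrary reduced words. First, I would verify that the generalized quantum minors $\{\Delta_{\gamma_k,\delta_k}\}_{k\in I}$ satisfy pairwise $q$-commutation relations prescribed precisely by $\Lambda$, using the quantum-minor identities from \cite{BerensteinZelevinsky05}. Since $R_q[G]$ is a domain by Lemma \ref{l:domain}, this yields an injective algebra homomorphism from the quantum torus $\LP(\sd^{\BZ})$ into $\Fract R_q[G]$ sending $x_k \mapsto \Delta_{\gamma_k,\delta_k}$.

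Next, for each unfrozen vertex $k\in I_{\ufv}$ I would establish the quantum exchange relation as a generalized Pl\"ucker / $T$-system identity among quantum minors, confirming that $\mu_k^*(x_k)$ is again a generalized quantum minor of the expected highest and lowest weight, and that the combinatorics of the identity match the BZ recipe for the $\tB$-matrix of Section \ref{sec:BZ-seed}. The base case is provided by an unshuffled reduced word $\ubi_0$ of $(w_0,w_0)$, in which the simple reflections for the first copy of $w_0$ all appear before those for the second. For such $\ubi_0$, \cite{goodearl2016berenstein} establishes that the BZ quantum torus has skew-field of fractions equal to $\Fract R_q[G]$ and that $R_q[G]\simeq \bUpClAlg(\sd^{\BZ}(\ubi_0))$.

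The main step is then to show that the BZ seed $\sd^{\BZ}(\ubi)$ attached to an arbitrary reduced word $\ubi$ is mutation-equivalent to $\sd^{\BZ}(\ubi_0)$, with cluster variables matching the BZ-prescribed generalized quantum minors at every intermediate seed. I would factor the passage from $\ubi_0$ to $\ubi$ into a finite sequence of elementary moves of three types: commutations of adjacent commuting letters, braid moves within each factor, and single transpositions across the boundary between the two factors of $(w_0,w_0)$. For each elementary move I would exhibit an explicit mutation sequence in the quantum cluster algebra and verify, by comparing exchange relations with known quantum-minor identities, that the resulting quantum seed is exactly the BZ seed for the new reduced word. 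Once mutation-equivalence of all BZ seeds is established, the identification $R_q[G]\simeq \bUpClAlg(\sd^{\BZ})$ transfers from the base case to every BZ seed, and statement (c) follows because any generalized quantum minor $\Delta_{w\varpi_i,u\varpi_i}$ occurs as an initial cluster variable in a BZ seed for a suitably chosen reduced word.

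The main obstacle is this last step: matching intermediate cluster variables with generalized quantum minors along the mutation sequences realizing braid moves. Such mutation sequences can be long, the intermediate seeds need not themselves be BZ seeds, and one must track $g$-vectors, $F$-polynomials, and $q$-commutation scalars simultaneously. Triply-laced braid moves, especially those in $G_2$, are the most combinatorially delicate, which is consistent with the special treatment that $G_2$ (and to a lesser extent $F_4$ and $E_8$) receives in the statements of Theorems \ref{ti:cluster-G-A-U} and \ref{ti:integral-cluster-G-classical}.
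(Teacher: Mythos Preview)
The paper does not prove Theorem \ref{thm:up-cluster-k-G}; it is quoted from \cite{QY2025partially}. So there is no in-paper proof to compare against. That said, the introduction does sketch the strategy used in the reference: the skew-field-of-fractions property for unshuffled words is taken from \cite{goodearl2016berenstein}, and \cite{QY2025partially} then shows that the BZ seeds for different reduced words are mutation-equivalent, which propagates the result to all reduced words. Your outline follows exactly this architecture, so at the level of strategy you are aligned with what the paper attributes to the cited source.

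Two small corrections. First, you overstate what \cite{goodearl2016berenstein} provides: the introduction only credits it with the skew-field property for unshuffled words, not with the full identification $R_q[G]\simeq \bUpClAlg(\sd^{\BZ}(\ubi_0))$; the latter is part of what \cite{QY2025partially} establishes. Second, your closing remark that the difficulty of $G_2$ braid moves explains the special treatment of $G_2$, $F_4$, $E_8$ in Theorems \ref{ti:cluster-G-A-U} and \ref{ti:integral-cluster-G-classical} is a misattribution. Those exceptional cases arise in Section \ref{sec:q-minor-generate} for an entirely different reason: in those types the generating module $M_q$ of \eqref{eq:Mq} is not minuscule, so one must express the zero-weight matrix coefficients in terms of generalized quantum minors, and the resulting coefficients need not lie in $\AAA$. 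This has nothing to do with the mutation-equivalence argument underlying Theorem \ref{thm:up-cluster-k-G}, which holds uniformly in all types.
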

From now on, we will always view $R_q[G]$ as a cluster algebra via $\kappa$, and will identify $R_q[G]=\bUpClAlg(\sd^\BZ)$.

Let $\AAA$ denote $\Z[q^{\pm\Hf}]$. By \cite{qin2023analogs}, $\upClAlg(\sd^{\BZ})_\AAA$ has the the common triangular basis $\can$ in the sense of \cite{qin2017triangular}. It is an analog of the dual canonical basis and is $M^{\trop}$-pointed. In addition, the seed $\sd^{\BZ}$ can be optimized by \cite[Proposition 6.16]{qin2023analogs}. Hence $\overline{\can}:=\can\cap \bUpClAlg(\sd^{\BZ})_\AAA$ is a basis of $\bUpClAlg(\sd^{\BZ})_\AAA$ by Proposition \ref{prop:partial-basis}. Moreover, $\sd^\BZ$ is known to be injective-reachable; see \cite{shen2021cluster} or \cite[Section 8.1, Lemma 8.4]{qin2023analogs} for details. Therefore, Corollary \ref{cor:specialization-bUpClAlg} implies the following result over an arbitrary commutative unital ring $\kk$ endowed with the specialization map $\alpha\colon \AAA\to \kk$.
\begin{thm}\label{thm:specialize_bUpClAlg}
    The algebra $\bUpClAlg(\sd^{\BZ})_\AAA \otimes_\AAA \kk$ is canonically isomorphic to $\bUpClAlg(\sd^{\BZ})_\kk$. Moreover, $\alpha(\overline{\can})$ is a basis of $\bUpClAlg(\sd^{\BZ})_\kk$.
\end{thm}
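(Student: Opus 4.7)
The plan is to deduce this theorem directly from Corollary \ref{cor:specialization-bUpClAlg} applied to $Z=\can$, with initial seed $\sd=\sd^{\BZ}$. So the task reduces to checking, one by one, that $(\sd^{\BZ}, \upClAlg(\sd^{\BZ})_{\AAA}, \can)$ satisfies the three hypotheses of that corollary.

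First, I would assemble the three structural inputs about $\sd^{\BZ}$ needed to invoke Corollary \ref{cor:specialization-bUpClAlg}, all of which are recalled in the paragraph preceding the statement. Namely: (i) by \cite{qin2023analogs}, the common triangular basis $\can$ of $\upClAlg(\sd^{\BZ})_{\AAA}$ exists and is $M^{\trop}$-pointed; (ii) by \cite[Proposition 6.16]{qin2023analogs}, the seed $\sd^{\BZ}$ can be optimized in the sense needed by Proposition \ref{prop:partial-basis} and Lemma \ref{lem:optimized-tropical-pt}; and (iii) by \cite{shen2021cluster} (or \cite[Section 8.1, Lemma 8.4]{qin2023analogs}), the seed $\sd^{\BZ}$ is injective-reachable, as required for Theorem \ref{thm:tropical-basis}. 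Moreover, the construction of $\sd^{\BZ}$ in Section \ref{sec:BZ-seed} shows that it has no isolated unfrozen vertex, so Assumption \ref{asm:bad_case} is automatic for any choice of commutative unital ring $\kk$ and any unit $q^{\Hf}\in\kk^{\times}$. This ensures that the cluster-theoretic machinery of Section \ref{sec:cluster-alg}, which underlies Corollary \ref{cor:specialization-bUpClAlg}, is genuinely available in the present setting.

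Feeding these into Corollary \ref{cor:specialization-bUpClAlg} with $Z=\can$ then yields simultaneously the two conclusions of the theorem: the canonical isomorphism
\[
\bUpClAlg(\sd^{\BZ})_{\AAA}\otimes_{\AAA}\kk \;\simeq\; \bUpClAlg(\sd^{\BZ})_{\kk},
\]
and the fact that $\alpha(\overline{\can})=\alpha(\can\cap\bUpClAlg(\sd^{\BZ})_{\AAA})$ is a $\kk$-basis of $\bUpClAlg(\sd^{\BZ})_{\kk}$. There is really no mathematical obstacle here — the hard work was done earlier, both in building up the cluster algebra framework over an arbitrary commutative unital $\kk$ (culminating in Theorem \ref{thm:tropical-basis}, Proposition \ref{prop:partial-basis}, and Corollary \ref{cor:specialization-bUpClAlg}) and in establishing the three properties of $\sd^{\BZ}$ cited from \cite{qin2023analogs,shen2021cluster} — so the only step to carry out is the bookkeeping of hypotheses indicated above.
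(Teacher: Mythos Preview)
Your proposal is correct and matches the paper's approach exactly: the paper does not give a separate proof of this theorem, but simply records in the paragraph immediately preceding it that $\can$ is $M^{\trop}$-pointed, that $\sd^{\BZ}$ can be optimized, and that $\sd^{\BZ}$ is injective-reachable, and then states that Corollary \ref{cor:specialization-bUpClAlg} implies the result. Your write-up is a faithful unpacking of that one-line deduction.
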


\section{Upper cluster algebra structure on the integral form of $R_q[G]$}

In this section, the quantum cluster algebras $\clAlg$, $\bClAlg$, $\upClAlg$, and $\bUpClAlg$ are assumed to be defined over $\KK$ and, for an intermediate ring $\AAA \subseteq \NewRing \subseteq \KK$, we will use $\clAlg_{\NewRing}$, $\bClAlg_{\NewRing}$, $\upClAlg_{\NewRing}$, and $\bUpClAlg_{\NewRing}$ to denote the corresponding quantum cluster algebras defined over $\NewRing$, see Section \ref{sec:cluster-alg}. For any quantum seed $\sd$, let $\LP(\sd)$ denote the quantum torus algebra associated with $\sd$ over $\KK$ and $\LP(\sd)_{\NewRing}$ the one over $\NewRing$. 
Recall from Theorem \ref{thm:up-cluster-k-G} that we have an upper cluster structure on the quantized coordinate ring $R_q[G]=\bUpClAlg(\sd^\BZ)$, where $\sd^\BZ$ denotes the BZ quantum seed associated with any signed reduced word for $(w_0,w_0)\in W\times W$. We strengthen Theorem \ref{thm:up-cluster-k-G} to integral forms as follows. 

\begin{thm}\label{thm:upcluster-G-U}
The equality $R_q[G]=\bUpClAlg(\sd^\BZ)$ is restricted to $R_q[G]_{\AAA}=\bUpClAlg(\sd^\BZ)_{\AAA}$. 
\end{thm}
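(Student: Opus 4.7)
The plan is to bootstrap the known $\KK$-level equality $R_q[G]_\KK = \bUpClAlg(\sd^\BZ)_\KK$ from Theorem \ref{thm:up-cluster-k-G} to the integral level over $\AAA = \Z[q^{\pm \Hf}]$. Both $R_q[G]_\AAA$ and $\bUpClAlg(\sd^\BZ)_\AAA$ sit inside the common $\KK$-algebra $R_q[G] = \bUpClAlg(\sd^\BZ)$, and the task is to identify them as $\AAA$-lattices.

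First, I would establish the inclusion $\bUpClAlg(\sd^\BZ)_\AAA \subseteq R_q[G]_\AAA$ using the common triangular basis $\overline{\can}$. By Theorem \ref{thm:specialize_bUpClAlg}, $\overline{\can}$ is an $\AAA$-basis of $\bUpClAlg(\sd^\BZ)_\AAA$, so it suffices to show each element of $\overline{\can}$ lies in $R_q[G]_\AAA$. This can be done by matching $\overline{\can}$ with (a suitable restriction of) the dual canonical basis of $R_q[G]_\AAA$ via unitriangularity: both bases are unitriangular with respect to a common cluster-monomial/standard basis and are parametrized by the same tropical points $M^{\trop}$, while the cluster monomials themselves are generalized quantum minors lying in $R_q[G]_\AAA$ by Proposition \ref{p:Lusgen} and the identification in Theorem \ref{thm:up-cluster-k-G}.

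For the harder inclusion $R_q[G]_\AAA \subseteq \bUpClAlg(\sd^\BZ)_\AAA$, I would use localization at prime elements. Given $z \in R_q[G]_\AAA$, the $\KK$-equality yields a Laurent expansion $x(\seq\sd^\BZ)^d * z = \sum c_m\, x(\seq\sd^\BZ)^m$ in every mutated BZ cluster with $c_m \in \KK$, and the goal is to check $c_m \in \AAA$. I would exploit the fact that the frozen cluster variables (generalized quantum minors) are normal elements of both $R_q[G]_\AAA$ and $\bUpClAlg(\sd^\BZ)_\AAA$ generating completely prime ideals: localizing at them reduces the problem from the partially compactified to the localized upper cluster algebra, where one can use the full cluster structure. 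Simultaneously, since $\AAA$ is a Krull domain (in fact a UFD), integrality of $c_m$ over $\AAA$ is equivalent to integrality at every height-one prime $\pi \in \AAA$; Lemma \ref{l:domain} ensures that each reduction $R_q[G]_{\AAA/\pi}$ remains a domain, so one can compare the two algebras modulo $\pi$ and lift via Nakayama-type arguments at the DVR $\AAA_{(\pi)}$.

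The principal obstacle is coordinating the two localization procedures—at noncommutative prime elements of $R_q[G]_\AAA$ (the frozen quantum minors) and at commutative primes $\pi \in \AAA$—so that a uniform comparison holds at every residue field, including at the rational primes $p \in \Z$ (small characteristic), the prime $\pi = q^{\Hf} - 1$ (classical limit), and irreducible polynomials in $q^{\pm \Hf}$ (possibly roots of unity). Controlling the quantum structure uniformly across all such specializations relies crucially on the flatness of $R_q[G]_\AAA$ over $\AAA$ and the specialization compatibility of the triangular basis provided by Theorem \ref{thm:specialize_bUpClAlg}; only with these in hand can one transport the local integrality statements back to the global equality $R_q[G]_\AAA = \bUpClAlg(\sd^\BZ)_\AAA$.
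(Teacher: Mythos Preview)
Your proposal shares the right high-level instinct with the paper---leverage the $\KK$-level equality and the primes of $\AAA$---but it is missing the concrete mechanism that makes the argument close, and in places it drifts toward circularity.

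The paper does \emph{not} prove the two inclusions separately. Instead it computes a single intersection in two ways. The key missing ingredient in your sketch is Lemma~\ref{lem:Laurent}: for a specific unshuffled Berenstein--Zelevinsky seed $\sd^\BZ_\bullet$ with initial cluster $E_\bullet$ (the full cluster, \emph{not} just the frozen variables), one has $R_q[G]_\AAA[E_\bullet^{-1}]=\LP(\sd^\BZ_\bullet)_\AAA$. The nontrivial inclusion here---that every element of $R_q[G]_\AAA$ becomes an $\AAA$-Laurent polynomial in the initial cluster---is proved by factoring through the injection $\iota_\triangle:R_q[G]_\AAA\hookrightarrow R_q[B^-]_\AAA\otimes_\AAA R_q[B^+]_\AAA$ and invoking the already-established \emph{integral} quantum cluster structure on $R_q[U^\pm]_\AAA$ (Goodearl--Yakimov). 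This is an external input you do not mention, and without it there is no way to control the $\AAA$-integrality of Laurent expansions. Once Lemma~\ref{lem:Laurent} is in hand, the paper combines it with Lemma~\ref{lem:cluster-intersect-integral} (which says $\bUpClAlg(\sd^\BZ)\cap\LP(\sd)_\AAA=\bUpClAlg(\sd^\BZ)_\AAA$, proved by a dominance-order leading-term argument) and with Theorem~\ref{thm:localization-intersection} applied to $R=R_q[G]_\AAA$, the central primes $\Delta_j$ given by irreducibles of $\AAA$, and the denominator set $E_\bullet$. Proposition~\ref{prop:prime_irreducible} supplies exactly the hypotheses of that theorem: the $\Delta_j$ are prime in $R_q[G]_\AAA$ (via Lemma~\ref{l:domain}, as you correctly note) and they do not divide any quantum minor. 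The whole proof is then a one-line chain of equalities.

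By contrast, several steps in your outline would not go through as written. Localizing at the \emph{frozen} variables only passes from $\bUpClAlg$ to $\upClAlg$ and does nothing for the integrality of Laurent coefficients; you need to invert the entire initial cluster to reach the quantum torus. The ``Nakayama-type'' lifting from $\AAA_{(\pi)}$ is not applicable here: neither algebra is a finitely generated $\AAA$-module, and comparing the two algebras modulo $\pi$ presupposes the very result you are trying to prove (Corollary~\ref{cor:upcluster-G-U} is \emph{deduced from} Theorem~\ref{thm:upcluster-G-U}, not an input to it). Finally, your proposed route for the ``easy'' inclusion via matching $\overline{\can}$ with the dual canonical basis is not harmless: $\overline{\can}$ is constructed inside $\bUpClAlg(\sd^\BZ)_\AAA$, not a priori inside $R_q[G]_\AAA$, so asserting the match already assumes what must be shown; and the claim that ``cluster monomials are generalized quantum minors'' is false beyond the initial seed.
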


\begin{cor}\label{cor:upcluster-G-U}
Let $\kk$ be a commutative unital ring, $q^\Hf\in\kk^\times$, and $\alpha:\AAA\rightarrow\kk$ be any specialization map (Definition \ref{def:specialization}). Then  
$R_q[G]_{\kk}\simeq \bUpClAlg(\sd^\BZ)_{\kk}$. In particular, we have 
$R[G]_{\Z}\simeq \bUpClAlg(\sd^\BZ)_{\Z}=\bUpClAlg(\sd^\BFZ)_{\Z}$.
\end{cor}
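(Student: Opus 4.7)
The plan is to deduce this corollary directly from the $\AAA$-level equality in Theorem \ref{thm:upcluster-G-U} together with the flat-specialization statement Theorem \ref{thm:specialize_bUpClAlg}, which relies on the common triangular basis of $\bUpClAlg(\sd^\BZ)_\AAA$ and on the fact that $\sd^\BZ$ is both injective-reachable and optimizable. Since both $R_q[G]_\kk$ and $\bUpClAlg(\sd^\BZ)_\kk$ are defined by extension of scalars from $\AAA$ via $\alpha$, an equality at the $\AAA$-level should propagate automatically.

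First I would recall that $R_q[G]_\kk = R_q[G]_\AAA \otimes_\AAA \kk$ by the definition of the integral form of $R_q[G]$ over $\kk$, where $\kk$ is regarded as an $\AAA$-module via the specialization map $\alpha$. Combining this with Theorem \ref{thm:upcluster-G-U} gives
\[
R_q[G]_\kk \;=\; R_q[G]_\AAA \otimes_\AAA \kk \;=\; \bUpClAlg(\sd^\BZ)_\AAA \otimes_\AAA \kk,
\]
and Theorem \ref{thm:specialize_bUpClAlg} then produces a canonical isomorphism $\bUpClAlg(\sd^\BZ)_\AAA \otimes_\AAA \kk \simeq \bUpClAlg(\sd^\BZ)_\kk$. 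Composing these identifications yields the desired isomorphism $R_q[G]_\kk \simeq \bUpClAlg(\sd^\BZ)_\kk$; by naturality of the constructions, it sends each initial cluster variable to the class of the corresponding generalized quantum minor $\Delta_{\gamma_k,\delta_k}$.

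For the final classical assertion, I would specialize along $\alpha\colon \AAA \to \Z$ with $\alpha(q^{\pm\Hf})=1$; the above then gives $R[G]_\Z \simeq \bUpClAlg(\sd^\BZ)_\Z$, where $\bUpClAlg(\sd^\BZ)_\Z$ is a classical upper cluster algebra. To identify it with $\bUpClAlg(\sd^\BFZ)_\Z$, note that $\sd^\BFZ$ carries the exchange matrix $-\tB$, whereas $\sd^\BZ$ specialized at $q^\Hf = 1$ carries $\tB$. In the classical setting the $q$-twisted product on $\LP(\sd)_\Z$ reduces to the ordinary commutative product (independent of $\Lambda$), and the classical exchange relation
\[
x_k \cdot x_k' \;=\; \prod_{j} x_j^{[-b_{jk}]_+} \,+\, \prod_{j} x_j^{[b_{jk}]_+}
\]
is manifestly invariant under $b_{jk}\mapsto -b_{jk}$. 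Hence the mutation processes starting from $\sd^\BZ|_{q^\Hf=1}$ and from $\sd^\BFZ$ produce the same family of cluster variables inside $\Z[x_i^{\pm1}]_{i\in I}$, and the two partially compactified upper cluster algebras coincide as subalgebras of this common ambient ring.

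I do not anticipate a substantive obstacle here: the heavy lifting is already performed in Theorem \ref{thm:upcluster-G-U} and Theorem \ref{thm:specialize_bUpClAlg}. The only point that deserves verification is the naturality of the specialization square, so that the resulting isomorphism acts as expected on generalized minors rather than being merely an abstract isomorphism of $\kk$-algebras; this follows from the fact that both the integral form $R_q[G]_\AAA$ and the cluster algebra $\bUpClAlg(\sd^\BZ)_\AAA$ are generated, after scalar extension, by the images of the same generalized quantum minors.
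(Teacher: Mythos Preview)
Your proposal is correct and follows essentially the same approach as the paper's proof: both invoke the definition $R_q[G]_\kk = R_q[G]_\AAA \otimes_\AAA \kk$, apply Theorem~\ref{thm:upcluster-G-U} to identify the $\AAA$-level objects, and then use Theorem~\ref{thm:specialize_bUpClAlg} to pass from $\bUpClAlg(\sd^\BZ)_\AAA \otimes_\AAA \kk$ to $\bUpClAlg(\sd^\BZ)_\kk$. Your additional paragraph justifying $\bUpClAlg(\sd^\BZ)_\Z = \bUpClAlg(\sd^\BFZ)_\Z$ via the sign-symmetry of the classical exchange relation is a helpful elaboration that the paper leaves implicit.
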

\begin{proof}[Proof of Corollary \ref{cor:upcluster-G-U}]
    By \eqref{AAA'}, $R_q[G]_{\kk}=R_q[G]_\AAA \otimes_{\AAA} \kk$. On the other hand, Theorem \ref{thm:specialize_bUpClAlg} implies $\bUpClAlg(\sd^\BZ)_{\kk}\simeq \bUpClAlg(\sd^{\BZ})_\AAA \otimes_\AAA \kk$. Hence, the assertion follows from Theorem \ref{thm:upcluster-G-U}. 
\end{proof}

The rest of this section is devoted to the proof of Theorem \ref{thm:upcluster-G-U}. The following lemma holds for the upper cluster algebra $\bUpClAlg$ associated with any quantum seed $\sd$.
\begin{lem}\label{lem:cluster-intersect-integral}
For any quantum seed $\sd$, $\bUpClAlg\cap \LP(\sd)_{\AAA}=\bUpClAlg_{\AAA}$.
\end{lem}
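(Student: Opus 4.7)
The inclusion $\bUpClAlg_\AAA \subseteq \bUpClAlg \cap \LP(\sd)_\AAA$ is immediate. For the reverse, the plan is to take $z \in \bUpClAlg \cap \LP(\sd)_\AAA$ and prove that $z \in \bLP(\sd')_\AAA$ for every $\sd' \in \Delta^+_\sd$. The membership $z \in \bUpClAlg$ already forces $z \in \bLP(\sd')$, so the substantive task is only to upgrade the $\KK$-coefficients of the cluster expansion of $z$ in $\sd'$ to lie in $\AAA$. I would proceed by induction on the length of a mutation sequence connecting $\sd$ and $\sd'$, the base case $\sd' = \sd$ being tautological since $z \in \LP(\sd)_\AAA \cap \bLP(\sd)$ immediately gives $z \in \bLP(\sd)_\AAA$.

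The heart of the argument is the one-step inductive step: for $\sd_2 = \mu_k \sd_1$, assuming $z \in \bLP(\sd_1)_\AAA$ and $z \in \bLP(\sd_2)$, conclude $z \in \bLP(\sd_2)_\AAA$. To this end I would pass to the rank-one sub-cluster-algebra of $\bUpClAlg$ obtained from $\sd_1$ by declaring every $j \in I \setminus \{k\}$ frozen, and apply Lemma \ref{lem:basis_A1} to write $z$ uniquely as
\[
z = \sum_{n \geq 0} a_n * x_k(\sd_1)^n + \sum_{n > 0} c_n * x_k(\sd_2)^n,
\]
with $a_n, c_n$ in the ``other variables'' quantum torus over $\KK$ in $\{x_j(\sd_1)\}_{j \neq k}$. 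The partial-compactification conditions $z \in \bLP(\sd_1)$ and $z \in \bLP(\sd_2)$ force $a_n$ and $c_n$ to lie in the subring $R_{\bLP}$ in which the original frozen variables appear only with non-negative powers, so the goal reduces to showing $a_n, c_n \in R_{\bLP, \AAA}$.

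Using the exchange relation \eqref{eq:exchange_relation}, each $x_k(\sd_2)^n$ equals $q^{?} \cdot x_k(\sd_1)^{-n} * S_n$, where $S_n = (P^+ + P^-)^n$ and $P^{\pm}$ are Laurent monomials in $\{x_j(\sd_1)\}_{j \neq k}$ with $\AAA^\times$-coefficients, all of whose frozen exponents are non-negative. Comparing with the $\sd_1$-expansion of $z$, the hypothesis $z \in \bLP(\sd_1)_\AAA$ immediately gives $a_n \in R_{\bLP, \AAA}$ for $n \geq 0$ and $c_n * S_n \in R_{\bLP, \AAA}$ for $n > 0$. To deduce $c_n \in R_{\bLP, \AAA}$ I would fix a lexicographic order on monomials in $\{x_j(\sd_1)\}_{j \neq k}$ in which $P^+ > P^-$; then the leading monomial of $S_n$ in the $*$-product is $(P^+)^n$ up to a power of $q^{\Hf} \in \AAA^\times$. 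A standard Gauss-lemma descent in the quantum torus---identify the leading monomial of $c_n$, solve for its coefficient as an element of $\AAA$ from the leading term of $c_n * S_n$, subtract, and iterate---then yields $c_n \in R_{\bLP, \AAA}$, closing the induction.

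The only point requiring care will be executing this Gauss-lemma descent inside a quantum torus rather than a commutative Laurent polynomial ring. All $q$-twists produced by the $*$-product lie in $\AAA^\times$ because the skew-symmetric form $\Lambda$ compatible with $\tB$ has integer entries, and the leading monomial of $S_n$ is indeed a single monomial (not a sum) because $(P^+)^n$ dominates all other contributions in the chosen lexicographic order. Once these two observations are in place, the descent on the lexicographic order is routine.
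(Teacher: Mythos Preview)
Your argument is correct, and it follows a genuinely different route from the paper's own proof. Both extract $\AAA$-integrality of coefficients by isolating leading terms, but they organise the extraction differently.

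The paper works globally, with no induction on mutation distance. For an arbitrary target seed $\sd'$ it writes $z*x(\sd')^d=\sum_{m\in\N^I} c_m\, x(\sd')^m$ and uses that the cluster monomials $x(\sd')^m$ are pointed elements of $\LP(\sd)$ with pairwise distinct degrees for the dominance order $\prec_\sd$. Choosing any $\prec_\sd$-maximal $m_0$ among $\{m:c_m\neq 0\}$, the coefficient $c_{m_0}$ is read off as a single Laurent coefficient of $z*x(\sd')^d\in\LP(\sd)_\AAA$, hence lies in $\AAA$; subtract $c_{m_0}x(\sd')^{m_0}\in\LP(\sd)_\AAA$ and repeat. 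The input is the pointedness of cluster monomials (the $g$-vector/$F$-polynomial machinery already set up in the paper), and the argument is a one-shot descent in the fixed seed $\sd$.

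Your proof instead inducts along a mutation path and reduces the single-step case to a rank-one computation via Lemma~\ref{lem:basis_A1}, followed by a Gauss-lemma descent under an ad hoc lexicographic order. This is more elementary in the sense that it avoids invoking the general pointedness theory; the price is a slightly longer local computation. Two small points worth making explicit in a write-up: (i) for a quantum seed the full-rank hypothesis on $\tB$ rules out isolated unfrozen vertices, so $P^+\neq P^-$ and your lexicographic separation is always available; (ii) the leading-term extraction is cleanest if you phrase it directly in $\LP(\sd_1)$ using a total group order on $\Z^I$, rather than first factoring out $x_k(\sd_1)^{-n}$ and working in the sub-torus on $\{x_j\}_{j\neq k}$ --- the $q$-twists from the $*$-product are then visibly units of $\AAA$ and the inductive peel-off goes through without further bookkeeping.
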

\begin{proof}
    For any $z\in \bUpClAlg$ and any seed $\sd'$ obtained from $\sd$ by iterated mutations, we have $z*x(\sd')^d=\sum_{m\in \N^I} c_m x(\sd')^m$ for some $d\in \N^{I_\ufv}$ and $c_m\in \KK$, $m\in \N^I$. Assume now that $z\in \LP(\sd)_{\AAA}$. It suffices to show that $c_m\in \AAA$ for all $m\in \N^I$.  
    Let us take the Laurent expansions of $z$, $x(\sd')^d$, and $x(\sd')^m$, $m\in \N^I$ in $\LP(\sd)_{\AAA}$. Recall that $x(\sd')^m$ are pointed elements of distinct degrees in $\LP(\sd)_{\AAA}$, whose degrees could be computed by the map in \cite[Definition 3.3.1]{qin2019bases}. Note that the quantum seed $\sd$ is of full rank. 
    Let $m_0$ denote an element of $m\in \N^I$ such that $c_{m_0}\neq 0$ and $\deg^\sd x(\sd')^m\prec_\sd\deg^\sd x(\sd')^{m_0}$ for all $m_0\neq m\in \N^I$ with $c_m\neq 0$. Then $c_{m_0}$ is the Laurent coefficients of $x(\sd)^{m_0}$ in $z*x(\sd')^d$, which implies $c_{m_0}\in {\AAA}$ since $z*x(\sd')^d$ is contained in $\LP(\sd)_{\AAA}$. Next,  consider $z*x(\sd')^d-c_{m_0}x(\sd')^{m_0}=\sum_{m:m\neq m_0} c_m x(\sd')^m\in \LP(\sd)_{\AAA}$. We can deduce similarly that $c_{m_1}\in {\AAA}$ for some $m_1$ appearing on the right. Repeating this process, we deduce that $c_m\in {\AAA}$ for all $m\in \N^I$. Therefore, $z\in \bUpClAlg_{\AAA}$.
\end{proof}

\begin{defn} 
\label{normal-prime}
\hfill
\begin{enumerate}
\item[(i)] An element $\Delta$ of a (noncommutative) ring $R$ is called {\em{normal}} if 
\[
\Delta R = R \Delta.
\] 
\item[(ii)] Let $R$ be a domain, i.e., $0$ is the only zero divisor of $R$. A {\em{prime element}} of $R$ is a nonzero, nonunit, normal element $\Delta \in R$ such that $R/(\Delta)$ is a domain, where $(\Delta):= R \Delta = \Delta R$ is the principal ideal generated by $\Delta$.
\end{enumerate}
\end{defn}
For example, every central element $\Delta$ of $R$ is normal. For a domain $R$, $a\in R$ and a normal element $\Delta \in R$, we write 
\[
\Delta | a \quad \mbox{if} \quad a\in (\Delta). 
\]
The condition in part (ii) of the definition that $R/(\Delta)$ is a domain is equivalent to the classical property
\[
\Delta | ab \quad \Rightarrow \quad \Delta | a \,\,\,\,
\mbox{or} \,\,\,\, \Delta | b, \quad \forall a, b \in R.
\]

Recall that a (noncommutative) ring $R$ is a left (resp.~right) Ore domain if $R$ has a left (resp.~right) quotient ring $Q(R)$, see e.g. \cite[\S 2.1.14]{McConnell-Robson}. For example, every left Noetherian domain is a left Ore domain \cite[Theorem 2.1.15]{McConnell-Robson}. 

We will base our arguments on the following useful result.
\begin{thm}[\cite{QY2025partially}]\label{thm:localization-intersection}
    Assume that $R$ is a left (resp.~right) Ore domain. Let $\Delta_j$, $j\in J$, be a collection of prime elements in $R$, where $J$ is an index set. Let $E$ be a left (resp.~right) denominator set of $R$ (see \cite[\S 2.1.13]{McConnell-Robson}) such that $\Delta_j \nmid x$, $\forall x\in E$. Then we have
    \[
        R[\Delta_j^{-1}\mid j\in J]\cap R[E^{-1}]=R.
    \]
\end{thm}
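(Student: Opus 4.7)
The plan is to work inside the Ore quotient ring $Q(R)$ of $R$, into which both localizations naturally embed: the collection $\{\Delta_j\mid j\in J\}$ generates an Ore set in $R$ because each $\Delta_j$ is normal, so $R[\Delta_j^{-1}\mid j\in J]\hookrightarrow Q(R)$, while $R[E^{-1}]\hookrightarrow Q(R)$ by the Ore condition on $E$. For $a$ in the intersection, write $a=\Delta^{-1}r$ with $\Delta=\Delta_{j_1}\cdots\Delta_{j_n}$ a product of prime elements from our collection and $r\in R$. Since $\Delta a=r\in R$ while $a\in S:=R[E^{-1}]$, the theorem reduces to the claim

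$(\star)$\ \emph{If $a\in S$ and $\Delta a\in R$ for a product $\Delta$ of our prime elements, then $a\in R$,}

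which I would prove by induction on the number $n$ of prime factors of $\Delta$.

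\textbf{Base case ($n=1$).} Set $r:=\Delta_1 a\in R$, where $\Delta_1$ is one of our primes, and express $a=e^{-1}s$ with $e\in E$, $s\in R$. The left Ore condition applied to $\Delta_1\in R$ and $e\in E$ produces $e'\in E$ and $\Delta'\in R$ with $e'\Delta_1=\Delta' e$, equivalently $\Delta_1 e^{-1}=e'^{-1}\Delta'$ in $Q(R)$. Hence
\[
r=\Delta_1 a=\Delta_1 e^{-1}s=e'^{-1}\Delta' s, \qquad \text{whence} \qquad e'\,r=\Delta'\,s \ \text{ in } R.
\]
Reducing the Ore relation $e'\Delta_1=\Delta' e$ modulo the prime ideal $(\Delta_1)$ yields $\overline{\Delta'}\,\bar{e}=0$ in the domain $R/(\Delta_1)$; since $\Delta_1\nmid e$ forces $\bar{e}\neq 0$, we conclude $\Delta_1\mid\Delta'$. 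Substituting back into $e'r=\Delta' s$ gives $\Delta_1\mid e'r$, and since $e'\in E$ implies $\Delta_1\nmid e'$, primality of $\Delta_1$ forces $\Delta_1\mid r$, say $r=\Delta_1 r_1$. Cancelling $\Delta_1$ in the domain $R$ from $\Delta_1 a=\Delta_1 r_1$ yields $a=r_1\in R$.

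\textbf{Inductive step.} Write $\Delta=\Delta_{j_1}\Delta''$ with $\Delta''$ a product of $n-1$ primes, and set $b:=\Delta''a\in S$. Then $\Delta_{j_1}b=\Delta a\in R$, so the base case applied to $(b,\Delta_{j_1})$ gives $b=\Delta''a\in R$. The induction hypothesis applied to $(a,\Delta'')$ then gives $a\in R$.

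\textbf{Main obstacle.} The principal technical difficulty is that the ``clear denominators'' identity $er=\Delta s$ does not hold verbatim in the noncommutative setting; the best one can arrange is a substituted equation $e'r=\Delta' s$ produced by the Ore condition. The key observation that unlocks the argument is that the Ore relation $e'\Delta_1=\Delta' e$ itself \emph{forces} $\Delta_1\mid\Delta'$, because $\bar{e}\neq 0$ in the domain $R/(\Delta_1)$; this is the step that converts an Ore manipulation into usable divisibility information on $r$. Organising the induction via $(\star)$ reduces every $n\geq 2$ case to the base case without having to track how the automorphisms associated to previously peeled primes transform $e$. The right Ore case is entirely symmetric: write $a=se^{-1}=r\Delta^{-1}$ and apply the right Ore condition to produce $\Delta_1 e'=e\Delta''$, which by the same modular argument forces $\Delta_1\mid\Delta''$ and hence $\Delta_1\mid r$ from the right.
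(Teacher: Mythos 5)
Your argument is correct: both localizations embed in the Ore quotient ring $Q(R)$ (the $\Delta_j$ being normal, they generate an Ore set of regular elements), and your reduction $(\star)$ — clearing the denominator $e\in E$ via the left Ore relation $e'\Delta_1=\Delta' e$, reducing modulo the completely prime ideal $(\Delta_1)$ to get first $\Delta_1\mid\Delta'$ and then $\Delta_1\mid r$, and inducting on the number of prime factors — is precisely the natural proof of the statement. The paper itself does not reprove this theorem (it is quoted from \cite{QY2025partially}), so there is no internal proof to compare against; the only cosmetic point is that the final cancellation from $\Delta_1 a=\Delta_1 r_1$ to $a=r_1$ takes place in $Q(R)$, where $\Delta_1$ is invertible, rather than ``in the domain $R$''.
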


It is well known that $R_q[G]$ is a Noetherian domain \cite[Lemma 9.1.9(i) and Proposition 9.2.2]{joseph2012quantum}.
Let 
\begin{equation}
    \label{set}
\{ \Delta_j \mid j\in J \} 
\end{equation}
denote the set of irreducible elements of $\AAA$, which are considered as elements of $R_q[G]_\AAA$. Remark that \eqref{set} can be also regarded as the set of the prime elements of $\AAA$ since $\AAA$ is a unique factorization domain.  
\begin{proposition}
\label{prop:prime_irreducible} The following hold:
\begin{enumerate}
\item[(i)] All elements $\Delta_j$ are prime elements of $R_q[G]_{\AAA}$. 
\item[(ii)] $\Delta_j \nmid x$ in $R_q[G]_{\AAA}$ for any generalized quantum minor $x$.
\end{enumerate}
\end{proposition}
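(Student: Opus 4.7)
My plan is to prove parts (i) and (ii) separately, using only the Hopf algebra structure from Section 2 together with Lemma \ref{l:domain}.

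For (i), I would first note that any $\Delta_j\in\AAA$ is central in $R_q[G]_\AAA$, hence automatically normal. Since $R_q[G]_\AAA$ is $\AAA$-free and the counit $\varepsilon\colon R_q[G]_\AAA\to\AAA$ restricts to the identity on $\AAA$, were $\Delta_j$ a unit in $R_q[G]_\AAA$, applying $\varepsilon$ would make $\Delta_j$ a unit of $\AAA$, contradicting its irreducibility. For the domain condition on $R_q[G]_\AAA/(\Delta_j)$, set $\kk:=\AAA/(\Delta_j)$; this is an integral domain since $\AAA$ is a UFD and $\Delta_j$ is prime. The projection $\alpha\colon\AAA\to\kk$ is a specialization map in the sense of Definition \ref{def:specialization}, because $q^{\pm\Hf}$ survive as units modulo $\Delta_j$. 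Since $R_q[G]_\AAA$ is $\AAA$-flat, the canonical map yields $R_q[G]_\AAA/(\Delta_j)\cong R_q[G]_\AAA\otimes_\AAA\kk=R_q[G]_\kk$, which is a domain by Lemma \ref{l:domain}.

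For (ii), the plan is to exhibit, for every pair $w,u\in W$ and every $\lambda\in P^+$, an element $x\in U_q(\g)_\AAA$ such that $\Delta_{w\lambda,u\lambda}(x)=\langle\xi_{w\lambda},x\cdot v_{u\lambda}\rangle$ is a unit of $\AAA$. Such an $x$ rules out divisibility: a factorization $\Delta_{w\lambda,u\lambda}=\Delta_j\,y$ with $y\in R_q[G]_\AAA$ would force the evaluation at $x$ to land in $\Delta_j\AAA$, impossible for a unit. When $w\lambda=u\lambda$ the choice $x=1$ works since $\langle\xi_{w\lambda},v_{u\lambda}\rangle=1$. In the general case, I would connect $u\lambda$ to $w\lambda$ inside the Weyl-group orbit $W\lambda$ by a sequence $u\lambda=\mu_0,\mu_1,\ldots,\mu_k=w\lambda$ with each step $\mu_{s+1}=s_{i_s}\mu_s$ strictly changing the weight, and then, for $n_s:=\langle\alpha_{i_s}^\vee,\mu_s\rangle\neq 0$, set $x_s:=X_{i_s}^{-(n_s)}$ if $n_s>0$ and $x_s:=X_{i_s}^{+(-n_s)}$ if $n_s<0$. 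All such $x_s$ lie in $U_q(\g)_\AAA$. The product $x:=x_{k-1}\cdots x_0$ then maps $v_{u\lambda}$ into the one-dimensional extremal weight space $V_q(\lambda)_{w\lambda}=\AAA\cdot v_{w\lambda}$, so $x\cdot v_{u\lambda}=c\cdot v_{w\lambda}$ for some $c\in\AAA$.

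The main step requiring care is showing that this scalar $c$ is actually a unit of $\AAA$, not merely nonzero. By induction along the chain, it suffices to verify the single-step analogue: for an extremal weight vector $v_\mu\in V_q(\lambda)_\AAA$ with $n:=\langle\alpha_i^\vee,\mu\rangle>0$, one has $X_i^{-(n)}v_\mu\in\AAA^\times\cdot v_{s_i\mu}$. Because $v_\mu$ satisfies $X_i^+v_\mu=0$, it generates an $(n{+}1)$-dimensional $U_{q_i}(\mathfrak{sl}_2)$-irrep inside $V_q(\lambda)$, and the standard $\mathfrak{sl}_2$-string calculation gives $X_i^{+(n)}X_i^{-(n)}v_\mu=v_\mu$. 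Writing $X_i^{-(n)}v_\mu=c\,v_{s_i\mu}$ and $X_i^{+(n)}v_{s_i\mu}=c'\,v_\mu$ with $c,c'\in\AAA$ (using that the extremal weight spaces are rank-one $\AAA$-modules generated by the canonical-basis elements $v_{s_i\mu}, v_\mu$), this yields $v_\mu=cc'\,v_\mu$, forcing $cc'=1$ and hence $c\in\AAA^\times$. The symmetric argument handles the case $n<0$, completing the reduction.
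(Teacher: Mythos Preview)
Your proof is correct and follows essentially the same strategy as the paper's. For (i), both you and the paper reduce to Lemma~\ref{l:domain} via $R_q[G]_\AAA/(\Delta_j)\cong R_q[G]_{\AAA/(\Delta_j)}$; you additionally spell out the non-unit check via the counit, which the paper leaves implicit. For (ii), both arguments exhibit an element $\tilde X\in U_q(\g)_\AAA$ with $\langle\Delta_{w\lambda,u\lambda},\tilde X\rangle\in\AAA^\times$, but the paper takes the specific route $v_{u\lambda}\to v_\lambda\to v_{w\lambda}$ (first raising to the highest weight, then lowering) and invokes the known extremal-vector identities $X_j^{\pm(\mp\langle w\lambda,h_j\rangle)}v_{w\lambda}=v_{s_jw\lambda}$ directly, so the scalar is exactly $1$ at each step; your version allows an arbitrary chain in $W\lambda$ and instead proves the scalar is a unit at each step via the $\mathfrak{sl}_2$-identity $X_i^{+(n)}X_i^{-(n)}v_\mu=v_\mu$, which is a self-contained derivation of the same fact.
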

\begin{proof} 
(i) All elements of the set \eqref{set} are in the center of $R_q[G]_\AAA$, in particular, they are normal elements. Hence it suffices to show that $R_q[G]_\AAA/(\Delta_j)$ is a domain for all $j\in I$. We can regard $\Delta_j$ as an element of $\AAA$ and set $\kk:= \AAA/(\Delta_j)$. Write $\alpha\colon \AAA\to \kk$ as a natural projection. Then $R_q[G]_\AAA/(\Delta_j)$ is isomorphic to $R_q[G]_{\kk}$ defined in \eqref{AAA'}. Since $\kk$ is an integral domain,  $R_q[G]_\AAA/(\Delta_j)$ is a domain by Lemma \ref{l:domain}. 

(ii) Take any generalized quantum minor $x=\Delta_{u\varpi_i,v\varpi_i}$. Assume that $\Delta_j\mid x$, then there exists $x'\in R[G]_{\AAA}$ such that $\Delta_j x'=x$. Then we have $\langle x,U_q(\g)_\AAA\rangle=\Delta_j\langle x',U_q(\g)_\AAA\rangle\subset \Delta_j\AAA$.

Recall that the extremal vectors satisfy the following relations:
\begin{align*}
    X_j^{+(-\langle w\varpi_i,h_j\rangle)}.v_{w\varpi_i}&=v_{s_jw\varpi_i},\quad\text{if }l(s_jw)=l(w)-1,\\
    X_j^{-(\langle w\varpi_i,h_j\rangle)}.v_{w\varpi_i}&=v_{s_jw\varpi_i},\quad\text{if }l(s_jw)=l(w)+1.
\end{align*}
Hence there exists some $\tilde{X}=X_{j_1}^{-(m_1)}\cdots X_{j_t}^{-(m_t)}X_{j'_1}^{+(m'_1)}\cdots X_{j'_{t'}}^{(m'_{t'})}\in U(\g)_\AAA$ such that $X_{j'_1}^{+(m'_1)}\cdots X_{j'_{t'}}^{(m'_{t'})}v_{v\varpi_i}=v_{\varpi_i}$ and $\tilde{X}v_{v\varpi_i}=v_{u\varpi_i}$. We deduce that $\langle \Delta_{u\varpi_i,v\varpi_i},\tilde{X}\rangle=1\notin \Delta_j\AAA$. This contradiction shows that the assumption $\Delta_j\mid x$ is impossible.
\end{proof}

Fix a reduced word $\bi=(i_1,\dots, i_N)$ of $w_0$. Then 
    \[
    \ubi^{\bullet}\coloneqq (i_N,\dots, i_1, -i_1,\dots, -i_N)
    \]
is a reduced word for $(w_0, w_0)$.  For $k=1,\dots, N$, set $w_0^{\leq k}\coloneqq s_{i_1}\cdots s_{i_k}$. 
Denote by $\sd^\BZ_{\bullet}$ the Berenstein--Zelevinsky quantum seed associated with $\ubi^{\bullet}$ and $(\bi_{-r}^{\bullet},\ldots,\bi_{-1}^{\bullet})=(r,\dots, 1)$. Recall the notation in Section \ref{sec:BZ-seed}. In this case, we have $I=[-r,-1]\sqcup [1, 2N]$, and through the identification via $\kappa$ in Theorem \ref{thm:up-cluster-k-G}, the quantum cluster $(x_k^{\bullet})_{k\in I}$ of $\sd^\BZ_{\bullet}$ is described as follows.
\[
x_k^{\bullet}=
\begin{cases}
    \text{$\Delta_{\varpi_{|k|},w_0\varpi_{|k|}}$ for $k=-r,\dots,-1$}, \\
    \text{$\Delta_{\varpi_{i_{N+1-k}}, w_0^{\leq N-k}\varpi_{i_{N+1-k}}}$ for $k=1,\dots,N$}, \\
    \text{$\Delta_{w_0^{\leq k-N}\varpi_{i_{k-N}}, \varpi_{i_{k-N}}}$ for $k=N+1,\dots,2N$}.
\end{cases}
\]
Set $E_{\bullet}:=\{x_k^{\bullet}\mid k\in I\}$. It is straightforward to verify $E_{\bullet}\supset \{\Delta_{\varpi_{k},\varpi_{k}}\mid k\in [1, r]\}$.
\begin{lem}\label{lem:Laurent}
In the skew-field of fractions $Q(R_q[G])$, we have $R_q[G]_{\AAA}[E_{\bullet}^{-1}]=\LP(\sd^\BZ_{\bullet})_{\AAA}$. 
\end{lem}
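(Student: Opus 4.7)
My strategy is to verify the two inclusions of the claimed equality separately, with the direction $\LP(\sd^\BZ_{\bullet})_\AAA \subseteq R_q[G]_\AAA[E_{\bullet}^{-1}]$ being routine and the reverse inclusion carrying the bulk of the work. For the easy direction, every $x_k^{\bullet} \in E_\bullet$ is a generalized quantum minor $\Delta_{\gamma_k,\delta_k}$, and the divided-power argument used in the proof of Proposition \ref{prop:prime_irreducible}(ii) shows that $\langle x_k^\bullet, U_q(\g)_\AAA\rangle \subseteq \AAA$; hence $x_k^\bullet \in R_q[G]_\AAA$. The inverses $(x_k^\bullet)^{-1}$ belong to $R_q[G]_\AAA[E_\bullet^{-1}]$ by construction, so the $\AAA$-subalgebra they generate, namely $\LP(\sd^\BZ_\bullet)_\AAA = \AAA[x_k^{\bullet\pm 1}]_{k\in I}$, is contained in $R_q[G]_\AAA[E_\bullet^{-1}]$.

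For the reverse inclusion, Theorem \ref{thm:up-cluster-k-G} implies $R_q[G] = \bUpClAlg(\sd^\BZ_\bullet) \subseteq \LP(\sd^\BZ_\bullet)$ over $\KK$; since the $x_k^\bullet$ are units of $\LP(\sd^\BZ_\bullet)$, inverting them produces $R_q[G][E_\bullet^{-1}] = \LP(\sd^\BZ_\bullet)$ inside $Q(R_q[G])$. The integral version therefore reduces to proving $R_q[G]_\AAA \subseteq \LP(\sd^\BZ_\bullet)_\AAA$: because $(x_k^\bullet)^{\pm 1} \in \LP(\sd^\BZ_\bullet)_\AAA$, once this containment holds the further localization at $E_\bullet$ stays inside the quantum torus over $\AAA$. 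To establish $R_q[G]_\AAA \subseteq \LP(\sd^\BZ_\bullet)_\AAA$, I invoke Proposition \ref{p:Lusgen} to restrict attention to the finite $\AAA$-algebra generating set $\widetilde{\Gamma}$ consisting of the matrix coefficients $c^{V_q(\varpi_i)}(b^*,v_{\varpi_i})$ and $c^{V_q(\varpi_i)}(b^*,v_{w_0\varpi_i})$ with $b \in \BBB(\varpi_i)$. For each such generator I would write down an explicit Laurent expansion in the $x_k^\bullet$ with coefficients in $\AAA$. The choice of the unshuffled word $\ubi^\bullet$ is essential here: its first $N$ letters realize $w_0$ on the second factor, producing minors $\Delta_{\varpi_i,w_0^{\leq N-k}\varpi_i}$ adapted to analyzing coefficients involving the lowest weight vector, and its second $N$ letters realize $w_0$ on the first factor, producing minors $\Delta_{w_0^{\leq k-N}\varpi_i,\varpi_i}$ adapted to analyzing coefficients involving the highest weight vector. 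Writing $b = \omega(\widetilde{b}).v_{w_0\varpi_i}$ for some $\widetilde{b} \in \BBB(\infty)$ (as in the proof of Lemma \ref{l:domain}) and unwinding the coproduct of the divided powers appearing in $\widetilde{b}$ produces the desired $\AAA$-integral expansions.

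The main obstacle is the explicit computation of these Laurent expansions with uniform control on $\AAA$-integrality, simultaneously over all canonical basis vectors $b \in \BBB(\varpi_i)$ and all fundamental weights $\varpi_i$. This amounts to an integral refinement of the unshuffled-word case of the Berenstein--Zelevinsky conjecture established over $\KK$ in \cite{goodearl2016berenstein}. The two main technical inputs I expect to rely on are the integrality of the dual-PBW expansion of Lusztig's canonical basis of $U_q^-(\g)_\AAA$ associated with $\ubi^\bullet$, and the fact that the divided powers $X_i^{\pm (n)}$ act by $\AAA$-matrices on the integral forms $V_q(\varpi_i)_\AAA$. Combined with Proposition \ref{p:Lusgen}, these ingredients ensure that every generator in $\widetilde{\Gamma}$ has an $\AAA$-integral Laurent expansion in $\LP(\sd^\BZ_\bullet)$, closing the proof of the lemma.
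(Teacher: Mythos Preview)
Your overall reduction is sound: the easy inclusion holds because each $x_k^\bullet$ is a generalized quantum minor in $R_q[G]_\AAA$, and for the other direction it suffices to show $R_q[G]_\AAA \subset \LP(\sd^\BZ_\bullet)_\AAA$, which by Proposition~\ref{p:Lusgen} reduces to the generators in $\widetilde{\Gamma}$. However, the final paragraph does not constitute a proof of the key step. You propose to ``write down an explicit Laurent expansion in the $x_k^\bullet$ with coefficients in $\AAA$'' and name two ingredients (integrality of dual-PBW expansions, $\AAA$-integrality of divided-power actions), but neither of these by itself produces a Laurent polynomial in the cluster variables $x_k^\bullet$; they only tell you that certain structure constants lie in $\AAA$. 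The passage from matrix coefficients of $V_q(\varpi_i)$ to Laurent polynomials in the specific quantum minors comprising $E_\bullet$ is exactly the integral Laurent phenomenon for this seed, which is what the lemma is meant to establish---so invoking it informally is circular. Your reference to \cite{goodearl2016berenstein} is in the right direction but that paper works over $\KK$, and you give no indication of how to upgrade it to $\AAA$.

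The paper's proof supplies the missing mechanism by factoring through the quantum unipotent subgroup. It embeds $R_q[G]_\AAA$ into $R_q[B^-]_\AAA \otimes_\AAA R_q[B^+]_\AAA$ via $\iota_\triangle = (\pi_-\otimes\pi_+)\circ\Delta_{R_q[G]}$, shows that the image lands in a subspace $\widetilde{R}_0$ expressed in terms of the basis elements $c^-(\widetilde b)$, $c(\widetilde b')$ and characters $c^-(\mu)$, $c(\mu)$, and computes $\iota_\triangle(x_k^\bullet)$ explicitly in terms of the unipotent quantum minors $D_{\varpi_i,w\varpi_i}$. The unshuffled shape of $\ubi^\bullet$ is what makes $\iota_\triangle(E_\bullet)$ split cleanly into a $U^+$-part, a $U^-$-part (via $\varphi^*$), and characters. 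At this point the paper invokes the \emph{already established} integral quantum cluster algebra structure on $R_q[U^+]_\AAA$ from \cite[Theorem~7.3]{goodearl2020integral}: that theorem gives the Laurent phenomenon over $\AAA$ for $R_q[U^+]_\AAA$ with initial cluster $\{D(k;\bi)\}$, so for any $\widetilde b\in\BBB(\infty)$ one can clear denominators by a monomial in the $D(k;\bi)$. Combining this with the explicit form of $\iota_\triangle(x_k^\bullet)$ and adjusting the characters yields the desired statement. This structural reduction to a known integral result on a smaller algebra is the step your plan does not supply.
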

\begin{proof}
    Since $x_k^{\bullet}\in R_q[G]_{\AAA}$ for $k\in I$, we have $\LP(\sd^\BZ_{\bullet})_{\AAA}\subset R_q[G]_{\AAA}[E_{\bullet}^{-1}]$. Let us prove the other inclusion. It suffices to show that, for any $\phi\in R_q[G]_{\AAA}$, there exists a monomial $m$ in $E_{\bullet}$ such that $m\phi$ is contained in $\sum_{m\in \N^I}\AAA x(\sd^\BZ_{\bullet})^m$.  

    Recall the notation in the proof of Lemma \ref{l:domain}. 
    For $\mu\in P$, define  $c^-(\mu)\in U^{\leq}_q(\g)^{\ast}$ as 
\[
\langle c^-(\mu), xK_{\alpha}\rangle =\varepsilon(x)q^{(\mu, \alpha)}
\]
for $x\in U^-_q(\g)$ and $\alpha\in Q$. For $\widetilde{b}\in \BBB(\infty)$, define $c^-(\widetilde{b})\in U^{\leq}_q(\g)^{\ast}$ by 
\[
\langle c^-(\widetilde{b}), \widetilde{b}'K_{\alpha}\rangle =\delta_{\widetilde{b}, \widetilde{b}'}
\]
for $\widetilde{b}'\in \BBB(\infty)$ and $\alpha\in Q$. Note that, by the antiisomorphism $\omega^{\ast}\colon R_q[B^+]_{\AAA}\xrightarrow[]{\sim} R_q[B^-]_{\AAA}, \phi\mapsto \phi\circ\omega$, we have $\omega^{\ast}(c(\mu))=c^-(-\mu)$ and $\omega^{\ast}(c(\widetilde{b}))=c^-(\widetilde{b})$ for $\mu\in P$ and $\widetilde{b}\in \BBB(\infty)$. 
In particular, $R_q[U^-]_{\AAA}:=\sum_{\widetilde{b}\in \BBB(\infty)}\AAA c^-(\widetilde{b})$ is an $\AAA$-subalgebra of $R_q[B^-]_{\AAA}$.

The injective $\AAA$-algebra homomorphism $\iota_{\triangle}\colon R_q[G]_{\AAA}\to R_q[B^-]_{\AAA}\otimes_{\AAA}R_q[B^+]_{\AAA}$ in \cite[Lemma 3.7]{Lus-Zform} is given by 
\[
\iota_{\triangle}=(\pi_-\otimes \pi_+)\circ \Delta_{R_q[G]},
\]
where $\Delta_{R_q[G]}$ is a coproduct of $R_q[G]_{\AAA}$. If $\phi\in R_q[G]_{\AAA}$ satisfies 
\begin{equation}
    \phi(xK_{\alpha})=q^{(\mu, \alpha)}\phi(x),\ \forall x\in U_q(\g), \forall \alpha\in Q, \label{eq:weight}
\end{equation}
for some $\mu\in P$, then for $\widetilde{b}, \widetilde{b}'\in \BBB(\infty)$ and $\alpha, \alpha'\in Q$, 
\begin{align*}
    \iota_{\triangle}(\phi)(\widetilde{b}K_{\alpha}\otimes \omega(\widetilde{b}')K_{\alpha'})
    &=\phi(\widetilde{b}K_{\alpha} \omega(\widetilde{b}')K_{\alpha'})\\
    &=q^{(-\wt\widetilde{b}', \alpha)}\phi(\widetilde{b}\omega(\widetilde{b}')K_{\alpha+\alpha'})\\
    &=q^{(\mu-\wt\widetilde{b}', \alpha)+(\mu, \alpha')}\phi(\widetilde{b}\omega(\widetilde{b}'))\in \AAA.
\end{align*}
Therefore, 
\[
\iota_{\triangle}(\phi)=\sum_{\widetilde{b}, \widetilde{b}'\in \BBB(\infty)}\phi(\widetilde{b}\omega(\widetilde{b}')) c^-(\mu-\wt\widetilde{b}')c^-(\widetilde{b})\otimes c(\widetilde{b}')c(\mu).
\]
Note that this is a finite sum. Since, as an $\AAA$-module, $R_q[G]_{\AAA}$ is spanned by the elements satisfying \eqref{eq:weight} for some $\mu\in P$ (see Proposition \ref{p:Lusgen}), $\iota_{\triangle}(R_q[G]_{\AAA})$ is contained in 
\[
\widetilde{R}_0\coloneqq \sum_{\mu\in P, \widetilde{b}, \widetilde{b}'\in \BBB(\infty)}\AAA c^-(\mu-\wt\widetilde{b}')c^-(\widetilde{b})\otimes c(\widetilde{b}')c(\mu).
\]
Hence it suffices to show that, for any $\widetilde{\phi}\in \widetilde{R}_0$, there exists a monomial $m$ in $E_{\bullet}$ such that $\iota_{\triangle}(m)\widetilde{\phi}$ is contained in $\sum_{m\in \N^I}\AAA \iota_{\triangle}(x(\sd^\BZ_{\bullet}))^m$. 

For $i\in [1, r]$ and $w\in W$, define $D_{\varpi_i, w\varpi_i}\in R_q[U^+]_{\AAA}\subset R_q[B^+]_{\AAA}$ by 
\[
\langle D_{\varpi_i, w\varpi_i}, xK_{\alpha}\rangle =\langle \xi_{\varpi_i}, x.v_{w\varpi_i}\rangle. 
\]
Note that $D_{\varpi_i, w\varpi_i}=\pi_+(\Delta_{\varpi_i, w\varpi_i})c(-w\varpi_i)$. These elements are called the unipotent quantum minors, cf.~\cite[Eq. (6.8)]{goodearl2020integral}. Then, 
\begin{align*}
&\iota_{\triangle}(x_k^{\bullet})=\\
&\begin{cases}
    \text{$c^-(\varpi_{|k|})\otimes D_{\varpi_{|k|},w_0\varpi_{|k|}}c(w_0\varpi_{|k|})$ for $k=-r,\dots,-1$}, \\
    \text{$c^-(\varpi_{i_{N+1-k}})\otimes D_{\varpi_{i_{N+1-k}}, w_0^{\leq N-k}\varpi_{i_{N+1-k}}}c(w_0^{\leq N-k}\varpi_{i_{N+1-k}})$ for $k=1,\dots,N$}, \\
    \text{$c^-(\varpi_{i_{k-N}})\varphi^{\ast}(D_{\varpi_{i_{k-N}}, w_0^{\leq k-N}\varpi_{i_{k-N}}})\otimes c(\varpi_{i_{k-N}})$ for $k=N+1,\dots,2N$}.
\end{cases}
\end{align*}
Here $\ast\colon U_q^-(\g)\to U_q^-(\g)$ is the antiautomorphism defined by $X_i^-\mapsto X_i^-$ for $i\in [1, r]$, $\ast\colon R_q[U^+]_{\AAA}\to R_q[U^+]_{\AAA}$ is the antiautomorphism\footnote{By the isomorphism $\iota\colon U_q^-(\g)\xrightarrow[]{\sim}R_q[U^+]_{\KK}$ in \cite[Eq. (6.3)]{goodearl2020integral}, the algebra antiautomorphism $\ast\colon R_q[U^+]_{\AAA}\to R_q[U^+]_{\AAA}$ is also given by $\iota\circ \ast\circ \iota^{-1}|_{R_q[U^+]_{\AAA}}$, cf.~\cite[Lemma 6.16]{jantzen1996lectures} and \cite[Lemma 3.5]{Kimura10}.
} defined by $c(\widetilde{b})\mapsto c(\ast\widetilde{b})$ for $\widetilde{b}\in \BBB(\infty)$, and $\varphi^{\ast}\coloneqq \omega^{\ast}|_{R_q[U^+]_{\AAA}}\circ \ast\colon R_q[U^+]_{\AAA}\xrightarrow[]{\sim} R_q[U^-]_{\AAA}$. Thus our claim follows from the quantum cluster algebra structure on $R_q[U^+]_{\AAA}$ (and on $R_q[U^-]_{\AAA}$ via $\varphi^{\ast}$) verified in \cite[Theorem 7.3]{goodearl2020integral} together with the Laurent phenomenon. Indeed, the initial cluster variables for $R_q[U^+]_{\AAA}$ is given by 
\[
q^{a_k}D(k;\bi)\coloneqq q^{a_k}D_{\varpi_{i_{k}}, w_0^{\leq k}\varpi_{i_k}},\quad k=1,\dots, N
\]
for some $a_k\in \Z\left[\frac{1}{2}\right]$. Note that we have 
\begin{align*}
    \{\iota_{\triangle}(x_k^{\bullet})\mid k\in I\}=
    &\{c^-(\varpi_{i_k})\otimes D(k;\bi)c(w_0^{\leq k}\varpi_{i_k})\mid k\in [1, N]\}\\
    &\cup \{c^-(\varpi_{k})\otimes c(\varpi_{k})\mid k\in [1, r]\}\\ 
    &\cup\{c^-(\varpi_{i_k})\varphi^{\ast}(D(k;\bi))\otimes c(\varpi_{i_k})\mid k\in [1, N]\}.
\end{align*}
For $m=(m_1,\dots, m_N), m'=(m'_1,\dots, m'_N)\in \N^N$, we set 
\[
D(\bi)^m\coloneqq D(1;\bi)^{m_1}\cdots D(N;\bi)^{m_N},
\]
and  
\begin{align*}
&\widetilde{D}(\ubi^{\bullet})^{m\circ m'}\coloneqq \\
&(c^-(\varpi_{i_1})\varphi^{\ast}(D(1;\bi))\otimes c(\varpi_{i_1}))^{m_1}\cdots (c^-(\varpi_{i_N})\varphi^{\ast}(D(N;\bi))\otimes c(\varpi_{i_N}))^{m_N}\\
&\cdot (c^-(\varpi_{i_1})\otimes D(1;\bi)c(w_0^{\leq 1}\varpi_{i_1}))^{m'_1}\cdots (c^-(\varpi_{i_N})\otimes D(N;\bi)c(w_0^{\leq N}\varpi_{i_N}))^{m'_N}. 
\end{align*}
Note that $\widetilde{D}(\ubi^{\bullet})^{m\circ m'}$ is a monomial in $\iota_{\triangle}(E_{\bullet})$. By \eqref{eq:borel}, 
\[
\widetilde{D}(\ubi^{\bullet})^{m\circ m'}\simeq 
c^-(\mu^-_{m\circ m'})\varphi^{\ast}(D(\bi)^{m})\otimes D(\bi)^{m'}c(\mu_{m\circ m'}),
\]
where $\simeq$ stands for the equality up to some powers of $q^{\Hf}$, and $\mu_{m\circ m'}\coloneqq \sum_{k=1}^Nm_k\varpi_{i_k}+\sum_{k=1}^Nm'_kw_0^{\leq k}\varpi_{i_k}$, $\mu^-_{m\circ m'}\coloneqq \sum_{k=1}^Nm_k\varpi_{i_k}+\sum_{k=1}^Nm'_k\varpi_{i_k}$. 

Let $\widetilde{b}, \widetilde{b}'\in \BBB(\infty)$. By the Laurent phenomenon, there exists $n, n'\in \N^N$ such that 
\[
\varphi^{\ast}(D(\bi)^{n})c^-(\widetilde{b})\in \sum_{m\in \N^N}\AAA\varphi^{\ast}(D(\bi)^m),\quad 
D(\bi)^{n'}c(\widetilde{b}')\in \sum_{m'\in \N^N}\AAA D(\bi)^{m'}.
\]

Then, for $\mu\in P$, $\widetilde{D}(\ubi^{\bullet})^{n\circ n'}(c^-(\mu-\wt\widetilde{b}')c^-(\widetilde{b})\otimes c(\widetilde{b}')c(\mu))$ belongs to
\begin{align*}
&\sum_{m, m'\in \N^N, \nu\in P}\AAA c^-(\nu+\mu^-_{m\circ m'}-\mu_{m\circ m'})\varphi^{\ast}(D(\bi)^m)\otimes D(\bi)^{m'} c(\nu)\\
&=\sum_{m, m'\in \N^N, \nu\in P}\AAA (c^-(\nu)\otimes c(\nu))\widetilde{D}(\ubi^{\bullet})^{m\circ m'}.
\end{align*}
Therefore, for $\mu\in P$, there exists $n''_1,\dots, n''_r \in \N$ such that 
\begin{align*}
&\left(\prod_{k=1}^r(c^-(\varpi_k)\otimes c(\varpi_k))^{n''_k}\right)\widetilde{D}(\ubi^{\bullet})^{n\circ n'}(c^-(\mu-\wt\widetilde{b}')c^-(\widetilde{b})\otimes c(\widetilde{b}')c(\mu))\\
&\in 
\sum_{m, m'\in \N^N, m''_1,\dots, m''_r \in \N}\AAA \left(\prod_{k=1}^r(c^-(\varpi_k)\otimes c(\varpi_k))^{m''_k}\right)\widetilde{D}(\ubi^{\bullet})^{m\circ m'},
\end{align*}
which proves our claim. 
\end{proof}
\begin{proof}[Proof of Theorem \ref{thm:upcluster-G-U}]
   Write $\ring:=R_q[G]_{\AAA}$ for brevity. First, by Proposition \ref{prop:prime_irreducible}(i), the irreducible elements of $\AAA$, denoted by $\Delta_j, j\in J$, are prime elements of $\ring$. 
   Furthermore, the localization $\ring[\Delta_j^{-1}\mid j\in J]$ equals $R_q[G]$. Therefore, Theorem \ref{thm:localization-intersection} and Proposition \ref{prop:prime_irreducible}(ii) 
    imply $\ring[\Delta_j^{-1}]_{j\in J}\cap \ring[E_{\bullet}^{-1}]=\ring=R_q[G]_\AAA$.  On the other hand, we have $\ring[\Delta_j^{-1}]_{j\in J}=R_q[G]=\bUpClAlg(\sd^\BZ)$. Thus Lemmas \ref{lem:cluster-intersect-integral} and \ref{lem:Laurent} imply $\ring[\Delta_j^{-1}]_{j\in J}\cap \ring[E_{\bullet}^{-1}]=\bUpClAlg(\sd^\BZ)\cap \LP(\sd^\BZ_{\bullet})_{\AAA}=\bUpClAlg(\sd^\BZ)_{\AAA}$. Thus, we obtain $R_q[G]_{\AAA}=\bUpClAlg(\sd^\BZ)_{\AAA}$.    
\end{proof}
\section{Cluster algebra structure on $R_q[G]$}\label{sec:q-minor-generate}
\subsection{Main results}
Theorem \ref{thm:up-cluster-k-G} confirms a cluster structure on the quantized coordinate ring $R_q[G]=\bUpClAlg(\sd^\BZ)$, where $\sd^\BZ$ denotes the BZ-seed associated with any signed reduced word for $(w_0,w_0)\in W\times W$. Hence, the Laurent phenomenon implies $\bClAlg(\sd^\BZ)\subset \bUpClAlg(\sd^\BZ)=R_q[G]$. In this section, we study the relations between $\bClAlg(\sd^\BZ)$ and $R_q[G]$. The following is our first main result in this direction. 
\begin{thm}\label{thm:cluster-G-A-U}
        If $G$ is not of type $F_4$, then $R_q[G]=\bClAlg(\sd^\BZ)$.
\end{thm}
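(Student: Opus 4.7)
The plan is to establish the nontrivial inclusion $R_q[G] \subset \bClAlg(\sd^\BZ)$; the reverse inclusion $\bClAlg(\sd^\BZ) \subset \bUpClAlg(\sd^\BZ) = R_q[G]$ is immediate from the quantum Laurent phenomenon together with Theorem \ref{thm:up-cluster-k-G}. The strategy is to exhibit a $\KK$-algebra generating set of $R_q[G]$ whose every member already lies in $\bClAlg(\sd^\BZ)$.

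The first step is to invoke Theorem \ref{thm:integral-coord-ring} (Theorem B). Under the hypothesis that $G$ is not of type $F_4$, it asserts that the generalized quantum minors generate $R_q[G]_\base$ as an $\base$-algebra for a suitable intermediate ring $\AAA \subseteq \base \subseteq \KK$. Extending scalars along $\base \hookrightarrow \KK$ (and using $R_q[G]_\base \otimes_\base \KK = R_q[G]$), I conclude that the set $\{\Delta_{w\lambda,u\lambda} \mid \lambda \in P^+,\; w,u \in W\}$ generates $R_q[G]$ as a $\KK$-algebra. Using the identity $\Delta_{w\lambda,u\lambda}\Delta_{w\mu,u\mu} = \Delta_{w(\lambda+\mu),u(\lambda+\mu)}$ from \eqref{DeDe}, it suffices to restrict attention to generalized quantum minors $\Delta_{w\varpi_i,u\varpi_i}$ attached to fundamental weights $\varpi_i$.

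The second step is to invoke the last assertion of Theorem \ref{thm:up-cluster-k-G}, which guarantees that every generalized quantum minor on a fundamental representation $V_q(\varpi_i)$ is a quantum cluster variable of some quantum seed in the mutation class of $\sd^\BZ$. By the definition of $\bClAlg(\sd^\BZ)$, each such element belongs to $\bClAlg(\sd^\BZ)$. Combining these two steps, every $\KK$-algebra generator of $R_q[G]$ furnished by Step~1 already lies in $\bClAlg(\sd^\BZ)$, which yields $R_q[G] \subset \bClAlg(\sd^\BZ)$, as required.

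The entire difficulty of this argument is thus packaged into the auxiliary result Theorem \ref{thm:integral-coord-ring}, whose quantum and integral formulation is the substantive new input; the cluster-algebraic step is purely formal once generation by minors is known. Correspondingly, the exclusion of type $F_4$ is inherited from the failure of the generation-by-generalized-minors statement in that case (mirroring the analogous failure of Oya's classical \cite[Theorem 3.2]{Oya}) rather than from any obstacle arising on the cluster-algebra side.
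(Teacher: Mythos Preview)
Your proof is correct and follows essentially the same approach as the paper: reduce to showing that the generalized quantum minors lie in $\bClAlg(\sd^\BZ)$ via \eqref{DeDe} and the last assertion of Theorem \ref{thm:up-cluster-k-G}, and then invoke Theorem \ref{thm:integral-coord-ring} to conclude that these minors generate $R_q[G]$ over $\KK$. The paper packages the deduction of Theorems \ref{thm:cluster-G-A-U}, \ref{thm:integral-cluster-G}, and \ref{thm:integral-cluster-G-k} together from Theorem \ref{thm:integral-coord-ring} in one paragraph, but the logical content for the $\KK$-level statement is exactly what you wrote.
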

Note that the classical version (i.e., over $\KK=\C$) of Theorem \ref{thm:cluster-G-A-U} was shown in \cite[Theorem 3.1]{Oya} under the same assumption on $G$.

We have shown the integrality of the equality $\bUpClAlg(\sd^\BZ)=R_q[G]$ in  Theorem \ref{thm:upcluster-G-U}. In the same spirit, we will study the integrality of the equality $R_q[G]=\bClAlg(\sd^\BZ)$ in Theorem \ref{thm:cluster-G-A-U} and obtain the following result. 

\begin{thm}\label{thm:integral-cluster-G}
Assume $G$ is not of type $E_8$ or $F_4$. We choose 
\[
\base=\begin{cases}
    \AAA_{1/2}=\AAA[(q^2+1)^{-1}]&\text{when $G$ is of type $G_2$},\\
    \AAA&\text{otherwise.}
\end{cases}
\]
Then the equality $R_q[G]=\bClAlg(\sd^\BZ)$ in Theorem \ref{thm:cluster-G-A-U} can be restricted to
\[
R_q[G]_{\base}=\bClAlg(\sd^\BZ)_{\base}.
\]
\end{thm}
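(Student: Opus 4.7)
The plan is to combine Theorem \ref{ti:integral-coord-ring}, which asserts that the generalized quantum minors generate $R_q[G]_{\base}$ as a $\base$-algebra, with the fact that every fundamental generalized quantum minor is a quantum cluster variable of a Berenstein--Zelevinsky seed, and the integral upper-cluster identification of Theorem \ref{thm:upcluster-G-U}.

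First I would handle the inclusion $\bClAlg(\sd^\BZ)_{\base} \subseteq R_q[G]_{\base}$. By Theorem \ref{thm:upcluster-G-U}, every quantum cluster variable $x_i(\seq \sd^\BZ)_\AAA$ already lies in $\bUpClAlg(\sd^\BZ)_\AAA = R_q[G]_\AAA$. Since $\AAA \subseteq \base$ and the specialization map $\AAA \to \base$ is the inclusion, the cluster variable $x_i(\seq \sd^\BZ)$ in $\LP(\sd^\BZ)_{\base}$ is the image of $x_i(\seq \sd^\BZ)_\AAA$ and therefore lies in $R_q[G]_\AAA \subseteq R_q[G]_{\base}$. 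As $\bClAlg(\sd^\BZ)_{\base}$ is generated over $\base$ by such cluster variables, it is contained in $R_q[G]_{\base}$.

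For the reverse inclusion $R_q[G]_{\base} \subseteq \bClAlg(\sd^\BZ)_{\base}$, Theorem \ref{ti:integral-coord-ring} reduces the problem to showing that every generalized quantum minor $\Delta_{w\lambda, u\lambda}$, with $\lambda \in P^+$ and $w, u \in W$, belongs to $\bClAlg(\sd^\BZ)_{\base}$. Writing $\lambda = \sum_i a_i \varpi_i$ with $a_i \in \Z_{\geq 0}$ and iterating \eqref{DeDe}, the minor $\Delta_{w\lambda, u\lambda}$ is (up to the prescribed ordering) a product of the fundamental generalized quantum minors $\Delta_{w\varpi_i, u\varpi_i}$. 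By Theorem \ref{thm:up-cluster-k-G}, each such fundamental minor is a cluster variable of a Berenstein--Zelevinsky seed reachable from $\sd^\BZ$ by a sequence of mutations; by the previous paragraph, such cluster variables lie in $\bClAlg(\sd^\BZ)_{\base}$, and therefore so do all their products. This completes the argument.

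The main obstacle is of course the establishment of Theorem \ref{ti:integral-coord-ring} itself, a quantum and integral refinement of the classical generation result \cite[Theorem 3.2]{Oya}. Its proof should rest on careful analysis of the Lusztig braid group action on the integral forms of the fundamental $U_q(\g)$-modules together with explicit rank-two calculations that dictate the exceptional base rings: inversion of $[2]_q = q^2 + 1$ in type $G_2$ and passage to $\KK$ in type $E_8$ (the latter being the reason $E_8$ must be excluded from the present statement). Once Theorem \ref{ti:integral-coord-ring} is in hand, the cluster-theoretic argument sketched above is essentially immediate.
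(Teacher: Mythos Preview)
Your proposal is correct and follows essentially the same route as the paper: one inclusion comes from the Laurent phenomenon (equivalently, Theorem \ref{thm:upcluster-G-U}), and the other from Theorem \ref{thm:integral-coord-ring} together with \eqref{DeDe} and the fact (Theorem \ref{thm:up-cluster-k-G}) that fundamental generalized quantum minors are cluster variables. Your speculation in the final paragraph about how Theorem \ref{thm:integral-coord-ring} is proved is off the mark---the paper's argument goes through tensor powers of a small module $M_q$ and Lusztig's based-module formalism rather than braid group actions---but since you use that theorem only as a black box, this does not affect the validity of your proof.
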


From now on, let $\kk$ be any commutative unital ring with a chosen $q^\Hf\in\kk^\times$. Then we have the unique specialization map $\alpha:\AAA\rightarrow\kk$ sending $q^\Hf$ to $q^\Hf\in\kk^\times$ (Definition \ref{def:specialization}). 
\begin{thm}\label{thm:integral-cluster-G-k}
Assume $G$ is not of type $F_4$. We choose 
\[
\base=
\begin{cases}
    \KK&\text{when $G$ is of type $E_8$},\\
    \AAA_{1/2}=\AAA[(q^2+1)^{-1}]&\text{when $G$ is of type $G_2$},\\
    \AAA&\text{otherwise.}
\end{cases}
\]
If $\alpha:\AAA\rightarrow\kk$ extends to a specialization map $\alpha:\base\rightarrow\kk$, then
\[
R_q[G]_{\kk}\simeq \bClAlg(\sd^\BZ)_{\kk}.
\]
\end{thm}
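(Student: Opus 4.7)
The plan is to deduce this by base change from the cluster algebra equalities over $\base$ that were established earlier in the paper. Specifically, when $G$ is not of type $E_8$ or $F_4$, Theorem \ref{thm:integral-cluster-G} gives $R_q[G]_\base = \bClAlg(\sd^\BZ)_\base$, and when $G$ is of type $E_8$ with $\base=\KK$, the same equality is furnished by Theorem \ref{thm:cluster-G-A-U}. In either case I intend to tensor this equality with $\kk$ over $\base$ and identify the resulting $\kk$-algebra with $\bClAlg(\sd^\BZ)_\kk$ sitting inside $R_q[G]_\kk\simeq \bUpClAlg(\sd^\BZ)_\kk$ (the latter isomorphism being Corollary \ref{cor:upcluster-G-U}).

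The first step uses the factorization $\alpha\colon \AAA\to \base\to \kk$ and associativity of tensor products to write
\[
R_q[G]_\kk = R_q[G]_\AAA \otimes_\AAA \kk \simeq R_q[G]_\base \otimes_\base \kk \simeq \bClAlg(\sd^\BZ)_\base \otimes_\base \kk.
\]
The second step constructs a surjective $\kk$-algebra homomorphism $\Phi\colon \bClAlg(\sd^\BZ)_\base\otimes_\base\kk \twoheadrightarrow \bClAlg(\sd^\BZ)_\kk$. By Definition \ref{def:general_cluster_variable} combined with the commutativity of mutations with specialization in \eqref{eq:mutation_specialization}, specialization carries each cluster variable of $\bClAlg(\sd^\BZ)_\base$ to the corresponding cluster variable of $\bClAlg(\sd^\BZ)_\kk$; since $\bClAlg(\sd^\BZ)_\base$ is generated as a $\base$-algebra by its cluster variables, this extends by the universal property of tensor product to the desired $\Phi$, whose surjectivity is built in by construction.

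To conclude, I would analyze the composition
\[
\bClAlg(\sd^\BZ)_\base\otimes_\base\kk \xrightarrow{\Phi} \bClAlg(\sd^\BZ)_\kk \hookrightarrow \bUpClAlg(\sd^\BZ)_\kk \simeq R_q[G]_\kk
\]
and check that it coincides with the composite isomorphism from the first step. The main obstacle I expect is precisely this compatibility check: under the identification $\kappa$ from Theorem \ref{thm:up-cluster-k-G}, each cluster variable $x_k$ corresponds to the generalized quantum minor $\Delta_{\gamma_k,\delta_k}$, which lives in the common integral form $R_q[G]_\AAA$ and is thus base-independent, but the commutativity of the full diagram of base-change maps and cluster identifications needs to be traced carefully. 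Once this is verified, the composition is an isomorphism, which forces the inclusion $\bClAlg(\sd^\BZ)_\kk \hookrightarrow R_q[G]_\kk$ to be surjective and hence an isomorphism, completing the proof.
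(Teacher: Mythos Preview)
Your approach is correct, but it is organized differently from the paper's. The paper does not deduce Theorem \ref{thm:integral-cluster-G-k} from Theorems \ref{thm:cluster-G-A-U} and \ref{thm:integral-cluster-G} by base change; instead it proves all three simultaneously as direct consequences of Theorem \ref{thm:integral-coord-ring}. Concretely, the paper argues directly over $\kk$: one already has $\bClAlg(\sd^\BZ)_\kk \subset \bUpClAlg(\sd^\BZ)_\kk \simeq R_q[G]_\kk$ by the Laurent phenomenon and Corollary \ref{cor:upcluster-G-U}; all generalized quantum minors lie in $\bClAlg(\sd^\BZ)_\kk$ because those associated with fundamental weights are cluster variables (Theorem \ref{thm:up-cluster-k-G}) and the rest are products of these by \eqref{DeDe}; and by Theorem \ref{thm:integral-coord-ring} the generalized quantum minors generate $R_q[G]_\base$, hence generate $R_q[G]_\kk = R_q[G]_\base \otimes_\base \kk$. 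This forces the inclusion to be an equality. Your route goes through the intermediate identity $R_q[G]_\base = \bClAlg(\sd^\BZ)_\base$ and a tensor-product compatibility check; it is valid, and the compatibility you flag as the main obstacle is indeed routine (both sides send $x_k$ to the image of $\Delta_{\gamma_k,\delta_k}$), but the paper's argument avoids that diagram-chase entirely and has the virtue of treating the three theorems with one uniform argument.
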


Since we already know $\bClAlg(\sd^\BZ)_{\kk}\subset \bUpClAlg(\sd^\BZ)_{\kk}\simeq R_q[G]_{\kk}$ by the Laurent phenomenon and Corollary \ref{cor:upcluster-G-U}, we can prove $\bClAlg(\sd^\BZ)_{\kk}\simeq R_q[G]_{\kk}$ by showing  $\bClAlg(\sd^\BZ)_{\kk}$ contains a generating set of $R_q[G]_{\kk}$ (through the isomorphism). The equality \eqref{DeDe} and Theorem \ref{thm:up-cluster-k-G} imply that all generalized quantum minors belong to $\bClAlg(\sd^\BZ)_{\kk}$. Therefore, Theorems \ref{thm:cluster-G-A-U}, \ref{thm:integral-cluster-G}, and \ref{thm:integral-cluster-G-k} are deduced from the following theorem, which might be of independent interest. 

\begin{theorem}\label{thm:integral-coord-ring}
Assume $G$ is not of type $F_4$. We choose $\base$ as in Theorem \ref{thm:integral-cluster-G-k}. Then the generalized quantum minors generate $R_q[G]_\base$ as an $\base$-algebra. 
\end{theorem}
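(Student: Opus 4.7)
The plan is to combine Proposition \ref{p:Lusgen} with a quantum, integral refinement of \cite[Theorem 3.2]{Oya}. By Proposition \ref{p:Lusgen}, $R_q[G]_\base$ is generated as an $\base$-algebra by
\[
\widetilde{\Gamma}=\{c^{V_q(\varpi_i)}(b^{\ast}, v_{w_0\varpi_i}),\ c^{V_q(\varpi_i)}(b^{\ast}, v_{\varpi_i})\mid i\in [1,r],\ b\in \BBB(\varpi_i)\},
\]
so it suffices to show that every element of $\widetilde{\Gamma}$ lies in the $\base$-subalgebra generated by the generalized quantum minors. If $b=v_{u\varpi_i}$ is an extremal weight vector, the corresponding matrix coefficient is a generalized quantum minor by definition. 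The substantive task is therefore to handle non-extremal canonical basis vectors, which exist only in non-minuscule fundamental representations; this is automatic in type $A_r$ (all fundamental representations are minuscule) and is the content of the argument in all other types.

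The mechanism is the product formula \eqref{eq:product}: a product $\Delta_{w_1\varpi_{j_1}, u_1\varpi_{j_1}}\cdots \Delta_{w_s\varpi_{j_s}, u_s\varpi_{j_s}}$ of generalized quantum minors realizes, via \eqref{eq:product}, a matrix coefficient of $V_q(\varpi_{j_1})\otimes\cdots\otimes V_q(\varpi_{j_s})$ paired against a tensor product of extremal weight vectors. Given a non-extremal $b\in \BBB(\varpi_i)$, one realizes $V_q(\varpi_i)$ as an irreducible summand of a suitable tensor product $V_q(\varpi_{j_1})\otimes\cdots\otimes V_q(\varpi_{j_s})$ and identifies $b$ with the projection of a tensor product of canonical basis vectors; the matrix coefficient $c^{V_q(\varpi_i)}(b^{\ast}, v_{\bullet})$ is then expressed as an $\base$-linear combination of products of minors, with coefficients determined by the quantum Clebsch--Gordan projection onto the $V_q(\varpi_i)$-isotypic component. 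The multiplicity coincidence \eqref{mult} ensures that the tensor-product realizations employed by Oya in the classical setting transfer directly to the quantum setting; what must then be controlled is the arithmetic of these projection scalars.

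The integrality analysis is the main obstacle and explains the type-dependent choice of $\base$. For types $A_r, B_r, C_r, D_r, E_6, E_7$, the projection scalars produced by Oya's realizations quantize to units of $\AAA$, giving $\base=\AAA$. For $G_2$, the non-minuscule fundamental representations $V_q(\varpi_1)$ and $V_q(\varpi_2)$ contain weight spaces of dimension greater than one whose canonical basis elements are related to extremal vectors by quantum Clebsch--Gordan coefficients involving $[2]_{q_1}$, equivalently $q^{-1}(q^2+1)$; this forces $\base=\AAA_{1/2}$. For $E_8$, iterating the tensor product projection is unavoidable to reach all canonical basis elements of, e.g., the quantum adjoint $V_q(\varpi_8)$, and produces denominators that are prime elements of $\AAA$ not invertible in $\AAA_{1/2}$, so one must take $\base=\KK$. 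The exclusion of type $F_4$ is as in \cite{Oya}: there is a specific non-minuscule fundamental representation for which no tensor-product realization allows the projection strategy to reach all canonical basis vectors, and this obstruction is inherited by the quantum setting via \eqref{mult}.

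In executing the plan I would first record the minuscule reduction for type $A_r$, then invoke \eqref{eq:product}, \eqref{mult} and the compatibility of canonical bases with tensor products to reduce each remaining type to an explicit finite list of quantum Clebsch--Gordan identities to be verified in each non-minuscule fundamental representation. For $G_2$ these identities can be checked directly in $V_q(\varpi_1)$ (dimension $7$) and $V_q(\varpi_2)$ (dimension $14$) after inverting $q^2+1$; for $E_8$ the verification only needs to land in $\KK$, at which point the classical argument of \cite{Oya} lifts unchanged; and for the remaining types one inspects the specific Clebsch--Gordan coefficients appearing in Oya's constructions and confirms that they are units of $\AAA$.
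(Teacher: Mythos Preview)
Your proposal takes a harder route than the paper and leaves a real gap. You start from Lusztig's generating set $\widetilde{\Gamma}$ (Proposition~\ref{p:Lusgen}), which involves \emph{all} fundamental representations $V_q(\varpi_i)$, and propose to handle each non-extremal canonical basis vector by realizing $V_q(\varpi_i)$ inside a tensor product of other fundamentals. But for types $B_r, C_r, E_6, E_7$ most fundamental representations are \emph{not} minuscule, so your scheme would require controlling projection scalars for many large modules; your assertion that ``the projection scalars produced by Oya's realizations quantize to units of $\AAA$'' is neither proved nor plausible as stated (Oya works over $\C$ and does not track integrality, and in any case Oya already reduces to the single module $M$ before any tensor-product analysis). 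For $E_8$ and $G_2$ there are no minuscule fundamental representations at all, so your inductive reduction has no base case as written.

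The paper's approach is to first prove (Theorem~\ref{t:integral-gen}) that $R_q[G]_\AAA$ is already generated over $\AAA$ by the matrix coefficients $\Gamma$ of the \emph{single} module $M_q$ of \eqref{eq:Mq}, using Lusztig's theory of based modules and their tensor products to pass from $\widetilde{\Gamma}$ down to $\Gamma$ with $\AAA$-coefficients. In the paper's Dynkin labeling, $M_q$ is minuscule for types $A,B,C,D,E_6,E_7$, so every element of $\Gamma$ is already a generalized quantum minor and there is nothing further to check---no Clebsch--Gordan denominators arise. Only for $G_2$ (where $M_q=V_q(\varpi_1)$ is the $7$-dimensional quasi-minuscule) and $E_8$ (where $M_q=V_q(\varpi_1)=V_q(\theta)$ is the quantum adjoint) is extra work needed; this is Theorem~\ref{t:EG-gamma}, handled by an explicit calculation for $G_2$ and an $R$-matrix argument on the quantum adjoint (Theorem~\ref{t:adj-minor}) for $E_8$. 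The reduction to $M_q$ is precisely the step your proposal is missing, and it is what makes the $\AAA$-integrality for $B,C,D,E_6,E_7$ automatic rather than a case-by-case verification.
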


Finally, in the classical $E_8$ case, we can show that generalized quantum minors generate $R[G]_{\Z}$ as a $\Z$-algebra (Theorem \ref{thm:integral-coord-ring-classical}). Hence, we obtain the following refinement of \cite[Theorem 3.1]{Oya}. 
\begin{theorem}\label{thm:integral-cluster-G-classical}
Assume $G$ is not of type $F_4$. We choose 
\[
\base=\begin{cases}
    \Z\left[\frac{1}{2}\right]&\text{when $G$ is of type $G_2$},\\
    \Z&\text{otherwise.}
\end{cases}
\]
Then we have $R[G]_\base\simeq \bClAlg(\sd^\BFZ)_{\base}=\bUpClAlg(\sd^\BFZ)_{\base}$. Moreover, If $\alpha:\AAA\rightarrow\kk$ extends to a specialization map $\alpha:\base\rightarrow\kk$, then  
\[
R[G]_{\kk}\simeq \bClAlg(\sd^\BFZ)_{\kk}=\bUpClAlg(\sd^\BFZ)_{\kk}.
\]
\end{theorem}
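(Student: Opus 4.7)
The plan is to prove this classical strengthening of Theorem \ref{thm:integral-cluster-G} by following the same pattern as the quantum case, using as the key new input the classical analog Theorem \ref{thm:integral-coord-ring-classical} of Theorem \ref{thm:integral-coord-ring}. The essential gain over the quantum case is that Theorem \ref{thm:integral-coord-ring-classical} is valid in type $E_8$ over $\Z$, whereas the quantum generation statement had to pass to $\KK$ in that case; once the generation statement is in hand, the rest of the argument runs in parallel with the quantum proof of Theorems \ref{thm:cluster-G-A-U}--\ref{thm:integral-cluster-G-k}.

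First I will establish the $\base$-isomorphism $R[G]_\base\simeq\bClAlg(\sd^\BFZ)_\base=\bUpClAlg(\sd^\BFZ)_\base$. By \eqref{DeDe}, every generalized minor $\Delta_{w\lambda,u\lambda}$ is a commutative product of fundamental generalized minors $\Delta_{w\varpi_i,u\varpi_i}$, and each of these is a cluster variable of a seed reachable from $\sd^\BFZ$ by the classical analog of Theorem \ref{thm:up-cluster-k-G} (this is precisely the Berenstein--Fomin--Zelevinsky upper cluster structure that underlies Theorem \ref{thm:up-cluster-k-G}). Hence every generalized minor lies in $\bClAlg(\sd^\BFZ)_\base$, and Theorem \ref{thm:integral-coord-ring-classical} then gives the inclusion $R[G]_\base\subseteq\bClAlg(\sd^\BFZ)_\base$. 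Combining with the Laurent phenomenon $\bClAlg(\sd^\BFZ)_\base\subseteq\bUpClAlg(\sd^\BFZ)_\base$ and the classical case of Corollary \ref{cor:upcluster-G-U} applied with $\kk=\base$, which identifies $\bUpClAlg(\sd^\BFZ)_\base$ with $R[G]_\base$, the three algebras must coincide.

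For the specialization statement I apply $-\otimes_\base \kk$ to the $\base$-isomorphism just established. The upper cluster algebra side transfers via (the classical case of) Theorem \ref{thm:specialize_bUpClAlg}, which is itself a consequence of the existence of an $M^{\trop}$-pointed basis, the injective-reachability of $\sd^\BFZ$, and the optimizability of its frozen vertices, applied through Corollaries \ref{cor:specialize_trop_basis} and \ref{cor:specialization-bUpClAlg}. The cluster algebra side transfers because the commutativity \eqref{eq:mutation_specialization} shows that specialization of coefficients intertwines with cluster mutations, so $\bClAlg(\sd^\BFZ)_\base\otimes_\base\kk\simeq\bClAlg(\sd^\BFZ)_\kk$. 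Finally, $R[G]_\kk=R[G]_\Z\otimes_\Z\kk=R[G]_\base\otimes_\base\kk$ by construction of the integral form, and chaining these isomorphisms over $\base$ with $-\otimes_\base\kk$ yields the claim.

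The main obstacle I expect is the $E_8$ case of Theorem \ref{thm:integral-coord-ring-classical}, namely proving that the generalized minors already generate $R[G]_\Z$ over $\Z$. This is a genuine sharpening of the quantum generation statement for $E_8$, where denominators cannot be cleared inside $\AAA$; the classical argument should exploit the fact that the Chevalley-style multiplication identities among classical generalized minors stay within the $\Z$-span of products of minors, so that one may express the Lusztig-type generators of $R[G]_\Z$ from Proposition \ref{p:Lusgen} as integral polynomials in minors without having to invert any $\Z$-scalar. In type $G_2$ the analogous identities require $2$ to be inverted, reflecting why the base must be $\Z[\tfrac{1}{2}]$ there, exactly as inverting $[2]_q$ was needed in the quantum $G_2$ case.
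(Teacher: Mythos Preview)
Your approach to the first claim (over $\base$) is correct and matches the paper's strategy for type $E_8$: use Corollary \ref{cor:upcluster-G-U} for $R[G]_\base\simeq\bUpClAlg(\sd^\BFZ)_\base$, then Theorem \ref{thm:integral-coord-ring-classical} together with the fact that fundamental generalized minors are cluster variables to squeeze $\bClAlg$ in between. The paper is slightly more economical for the remaining types, simply observing that these are already special cases of Theorem \ref{thm:integral-cluster-G-k} (at $q^{1/2}=1$ the extension from $\AAA_{1/2}$ exists precisely when $2\in\kk^\times$, which recovers the $\Z[\tfrac12]$ condition for $G_2$), but your uniform treatment is equally valid.

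There is a small gap in your specialization argument for general $\kk$. The commutativity \eqref{eq:mutation_specialization} only tells you that cluster variables specialize to cluster variables; this yields a surjection $\bClAlg(\sd^\BFZ)_\base\otimes_\base\kk\twoheadrightarrow\bClAlg(\sd^\BFZ)_\kk$, not an isomorphism (cf.\ Remark \ref{rem:image_specialization}, where exactly this question is flagged as open in general). You can close this gap, but not with \eqref{eq:mutation_specialization} alone: either combine the equality $\bClAlg_\base=\bUpClAlg_\base$ you have already established with Corollary \ref{cor:specialization-bUpClAlg} to identify both tensor products with $\bUpClAlg_\kk$, or---more directly, and as the paper does---simply rerun the sandwich argument over $\kk$. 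Since the generalized minors generate $R[G]_\base$ they also generate $R[G]_\kk=R[G]_\base\otimes_\base\kk$; under the isomorphism of Corollary \ref{cor:upcluster-G-U} they correspond to cluster variables and hence lie in $\bClAlg(\sd^\BFZ)_\kk$, forcing $\bClAlg(\sd^\BFZ)_\kk=\bUpClAlg(\sd^\BFZ)_\kk\simeq R[G]_\kk$.
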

\begin{proof}
When $G$ is not of type $E_8$, the result is the special case of Theorem \ref{thm:integral-cluster-G-k}. In the case of type $E_8$, the results follows from Corollary \ref{cor:upcluster-G-U} and Theorem \ref{thm:integral-coord-ring-classical}.
\end{proof}
The rest of this paper is devoted to the proof of Theorems \ref{thm:integral-coord-ring} and \ref{thm:integral-coord-ring-classical}.
\subsection{Generators of $R_q[G]$}
Define the $U_q(\g)$-module $M_q$ as follows\footnote{Note that we do not exclude the case of  type $F_4$ here.}: 
\begin{align}
    M_q&\coloneqq 
    \begin{cases}
        V_q(\varpi_{r-1})\oplus V_q(\varpi_{r})&\text{if $\g$ is of type $D_r$,}\\
        V_q(\varpi_1)&\text{otherwise.}
    \end{cases}\label{eq:Mq}
\end{align}

Write the integral form of $M_q$ as $M_\AAA$, and set $M\coloneqq M_\AAA|_{q=1}$. If we regard $M$ as a $\g$-module (hence, $G$-module), then the matrix coefficients of $M$ generates the coordinate ring $R[G]_{\C}=\C[G]$. See \cite[Propositions 2.1 and 2.2]{IOS} and the argument in \cite[Proof of Theorem 3.2]{Oya}. Therefore, every finite dimensional irreducible $\g$-module appears as a direct summand of a certain iterated tensor product of $M$. Thus, by \eqref{mult}, every finite dimensional type one irreducible $U_q(\g)$-module can be obtained as a direct summand of a certain iterated tensor product of $M_q$. Hence, $R_q[G]$ is generated by the matrix coefficients of $M_q$ as a $\KK$-algebra. 

Denote by $\BBB_M$ the canonical basis of $M_q$ in the sense of \cite{Lus:intro}. Define a basis $\BBB_M^{\ast}=\{b^{\ast}\mid b\in \BBB_M\}$ of $M_q^{\ast}$ by $\langle b^{\ast}, b'\rangle=\delta_{b, b'}$ for $b, b'\in \BBB_M$. Set 
\[
\Gamma\coloneqq \{c^{M_q}(b^{\ast}, b')\mid b, b'\in \BBB_M\}\subset R_q[G]_\AAA. 
\]
As mentioned above, $\Gamma$ generates $R_q[G]$ as a $\KK$-algebra. The following theorem is a refinement of this statement. 
\begin{theorem}\label{t:integral-gen}
The set $\Gamma$ generates $R_q[G]_\AAA$ as an $\AAA$-algebra. 
\end{theorem}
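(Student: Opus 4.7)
The plan is to show that each generator of $R_q[G]_\AAA$ provided by Proposition \ref{p:Lusgen} — namely $c^{V_q(\varpi_i)}(b^*, v_{\varpi_i})$ and $c^{V_q(\varpi_i)}(b^*, v_{w_0\varpi_i})$ for $i\in [1,r]$ and $b \in \BBB(\varpi_i)$ — lies in the $\AAA$-subalgebra of $R_q[G]$ generated by $\Gamma$. Iterating the product formula \eqref{eq:product}, this subalgebra coincides with the $\AAA$-span of matrix coefficients $c^{M_q^{\otimes n}}(B^*, B')$, where $B, B'$ range over the tensor basis $\BBB_M^{\otimes n}$. By Lusztig's construction of canonical bases on tensor products (\cite[Chapter 27]{Lus:intro}), the change-of-basis matrix between $\BBB_M^{\otimes n}$ and the canonical basis of $M_q^{\otimes n}$ has entries in $\AAA$; hence the same $\AAA$-subalgebra equals the $\AAA$-span of $c^{M_q^{\otimes n}}(B^*, B')$ with $B, B'$ running through the canonical (respectively dual canonical) basis of $M_q^{\otimes n}$.

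The crux is to realize each $V_q(\varpi_i)$ integrally as a direct summand of some tensor power $M_q^{\otimes n_i}$. By \eqref{mult} and the classical statement \cite[Propositions 2.1 and 2.2]{IOS} that matrix coefficients of $M$ generate $\C[G]$, every finite dimensional irreducible type one $U_q(\g)$-module appears as a summand of some $M_q^{\otimes n}$. I fix such an $n_i$ together with a specific copy of $V_q(\varpi_i)$ inside $M_q^{\otimes n_i}$, determined by choosing a canonical basis element of $M_q^{\otimes n_i}$ of weight $\varpi_i$ that is a highest weight vector. Lusztig's based module theorem then guarantees that this copy is a based sub-module: its intrinsic canonical basis $\BBB(\varpi_i)$ is identified with a subset of the canonical basis of $M_q^{\otimes n_i}$, so $V_q(\varpi_i)_\AAA$ sits as an $\AAA$-module direct summand of $(M_q^{\otimes n_i})_\AAA$, with complement spanned by the remaining canonical basis elements.

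Under this embedding $\iota\colon V_q(\varpi_i)_\AAA \hookrightarrow (M_q^{\otimes n_i})_\AAA$, the extremal vectors $v_{\varpi_i}$ and $v_{w_0\varpi_i}$, being elements of $\BBB(\varpi_i)$, become canonical basis elements of $M_q^{\otimes n_i}$. Moreover, for each $b \in \BBB(\varpi_i)$, the dual canonical basis element $\widetilde{b^*}\in (M_q^{\otimes n_i})\spcheck_\AAA$ labelled by $\iota(b)$ restricts to $b^*$ on $V_q(\varpi_i)$ and vanishes on the complement, so that
\[
c^{V_q(\varpi_i)}(b^*, v) \;=\; c^{M_q^{\otimes n_i}}(\widetilde{b^*}, \iota(v)) \quad \text{for } v \in \{v_{\varpi_i}, v_{w_0\varpi_i}\}.
\]
By the first paragraph, the right-hand side is an $\AAA$-linear combination of products of elements of $\Gamma$, which places every generator of Proposition \ref{p:Lusgen} inside the desired subalgebra and completes the proof.

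The main obstacle is the integrality of the based-module splitting: we need the intrinsic canonical basis of the chosen summand $V_q(\varpi_i) \subset M_q^{\otimes n_i}$ to coincide with a subset of the canonical basis of the ambient tensor product, so that the integral forms split as $\AAA$-modules. This is precisely the content of Lusztig's compatibility theorem for based sub-modules (which relies on the bar-involution and the standard lattice in tensor products); once invoked, the rest of the argument is bookkeeping via \eqref{eq:product}. In contrast to Theorems \ref{thm:cluster-G-A-U}--\ref{thm:integral-coord-ring}, where type-specific obstructions force the exclusion of $F_4$ and special treatment of $G_2$ and $E_8$, this statement holds uniformly across all types.
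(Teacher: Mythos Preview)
Your overall strategy matches the paper's: reduce to Proposition~\ref{p:Lusgen}, then use Lusztig's based-module theory on $M_q^{\otimes n}$ together with the $\AAA$-integral change of basis between $\BBB_M^{\otimes n}$ and the canonical basis $\BBB_n$ (your first paragraph is the paper's property~(iii)). The gap is in your second paragraph. Lusztig's results in \cite[Chapter~27]{Lus:intro} do \emph{not} assert that a simple constituent $V_q(\varpi_i)$ of $M_q^{\otimes n}$ is a based \emph{submodule} with $\BBB(\varpi_i)$ identified with a subset of $\BBB_n$. What is proved is a based \emph{filtration}: each $M_q^{\otimes n}[\geq\lambda]$ is spanned by a subset $\BBB_n[\geq\lambda]\subset\BBB_n$, and only the \emph{subquotient} $M_q^{\otimes n}[\geq\lambda]/M_q^{\otimes n}[>\lambda]$ is isomorphic, as a based module, to a direct sum of copies of $(V_q(\lambda),\BBB(\lambda))$. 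In particular, a canonical basis element $\widetilde{b}\in\BBB_n$ of weight $\varpi_i$ that projects to a highest weight vector in the quotient need not satisfy $X_j^+\widetilde{b}=0$ in $M_q^{\otimes n}$ (the obstruction lives in $M_q^{\otimes n}[>\varpi_i]$), and even when it does, the submodule it generates is in general not spanned by elements of $\BBB_n$. So your embedding $\iota$ with the stated properties need not exist.

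The paper repairs this precisely by working at the subquotient level: for $b_1,b_2\in\BBB(\lambda)$ one chooses lifts $\widetilde{b}_1,\widetilde{b}_2\in\BBB_n[\geq\lambda]\setminus\BBB_n[>\lambda]$ under the projection $\pi_{n,\lambda}$ and verifies $c^{M_q^{\otimes n}}(\widetilde{b}_1^{\,*},\widetilde{b}_2)=c^{V_q(\lambda)}(b_1^*,b_2)$. This holds because $U_q(\g)\cdot\widetilde{b}_2\subset M_q^{\otimes n}[\geq\lambda]$ and the dual canonical basis element $\widetilde{b}_1^{\,*}$ vanishes on all of $\BBB_n[>\lambda]$, so the pairing only sees the image in the quotient, where the based-module isomorphism with $(V_q(\lambda),\BBB(\lambda))$ applies. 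With this identity in hand, your first and third paragraphs go through unchanged; the correction is to replace ``based submodule'' by ``based subquotient'' and to interpret $\iota$ not as a module embedding but as the lift $b\mapsto\widetilde{b}$ of basis elements.
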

\begin{proof}
    Denote by $R_{\AAA}$ the $\AAA$-subalgebra of $R_q[G]_\AAA$ generated by $\Gamma$. We shall show that $R_{\AAA}=R_q[G]_\AAA$.   
    Thanks to Proposition \ref{p:Lusgen}, it suffices to show that the set $\widetilde{\Gamma}$ in Proposition \ref{p:Lusgen} belongs to $R_{\AAA}$.  
    
    The module $M_q$ together with $\BBB_M$ forms a based module in the sense of \cite[Chapter 27]{Lus:intro}. For $k\in \Z_{>0}$, consider the $k$-th tensor product $(M_q, \BBB_M)^{\otimes k}=(M_q^{\otimes k}, \BBB_{k})$
    of the based module $(M_q, \BBB_M)$, which is again a based module, \cite[Section 27.3]{Lus:intro}. For $\lambda\in P^+$ and a type one $U_q(\g)$-module $V$, denote by $V[\lambda]$ the sum of the simple $U_q(\g)$-submodules of $V$ which are isomorphic to $V_q(\lambda)$, and set 
    \[
    V[\geq\! \lambda]\coloneqq \bigoplus_{\lambda'\geq  \lambda}V[\lambda'],\qquad V[>\!\lambda]\coloneqq \bigoplus_{\lambda'> \lambda}V[\lambda'].
    \]
    We have the following properties.
    \begin{itemize}
        \item[(i)] $\BBB_{k}[\geq\!\lambda]\coloneqq \BBB_{k}\cap (M_q^{\otimes k})[\geq\!\lambda]$ is a basis of $(M_q^{\otimes k})[\geq\!\lambda]$, and $\BBB_{k}[>\!\lambda]\coloneqq \BBB_{k}\cap (M_q^{\otimes k})[>\! \lambda]$ is a basis of $(M_q^{\otimes k})[>\!\lambda]$ for $\lambda\in P^+$ (\cite[Proposition 27.1.8]{Lus:intro}).
        \item[(ii)]  Let $\pi_{k, \lambda}\colon (M_q^{\otimes k})[\geq\!\lambda]\to (M_q^{\otimes k})[\geq\! \lambda]/(M_q^{\otimes k})[>\!\lambda]$ be a projection. Then $((M_q^{\otimes k})[\geq\!\lambda]/(M_q^{\otimes k})[>\!\lambda], \pi_{k, \lambda}(\BBB_{k}[\geq\!\lambda]\setminus \BBB_{k}[>\!\lambda]))$ is a based module. Moreover, this based module is isomorphic to the direct sum of $\dim (M_q^{\otimes k})[\lambda]_{\lambda}$ copies of $(V_q(\lambda), \BBB(\lambda))$ (\cite[Section 27.1.4, Proposition 27.1.7]{Lus:intro}). 
        \item[(iii)]  Any element of  $\BBB_{k}$ is an $\AAA$-linear combination of elements of $\BBB_M^{\otimes k}\coloneqq \{b'_1\otimes \cdots \otimes b'_k\mid b'_1,\dots, b'_k\in \BBB_M\}$. Conversely, any element of $\BBB_M^{\otimes k}$ is an $\AAA$-linear combination of elements of $\BBB_{k}$ (\cite[Theorem 27.3.2]{Lus:intro}). 
    \end{itemize} 
    
Let $\lambda\in P^+$ be a dominant integral weight such that $(M_q^{\otimes k})[\lambda]\neq 0$. By (ii), we can take a direct summand $V\subset \left((M_q^{\otimes k})[\geq \lambda]/(M_q^{\otimes k})[> \lambda]\right)$ and a subset $\BBB\subset \left(V\cap \pi_{k, \lambda}(\BBB_{k}[\geq\lambda]\setminus \BBB_{k}[>\lambda])\right)$ so that $(V, \BBB)$ is isomorphic to $(V_q(\lambda), \BBB(\lambda))$ as a based module. For $b_1, b_2\in \BBB(\lambda)$, we take the corresponding elements of $\BBB$, which are again written as $b_1, b_2$, respectively. Then, there exist $\widetilde{b}_i\in \BBB_{k}[\geq\lambda]\setminus \BBB_{k}[>\lambda]$ ($i=1,2$) such that 
\[
\pi_{k, \lambda}(\widetilde{b}_i)=b_i.
\]
Let $\BBB_k^{\ast}=\{\widetilde{b}^{\ast}\mid \widetilde{b}\in \BBB_k\}$ be the dual basis of $\BBB_k$ in $(M_q^{\otimes k})^{\ast}$, where $\langle \widetilde{b}^{\ast}, \widetilde{b}'\rangle=\delta_{\widetilde{b}, \widetilde{b}'}$ for $\widetilde{b}, \widetilde{b}'\in \BBB_k$. Then, by (ii),  we have 
\begin{equation}
c^{M_q^{\otimes k}}(\widetilde{b}_1^{\ast}, \widetilde{b}_2)=c^{V_q(\lambda)}(b_1^{\ast}, b_2).\label{eq:tilde}
\end{equation}
By (iii), $\widetilde{b}_1^{\ast}\in \BBB_{k}^{\ast}$ is an $\AAA$-linear combination of the elements of $\{(b'_k)^{\ast}\otimes \cdots \otimes (b'_1)^{\ast}\mid b'_1,\dots, b'_k\in \BBB_{M}\}$, and $\widetilde{b}_2$ is an $\AAA$-linear combination of the elements of $\BBB_M^{\otimes k}$.  Therefore, the left hand side of \eqref{eq:tilde} belongs to $R_{\AAA}$ by \eqref{eq:product}. 

The discussion before Theorem \ref{t:integral-gen} shows that, for all $\lambda\in P^+$, there exists $k\in \Z_{>0}$ such that $(M_q^{\otimes k})[\lambda]\neq 0$. Moreover, the elements of $\widetilde{\Gamma}$ are of the form of the right-hand side of \eqref{eq:tilde}. Therefore, we conclude that $\widetilde{\Gamma}\subset R_{\AAA}$. 
\end{proof}
When $G$ is of type $A_n$, $B_n$, $C_n$, $D_n$, $E_6$, or $E_7$, the module $M_q$ is a direct sum of minuscule representations, namely $\BBB_M$ consists of extremal weight vectors. Therefore, in this case, $\Gamma$ consists of generalized quantum minors. In the case of type $E_8$ and $G_2$, we have the following result. 
\begin{theorem}\label{t:EG-gamma}
When $G$ is of type $E_8$ and $G_2$, the elements of $\Gamma$ are given by polynomials in the generalized quantum minors with coefficients in $\KK$. Moreover, when $G$ is of type $G_2$, these coefficients can be taken from $
\AAA_{1/2}\coloneqq \AAA[[2]_q^{-1}]=\AAA[(q^2+1)^{-1}]$.
\end{theorem}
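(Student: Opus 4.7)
The plan is to split into extremal and non-extremal cases. In both types, $M_q = V_q(\varpi_1)$, and every element of $\BBB_M$ is either an extremal weight vector $v_{w\varpi_1}$ with $w \in W$ or a canonical basis element of the zero weight space $V_q(\varpi_1)_0$, which is one-dimensional for $G_2$ (a single vector $b_0$) and eight-dimensional for $E_8$. Matrix coefficients between two extremal basis vectors coincide by definition with generalized quantum minors, so the task reduces to the finitely many matrix coefficients $c^{M_q}(b^*, b')$ with at least one of $b, b'$ in the zero weight subspace.

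The main step is to realize each zero-weight canonical basis element as a projection of a simple tensor of extremal weight vectors. Fix the $U_q(\g)$-equivariant projection $p\colon V_q(\varpi_1) \otimes V_q(\varpi_1) \twoheadrightarrow V_q(\varpi_1)$, unique up to scalar since $V_q(\varpi_1)$ has multiplicity one in the tensor square. Its classical limit is the octonion cross product in type $G_2$ and the Lie bracket in type $E_8$, both antisymmetric; using this together with a careful choice of extremal weights $\mu$, one obtains nonzero scalars $s_\mu \in \KK$ such that $p(v_\mu \otimes v_{-\mu}) = s_\mu b_0$ in the $G_2$ case (and an analogous invertible system of eight relations determining the eight zero-weight canonical basis vectors in the $E_8$ case). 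Combining the adjunction $c^{M_q}(\xi, p(u)) = c^{M_q \otimes M_q}(p^*(\xi), u)$, which is a direct consequence of the $U_q(\g)$-equivariance of $p$, with the product formula \eqref{eq:product} and expanding $p^*(\xi) \in V_q(\varpi_1)^* \otimes V_q(\varpi_1)^*$ in the tensor product of dual canonical bases, I would rewrite $s_\mu \cdot c^{M_q}(b^*, b_0)$ as a finite $\KK$-linear combination of products of matrix coefficients of $M_q$. The factors whose dual argument is extremal are generalized quantum minors; the remaining factors, of the form $c^{M_q}(b_0^*, v_{w\varpi_1})$ for extremal $w\varpi_1$, I would handle by the dual construction using the isomorphism $V_q(\varpi_1) \cong V_q(\varpi_1)^*$ (valid since $-w_0 = \mathrm{id}$ for types $E_8$ and $G_2$). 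This produces a finite-dimensional linear system whose nondegeneracy in the classical limit yields the desired polynomial expressions over $\KK$.

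The integrality refinement for $G_2$ is the main obstacle and reduces to bookkeeping of the quantum-integer denominators in the scalars $s_\mu$ and in the expansion of $p^*(\xi)$. The crucial structural input is that, as a module over $U_{q_1}(\mathfrak{sl}_2)$ with $q_1 = q$ (where $\alpha_1$ is the short simple root), $V_q(\varpi_1)$ decomposes as one three-dimensional summand plus two two-dimensional summands, with $b_0$ the middle weight vector of the three-dimensional summand. The only divided-power scalar that enters the Clebsch--Gordan coefficients at weight zero is therefore $[2]_{q_1}! = [2]_q = q+q^{-1} = (q^2+1)/q$; solving the above finite linear system over the localization $\AAA[[2]_q^{-1}] = \AAA_{1/2}$ then produces the claimed integral polynomial expressions. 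For $E_8$ no such tracking is required by the statement and the same argument works verbatim over $\KK$ for each of the eight zero-weight canonical basis vectors, which are indexed naturally by the simple roots and span the zero-weight subspace of the quantum adjoint representation.
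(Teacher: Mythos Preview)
Your broad strategy---using the multiplication map $p\colon M_q\otimes M_q\to M_q$ together with the adjunction $c^{M_q}(\xi,p(u))=c^{M_q\otimes M_q}(p^*(\xi),u)$---matches the paper's, but there is a genuine gap at the step where you handle $c^{M_q}(b_0^*,v_{w\varpi_1})$. When you expand $p^*(b_0^*)$ (respectively $p^*(t_i^*)$ in type $E_8$) in the tensor square of the dual canonical basis, the weight-zero component contains a nonzero term $a\cdot b_0^*\otimes b_0^*$ (respectively $\sum_{k,l}a_{kl}^{(i)}t_k^*\otimes t_l^*$). Classically these terms vanish because $[\h,\h]=0$ (see Remark~\ref{r:A=O-classical}), but in the quantum case they do not. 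Hence if you compute $c^{M_q}(b_0^*,v_{w\varpi_1})$ by writing $v_{w\varpi_1}$ as $p$ of a tensor of extremal vectors, the right-hand side contains products $c^{M_q}(b_0^*,v_{\mu_1})\cdot c^{M_q}(b_0^*,v_{\mu_2})$: the relations among your unknowns are quadratic, not linear, and your appeal to ``nondegeneracy in the classical limit'' fails precisely because the quadratic coefficient vanishes at $q=1$. Your alternative ``dual construction using $V_q(\varpi_1)\cong V_q(\varpi_1)^*$'' does not break the circularity either: under that isomorphism $c^{M_q}(b_0^*,v_{w\varpi_1})$ is sent (up to scalar) to a coefficient of the form $c^{M_q}(\xi_{w'\varpi_1},b_0)$, which is exactly the type you were reducing from in step~1.

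The paper resolves this circularity by two separate mechanisms that your proposal lacks. In type $G_2$ it computes the embedding $\iota\colon M_q\hookrightarrow M_q^{\otimes 2}$ explicitly and, whenever a multiple of $b_{(0,0)}\otimes b_{(0,0)}$ must be paired against an extremal dual tensor, substitutes using the explicit generator $\widetilde{\mathbf{1}}$ of the trivial submodule of $M_q^{\otimes 2}$: since $c^{M_q^{\otimes 2}}(\xi,\widetilde{\mathbf{1}})$ is a scalar, this rewrites $\frac{1}{[2]_q}b_{(0,0)}\otimes b_{(0,0)}$ as an $\AAA$-combination of extremal tensors plus a constant, and the explicit $\frac{1}{[2]_q}$ here is the actual source of the $\AAA_{1/2}$ coefficients. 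In type $E_8$ (Theorem~\ref{t:adj-minor}) the paper instead replaces $X_{\beta_1}\otimes X_{\beta_2}$ by its $R$-matrix antisymmetrization $X_{\beta_1}\otimes X_{\beta_2}-\mathcal{R}(X_{\beta_1}\otimes X_{\beta_2})$; the point of Lemma~\ref{l:symm} is that $(a_{kl}^{(i)})$ is symmetric, so the $\sum a_{kl}^{(i)}t_k^*\otimes t_l^*$ contribution cancels against its swapped version when paired with the antisymmetrized tensor.
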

\begin{proof}
In the case of type $E_8$, $\varpi_1$ equals the highest root $\theta$ of $\g$. Hence,  $M_q=V_q(\varpi_1)=V_q(\theta)$ is the quantum adjoint representation. In this case, the theorem follows from Theorem \ref{t:adj-minor} below. Note that the set $\Upsilon_{\neq 0}$ in Theorem \ref{t:adj-minor} consists of generalized quantum minors when $G$ is simply-laced). 

In the following, we give a proof of the theorem in the case of type $G_2$. Denote by $R'_{\AAA_{1/2}}$ the $\AAA_{1/2}$-subalgebra of $R_q[G]_{\AAA_{1/2}}$ generated by the generalized quantum minors. We need to show that $\Gamma\subset R'_{\AAA_{1/2}}$. 
    In the case of type $G_2$, $M_q=V_q(\varpi_1)$ is a $7$-dimensional quasi-minuscule $U_q(\g)$-module, see \cite[Definition 3.4]{Oya} for the definition of quasi-minuscule $U_q(\g)$-module. Write $b_{(1,0)}=v_{\varpi_1}$, and set 
    \begin{align*}
        b_{(-1,1)}&\coloneqq X_1^-. b_{(1,0)},&
        b_{(2,-1)}&\coloneqq X_2^-. b_{(-1,1)},&
        b_{(0,0)}&\coloneqq X_1^-. b_{(2,-1)},\\
        b_{(-2,1)}&\coloneqq \frac{1}{[2]_{q}}X_1^-. b_{(0,0)},&
        b_{(1,-1)}&\coloneqq X_2^-. b_{(-2,1)},&
        b_{(-1,0)}&\coloneqq X_1^-. b_{(1,-1)}.
    \end{align*}
    These seven vectors form the canonical basis $\BBB_M=\BBB(\varpi_1)$ of $M_q$. Note that $\wt b_{(m, n)}=m\varpi_1+n\varpi_2$. 
    
    We already know that $c^{M_q}(b^{\ast}, b')$ is a generalized quantum minor if $b, b'\in \BBB_M\setminus\{b_{(0,0)}\}$. Therefore, it suffices to show that $c^{M_q}(b^{\ast}, b_{(0,0)})$ and  $c^{M_q}(b_{(0,0)}^{\ast}, b)$ belong to $R'_{\AAA_{1/2}}$ for all $b\in \BBB_M$. 
    
    Direct calculation shows that  
    \[
    \widetilde{b}_{(1,0)}:= b_{(1,0)}\otimes b_{(0,0)}- q[2]_qb_{(-1,1)}\otimes b_{(2,-1)}+q^4[2]_qb_{(2,-1)}\otimes b_{(-1,1)}-q^6b_{(0,0)}\otimes b_{(1,0)}
    \]
    satisfies $X_1^+. \widetilde{b}_{(1,0)}=X_2^+. \widetilde{b}_{(1,0)}=0$ and $\wt \widetilde{b}_{(1,0)}=\varpi_1$ in $M_q^{\otimes 2}$. Hence there exists an injective $U_q(\g)$-module homomorphism $\iota\colon M_q\to M_q^{\otimes 2}$ satisfying $\iota(b_{(1,0)})=\widetilde{b}_{(1,0)}$. We write $\iota(b_{(m,n)})$ as $\widetilde{b}_{(m,n)}$ for $b_{(m,n)}\in \BBB_M$. Then we have 
    \begin{align*}
        \widetilde{b}_{(-1,1)}
        &=[2]_qb_{(1,0)}\otimes b_{(-2,1)}-q^2b_{(-1,1)}\otimes b_{(0,0)}\\
        &\phantom{==}+q^4b_{(0,0)}\otimes b_{(-1,1)}-q^5[2]_qb_{(-2,1)}\otimes b_{(1,0)},\\
        \widetilde{b}_{(2,-1)}&=[2]_qb_{(1,0)}\otimes b_{(1,-1)}
        -q^2b_{(2,-1)}\otimes b_{(0,0)}\\
        &\phantom{==}+q^4b_{(0,0)}\otimes b_{(2,-1)}-q^5[2]_qb_{(1,-1)}\otimes b_{(1,0)},\\
        \widetilde{b}_{(0,0)}
        &=[2]_qb_{(1,0)}\otimes b_{(-1,0)}
        +q^{-1}[2]_qb_{(-1,1)}\otimes b_{(1,-1)}\\
        &\phantom{==}-q^2[2]_qb_{(2,-1)}\otimes b_{(-2,1)}
        +q^3(q-q^{-1})b_{(0,0)}\otimes b_{(0,0)}\\
        &\phantom{==}+q^2[2]_qb_{(-2,1)}\otimes b_{(2,-1)}
        -q^5[2]_qb_{(1,-1)}\otimes b_{(-1,1)}
        -q^4[2]_qb_{(-1,0)}\otimes b_{(1,0)},
        \\
        \widetilde{b}_{(-2,1)}
        &=[2]_qb_{(-1,1)}\otimes b_{(-1,0)}
        -q^2b_{(0,0)}\otimes b_{(-2,1)}\\
        &\phantom{==}+q^4b_{(-2,1)}\otimes b_{(0,0)}-q^5[2]_qb_{(-1,0)}\otimes b_{(-1,1)},\\
        \widetilde{b}_{(1,-1)}
        &=[2]_qb_{(2,-1)}\otimes b_{(-1,0)}
        -q^2b_{(0,0)}\otimes b_{(1,-1)}\\
     &\phantom{==}+q^4b_{(1,-1)}\otimes b_{(0,0)}
        -q^5[2]_qb_{(-1,0)}\otimes b_{(2,-1)},\\
        \widetilde{b}_{(-1,0)}
        &=b_{(0,0)}\otimes b_{(-1,0)}
        -q[2]_qb_{(-2,1)}\otimes b_{(1,-1)}\\
        &\phantom{==}+q^4[2]_qb_{(1,-1)}\otimes b_{(-2,1)}
        -q^6b_{(-1,0)}\otimes b_{(0,0)}.        
    \end{align*}
    First, we show $c^{M_q}(b_{(1,0)}^{\ast}, b_{(0,0)})\in R'_{\AAA_{1/2}}$. We have 
    \[
    c^{M_q}(b_{(1,0)}^{\ast}, b_{(0,0)})=-\frac{1}{q[2]_q}c^{M_q^{\otimes 2}}(b_{(2,-1)}^{\ast}\otimes b_{(1,-1)}^{\ast}, \widetilde{b}_{(0,0)}).
    \]
    Indeed, if we write $x. b_{(0,0)}=\sum_{b\in \BBB_M} a_b(x) b$ with $a_b(x)\in \KK$ for $x\in U_q(\g)$, then $x. \widetilde{b}_{(0,0)}=\sum_{b\in \BBB_M} a_b(x) \widetilde{b}$. 
 In addition, $b_{(-1,1)}\otimes b_{(2,-1)}$ does not appear in the expansion of $\widetilde{b}_{(m,n)}$ for $(m,n)\neq (1,0)$. Hence,
\begin{align*}
\langle c^{M_q}(b_{(1,0)}^{\ast}, b_{(0,0)}), x\rangle&=a_{b_{(1,0)}}(x)=\langle -\frac{1}{q[2]_q}c^{M_q^{\otimes 2}}(b_{(2,-1)}^{\ast}\otimes b_{(-1,1)}^{\ast}, \widetilde{b}_{(0,0)}), x\rangle.
\end{align*}
Here recall Remark \ref{r:dual}. Therefore, 
\begin{align*}
        c^{M_q}(b_{(1,0)}^{\ast}, b_{(0,0)})
        &=-\frac{1}{q[2]_q}c^{M_q^{\otimes 2}}(b_{(2,-1)}^{\ast}\otimes b_{(-1,1)}^{\ast}, \widetilde{b}_{(0,0)})\\
        &=-q^{-1}c^{M_q^{\otimes 2}}(b_{(2,-1)}^{\ast}\otimes b_{(-1,1)}^{\ast}, b_{(1,0)}\otimes b_{(-1,0)})\\
        &\phantom{==}-q^{-2}c^{M_q^{\otimes 2}}(b_{(2,-1)}^{\ast}\otimes b_{(-1,1)}^{\ast}, 
        b_{(-1,1)}\otimes b_{(1,-1)})\\
        &\phantom{==}+qc^{M_q^{\otimes 2}}(b_{(2,-1)}^{\ast}\otimes b_{(-1,1)}^{\ast}, 
        b_{(2,-1)}\otimes b_{(-2,1)})\\
        &\phantom{==}-q^2(q-q^{-1})c^{M_q^{\otimes 2}}(b_{(2,-1)}^{\ast}\otimes b_{(-1,1)}^{\ast}, 
        \frac{1}{[2]_q}b_{(0,0)}\otimes b_{(0,0)})\\
        &\phantom{==}-qc^{M_q^{\otimes 2}}(b_{(2,-1)}^{\ast}\otimes b_{(-1,1)}^{\ast}, b_{(-2,1)}\otimes b_{(2,-1)})\\
        &\phantom{==}+q^4c^{M_q^{\otimes 2}}(b_{(2,-1)}^{\ast}\otimes b_{(-1,1)}^{\ast}, b_{(1,-1)}\otimes b_{(-1,1)})\\
        &\phantom{==}+q^3c^{M_q^{\otimes 2}}(b_{(2,-1)}^{\ast}\otimes b_{(-1,1)}^{\ast}, b_{(-1,0)}\otimes b_{(1,0)}).
\end{align*}
Hence, by \eqref{eq:product}, it suffices to show that 
\[
c^{M_q^{\otimes 2}}(b_{(2,-1)}^{\ast}\otimes b_{(-1,1)}^{\ast}, 
        \frac{1}{[2]_q}b_{(0,0)}\otimes b_{(0,0)})\in R'_{\AAA_{1/2}}. 
\]
A direct calculation shows that 
\begin{align*}
\widetilde{\mathbf{1}} 
        &=q^{-6}b_{(1,0)}\otimes b_{(-1,0)}
        -q^{-5}b_{(-1,1)}\otimes b_{(1,-1)}\\
        &\phantom{==}+q^{-2}b_{(2,-1)}\otimes b_{(-2,1)}
        -\frac{1}{[2]_q}b_{(0,0)}\otimes b_{(0,0)}\\
        &\phantom{==}+b_{(-2,1)}\otimes b_{(2,-1)}
        -q^3b_{(1,-1)}\otimes b_{(-1,1)}
        +q^{4}b_{(-1,0)}\otimes b_{(1,0)}
\end{align*}
satisfies $X_1^+.\widetilde{\mathbf{1}}=X_2^+.\widetilde{\mathbf{1}}=0$ and $\wt \widetilde{\mathbf{1}}=0$ in $M_q^{\otimes 2}$. Therefore, $\widetilde{\mathbf{1}}$ spans a trivial $U_q(\g)$-submodule of $M_q^{\otimes 2}$. Thus,
we have 
\[
c^{M_q^{\otimes 2}}(b_{(2,-1)}^{\ast}\otimes b_{(-1,1)}^{\ast}, 
        \widetilde{\mathbf{1}})=0.
\]
Hence,
\begin{align*}
    &c^{M_q^{\otimes 2}}(b_{(2,-1)}^{\ast}\otimes b_{(-1,1)}^{\ast}, 
        \frac{1}{[2]_q}b_{(0,0)}\otimes b_{(0,0)})\\
        &=
        c^{M_q^{\otimes 2}}(b_{(2,-1)}^{\ast}\otimes b_{(-1,1)}^{\ast}, q^{-6}b_{(1,0)}\otimes b_{(-1,0)}
        -q^{-5}b_{(-1,1)}\otimes b_{(1,-1)}\\
        &\phantom{==}+q^{-2}b_{(2,-1)}\otimes b_{(-2,1)}+b_{(-2,1)}\otimes b_{(2,-1)}
        -q^3b_{(1,-1)}\otimes b_{(-1,1)}
        +q^{4}b_{(-1,0)}\otimes b_{(1,0)}),
\end{align*}
and the right-hand side belongs to $R'_{\AAA_{1/2}}$ by \eqref{eq:product}. This shows $c^{M_q}(b_{(1,0)}^{\ast}, b_{(0,0)})\in R'_{\AAA_{1/2}}$. One can show $c_{b^{\ast}, b_{(0,0)}}\in R'_{\AAA_{1/2}}$ for $b\in \BBB_M\setminus \{b_{(0,0)}\}$ in the same manner, where $b_{(2,-1)}^{\ast}\otimes b_{(-1,1)}^{\ast}$ is replaced by any chosen term $b_{(m_2,n_2)}^{\ast}\otimes b_{(m_1,n_1)}^{\ast}$ such that $(m_1,n_1),(m_2,n_2)\neq (0,0)$ and $b_{(m_1,n_1)}\otimes b_{(m_2,n_2)}$ appears in the expansion of $\widetilde{b}$. 

Next, we prove $c^{M_q}(b_{(0,0)}^{\ast}, b_{(0,0)})\in R'_{\AAA_{1/2}}$. 
By the same argument as above, 
    \[
    c^{M_q}(b_{(0,0)}^{\ast}, b_{(0,0)})=\frac{1}{[2]_q}c^{M_q^{\otimes 2}}(b_{(-1,0)}^{\ast}\otimes b_{(1,0)}^{\ast}, \widetilde{b}_{(0,0)}),
    \]
    and we need to show that 
    \[
    c^{M_q^{\otimes 2}}(b_{(-1,0)}^{\ast}\otimes b_{(1,0)}^{\ast}, 
        \frac{1}{[2]_q}b_{(0,0)}\otimes b_{(0,0)})\in R'_{\AAA_{1/2}}. 
    \]
    We have 
    \[
    c^{M_q^{\otimes 2}}(b_{(-1,0)}^{\ast}\otimes b_{(1,0)}^{\ast}, 
        \widetilde{\mathbf{1}})=q^{-6}.
    \]
    Hence, 
    \begin{align*}
    &c^{M_q^{\otimes 2}}(b_{(-1,0)}^{\ast}\otimes b_{(1,0)}^{\ast}, 
        \frac{1}{[2]_q}b_{(0,0)}\otimes b_{(0,0)})\\
        &=
        c^{M_q^{\otimes 2}}(b_{(-1,0)}^{\ast}\otimes b_{(1,0)}^{\ast}, q^{-6}b_{(1,0)}\otimes b_{(-1,0)}
        -q^{-5}b_{(-1,1)}\otimes b_{(1,-1)}\\
        &\phantom{==}+q^{-2}b_{(2,-1)}\otimes b_{(-2,1)}+b_{(-2,1)}\otimes b_{(2,-1)}
        -q^3b_{(1,-1)}\otimes b_{(-1,1)}
        +q^{4}b_{(-1,0)}\otimes b_{(1,0)})\\
        &\phantom{==}-q^{-6},
\end{align*}
and the right-hand side belongs to $R'_{\AAA_{1/2}}$. This shows $c^{M_q}(b_{(0,0)}^{\ast}, b_{(0,0)})\in R'_{\AAA_{1/2}}$. 

Finally, we prove $c^{M_q}(b_{(0,0)}^{\ast}, b_{(1,0)})\in R'_{\AAA_{1/2}}$. 
By the same argument as above, 
    \begin{align*}
        &c^{M_q}(b_{(0,0)}^{\ast}, b_{(1,0)})\\
        &=\frac{1}{[2]_q}c^{M_q^{\otimes 2}}(b_{(-1,0)}^{\ast}\otimes b_{(1,0)}^{\ast}, \widetilde{b}_{(1,0)})\\
        &=\frac{1}{[2]_q}c^{M_q^{\otimes 2}}(b_{(-1,0)}^{\ast}\otimes b_{(1,0)}^{\ast},  b_{(1,0)}\otimes b_{(0,0)}- q[2]_qb_{(-1,1)}\otimes b_{(2,-1)}\\
        &\phantom{==}+q^4[2]_qb_{(2,-1)}\otimes b_{(-1,1)}-q^6b_{(0,0)}\otimes b_{(1,0)}),
    \end{align*}
    and we have already shown that the right-hand side belongs to $R'_{\AAA_{1/2}}$. 
    One can show $c_{b_{(0,0)}^{\ast}, b}\in R'_{\AAA_{1/2}}$ for $b\in \BBB_M\setminus \{b_{(0,0)}\}$ in the same manner. This completes the proof of the theorem in the $G_2$ case. 
\end{proof}
\begin{rem}
If we consider the parallel argument in the classical case, $b_{(0,0)}\otimes b_{(0,0)}$ does not occur in the expansion of $\widetilde{b}_{(0,0)}$. Thus, we do not need the argument involving the trivial $U_q(\g)$-submodule of $M_q^{\otimes 2}$. Hence, the proof is much simpler and it is the argument adopted in the paper \cite{Oya}. Such an increase in terms is a major difficulty in the quantum case. Indeed, also in the case of type $E_8$, the argument becomes intricate in the quantum setting due to this problem. See Remark \ref{r:A=O-classical} and Theorem \ref{t:adj-minor-classical}.
\end{rem}
\begin{proof}[{Proof of Theorem \ref{thm:integral-coord-ring}}]
    The assertion immediately follows from Theorems \ref{t:integral-gen} and \ref{t:EG-gamma} together with the remark just before Theorem \ref{t:EG-gamma}.
\end{proof}
\subsection{Matrix coefficients of the quantum adjoint representation}
Throughout this subsection, we assume that the rank $r$ of $\g$ is greater than $1$.
Recall from Section \ref{2.2} that $\theta$ denotes the highest root of $\g$. In this subsection, we prove the following. 
\begin{thm}\label{t:adj-minor} Assume that the rank $r$ of $\g$ is greater than $1$. The elements of 
    \[
    \Upsilon_0\coloneqq \{c^{V_q(\theta)}(b^{\ast}, b')\mid b, b'\in \BBB(\theta),\ \wt b=0\ \text{or}\ \wt b'=0\}
    \]
    can be written as polynomials in 
    \[
    \Upsilon_{\neq 0}\coloneqq \{c^{V_q(\theta)}(b^{\ast}, b')\mid b, b'\in \BBB(\theta),\ \wt b\neq 0, \wt b'\neq 0\}.
    \]
    with coefficients in $\KK$. 
\end{thm}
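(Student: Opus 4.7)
The plan is to mimic the $G_2$ argument from the proof of Theorem \ref{t:EG-gamma}, with $V_q(\theta)$ playing the role of $M_q$. Since the classical Lie bracket $\g\otimes\g\twoheadrightarrow \g$ is a nonzero $\g$-equivariant map for simple $\g$ of rank $>1$, the multiplicity of $V(\theta)$ in $V(\theta)^{\otimes 2}$ is at least one; by \eqref{mult}, the same holds quantumly, so I can fix a $U_q(\g)$-module embedding $\iota\colon V_q(\theta)\hookrightarrow V_q(\theta)^{\otimes 2}$ and set $\widetilde{b}\coloneqq \iota(b)$ for $b\in\BBB(\theta)$.

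For each $b\in\BBB(\theta)$, the key step is to identify a \emph{signature tensor}: a pair $(v_1^{(b)},v_2^{(b)})\in\BBB(\theta)^2$ with $\wt v_1^{(b)},\wt v_2^{(b)}\neq 0$ and a scalar $\lambda_b\in \KK^\times$, such that the coefficient of $v_1^{(b)}\otimes v_2^{(b)}$ in $\widetilde{b}$ is $\lambda_b$ while the coefficient of $v_1^{(b)}\otimes v_2^{(b)}$ in every $\widetilde{b'}$ with $b'\in\BBB(\theta)\setminus\{b\}$ vanishes. For $b$ of nonzero weight this is automatic, since the weight spaces of $V_q(\theta)$ away from $0$ are one-dimensional, so any pure tensor of weight $\wt b$ appearing in $\widetilde b$ will do. For $b$ of weight zero it requires separating the $r$ weight-zero canonical basis vectors. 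Once the signature tensors are chosen, the same reasoning as in the proof of Theorem \ref{t:EG-gamma} yields
\begin{equation*}
    c^{V_q(\theta)}(b^*,v) = \lambda_b^{-1}\, c^{V_q(\theta)^{\otimes 2}}\!\left((v_2^{(b)})^*\otimes (v_1^{(b)})^*,\,\widetilde v\right), \qquad \forall v\in V_q(\theta).
\end{equation*}
Taking $v=b_0\in \BBB(\theta)$ of weight zero and expanding $\widetilde{b_0}$ into pure tensors, the coproduct formula \eqref{eq:product} converts the right-hand side into a $\KK$-linear combination of products $c^{V_q(\theta)}((v_1^{(b)})^*,u_1)\cdot c^{V_q(\theta)}((v_2^{(b)})^*,u_2)$ over weight-zero pure tensors $u_1\otimes u_2$ in $\widetilde{b_0}$.

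These pure tensors split into two types: (i) those of the form $v_\alpha\otimes v_{-\alpha}$ with $\alpha\in\Phi$, which produce products already in $\Upsilon_{\neq 0}\cdot\Upsilon_{\neq 0}$; and (ii) those of the form $u_1\otimes u_2$ with $\wt u_1=\wt u_2=0$, which produce products in $\Upsilon_{\neq 0}\cdot\Upsilon_0$ that still need to be eliminated. The type-(ii) contributions would be handled by exploiting the $U_q(\g)$-invariant $\widetilde{\mathbf{1}}\in (V_q(\theta)^{\otimes 2})^{U_q(\g)}$, whose existence follows from the self-duality of $V_q(\theta)$. Since $x.\widetilde{\mathbf{1}}=\varepsilon(x)\widetilde{\mathbf{1}}$ for all $x\in U_q(\g)$, we have $c^{V_q(\theta)^{\otimes 2}}(\xi,\widetilde{\mathbf{1}})=\langle \xi,\widetilde{\mathbf{1}}\rangle\cdot 1$ in $R_q[G]$ for every $\xi$. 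Expanding $\widetilde{\mathbf{1}}$ in pure tensors and applying \eqref{eq:product} converts this invariance into a $\KK$-linear relation among products of matrix coefficients, to be used iteratively to rewrite type-(ii) products as $\KK$-linear combinations of type-(i) products plus multiples of $1$. The symmetric case $\wt b=0$, $\wt b'\neq 0$ is handled analogously by repeating the argument on the first slot.

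The hardest part will be that $\dim (V_q(\theta)^{\otimes 2})^{U_q(\g)}=1$ whereas there are $r^2$ independent weight-zero pure tensors to eliminate, so a single invariant does not provide enough relations when $r\geq 2$. I anticipate having to supplement the argument with either invariants in higher tensor powers $V_q(\theta)^{\otimes k}$ (whose dimension grows), or with several choices of signature tensors for the same $b$, in order to generate a sufficiently large linear system for the type-(ii) products. A secondary difficulty, which explains why the theorem is restricted to rank $>1$, is constructing signature tensors for the $r$ weight-zero basis elements $b_0\in \BBB(\theta)$: this amounts to verifying that the projections of $\iota(b_0^{(1)}),\ldots,\iota(b_0^{(r)})$ onto suitable weight subspaces of $V_q(\theta)^{\otimes 2}$ remain linearly independent. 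This should follow from the injectivity of $\iota$ once the chosen ambient weight subspaces are large enough, but would have to be verified, most likely by a case analysis matching the explicit $G_2$ and $E_8$ computations of Theorem \ref{t:EG-gamma}.
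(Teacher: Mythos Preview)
Your setup is right: the map $\iota=m^\ast$ dual to a quantum Lie bracket $m\colon V_q(\theta)^{\otimes 2}\to V_q(\theta)$ is exactly what the paper uses, and your identification of the obstruction---the type-(ii) terms $t_k^\ast\otimes t_l^\ast$ in $\iota(t_i^\ast)$---is accurate. But your proposed elimination mechanism has a genuine gap, which you yourself flag: the one-dimensional invariant $\widetilde{\mathbf{1}}$ gives only one relation against $r^2$ unknowns, and appealing to higher tensor invariants or multiple signature tensors is speculative and would likely require a type-by-type verification that defeats the purpose of a uniform proof.

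The paper bypasses this difficulty with a different idea. Write $\iota(t_i^\ast)=\sum_{k,l}a_{kl}^{(i)}t_k^\ast\otimes t_l^\ast+(\text{nonzero-weight terms})$. A short computation (Lemma~\ref{l:symm}) shows the matrix $A^{(i)}=(a_{kl}^{(i)})$ is \emph{symmetric}. Now choose $(\beta_1^\circ,\beta_2^\circ)$ so that $\beta_1^\circ,\beta_2^\circ\neq 0$, $\beta_1^\circ+\beta_2^\circ=\beta$, and $\beta_1^\circ$ is maximal for the partial order (Lemma~\ref{l:psibeta}); this maximality forces the universal $R$-matrix to act simply: $\mathcal{R}(X_{\beta_1^\circ}\otimes X_{\beta_2^\circ})=q^{-(\beta_1^\circ,\beta_2^\circ)}X_{\beta_2^\circ}\otimes X_{\beta_1^\circ}$. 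Since $m(X_{\beta_1^\circ}\otimes X_{\beta_2^\circ}-\mathcal{R}(X_{\beta_1^\circ}\otimes X_{\beta_2^\circ}))$ is a nonzero multiple of $X_\beta$ (the classical bracket is antisymmetric), one can express $c^{V_q(\theta)}(t_i^\ast,X_\beta)$ as a pairing of $\iota(t_i^\ast)$ against this antisymmetrized vector. The zero-weight contribution is then
\[
\Bigl\langle \sum_{k,l}a_{kl}^{(i)}t_k^\ast\otimes t_l^\ast,\; X_{\beta_1^\circ}\otimes X_{\beta_2^\circ}-q^{-(\beta_1^\circ,\beta_2^\circ)}X_{\beta_2^\circ}\otimes X_{\beta_1^\circ}\Bigr\rangle_{\Delta(x)},
\]
and using $\mathcal{R}$-intertwining plus the symmetry $a_{kl}^{(i)}=a_{lk}^{(i)}$ this collapses to a sum over $\Theta_\alpha$-corrections with $\alpha\neq 0$, all of which land in $\Upsilon_{\neq 0}\cdot\Upsilon_{\neq 0}$. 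So the type-(ii) problem is solved not by producing extra relations but by making the troublesome symmetric part cancel against itself via antisymmetrization. The cases $\beta=\pm\theta$ and the remaining elements $c^{V_q(\theta)}(b^\ast,t_i)$ then follow by easy reduction.
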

As explained in the beginning of the proof of Theorem \ref{t:EG-gamma}, this theorem implies Theorem \ref{t:EG-gamma} in the case of type $E_8$. 

The structure of the based module $(V_q(\theta), \BBB(\theta))$ is explicitly described as follows. 

\begin{theorem}[{\cite[Theorem 0.6]{Lusztig-qadjoint}}]\label{t:qadjoint-str} The quantum adjoint representation $V_q(\theta)$ has a unique $\KK$-basis $\BBB=\{X_{\alpha}\mid \alpha\in \Phi\}\sqcup \{t_i\mid i\in [1,r]\}$ satisfying the following properties: 
\begin{itemize}
    \item[(i)] $X_{\theta}=v_{\theta}$.
    \item[(ii)] $X_{\alpha}\in V_q(\theta)_{\alpha}$ for $\alpha\in \Phi$, and $t_i\in V_q(\theta)_{0}$ for $i\in [1,r]$.
    \item[(iii)] For $i\in [1, r]$, the actions of $X_i^+$ and $X_i^-$ are given as follows: 
    \begin{itemize}
        \item[] $X_i^+. X_{\alpha}=[q_{i, \alpha}+1]_iX_{\alpha+\alpha_i}$ if $\alpha\in \Phi$ and $p_{i, \alpha}>0$, 
        \item[] $X_i^+. X_{-\alpha_i}=t_i$, 
        \item[] $X_i^+. X_{\alpha}=0$ if $\alpha\in \Phi$, $p_{i, \alpha}=0$, and $\alpha\neq -\alpha_i$, 
        \item[] $X_i^+. t_j=[|c_{ji}|]_jX_{\alpha_i}$ if $j\in [1, r]$,         
        \item[] $X_i^-. X_{\alpha}=[p_{i, \alpha}+1]_iX_{\alpha-\alpha_i}$ if $\alpha\in \Phi$ and $q_{i, \alpha}>0$, 
        \item[] $X_i^-. X_{\alpha_i}=t_i$, 
        \item[] $X_i^-. X_{\alpha}=0$ if $\alpha\in \Phi$, $q_{i, \alpha}=0$, and $\alpha\neq \alpha_i$, 
        \item[] $X_i^-. t_j=[|c_{ji}|]_jX_{-\alpha_i}$ if $j\in [1, r]$,         
    \end{itemize}
     where 
     \begin{align*}
      p_{i, \alpha}&:=\max\big(\{0\}\cup \{p\in \Z_{>0}\mid \alpha+k\alpha_i\in \Phi\text{ for }k=1,\dots, p\}\big),\\
      q_{i, \alpha}&:=\max\big(\{0\}\cup \{q\in \Z_{>0}\mid \alpha-k\alpha_i\in \Phi\text{ for }k=1,\dots, q\}\big). 
     \end{align*}
\end{itemize}
     Moreover, $\BBB=\BBB(\theta)$. 
\end{theorem}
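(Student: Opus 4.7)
The plan is to exploit an equivariant embedding $\iota\colon V_q(\theta)\hookrightarrow V_q(\theta)^{\otimes 2}$, whose existence for rank $r\geq 2$ follows from \eqref{mult} together with the classical fact that the adjoint appears in its own tensor square as the dual of the Lie bracket. By definition of matrix coefficients,
\[
c^{V_q(\theta)}(b^{\ast},v)=c^{V_q(\theta)^{\otimes 2}}(\xi,\iota(v))
\]
for any $\xi\in(V_q(\theta)^{\otimes 2})^{\ast}$ with $\iota^{\ast}(\xi)=b^{\ast}$; once $\xi$ is expanded as a sum of simple tensors $\xi_2\otimes\xi_1$, formula \eqref{eq:product} rewrites the right hand side as a sum of products of two $V_q(\theta)$-matrix coefficients.

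First I would fix $\iota$ by determining $\iota(v_\theta)$ (unique up to scalar by the highest-weight condition) and propagating it via the Chevalley generators through the coproduct, using the explicit action formulas of Theorem~\ref{t:qadjoint-str}. Since $t_i=X_i^-\cdot X_{\alpha_i}$, this yields
\[
\iota(t_i)=\sum_{\alpha\in\Phi}a_{i,\alpha}\,X_\alpha\otimes X_{-\alpha}+\sum_{j,k\in[1,r]}a_{i,jk}\,t_j\otimes t_k,
\]
with explicit scalars $a_{i,\alpha},a_{i,jk}\in\KK$. For a matrix coefficient $c(X_\beta^{\ast},t_i)\in\Upsilon_0$ with $\beta\in\Phi$, I would choose $\xi$ as a sum of simple tensors $\xi_2\otimes\xi_1$ with both factors of nonzero weight summing to $-\beta$; the $X_\alpha\otimes X_{-\alpha}$-block then contributes only products in $\Upsilon_{\neq 0}\cdot\Upsilon_{\neq 0}$. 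To handle the $t_j\otimes t_k$-block I would fix the (up to scalar) unique $U_q(\g)$-invariant $\tilde{\mathbf{1}}\in V_q(\theta)^{\otimes 2}$, whose existence follows from the self-duality of $V_q(\theta)$, and expand it as $\sum_\alpha b_\alpha X_\alpha\otimes X_{-\alpha}+\sum_{j,k}b_{jk}\,t_j\otimes t_k$. Since $c^{V_q(\theta)^{\otimes 2}}(\xi,\tilde{\mathbf{1}})=\varepsilon\cdot\langle\xi,\tilde{\mathbf{1}}\rangle\in\KK$ for every $\xi$, subtracting scalar multiples of $\tilde{\mathbf{1}}$ from $\iota(t_i)$ changes $c(X_\beta^{\ast},t_i)$ only by a constant.

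The main technical difficulty, which is the crux of the argument, is verifying that after such subtractions the residual $t\otimes t$-contribution can be completely absorbed -- possibly after iterating with higher tensor powers of $V_q(\theta)$ or by exploiting additional summands of $V_q(\theta)^{\otimes 2}$ whose multiplicities are forced to match the classical ones by \eqref{mult} -- into pure $X_\alpha\otimes X_{-\alpha}$ contributions plus scalars. This is essentially a quantization of the antisymmetry of the dual Lie bracket together with $[\h,\h]=0$, both of which must be reconstructed from the explicit formulas of Theorem~\ref{t:qadjoint-str}. Once the mixed case $c(X_\beta^{\ast},t_i)$ is settled, the remaining matrix coefficients $c(t_j^{\ast},t_i)\in\Upsilon_0$ are handled by the same procedure: lift $t_j^{\ast}$ along $\iota^{\ast}$ to a sum of simple tensors of the form $X_\gamma^{\ast}\otimes X_{-\gamma}^{\ast}$ (possible because $0=\gamma+(-\gamma)$ for any $\gamma\in\Phi$), and then apply the mixed-case reduction to each resulting factor $c(X_{\pm\gamma}^{\ast},t_k)$ to obtain the desired polynomial expression in $\Upsilon_{\neq 0}$ with coefficients in $\KK$.
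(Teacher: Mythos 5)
Your proposal does not address the statement at hand. The statement is Lusztig's structure theorem for the quantum adjoint representation: the existence and \emph{uniqueness} of a $\KK$-basis $\BBB=\{X_{\alpha}\mid \alpha\in\Phi\}\sqcup\{t_i\mid i\in[1,r]\}$ of $V_q(\theta)$ normalized by $X_\theta=v_\theta$, with the listed weight properties and the explicit formulas for the action of $X_i^{\pm}$ in terms of the integers $p_{i,\alpha},q_{i,\alpha}$, together with the identification $\BBB=\BBB(\theta)$ with the canonical basis. In the paper this is not proved at all; it is quoted from Lusztig's paper on the quantum adjoint representation (Theorem 0.6 there). What you sketch instead is an argument for Theorem \ref{t:adj-minor} — that the weight-zero matrix coefficients $\Upsilon_0$ lie in the algebra generated by $\Upsilon_{\neq 0}$ — and in doing so you explicitly \emph{use} the action formulas of Theorem \ref{t:qadjoint-str} as an input ("propagating it via the Chevalley generators \dots using the explicit action formulas of Theorem~\ref{t:qadjoint-str}"). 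An argument that presupposes the statement cannot serve as its proof.

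Concretely, nothing in your proposal establishes any of the three things the theorem asserts: (a) that a basis with these exact structure constants (e.g.\ $X_i^+.X_\alpha=[q_{i,\alpha}+1]_iX_{\alpha+\alpha_i}$, $X_i^\pm.t_j=[|c_{ji}|]_jX_{\pm\alpha_i}$) exists; (b) that it is unique once one fixes $X_\theta=v_\theta$; and (c) that it coincides with the canonical basis $\BBB(\theta)$. A genuine proof would have to construct the vectors $X_\alpha$ and $t_i$ inside $V_q(\theta)$ (for instance via the canonical basis or the braid group action on extremal vectors), verify the quantum-integer coefficients by rank-two $\mathfrak{sl}_2$-string computations, and compare with the characterization of the canonical basis — none of which appears in your sketch. (Separately, even as a proof of Theorem \ref{t:adj-minor} your route differs from the paper's, which avoids computing the embedding $\iota$ explicitly and instead uses the $R$-matrix together with the symmetry of the matrix $A_\iota^{(i)}$ of Lemma \ref{l:symm}; but that is beside the point here.)
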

Throughout this subsection, we will use the notation in Theorem \ref{t:qadjoint-str} for the elements of $\BBB(\theta)$, and its dual basis will be written as $\BBB(\theta)^{\ast}=\{X_{\alpha}^{\ast}\mid \alpha\in \Phi\}\sqcup \{t_i^{\ast}\mid i\in [1,r]\}$ as before. 
By the formula in Theorem \ref{t:qadjoint-str}, we have
\begin{align}
    X_i^+. t_j^{\ast}&=-\delta_{ij}X_{-\alpha_i}^{\ast},&
    X_i^+. X_{\alpha_j}^{\ast}&=-\delta_{ij}q_j^{-2}\sum_{k\in [1,r]}[|c_{kj}|]_kt_k^{\ast}\label{eq:action-ex}
\end{align}
for $i, j\in [1,r]$. 

Before proving Theorem \ref{t:adj-minor}, we prepare two technical lemmas (Lemmas \ref{l:symm} and \ref{l:psibeta}). 

Since the Lie bracket gives a $\g$-module homomorphism $V(\theta)\otimes V(\theta)\to V(\theta)$, we have $[V_q(\theta)\otimes V_q(\theta) : V_q(\theta)]\geq 1$ by \eqref{mult}. We fix a $U_q(\g)$-module homomorphism $m\colon V_q(\theta)\otimes V_q(\theta) \to V_q(\theta)$. By taking dual, we have $\iota\coloneqq m^{\ast}\colon V_q(\theta)^{\ast}\to V_q(\theta)^{\ast}\otimes V_q(\theta)^{\ast}$. For $i\in [1,r]$, we write 
\begin{multline*}
\iota(t_i^{\ast})=\sum_{k,l\in [1,r]}a_{kl}^{(i)}t_k^{\ast}\otimes t_l^{\ast}+\sum_{k\in [1,r]}b_k^{(i)}X_{\alpha_k}^{\ast}\otimes X_{-\alpha_k}^{\ast}
\\
+\sum_{k\in [1,r]}c_k^{(i)}X_{-\alpha_k}^{\ast}\otimes X_{\alpha_k}^{\ast}+(\mbox{other terms}),
\end{multline*}
where $(\mbox{other terms}) \in \bigoplus_{\beta\in \Phi\setminus (\Pi\cup -\Pi)}(V_q(\theta)^{\ast})_{-\beta}\otimes (V_q(\theta)^{\ast})_{\beta}$. Set
\[
A_{\iota}^{(i)}\coloneqq (a_{kl}^{(i)})_{k, l\in [1,r]}\in \Mat_{r\times r}(\KK). 
\]
\begin{lem}\label{l:symm}
The matrix $A_{\iota}^{(i)}$ is symmetric for all $i\in [1,r]$.  
\end{lem}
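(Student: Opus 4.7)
The plan is to reduce the symmetry of $A_{\iota}^{(i)}$ to an identity about the multiplication map $m$ itself, and then verify that identity using the explicit action rules in Theorem \ref{t:qadjoint-str}.

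First, by the duality identification in Remark \ref{r:dual}, the pairing
\[
\langle \iota(t_i^{\ast}), v_1\otimes v_2\rangle=\langle t_i^{\ast},m(v_1\otimes v_2)\rangle
\]
together with the weight decomposition of $V_q(\theta)^{\ast}\otimes V_q(\theta)^{\ast}$ (which is disjoint from $V_q(\theta)^{\ast}_0\otimes V_q(\theta)^{\ast}_0$ on the ``other terms'') identifies $a_{kl}^{(i)}$ with the coefficient of $t_i$ in $m(t_l\otimes t_k)$. Hence the desired symmetry $a_{kl}^{(i)}=a_{lk}^{(i)}$ is equivalent to the identity
\[
m(t_k\otimes t_l)=m(t_l\otimes t_k)\quad\text{in }V_q(\theta)_0
\]
for all $k,l\in [1,r]$.

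Next I would write $t_k=X_k^{+}.X_{-\alpha_k}$ and use the equivariance of $m$, the coproduct $\Delta(X_k^{+})=X_k^{+}\otimes 1+K_k\otimes X_k^{+}$, and the action rules of Theorem \ref{t:qadjoint-str} to obtain
\[
m(t_k\otimes t_l)=X_k^{+}.m(X_{-\alpha_k}\otimes t_l)-q_k^{-2}\,[|c_{lk}|]_k\, m(X_{-\alpha_k}\otimes X_{\alpha_k}).
\]
Since $V_q(\theta)_{-\alpha_k}=\KK X_{-\alpha_k}$ is one-dimensional, $m(X_{-\alpha_k}\otimes t_l)=\mu_{k,l}\, X_{-\alpha_k}$ for a scalar $\mu_{k,l}\in\KK$, and consequently $X_k^{+}.\mu_{k,l}X_{-\alpha_k}=\mu_{k,l}\, t_k$. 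Writing $m(X_{-\alpha_k}\otimes X_{\alpha_k})=\sum_{j}\gamma_{k,j}^{-}\,t_j$, we obtain the normal form
\[
m(t_k\otimes t_l)=\mu_{k,l}\, t_k-q_k^{-2}[|c_{lk}|]_k\sum_{j}\gamma_{k,j}^{-}\, t_j,
\]
and an analogous expression for $m(t_l\otimes t_k)$ with the roles of $k$ and $l$ swapped.

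Equating the coefficient of $t_i$ in these two expressions reduces the proof to a short list of scalar identities relating $\mu_{k,l},\mu_{l,k},\gamma_{k,i}^{-}$ and $\gamma_{l,i}^{-}$. These are obtained by further applications of equivariance: applying $X_l^{+}$ to $m(X_{-\alpha_k}\otimes X_{-\alpha_l})$ (which lies in the one-dimensional weight space $V_q(\theta)_{-\alpha_k-\alpha_l}$) links $\mu_{k,l}$ and $\mu_{l,k}$, while applying appropriate raising operators to $m(X_{-\alpha_k}\otimes X_{\alpha_k})$ expresses $\gamma_{k,j}^{-}$ in terms of the Cartan data. The intrinsic symmetry $d_k c_{kl}=d_l c_{lk}$ of the symmetrised Cartan matrix then yields the required equalities.

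The main difficulty is the careful bookkeeping of the scalar factors $q^{\pm(\alpha,\beta)}$ that appear via the coproduct, the antipode, and Lusztig's normalisation of the canonical basis $\BBB(\theta)$ encoded in Theorem \ref{t:qadjoint-str} and \eqref{eq:action-ex}. The identity $m(t_k\otimes t_l)=m(t_l\otimes t_k)$ ultimately reflects the symmetry of the bilinear form on $\mathfrak{h}^{\ast}$, but this symmetry is obscured by the quantum deformation until every power of $q$ has been tracked consistently.
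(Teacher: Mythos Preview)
Your reduction to the statement $m(t_k\otimes t_l)=m(t_l\otimes t_k)$ is correct and is a nice dual reformulation of the lemma. However, the verification you sketch is not complete, and one of its steps is mis-stated. The scalars $\gamma_{k,j}^{-}$ in $m(X_{-\alpha_k}\otimes X_{\alpha_k})=\sum_j\gamma_{k,j}^{-}t_j$ depend on the chosen homomorphism $m$, not just on the Cartan data; applying raising operators only produces further linear relations among values of $m$ (such as $m(t_k\otimes X_{\alpha_k})$, $m(X_{-\alpha_k}\otimes X_{-\alpha_l})$, \dots), not closed-form expressions. So the promised ``short list of scalar identities'' still has to be extracted from a growing web of relations, and you have not shown that this web closes up to give the symmetry. (There is also a small index slip: by Theorem \ref{t:qadjoint-str} one has $X_k^{+}.t_l=[|c_{lk}|]_l\,X_{\alpha_k}$, so the coefficient in your displayed normal form should be $[|c_{lk}|]_l$, not $[|c_{lk}|]_k$.)

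The paper's proof takes a genuinely different and much shorter route: it stays entirely on the dual side. For each fixed $i$ and each $j\neq i$, the relation \eqref{eq:action-ex} gives $X_j^{+}.t_i^{\ast}=0$, hence $X_j^{+}.\iota(t_i^{\ast})=0$. Expanding this and reading off only the coefficients of $X_{-\alpha_j}^{\ast}\otimes t_l^{\ast}$ and of $t_k^{\ast}\otimes X_{-\alpha_j}^{\ast}$ yields, without any further bookkeeping,
\[
a_{jl}^{(i)}=-c_j^{(i)}[|c_{lj}|]_l,\qquad a_{kj}^{(i)}=-q_j^{-2}b_j^{(i)}[|c_{kj}|]_k.
\]
Setting $k=l=j$ forces $q_j^{-2}b_j^{(i)}=c_j^{(i)}$, and then the two displayed formulas coincide, giving $a_{jk}^{(i)}=a_{kj}^{(i)}$ for every $j\neq i$ (and hence for all $j,k$ by swapping roles). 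The advantage of this approach over yours is that it never introduces the auxiliary scalars $\mu_{k,l}$, $\nu_{k,l}$, $\gamma_{k,j}^{-}$ at all: the single equivariance relation $X_j^{+}.\iota(t_i^{\ast})=0$ already encodes exactly the linear constraints needed, and the ``other terms'' in the expansion of $\iota(t_i^{\ast})$ are seen not to contribute to the relevant components for weight reasons. Your approach, if completed, would amount to redoing this computation on the primal side, where the same information is spread across several values of $m$ and must be reassembled by hand.
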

\begin{proof}
    Fix $i\in [1,r]$. For $j\in [1, r]\setminus \{i\}$, \eqref{eq:action-ex} implies 
    \begin{align*}
        0&=X_j^+\cdot \iota(t_i^{\ast})\\
        &=-\sum_{l\in [1,r]}a_{jl}^{(i)}X_{-\alpha_j}^{\ast}\otimes t_l^{\ast}
        -
        \sum_{k\in [1,r]}a_{kj}^{(i)}t_k^{\ast}\otimes X_{-\alpha_j}^{\ast}\\
        &\phantom{==}
        -q_j^{-2}\sum_{k\in [1,r]}b_j^{(i)}[|c_{kj}|]_kt_k^{\ast}
        \otimes X_{-\alpha_j}^{\ast}
        -\sum_{l\in [1,r]}
        c_j^{(i)}[|c_{lj}|]_lX_{-\alpha_j}^{\ast}\otimes t_l^{\ast}+(\mbox{other terms}).
    \end{align*}
    Here $(\mbox{other terms}) \in \bigoplus_{\substack{\beta_1, \beta_2\in \Phi;\\ \beta_1+\beta_2=\alpha_j}}(V_q(\theta)^{\ast})_{\beta_1}\otimes (V_q(\theta)^{\ast})_{\beta_2}$. Therefore, 
    \[
    \begin{cases}
    -a_{kj}^{(i)}-q_j^{-2}b_j^{(i)}[|c_{kj}|]_k=0&\text{for $j\in [1, r]\setminus \{i\}$ and $k\in [1,r]$},\\
        -a_{jl}^{(i)}-c_j^{(i)}[|c_{lj}|]_l=0&\text{for $j\in [1, r]\setminus \{i\}$ and $l\in [1,r]$}.
    \end{cases}
    \]
    For $j\in [1, r]\setminus \{i\}$, we have $
    q_j^{-2}b_j^{(i)}[2]_j=-a_{jj}^{(i)}=c_j^{(i)}[2]_j$, hence 
    \begin{align*}
        q_j^{-2}b_j^{(i)}=c_j^{(i)}.
    \end{align*}
    This implies 
    \begin{align*}
        a_{jk}^{(i)}=-c_j^{(i)}[|c_{kj}|]_k=-q_j^{-2}b_j^{(i)}[|c_{kj}|]_k=a_{kj}^{(i)}\tag{$(\mathrm{eq})_{jk}$} 
    \end{align*}
    for $j\in [1, r]\setminus \{i\}$ and $k\in [1,r]$. 
    
    We need to show that $a_{jk}^{(i)}=a_{kj}^{(i)}$ for all $j, k\in [1, r]$ with $j\neq k$. When $j\neq i$, $(\mathrm{eq})_{jk}$ implies $a_{jk}^{(i)}=a_{kj}^{(i)}$, and when $j=i$, we have $k\neq j=i$ and $(\mathrm{eq})_{kj}$ implies $a_{jk}^{(i)}=a_{kj}^{(i)}$. 
\end{proof}
\begin{rem}\label{r:A=O-classical}
    In the classical case, we can take $m$ to be (a scalar multiple) of the Lie bracket and consider $A_{m^{\ast}}^{(i)}$ in the same way. In this case, $A_{m^{\ast}}^{(i)}$ is the zero matrix since $[\h,\h]=0$, cf.~\cite[Eq. (3.2)]{Oya}.
\end{rem}
For $\beta\in \Phi$, set 
\[
\Psi(\beta)\coloneqq \{(\beta_1, \beta_2)\in (\wt V_q(\theta))^{\times 2}\mid \beta=\beta_1+\beta_2\}. 
\]
\begin{lem}\label{l:psibeta}
    Let $\beta\in \Phi\setminus \{\theta, -\theta\}$. Then there exists $(\beta_1^\circ, \beta_2^\circ)\in \Psi(\beta)$ satisfying the following.  
    \begin{itemize}
        \item[(i)] $\beta_1^\circ \not \leq \beta_1$ for all $(\beta_1, \beta_2)\in \Psi(\beta)\setminus \{(\beta_1^\circ, \beta_2^\circ)\}$. 
        \item[(ii)] $\beta_1^\circ\neq 0$ and $\beta_2^\circ\neq 0$. 
    \end{itemize}
\end{lem}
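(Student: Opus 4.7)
The plan is to reformulate conditions (i) and (ii) concretely and then exhibit $(\beta_1^\circ,\beta_2^\circ)$ as a $\leq$-maximal element of a suitable finite subset of $\Phi^+$. Since $\wt V_q(\theta)=\Phi\cup\{0\}$, the set $\Psi(\beta)$ consists of pairs $(\beta_1,\beta_2)\in(\Phi\cup\{0\})^{\times 2}$ with $\beta_1+\beta_2=\beta$. In particular $(0,\beta)$ and $(\beta,0)$ both lie in $\Psi(\beta)$, so condition (i) already demands $\beta_1^\circ\not\leq 0$ and $\beta_1^\circ\not\leq\beta$. Together with $\beta_1^\circ\in\Phi\cup\{0\}$, the first of these forces $\beta_1^\circ\in\Phi^+$, and then (ii) combined with $\beta_1^\circ+\beta_2^\circ=\beta$ forces $\beta_2^\circ\in\Phi^-$.

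Motivated by this, I introduce
\[
T(\beta):=\{\gamma\in\Phi^+ \mid \gamma-\beta\in\Phi^+\},
\]
which parametrises candidate decompositions $(\gamma,-(\gamma-\beta))$ with first entry in $\Phi^+$ and second in $\Phi^-$. The first step is to show $T(\beta)\neq\emptyset$ whenever $\beta\in\Phi\setminus\{\theta,-\theta\}$. This reduces to the classical fact that for every $\eta\in\Phi^+\setminus\{\theta\}$ there exists a simple root $\alpha_i$ with $\eta+\alpha_i\in\Phi^+$ (equivalently, $\theta$ is the unique maximum of $(\Phi^+,\leq)$, using $r\geq 2$). For $\beta\in\Phi^+\setminus\{\theta\}$, apply this to $\eta=\beta$ and take $\gamma:=\beta+\alpha_i$; for $\beta\in\Phi^-\setminus\{-\theta\}$, apply it to $\eta=-\beta$ and take $\gamma:=\alpha_i$, so that $\gamma-\beta=\alpha_i+(-\beta)\in\Phi^+$.

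Now choose $\beta_1^\circ$ to be any $\leq$-maximal element of the finite nonempty set $T(\beta)$ and set $\beta_2^\circ:=\beta-\beta_1^\circ\in\Phi^-$. By construction $\beta_1^\circ\in\Phi^+$ and $\beta_2^\circ\in\Phi^-$ are both nonzero, giving (ii). For (i), I argue by contradiction: suppose $(\beta_1,\beta_2)\in\Psi(\beta)\setminus\{(\beta_1^\circ,\beta_2^\circ)\}$ satisfies $\beta_1^\circ\leq\beta_1$. The case $\beta_1=0$ is excluded because $\beta_1^\circ\in\Phi^+\subset Q^+\setminus\{0\}$, and the case $\beta_1=\beta$ is excluded because $\beta-\beta_1^\circ=\beta_2^\circ\in\Phi^-\subset -Q^+\setminus\{0\}$, so $\beta_1^\circ\not\leq\beta$. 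Otherwise $\beta_1\in\Phi$, and the inequality $\beta_1\geq\beta_1^\circ>0$ in $Q$ forces $\beta_1\in\Phi^+$. Writing $\beta_2=\beta_2^\circ-(\beta_1-\beta_1^\circ)\in -Q^+$ and using $\beta_2\in\Phi\cup\{0\}$, we obtain $\beta_2\in\Phi^-\cup\{0\}$; ruling out $\beta_2=0$ as before, we find $\beta_2\in\Phi^-$, so $\beta_1-\beta=-\beta_2\in\Phi^+$. Thus $\beta_1\in T(\beta)$ and $\beta_1>\beta_1^\circ$ strictly, contradicting maximality.

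The only nontrivial input is the classical root-system fact used to verify $T(\beta)\neq\emptyset$; this is precisely where the hypothesis $\beta\neq\pm\theta$ is needed, and it is the step that cannot be pushed through for those excluded values (reflecting the genuine failure of (i) when $\beta=\pm\theta$). The remainder of the argument is a clean finite-combinatorial maximality check in the poset $(\Phi^+,\leq)$, so I do not anticipate any further obstacle.
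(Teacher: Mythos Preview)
Your proof is correct and follows essentially the same approach as the paper: both use the standard root-system fact that any $\eta\in\Phi^+\setminus\{\theta\}$ admits a simple root $\alpha_i$ with $\eta+\alpha_i\in\Phi^+$, produce from it a decomposition with $\beta_1^\circ\in\Phi^+$ and $\beta_2^\circ\in\Phi^-$, and then pass to a maximal such element. The paper phrases this as taking a maximal element of $\Psi(\beta)$ dominating $(\beta+\alpha_i,-\alpha_i)$ (resp.\ $(\alpha_i,\beta-\alpha_i)$), while you take a $\leq$-maximal element of your set $T(\beta)$; these produce the same $(\beta_1^\circ,\beta_2^\circ)$, and your verification of (i) just spells out what the paper leaves to the reader.
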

\begin{proof}
    Consider the partial ordering on $\Psi(\beta)$ defined as follows.  
    \[
    (\beta_1, \beta_2)\geq (\beta'_1, \beta'_2)\quad \Leftrightarrow\quad \beta_1\geq \beta'_1\ \text{and}\ \beta_2\leq \beta'_2.
    \]
    First, assume that $\beta\in \Phi_+\setminus \{\theta\}$. Since $\beta\neq \theta$, there exists $i\in [1, r]$ such that $\beta+\alpha_i\in \Phi_+$. Then $(\beta+\alpha_i, -\alpha_i)\in \Psi(\beta)$. Hence, if we take a maximal element $(\beta_1^\circ, \beta_2^\circ)\in \Psi(\beta)$ satisfying $(\beta_1^\circ, \beta_2^\circ)\geq (\beta+\alpha_i, -\alpha_i)$, then $(\beta_1^\circ, \beta_2^\circ)$ satisfies (i) and (ii). 
    
    Next, assume that $\beta\in -\Phi_+\setminus \{-\theta\}$. Since $\beta\neq -\theta$, there exists $i\in [1, r]$ such that $\beta-\alpha_i\in -\Phi_+$. Then $(\alpha_i, \beta-\alpha_i)\in \Psi(\beta)$. Hence, if we take a maximal element $(\beta_1^\circ, \beta_2^\circ)\in \Psi(\beta)$ satisfying $(\beta_1^\circ, \beta_2^\circ)\geq (\alpha_i, \beta-\alpha_i)$, then $(\beta_1^\circ, \beta_2^\circ)$ satisfies (i) and (ii). 
\end{proof}
\begin{proof}[Proof of Theorem \ref{t:adj-minor}]
    Let $R''$ be the $\KK$-subalgebra of $R_q[G]$ generated by $\Upsilon_{\neq 0}$. It suffices to show that $\Upsilon_0\subset R''$. Consider the $U_q(\g)$-module isomorphism
    \[
    \mathcal{R}\colon V_q(\theta)\otimes V_q(\theta)\xrightarrow[]{\sim} V_q(\theta)\otimes V_q(\theta),
    \]
    given by the action of the universal $R$-matrix of $U_q(\g)$, see \cite[Theorem 7.3]{jantzen1996lectures}. By definition, 
    \[
    \mathcal{R}=\Theta\circ \widetilde{f}\circ P,
    \]
    where 
    \begin{itemize}\setlength{\itemsep}{3pt}
        \item $P\colon V_q(\theta)\otimes V_q(\theta)\to V_q(\theta)\otimes V_q(\theta),\ v_1\otimes v_2\mapsto v_2\otimes v_1$,
        \item $\widetilde{f} \colon V_q(\theta)\otimes V_q(\theta)\to V_q(\theta)\otimes V_q(\theta),\ v_1\otimes v_2\mapsto q^{-(\wt v_1, \wt v_2)}v_1\otimes v_2$,
        \item $\Theta \colon V_q(\theta)\otimes V_q(\theta)\to V_q(\theta)\otimes V_q(\theta),$
        \[
        v_1\otimes v_2\mapsto \sum_{\alpha\in Q^+}\Theta_{\alpha}. v_1\otimes v_2
        \]
        for some element $\Theta_{\alpha}\in U_q^-(\g)_{-\alpha}\otimes U_q^+(\g)_{\alpha}$ satisfying $\Theta_0=1\otimes 1$. See \cite[Chapter 7]{jantzen1996lectures} for the explicit construction of $\Theta_{\alpha}$.
    \end{itemize}     
    Let $\beta\in \Phi\setminus \{ \pm \theta\}$. We first show  $c^{V_q(\theta)}(t_i^{\ast}, X_{\beta})\in R''$ for $i\in [1, r]$. 
    Fix an element $(\beta_1^\circ, \beta_2^\circ)\in \Psi(\beta)$ with the properties in Lemma \ref{l:psibeta}. Then, 
    \[
    \mathcal{R}(X_{\beta_1^\circ}\otimes X_{\beta_2^\circ})=q^{-(\beta_1^\circ, \beta_2^\circ)}X_{\beta_2^\circ}\otimes X_{\beta_1^\circ}.
    \]
    Since $[V_q(\theta)\otimes V_q(\theta) : V_q(\theta)]=[V(\theta)\otimes V(\theta) : V(\theta)]$, we can take a $U_q(\g)_\AAA$-module homomorphism 
    \[
    m_{\AAA}\colon V_q(\theta)_{\AAA}\otimes_{\AAA} V_q(\theta)_{\AAA}\to V_q(\theta)_{\AAA}
    \]
    such that its specialization $m_{\AAA}|_{q=1}\colon V(\theta)\otimes V(\theta) \to V(\theta)$ at $q=1$ coincides with a non-zero scalar multiple of the Lie bracket, cf.~\cite[Proposition 31.2.6]{Lus:intro}. Then, by \cite[Proposition 8.4 (d)]{Humphreys72}, 
    \begin{align}
        m_{\AAA}(X_{\beta_1^\circ}\otimes X_{\beta_2^\circ})=c_1X_{\beta},\quad m_{\AAA}(q^{-(\beta_1^\circ, \beta_2^\circ)}X_{\beta_2^\circ}\otimes X_{\beta_1^\circ})=c_2X_{\beta}\label{eq:q-bracket}
    \end{align}
for some $c_1, c_2\in \AAA$ such that $c_1|_{q=1}=-c_2|_{q=1}\neq 0$. Thus,
\[
m_{\AAA}(X_{\beta_1^\circ}\otimes X_{\beta_2^\circ}-q^{-(\beta_1^\circ, \beta_2^\circ)}X_{\beta_2^\circ}\otimes X_{\beta_1^\circ})=(c_1-c_2)X_{\beta}\neq 0.
\]
Set $c\coloneqq c_1-c_2\in \AAA$. Then,
\begin{align*}
    &c^{V_q(\theta)}(t_i^{\ast}, X_{\beta})\\
    &=c^{-1}c^{V_q(\theta)}\left(t_i^{\ast}, m_{\AAA}(X_{\beta_1^\circ}\otimes X_{\beta_2^\circ}-q^{-(\beta_1^\circ, \beta_2^\circ)}X_{\beta_2^\circ}\otimes X_{\beta_1^\circ})\right)\\
    &=c^{-1}c^{V_q(\theta)^{\otimes 2}}\left(m_{\AAA}^{\ast}(t_i^{\ast}), X_{\beta_1^\circ}\otimes X_{\beta_2^\circ}-q^{-(\beta_1^\circ, \beta_2^\circ)}X_{\beta_2^\circ}\otimes X_{\beta_1^\circ}\right).
\end{align*}
Write
\begin{align}
m_{\AAA}^{\ast}(t_i^{\ast})=\sum_{k,l\in [1,r]}a_{kl}^{(i)}t_k^{\ast}\otimes t_l^{\ast}+(\mbox{other term})\in V_q(\theta)^{\ast}\otimes V_q(\theta)^{\ast}.\label{eq:expansion}
\end{align}
Here $(\mbox{other terms}) \in \bigoplus_{\beta\in \Phi}(V_q(\theta)^{\ast})_{-\beta}\otimes (V_q(\theta)^{\ast})_{\beta}$. Then, by \eqref{eq:product}, $c^{V_q(\theta)}(t_i^{\ast}, X_{\beta})\in R''$ can be shown by proving that 
\begin{align}
c^{V_q(\theta)^{\otimes 2}}\left(\sum_{k,l\in [1,r]}a_{kl}^{(i)}t_k^{\ast}\otimes t_l^{\ast}, X_{\beta_1^\circ}\otimes X_{\beta_2^\circ}-q^{-(\beta_1^\circ, \beta_2^\circ)}X_{\beta_2^\circ}\otimes X_{\beta_1^\circ}\right)\in R''.\label{eq:key}    
\end{align}
We will show \eqref{eq:key}. For $x\in U_q(\g)$, we have
\begin{align*}
    &\left\langle \sum_{k,l\in [1,r]}a_{kl}^{(i)}t_k^{\ast}\otimes t_l^{\ast}, \Delta(x). (q^{-(\beta_1^\circ, \beta_2^\circ)}X_{\beta_2^\circ}\otimes X_{\beta_1^\circ})\right\rangle\\
    &=\left\langle \sum_{k,l\in [1,r]}a_{kl}^{(i)}t_k^{\ast}\otimes t_l^{\ast}, \Delta(x). \mathcal{R}(X_{\beta_1^\circ}\otimes X_{\beta_2^\circ})\right\rangle\\
    &=\left\langle \sum_{k,l\in [1,r]}a_{kl}^{(i)}t_k^{\ast}\otimes t_l^{\ast}, \mathcal{R}(\Delta(x). X_{\beta_1^\circ}\otimes X_{\beta_2^\circ})\right\rangle\\
    &=\sum_{\alpha\in Q^+}\left\langle \sum_{k,l\in [1,r]}a_{kl}^{(i)}(S^{-1}\otimes S^{-1})(\Theta_{\alpha}). (t_k^{\ast}\otimes t_l^{\ast}), \widetilde{f}(P(\Delta(x). X_{\beta_1^\circ}\otimes X_{\beta_2^\circ}))\right\rangle\\
    &=\sum_{\alpha\in Q^+}q^{(\alpha, \alpha)}\left\langle \sum_{k,l\in [1,r]}a_{kl}^{(i)}(S^{-1}\otimes S^{-1})(\Theta_{\alpha}). (t_k^{\ast}\otimes t_l^{\ast}), P(\Delta(x). X_{\beta_1^\circ}\otimes X_{\beta_2^\circ})\right\rangle\\
    &=\left\langle \sum_{k,l\in [1,r]}a_{kl}^{(i)}t_l^{\ast}\otimes t_k^{\ast}, \Delta(x). X_{\beta_1^\circ}\otimes X_{\beta_2^\circ}\right\rangle\\
    &\phantom{=}+\sum_{0\neq \alpha\in Q^+}q^{(\alpha, \alpha)}\left\langle \sum_{k,l\in [1,r]}a_{kl}^{(i)}P^{\ast}((S^{-1}\otimes S^{-1})(\Theta_{\alpha}). (t_k^{\ast}\otimes t_l^{\ast})), \Delta(x). X_{\beta_1^\circ}\otimes X_{\beta_2^\circ}\right\rangle.
\end{align*}
Therefore, we have 
\begin{align*}
&c^{V_q(\theta)^{\otimes 2}}\left(\sum_{k,l\in [1,r]}a_{kl}^{(i)}t_k^{\ast}\otimes t_l^{\ast}, q^{-(\beta_1^\circ, \beta_2^\circ)}X_{\beta_2^\circ}\otimes X_{\beta_1^\circ}\right)\\
&=c^{V_q(\theta)^{\otimes 2}}\left(\sum_{k,l\in [1,r]}a_{kl}^{(i)}t_l^{\ast}\otimes t_k^{\ast},  X_{\beta_1^\circ}\otimes X_{\beta_2^\circ}\right)+\\
&\phantom{=}\sum_{0\neq \alpha\in Q^+}q^{(\alpha, \alpha)}c^{V_q(\theta)^{\otimes 2}}\left(\sum_{k,l\in [1,r]}a_{kl}^{(i)}P^{\ast}((S^{-1}\otimes S^{-1})(\Theta_{\alpha}). (t_k^{\ast}\otimes t_l^{\ast})),  X_{\beta_1^\circ}\otimes X_{\beta_2^\circ}\right)\\
&=c^{V_q(\theta)^{\otimes 2}}\left(\sum_{k,l\in [1,r]}a_{kl}^{(i)}t_k^{\ast}\otimes t_l^{\ast},  X_{\beta_1^\circ}\otimes X_{\beta_2^\circ}\right)+\\
&\phantom{=}\sum_{0\neq \alpha\in Q^+}q^{(\alpha, \alpha)}c^{V_q(\theta)^{\otimes 2}}\left(\sum_{k,l\in [1,r]}a_{kl}^{(i)}P^{\ast}((S^{-1}\otimes S^{-1})(\Theta_{\alpha}). (t_k^{\ast}\otimes t_l^{\ast})),  X_{\beta_1^\circ}\otimes X_{\beta_2^\circ}\right).
\end{align*}
Here the second equality follows from Lemma \ref{l:symm}. Thus, 
\begin{align*}
    &c^{V_q(\theta)^{\otimes 2}}\left(\sum_{k,l\in [1,r]}a_{kl}^{(i)}t_k^{\ast}\otimes t_l^{\ast}, X_{\beta_1^\circ}\otimes X_{\beta_2^\circ}-q^{-(\beta_1^\circ, \beta_2^\circ)}X_{\beta_2^\circ}\otimes X_{\beta_1^\circ}\right)\\
    &=-\sum_{0\neq \alpha\in Q^+}q^{(\alpha, \alpha)}c^{V_q(\theta)^{\otimes 2}}\left(\sum_{k,l\in [1,r]}a_{kl}^{(i)}P^{\ast}((S^{-1}\otimes S^{-1})(\Theta_{\alpha})\cdot (t_k^{\ast}\otimes t_l^{\ast})),  X_{\beta_1^\circ}\otimes X_{\beta_2^\circ}\right),
\end{align*}
and the right hand side belongs to $R''$, which proves \eqref{eq:key}.

Next, we show that $c^{V_q(\theta)}(t_i^{\ast}, X_{\theta})\in R''$ for $i\in [1,r]$. Since $\theta \in\Phi_+\setminus \Pi$, there exists $j\in [1,r]$ such that $\theta-\alpha_j\in \Phi_+$. Note that the rank $r$ of $\g$ is greater than $1$. Then,
\[
m_{\AAA}(X_{\alpha_j}\otimes X_{\theta-\alpha_j})=c_{\theta}X_{\theta}
\]
for some $0\neq c_{\theta}\in \AAA$. Thus,
\begin{align*}
    c^{V_q(\theta)}(t_i^{\ast}, X_{\theta})
    &=c_{\theta}^{-1}c^{V_q(\theta)}\left(t_i^{\ast}, m_{\AAA}(X_{\alpha_j}\otimes X_{\theta-\alpha_j})\right)\\
    &=c_{\theta}^{-1}c^{V_q(\theta)^{\otimes 2}}\left(m_{\AAA}^{\ast}(t_i^{\ast}), X_{\alpha_j}\otimes X_{\theta-\alpha_j}\right).
\end{align*}
We have already shown that the right-hand side belongs to $R''$. Hence $c^{V_q(\theta)}(t_i^{\ast}, X_{\theta})\in R''$. The statement that $c^{V_q(\theta)}(t_i^{\ast}, X_{-\theta})\in R''$ for $i\in [1,r]$ can be shown in the parallel manner. 

Finally, we prove that $c^{V_q(\theta)}(b^{\ast}, t_i)\in R''$ for all $b\in \BBB(\theta)$ and $i\in [1,r]$. Since $m_\AAA |_{q=1}(\bigoplus_{\beta\in \Phi}V(\theta)_{\beta}\otimes V(\theta)_{-\beta})=V(\theta)_0$, we have $m_\KK(\bigoplus_{\beta\in \Phi}V_q(\theta)_{\beta}\otimes V_q(\theta)_{-\beta})=V_q(\theta)_0$. Here $m_\KK$ is the $U_q(\g)$-module homomorphism $V_q(\theta)\otimes V_q(\theta)\to V_q(\theta)$ induced from $m_{\AAA}$. Hence, for $i\in [1, r]$, there exists $\widetilde{t}_i=\sum_{\beta\in \Phi}a_{\beta}^{(i)}X_{\beta}\otimes X_{-\beta}\in \bigoplus_{\beta\in \Phi}V_q(\theta)_{\beta}\otimes V_q(\theta)_{-\beta}$ such that $m_\KK(\widetilde{t}_i)=t_i$. Then, for $b\in \BBB(\theta)$, 
\begin{align*}
    c^{V_q(\theta)}(b^{\ast}, t_i)
    &=c^{V_q(\theta)}\left(b^{\ast}, m_{\KK}(\widetilde{t}_i)\right)
    =\sum_{\beta\in \Phi}a_{\beta}^{(i)} c^{V_q(\theta)}\left(m_{\KK}^{\ast}(b^{\ast}), X_{\beta}\otimes X_{-\beta}\right).
\end{align*}
We have already shown that the right-hand side belongs to $R''$, which completes the proof of the theorem. 
\end{proof}
\subsection{Matrix coefficients of the adjoint representation}
We finally prove Theorem \ref{thm:integral-coord-ring-classical} which implies Theorem \ref{thm:integral-cluster-G-classical}. 
 By abuse of notation we will denote by $\BBB(\theta)=\{X_{\alpha}\mid \alpha\in \Phi\}\sqcup \{t_i\mid i\in [1,r]\}$ (resp.~$\BBB(\theta)^{\ast}=\{X_{\alpha}^{\ast}\mid \alpha\in \Phi\}\sqcup \{t_i^{\ast}\mid i\in [1,r]\}$) the $\C$-basis of the $\g$-module $V(\theta)$ (resp.~$V(\theta)^{\ast}$) induced from the $\AAA$-basis $\BBB(\theta)$ of $V_q(\theta)_{\AAA}$ (resp.~$V_q(\theta)\spcheck_\AAA$).  
\begin{thm}\label{t:adj-minor-classical}
    Assume that $\g$ is of simply-laced type of rank greater than $1$. As before, $\theta$ denotes the highest root of $\g$. Then the elements of 
    \[
    \Upsilon_0^{q=1}\coloneqq \{c^{V(\theta)}(b^{\ast}, b')\mid b, b'\in \BBB(\theta),\ \wt b=0\ \text{or}\ \wt b'=0\}
    \]
    can be written as polynomials in 
    \[
    \Upsilon_{\neq 0}^{q=1}\coloneqq \{c^{V(\theta)}(b^{\ast}, b')\mid b, b'\in \BBB(\theta),\ \wt b\neq 0, \wt b'\neq 0\}.
    \]
    with coefficients in $\Z$. 
\end{thm}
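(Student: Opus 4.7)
The plan is to adapt the quantum proof of Theorem \ref{t:adj-minor}, noting that the universal $R$-matrix manipulation becomes unnecessary in the classical setting thanks to Remark \ref{r:A=O-classical}: the vanishing $[\h, \h] = 0$ forces $m^*(t_i^*) = \sum_{\gamma \in \Phi} b_\gamma^{(i)}\, X_\gamma^* \otimes X_{-\gamma}^*$ with no $t_k^* \otimes t_l^*$ contribution. I will fix a Chevalley basis of $\g$ compatible with $\BBB(\theta)$, so that the simply-laced hypothesis yields $[X_\alpha, X_\beta] = \pm X_{\alpha+\beta}$ whenever $\alpha+\beta \in \Phi$, each coroot $[X_\gamma, X_{-\gamma}]$ is an integer combination of the $t_j$, and $t_i = \pm [X_{\alpha_i}, X_{-\alpha_i}]$. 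Taking $m \colon V(\theta) \otimes V(\theta) \to V(\theta)$ to be the Lie bracket itself, all matrix entries of $m$ and $m^*$ are integers in the basis $\BBB(\theta)$, so in particular $b_\gamma^{(i)} = \langle t_i^*, [X_\gamma, X_{-\gamma}]\rangle \in \Z$. Write $R''$ for the $\Z$-subalgebra of $R[G]$ generated by $\Upsilon_{\neq 0}^{q=1}$; the task is to show $\Upsilon_0^{q=1} \subset R''$.

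The first half of the argument handles $c^{V(\theta)}(t_i^*, X_\beta)$ for every $\beta \in \Phi$. For $\beta \in \Phi \setminus \{\pm\theta\}$ I apply Lemma \ref{l:psibeta} to obtain $(\beta_1^\circ, \beta_2^\circ) \in \Psi(\beta)$ with both components nonzero weights of $V(\theta)$; since the weight set of $V(\theta)$ is $\Phi \cup \{0\}$, both components are automatically roots, and hence $X_\beta = \epsilon [X_{\beta_1^\circ}, X_{\beta_2^\circ}]$ for some $\epsilon = \pm 1$. For $\beta = \pm\theta$ (non-simple since $\g$ is simply-laced with $r > 1$), I pick $\alpha_j$ with $\theta - \alpha_j \in \Phi_+$ and use the analogous decomposition. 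In every case, the classical analogue of \eqref{eq:product} yields
\begin{equation*}
c^{V(\theta)}(t_i^*, X_\beta) = \epsilon \sum_{\gamma \in \Phi} b_\gamma^{(i)}\, c^{V(\theta)}(X_{-\gamma}^*, X_{\beta_1^\circ}) \cdot c^{V(\theta)}(X_\gamma^*, X_{\beta_2^\circ}),
\end{equation*}
which is plainly a $\Z$-polynomial in $\Upsilon_{\neq 0}^{q=1}$.

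The second half treats $c^{V(\theta)}(b^*, t_i)$ for general $b \in \BBB(\theta)$. Writing $t_i = \eta [X_{\alpha_i}, X_{-\alpha_i}]$ with $\eta = \pm 1$, I get $c^{V(\theta)}(b^*, t_i) = \eta\, c^{V(\theta)^{\otimes 2}}(m^*(b^*), X_{\alpha_i} \otimes X_{-\alpha_i})$ and expand $m^*(b^*)$ in $\BBB(\theta)^* \otimes \BBB(\theta)^*$. A weight count combined with Remark \ref{r:A=O-classical} rules out summands in which both tensor factors have weight zero: for $b = t_j$ this is Remark \ref{r:A=O-classical}, while for $b = X_\alpha$ the overall weight $-\alpha \neq 0$ excludes two simultaneous $t_k^*$-factors. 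Then \eqref{eq:product} expresses each summand as an integer multiple of a product $c^{V(\theta)}(b_2^*, X_{\alpha_i}) \cdot c^{V(\theta)}(b_1^*, X_{-\alpha_i})$ in which at most one factor involves a weight-zero vector; any such factor $c^{V(\theta)}(t_k^*, X_{\pm\alpha_i})$ already lies in $R''$ by the first half (valid since $\alpha_i \neq \pm\theta$), and the other factor lies in $\Upsilon_{\neq 0}^{q=1}$. The only point one must track throughout is the integrality of every scalar appearing: the signs $\epsilon, \eta$, the coefficients $b_\gamma^{(i)}$, and the remaining matrix entries of $m^*$ all lie in $\Z$ precisely because of the simply-laced Chevalley normalization, which is why the hypothesis excludes non-simply-laced types (where structure constants $\pm 2$ would intrude and spoil integrality).
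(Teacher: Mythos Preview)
Your proof is correct and follows essentially the same strategy as the paper: decompose basis vectors via the Lie bracket $m$ and exploit that $m^*(t_i^*)$ has no $t_k^* \otimes t_l^*$ terms since $[\h,\h]=0$. The paper differs only in citing \cite[Theorem 5.7]{Geck2017} to justify that the $\Z$-span of the specialized $\BBB(\theta)$ is closed under the bracket with the unit structure constants you assert (your phrase ``Chevalley basis compatible with $\BBB(\theta)$'' is precisely this nontrivial fact), and in decomposing each $X_\beta$ as a bracket with a simple root vector rather than via Lemma \ref{l:psibeta}---but both decompositions serve the same purpose.
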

\begin{proof}
    Let $R''_{\Z}$ be the $\Z$-subalgebra of $R[G]_{\Z}$ generated by $\Upsilon_{\neq 0}^{q=1}$. It suffices to show that $\Upsilon_0^{q=1}\subset R''_{\Z}$. Since $V(\theta)$ is isomorphic to the adjoint representation of $\g$, we have an isomorphism $\varphi\colon V(\theta)\xrightarrow[]{\sim} \g$ of $\g$-modules with the adjoint action on the second. Hence, the set $\varphi(\BBB(\theta))$ is a $\C$-basis of $\g$. Write 
\begin{align*}
\g_{\Z}&\coloneqq \sum_{b\in \BBB(\theta)}\Z \varphi(b),&\g_{\Z,\neq 0}&\coloneqq \sum_{\beta\in \Phi}\Z \varphi(X_{\beta}),\\
\g\spcheck_{\Z}&\coloneqq\sum_{b\in \BBB(\theta)}\Z (\varphi^{-1})^{\ast}(b^{\ast})&(\g\spcheck_{\Z})_{\neq 0}&\coloneqq\sum_{\beta\in \Phi}\Z (\varphi^{-1})^{\ast}(X_{\beta}^{\ast}).&
\end{align*}
By \cite[Theorem 5.7]{Geck2017}, $\g_{\Z}$ is closed under the Lie bracket of $\g$. Moreover, by \cite[Lemma 5.1 and Theorem 5.7]{Geck2017}, we have the following: 
\begin{itemize}
    \item[(i)] For all $\beta\in \Phi$, there exist $i\in [1,r]$ and $\epsilon, \epsilon'\in \{1,-1\}$ such that $\beta-\epsilon\alpha_{i}\in \Phi$ and $    [\varphi(\epsilon'X_{\epsilon\alpha_{i}}), \varphi(X_{\beta-\epsilon\alpha_{i}}))]=\varphi(X_{\beta})$. Here is where we use our assumptions on $\g$.
    \item[(ii)] For all $i\in [1, r]$, there exists $\epsilon\in \{1,-1\}$ such that $[\varphi(\epsilon X_{\alpha_i}), \varphi(X_{-\alpha_i})]=\varphi(t_i)$. 
\end{itemize}
Write the Lie bracket as 
\[
m\colon \g\otimes \g\to \g,\ x\otimes y\to [x, y].
\]
Note that $m$ is a $\g$-module homomorphism. We will show that $c^{V(\theta)}(t_i^{\ast}, b)\in R''_{\Z}$ for all $i\in [1, r]$ and $b\in \BBB(\theta)$. By (i) and (ii), there exist $x_1, x_2\in\g_{\Z,\neq 0}$ such that $m(x_1\otimes x_2)=\varphi(b)$. Thus,   
\begin{align*}
    c^{V(\theta)}(t_i^{\ast}, b)&=c^{\g}((\varphi^{-1})^{\ast}(t_i^{\ast}), \varphi(b))\\
    &=c^{\g}((\varphi^{-1})^{\ast}(t_i^{\ast}), m(x_1\otimes x_2))\\
    &=c^{\g^{\otimes 2}}(m^{\ast}((\varphi^{-1})^{\ast}(t_i^{\ast})), x_1\otimes x_2).
\end{align*}
Since $m(\h\otimes \h)=0$, we have $m^{\ast}((\varphi^{-1})^{\ast}(t_i^{\ast}))\in (\g\spcheck_{\Z})_{\neq 0}\otimes (\g\spcheck_{\Z})_{\neq 0}$, cf.~Remark \ref{r:A=O-classical} and \cite[Eq. (3.2)]{Oya}. Therefore, $c^{V(\theta)}(t_i^{\ast}, b)=c^{\g^{\otimes 2}}(m^{\ast}((\varphi^{-1})^{\ast}(t_i^{\ast})), x_1\otimes x_2)\in R''_{\Z}$ by \eqref{eq:product}. 

It remains to show that $c^{V(\theta)}(b^{\ast}, t_i)\in R''_{\Z}$ for all $b\in \BBB(\theta)$ and $i\in [1, r]$. By (ii), there exists $\epsilon\in \{1,-1\}$ such that $m(\varphi(\epsilon X_{\alpha_i})\otimes  \varphi(X_{-\alpha_i}))=\varphi(t_i)$. Hence, 
\begin{align*}
    c^{V(\theta)}(b^{\ast}, t_i)&=c^{\g}((\varphi^{-1})^{\ast}(b^{\ast}), \varphi(t_i))\\
    &=c^{\g}((\varphi^{-1})^{\ast}(b^{\ast}), m(\varphi(\epsilon X_{\alpha_i})\otimes  \varphi(X_{-\alpha_i})))\\
    &=c^{\g^{\otimes 2}}(m^{\ast}((\varphi^{-1})^{\ast}(b^{\ast})), \varphi(\epsilon X_{\alpha_i})\otimes  \varphi(X_{-\alpha_i})).
\end{align*}
Since $m^{\ast}((\varphi^{-1})^{\ast}(b^{\ast}))\in \g\spcheck_{\Z}\otimes \g\spcheck_{\Z}$, the previous argument shows that the rightmost expression lies in $R''_{\Z}$. This completes the proof of the theorem.
\end{proof}
This theorem implies the following refinement of \cite[Theorem 3.2]{Oya}. 
\begin{theorem}\label{thm:integral-coord-ring-classical}
When $G$ is not of type $F_4$ and $G_2$, the generalized minors generate $R[G]_{\Z}$ as a $\Z$-algebra. When $G$ is of type $G_2$, the generalized minors generate $R[G]_{\Z\left[\frac{1}{2}\right]}$ as a $\Z\left[\frac{1}{2}\right]$-algebra.
\end{theorem}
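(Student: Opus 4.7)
The plan is to run the same overall strategy as in the proof of Theorem \ref{thm:integral-coord-ring}, replacing the quantum inputs by their classical counterparts: a classical analog of Theorem \ref{t:integral-gen}, together with classical identities expressing the matrix-coefficient generators in terms of generalized minors. Concretely, I first establish the classical analog of Theorem \ref{t:integral-gen}, namely that the set $\Gamma^{q=1} := \{c^M(b^*, b') \mid b, b' \in \BBB_M|_{q=1}\}$ generates $R[G]_\Z$ as a $\Z$-algebra. Since $R[G]_\Z = R_q[G]_\AAA \otimes_\AAA \Z$ via the specialization $q^{1/2} \mapsto 1$, and since $\Gamma$ is an $\AAA$-subset of $R_q[G]_\AAA$ specializing to $\Gamma^{q=1}$, this follows from Theorem \ref{t:integral-gen} by base change. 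Alternatively the proof of Theorem \ref{t:integral-gen} admits a direct classical reformulation using Kostant's integral form of $U(\g)$ and Lusztig's based tensor products, so one can also run the argument intrinsically.

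Next I proceed by Dynkin type. In types $A_r, B_r, C_r, D_r, E_6$, $E_7$ the module $M$ is a direct sum of minuscule (and hence weight-multiplicity-free) representations, so every element of $\BBB_M|_{q=1}$ is an extremal weight vector and every element of $\Gamma^{q=1}$ is already a generalized minor; this settles these types over $\Z$ with no further work. For $E_8$, $M = V(\varpi_1) = V(\theta)$ is the adjoint representation, and the weight spaces $V(\theta)_\beta$ with $\beta \in \Phi$ are one-dimensional, so the subset $\Upsilon_{\neq 0}^{q=1}$ of $\Gamma^{q=1}$ consists of generalized minors; the remaining generators in $\Upsilon_0^{q=1}$ are handled by Theorem \ref{t:adj-minor-classical}, which already appears in the excerpt and expresses them as $\Z$-polynomials in the elements of $\Upsilon_{\neq 0}^{q=1}$. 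This completes the $E_8$ case over $\Z$.

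Finally, for $G_2$, I would specialize the explicit computation from the $G_2$ portion of the proof of Theorem \ref{t:EG-gamma} at $q^{1/2} = 1$. Each identity there takes place in a tensor power of the integral module $M_q$ and therefore descends to an identity in the corresponding classical module; the only denominators occurring are powers of $[2]_q$, which specialize to $2$, explaining why the base must be enlarged to $\Z[1/2]$.

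The main obstacle will be the bookkeeping for the $G_2$ step: one must verify that the auxiliary trivial direct summand $\widetilde{\mathbf{1}} \subset M_q \otimes M_q$ used in the quantum argument has a nonzero classical analog (which it does, since $[V(\varpi_1) \otimes V(\varpi_1) : V(0)] = 1$ in type $G_2$ both quantum and classically), that the normalized basis element $b_{(-2,1)} = [2]_q^{-1} X_1^-.b_{(0,0)}$ becomes $\tfrac{1}{2} X_1^-.b_{(0,0)}$ and so lies in $\Z[1/2]\otimes \BBB_M$, and that a direct inspection of every coefficient appearing in the formulas for $\widetilde{b}_{(0,0)}, \widetilde{\mathbf{1}}$, and the subsequent expansions of $c^{M_q}(b_{(1,0)}^*, b_{(0,0)})$, $c^{M_q}(b_{(0,0)}^*, b_{(0,0)})$, and $c^{M_q}(b_{(0,0)}^*, b_{(1,0)})$ specializes to an element of $\Z[1/2]$ with no new primes in the denominator. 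Once this inspection is complete, the three reductions displayed in the proof of Theorem \ref{t:EG-gamma} yield the desired containment $\Gamma^{q=1} \subset R''_{\Z[1/2]}$, where $R''_{\Z[1/2]}$ is the $\Z[1/2]$-subalgebra of $R[G]_{\Z[1/2]}$ generated by generalized minors, which combined with the classical analog of Theorem \ref{t:integral-gen} finishes the $G_2$ case.
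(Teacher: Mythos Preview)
Your proposal is correct and follows essentially the same approach as the paper: for all types except $E_8$ one specializes Theorem~\ref{thm:integral-coord-ring} at $q=1$ (your case-by-case treatment of the minuscule types and $G_2$ is just an unpacking of this), and for $E_8$ one combines the classical specialization of Theorem~\ref{t:integral-gen} with Theorem~\ref{t:adj-minor-classical}. The $G_2$ bookkeeping you flag as the main obstacle is in fact unnecessary: since Theorem~\ref{thm:integral-coord-ring} already establishes the result over $\AAA_{1/2}$, and the specialization $q^{1/2}\mapsto 1$ sends $\AAA_{1/2}$ to $\Z[\tfrac12]$, the $G_2$ statement follows immediately without reinspecting any coefficients (also, your worry about $b_{(-2,1)}$ is misplaced---it is a canonical basis element and hence already in the $\AAA$-lattice).
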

\begin{proof}
    When $\g$ is not of type $E_8$, the result follows from Theorem \ref{thm:integral-coord-ring} by the specialization at $q=1$. In the case of type $E_8$, the result follows from Theorems \ref{t:integral-gen} and \ref{t:adj-minor-classical}. 
\end{proof}

\appendix

\section{Comparison of definitions of quantized coordinate rings}\label{app:QCA}
In this appendix, we show the equivalence between Lusztig's definition of the quantized coordinate ring in \cite{Lus:intro,Lus-Zform} and the definition used in this paper. Indeed, the quantized coordinate ring in \cite{Lus:intro,Lus-Zform} is defined as a subspace of the dual of the modified quantized enveloping algebra, whereas our $R_q[G]$ is defined as a subspace of the dual of the usual quantized enveloping algebra $U_q(\g)$. This equivalence is well known to experts, but we recall it for the readers' convenience, since it is an important point in this paper. 

For $\mu, \mu'\in P$, set
\[
_{\mu}\dot{U}_{\mu'}\coloneqq U_q(\g)/\left(\sum\nolimits_{i\in [1, r]}(K_i-q^{(\mu, \alpha_i)})U_q(\g)+\sum\nolimits_{i\in [1,r]}U_q(\g)(K_i-q^{(\mu', \alpha_i)})\right).
\]

Let $\pi_{\mu, \mu'}\colon U_q(\g)\to {_{\mu}\dot{U}_{\mu'}}$ be the canonical projection. Note that $\pi_{\mu, \mu'}(U_q(\g)_{\gamma})=0$ unless $\mu-\mu'=\gamma$. Write $1_{\mu}\coloneqq \pi_{\mu, \mu}(1)$. Set
\[
\dot{U}=\bigoplus_{\mu, \mu'\in P} {_{\mu}\dot{U}_{\mu'}}.
\]
We can define a $\KK$-algebra structure on $\dot{U}$ by 
\[
\pi_{\mu_1, \mu_1'}(x)\pi_{\mu_2, \mu_2'}(y)=\delta_{\mu_1', \mu_2}\pi_{\mu_1, \mu_2'}(xy)
\]
for $\mu_1, \mu_1', \mu_2, \mu_2'\in P$, and $x\in U_q(\g)_{\mu_1-\mu_1'}, y\in U_q(\g)_{\mu_2-\mu_2'}$. We have 
\begin{equation}
1_{\mu}1_{\mu'}=\delta_{\mu, \mu'}1_{\mu},\quad {_{\mu}\dot{U}_{\mu'}}=1_{\mu}\dot{U}1_{\mu'}\label{eq:idem}
\end{equation}
for $\mu, \mu'\in P$. The $\KK$-algebra $\dot{U}$ is called the \emph{modified quantized enveloping algebra} \cite[Chapter 23]{Lus:intro}. 
The modified quantized enveloping algebra $\dot{U}$ has a $U_q(\g)$-bimodule structure given by 
\[
x\pi_{\mu, \mu'}(y)x'=\pi_{\mu+\wt x, \mu'-\wt x'}(xyx')
\]
for $\mu, \mu'\in P$ and homogeneous elements $x, x', y\in U_q(\g)$.

For $\mu, \mu'\in P$, define a $\KK$-linear map 
\[
\Delta_{\mu, \mu'}\colon  {_{\mu}\dot{U}_{\mu'}}\to \prod\nolimits_{\substack{\mu=\mu_1+\mu_2\\ \mu'=\mu_1'+\mu_2'}}{_{\mu_1}\dot{U}_{\mu_1'}}\otimes {_{\mu_2}\dot{U}_{\mu_2'}}
\]
by 
\[
\Delta_{\mu, \mu'}(\pi_{\mu, \mu'}(x))=\left( (\pi_{\mu_1, \mu_1'}\otimes\pi_{\mu_2, \mu_2'})(\Delta(x))\right)_{\mu_1, \mu_1',\mu_2, \mu_2'}
\]
for $x\in U_q(\g)$. The $\KK$-linear map 
\[
\Delta\coloneqq \bigoplus_{\mu, \mu'\in P}\Delta_{\mu, \mu'}\colon \dot{U}\to \prod_{\mu_1, \mu_1', \mu_2, \mu_2'\in P}{_{\mu_1}\dot{U}_{\mu_1'}}\otimes {_{\mu_2}\dot{U}_{\mu_2'}}
\]
is called the comultiplication of $\dot{U}$. 

A left $\dot{U}$-module $V$ is said to be \emph{unital} if it satisfies the following conditions:
\begin{itemize}
\item For any $v\in V$,  $1_{\mu}.v=0$ except for finitely many $\mu\in P$.
\item For any $v\in V$, $\sum_{\mu\in P}1_{\mu}.v=v$.
\end{itemize}
The tensor product $V\otimes W$ of unital $\dot{U}$-modules $V, W$ is again a unital $\dot{U}$-module via $\Delta$. Note that the unital $\dot{U}$-module $V$ has a decomposition $V=\bigoplus_{\mu\in P}1_{\mu}.V$ by \eqref{eq:idem}. For a unital $\dot{U}$-module $V$, the matrix coefficient associated to a pair of vectors $v \in V$ and $\xi \in V^*$ is denoted by  
\begin{equation*} 
\dot{c}^V(\xi, v) \in \dot{U}^*, \quad 
\mbox{where} \quad \dot{c}^V(\xi, v)(x) := \langle \xi, x. v\rangle,  \quad \forall x \in \dot{U}. 
\end{equation*}

A unital $\dot{U}$-module $V$ can be regarded as a type one $U_q(\g)$-module by 
\[
x.v\coloneqq \sum_{\mu \in P}x1_{\mu}.v
\]
for $x\in U_q(\g)$ and $v\in V$. Then, $V=\bigoplus_{\mu\in P} V_{\mu}$ as a $U_q(\g)$-module, and $V_{\mu}=1_{\mu}.V$. Conversely, a type one $U_q(\g)$-module $V$ has a structure of unital $\dot{U}$-module such that, for $\mu, \mu'\in P, x\in U_q(\g)_{\mu-\mu'}$ and $v\in V_{\nu}$,  
\[
\pi_{\mu, \mu'}(x).v\coloneqq \delta_{\mu', \nu}x.v. 
\]
Then these correspondences are mutually inverse. Hence we will identify the unital $\dot{U}$-modules with the type one $U_q(\g)$-modules. 

Let $\dot{\BBB}$ be the canonical basis of $\dot{U}$ (see \cite[Chapter 25]{Lus:intro} for the definition). The following are important properties of $\dot{\BBB}$: 
\begin{itemize}
    \item[(B1)] $\dot{\BBB}$ forms an $\AAA$-basis of $\dot{U}_{\AAA}$, where $\dot{U}_{\AAA}$ is the $\AAA$-subalgebra of $\dot{U}$ generated by $\{X_i^{+(k)} 1_{\mu}, X_i^{-(k)} 1_{\mu}\mid i\in [1,r], k\in \Z_{\geq 0}, \mu\in P\}$. Moreover, for $\mu, \mu'\in P$, $\dot{\BBB}\cap 1_{\mu}\dot{U}_{\AAA}1_{\mu'}$ is an $\AAA$-basis of $1_{\mu}\dot{U}_{\AAA}1_{\mu'}$ (\cite[Theorem 25.2.1]{Lus:intro}). 
    \item[(B2)] For $\mu\in P^+$ and $\bm{a}=(a_i)_{i\in [1, r]}, \bm{a}'=(a'_i)_{i\in [1, r]}\in \Z_{\geq 0}^{[1, r]}$, set
\[
\mathcal{P}(\mu; \bm{a}, \bm{a}')\coloneqq \sum_{i\in [1, r]}\sum_{k>a_i}\dot{U}X_i^{+(k)}1_{\mu}+\sum_{i\in [1, r]}\sum_{l>a'_i}\dot{U}X_i^{-(l)}1_{\mu}.
\]  
Then $\dot{\BBB}\cap \mathcal{P}(\mu; \bm{a}, \bm{a}')$ forms a $\KK$-basis of $\mathcal{P}(\mu; \bm{a}, \bm{a}')$ (\cite[Theorem 25.2.5]{Lus:intro}).
\end{itemize}

For $\dot{b}\in \dot{\BBB}$, define $\dot{b}^{\ast}\in \dot{U}^{\ast}$ by 
\[
\langle \dot{b}^{\ast}, \dot{b}'\rangle=\delta_{\dot{b}, \dot{b}'}\text{ for }\dot{b}'\in \dot{\BBB}. 
\]
In \cite[Section 29.5]{Lus:intro},\cite[Section 3.1]{Lus-Zform}, the \emph{quantum coordinate algebra} $\mathbf{O}$ and its integral form $\mathbf{O}_{\AAA}$ are defined as 
\[
\mathbf{O}\coloneqq \sum_{\dot{b}\in \dot{\BBB}}\KK \dot{b}^{\ast},\qquad 
\mathbf{O}_{\AAA}\coloneqq \sum_{\dot{b}\in \dot{\BBB}}\AAA \dot{b}^{\ast}
\]
in $\dot{U}^{\ast}$. They are the Hopf algebras over $\KK$ and $\AAA$, respectively, by 
\[
\dot{b}_1^{\ast}\dot{b}_2^{\ast}=\sum_{\dot{b}\in \dot{\BBB}}\widehat{m}^{\dot{b}_1, \dot{b}_2}_{\dot{b}}\dot{b}^{\ast},\quad \Delta(\dot{b}^{\ast})=\sum_{\dot{b}_1, \dot{b}_2\in \dot{\BBB}}m_{\dot{b}_1, \dot{b}_2}^{\dot{b}}\dot{b}_1^{\ast}\otimes \dot{b}_2^{\ast},
\]
where $\widehat{m}^{\dot{b}_1, \dot{b}_2}_{\dot{b}}$ and $m_{\dot{b}_1, \dot{b}_2}^{\dot{b}}$ are defined by
\begin{align*}
    \Delta(\dot{b})=\sum_{\dot{b}_1, \dot{b}_2\in \dot{\BBB}}\widehat{m}^{\dot{b}_1, \dot{b}_2}_{\dot{b}}\dot{b}_1\otimes \dot{b}_2,\quad \dot{b}_1\dot{b}_2=\sum_{\dot{b}\in \dot{\BBB}}m_{\dot{b}_1, \dot{b}_2}^{\dot{b}}\dot{b}.
\end{align*}
Note that $\Delta(\dot{b})\in \prod_{\mu_1, \mu_1', \mu_2, \mu_2'\in P}{_{\mu_1}\dot{U}_{\mu_1'}}\otimes {_{\mu_2}\dot{U}_{\mu_2'}}$. The main statement in this appendix is the following. 
\begin{prop}\label{prop:comparison}
    Define a $\KK$-linear map $\Upsilon\colon R_q[G]\to \mathbf{O}$ by 
    \[
    c^{V}(\xi, v)\mapsto \dot{c}^{V}(\xi, v)
    \]
    for a finite dimensional type one $U_q(\g)$-module $V$ and $v\in V, \xi\in V^{\ast}$. Then $\Upsilon$ is an isomorphism of Hopf algebras and it restricts to an isomorphism $R_q[G]_{\AAA}\xrightarrow{\sim} \mathbf{O}_{\AAA}$.  
\end{prop}
\begin{proof}
We first show that $\dot{c}^{V}(\xi, v)\in \mathbf{O}$ for any finite dimensional type one $U_q(\g)$-module $V$ and $v\in V, \xi\in V^{\ast}$. It suffices to show that $\langle \xi, \dot{b}.v\rangle=0$ except for finitely many $\dot{b}\in \dot{\BBB}$ when $v\in V_{\mu}$ for some $\mu\in P$. 

By (B1), $\dot{\BBB}=\sqcup_{\mu'\in P}(\dot{\BBB}\cap\dot{U}1_{\mu'})$. 
If $\mu'\neq \mu$, $\dot{b}.v=0$ for $\dot{b}\in \dot{\BBB}\cap\dot{U}1_{\mu'}$. Moreover, since $V$ is finite dimensional, there exist $\bm{a}=(a_i)_{i\in [1, r]}, \bm{a}'=(a'_i)_{i\in [1, r]}\in \Z_{\geq 0}^{[1, r]}$ such that 
\begin{itemize}
    \item $X_i^{+(k)}.v=0=X_i^{-(l)}.v$ for $k>a_i, l>a'_i$, and 
    \item $\langle \alpha_i\spcheck, \mu\rangle=-a_i+a'_i$ for all $i\in [1, r]$. 
\end{itemize}
Then, $\mathcal{P}(\mu; \bm{a}, \bm{a}').v=0$. Hence, by (B2), it suffices to show that $(\dot{\BBB}\cap\dot{U}1_{\mu}) \setminus (\dot{\BBB}\cap \mathcal{P}(\mu; \bm{a}, \bm{a}'))$ is a finite set. Write $\lambda=\sum_{i\in [1, r]}a_i\varpi_i, \lambda'=\sum_{i\in [1, r]}a'_i\varpi_i\in P^+$. Note that $\mu=-\lambda+\lambda'$. Then, by \cite[Proposition 23.3.6]{Lus:intro}, the $\KK$-linear map 
\[
\dot{U}1_{\mu}\to V_q(-w_0\lambda)\otimes V_q(\lambda'),\ x\mapsto x.(v_{-\lambda}\otimes v_{\lambda'})
\]
is surjective and its kernel is equal to $\mathcal{P}(\mu; \bm{a}, \bm{a}')$. Therefore, by (B1) and (B2), $(\dot{\BBB}\cap\dot{U}1_{\mu}) \setminus (\dot{\BBB}\cap \mathcal{P}(\mu; \bm{a}, \bm{a}'))$ induces a $\KK$-basis of the finite dimensional vector space $V_q(-w_0\lambda)\otimes V_q(\lambda')$. Therefore, it is a finite set.  

The injectivity of $\Upsilon$ is clear from the correspondence between the unital $\dot{U}$-modules and the type one $U_q(\g)$-modules. We shall prove that $\Upsilon$ is surjective. We need to show that the image of $\Upsilon$ contains $\dot{b}^{\ast}$ for all $\dot{b}\in \dot{\BBB}$. Let $\dot{b}\in \dot{\BBB}\cap\dot{U}1_{\mu}$. Then, by \cite[Theorem 25.2.1]{Lus:intro}, there exist $\lambda, \lambda'\in P_+$ with $-\lambda+\lambda'=\mu$ such that $\dot{b}_{-\lambda, \lambda'}\coloneqq \dot{b}. (v_{-\lambda}\otimes v_{\lambda'})\neq 0$. Moreover,  
    \[
    \dot{\BBB}_{-\lambda, \lambda'}\coloneqq \{\dot{b}'.(v_{-\lambda}\otimes v_{\lambda'})\mid \dot{b}'\in \dot{\BBB}\text{ such that }\dot{b}'.(v_{-\lambda}\otimes v_{\lambda'})\neq 0\}
    \]
    forms a $\KK$-basis of $V_q(-w_0\lambda)\otimes V_q(\lambda')$. Take its dual basis $\dot{\BBB}_{-\lambda, \lambda'}^{\ast}$ of $(V_q(-w_0\lambda)\otimes V_q(\lambda'))^{\ast}$, and denote by 
   $\dot{b}_{-\lambda, \lambda'}^{\ast}$ the unique element of $\dot{\BBB}_{-\lambda, \lambda'}^{\ast}$ satisfying $\langle\dot{b}_{-\lambda, \lambda'}^{\ast}, \dot{b}_{-\lambda, \lambda'}\rangle=1$. Then, 
   \[
   \dot{c}^{V_q(-w_0\lambda)\otimes V_q(\lambda')}(\dot{b}_{-\lambda, \lambda'}^{\ast}, v_{-\lambda}\otimes v_{\lambda'})=\dot{b}^{\ast}.
   \]
   Therefore, $\dot{b}^{\ast}$ belongs to the image of $\Upsilon$. 

   Next, we show that $\Upsilon$ is a homomorphism of Hopf algebras. Let $\dot{b}_1\in \dot{\BBB}\cap {_{\mu_1}\dot{U}_{\mu'_1}}, \dot{b}_2\in \dot{\BBB}\cap {_{\mu_2}\dot{U}_{\mu'_2}}$. Then, by the above argument, there exist finite dimensional type one $U_q(\g)$-modules $V_i$ and $v_i\in V_{\mu'_i}, \xi_i\in (V^{\ast})_{-\mu_i}$  such that 
   \[
   c^{V_i}(\xi_i, v_i)=\Upsilon^{-1}(\dot{b}_i^{\ast})
   \]
   for $i=1, 2$. For $\dot{b}'\in \dot{\BBB}\cap {_{\nu}\dot{U}_{\nu'}}$, we take $\widetilde{\dot{b}'}\in U_q(\g)_{\nu-\nu'}$ such that $\pi_{\nu, \nu'}(\widetilde{\dot{b}'})=\dot{b}'$. Then, by definition, for $\dot{b}\in \dot{\BBB}\cap {_{\mu}\dot{U}_{\mu'}}$, 
\begin{align*}
    &\langle\Upsilon(c^{V_1}(\xi_1, v_1)c^{V_2}(\xi_2, v_2)), \dot{b}\rangle\\
    &
    =\langle \xi_2\otimes \xi_1, \dot{b}.(v_1\otimes v_2)\rangle\\
    &
    =\delta_{\mu', \mu'_1+\mu'_2}\langle \xi_2\otimes \xi_1, \widetilde{\dot{b}}.(v_1\otimes v_2)\rangle\\
    &
    =\delta_{\mu', \mu'_1+\mu'_2}\langle \xi_2\otimes \xi_1, (\pi_{\mu_1, \mu_1'}\otimes\pi_{\mu_2, \mu_2'})(\Delta(\widetilde{\dot{b}})).(v_1\otimes v_2)\rangle\\
    &
    =\sum_{\substack{\dot{b}'_1 \in \dot{\BBB}\cap {_{\mu_1}\dot{U}_{\mu'_1}}\\ \dot{b}'_2\in \dot{\BBB}\cap {_{\mu_2}\dot{U}_{\mu'_2}}}}\widehat{m}^{\dot{b}'_1, \dot{b}'_2}_{\dot{b}}
    \langle \xi_2\otimes \xi_1, (\dot{b}'_1. v_1)\otimes (\dot{b}'_2.v_2)\rangle=\widehat{m}^{\dot{b}_1, \dot{b}_2}_{\dot{b}}.\\
\end{align*}
   Therefore, 
   \begin{align*}
       \Upsilon(c^{V_1}(\xi_1, v_1)c^{V_2}(\xi_2, v_2))=\sum_{\dot{b}\in \dot{\BBB}}\widehat{m}^{\dot{b}_1, \dot{b}_2}_{\dot{b}}\dot{b}^{\ast}=\dot{b}_1^{\ast}\dot{b}_2^{\ast}=\Upsilon(c^{V_1}(\xi_1, v_1))\Upsilon(c^{V_2}(\xi_2, v_2)).
   \end{align*}
   Hence $\Upsilon$ is an algebra homomorphism. 
   
   Let $\dot{b}\in \dot{\BBB}\cap {_{\mu}\dot{U}_{\mu'}}$, and take a finite dimensional type one $U_q(\g)$-module $V$ and $v\in V_{\mu'}, \xi\in (V^{\ast})_{-\mu}$  such that 
   \begin{align}
   c^{V}(\xi, v)=\Upsilon^{-1}(\dot{b}^{\ast}).\label{eq:Upsilon_inv}       
   \end{align}
   Then, for $\dot{b}_1\in \dot{\BBB}\cap {_{\mu_1}\dot{U}_{\mu'_1}}$ and $\dot{b}_2\in \dot{\BBB}\cap {_{\mu_2}\dot{U}_{\mu'_2}}$, 
   \begin{align*}
   \langle (\Upsilon\otimes \Upsilon)(\Delta(c^{V}(\xi, v))), \dot{b}_1\otimes \dot{b}_2\rangle
   &=\delta_{\mu, \mu_1}\delta_{\mu'_1, \mu_2}\delta_{\mu'_2, \mu'}\langle \xi, \widetilde{\dot{b}}_1\widetilde{\dot{b}}_2.v\rangle\\
   &=\delta_{\mu, \mu_1}\delta_{\mu'_1, \mu_2}\langle \xi, \pi_{\mu_1, \mu'_2}(\widetilde{\dot{b}}_1\widetilde{\dot{b}}_2).v\rangle\\
   &=\langle \xi, \dot{b}_1\dot{b}_2.v\rangle\\
   &=\sum_{\dot{b}\in \dot{\BBB}}m_{\dot{b}_1, \dot{b}_2}^{\dot{b}'}\langle \xi, \dot{b}'.v\rangle
   =m_{\dot{b}_1, \dot{b}_2}^{\dot{b}}.
   \end{align*}
      Therefore, 
   \begin{align*}
       (\Upsilon\otimes \Upsilon)(\Delta(c^{V}(\xi, v)))=\sum_{\dot{b}_1, \dot{b}_2\in \dot{\BBB}}m_{\dot{b}_1, \dot{b}_2}^{\dot{b}}\dot{b}_1^{\ast}\otimes \dot{b}_2^{\ast}=\Delta (\dot{b}^{\ast})=\Delta(\Upsilon(c^{V}(\xi, v))).
   \end{align*}
   Hence $\Upsilon$ is a coalgebra homomorphism.

Finally, we prove that $\Upsilon(R_q[G]_{\AAA})=\mathbf{O}_{\AAA}$. Let $\dot{b}\in \dot{\BBB}\cap {_{\mu}\dot{U}_{\mu'}}$. By (B1), $\widetilde{\dot{b}}$ can be taken from $U_q(\g)_\AAA$. By Lemma \ref{lem:weight_decomp} below, $R_q[G]_{\AAA}$ is spanned  over $\AAA$ by elements of the form $c^V(\xi, v)$ for some finite dimensional type one $U_q(\g)$-module $V$ and its weight vectors $v\in V, \xi\in V^{\ast}$. Then, 
\[
\langle \Upsilon(c^V(\xi, v)), \dot{b}\rangle =\delta_{\mu', \wt v}\langle \xi, \widetilde{\dot{b}}.v\rangle=\delta_{\mu', \wt v}\langle c^V(\xi, v), \widetilde{\dot{b}}\rangle\in \AAA.
\]
Therefore, $\Upsilon(R_q[G]_{\AAA})\subset \mathbf{O}_{\AAA}$. 

Conversely, let $\dot{b}\in \dot{\BBB}\cap {_{\mu}\dot{U}_{\mu'}}$, and take $V, v, \xi$ as in \eqref{eq:Upsilon_inv}. For $x\in U_q(\g)_\AAA$, we can write 
\[
\pi_{\mu, \mu'}(x)=1_{\mu}x1_{\mu'}=\sum_{\dot{b}'\in \dot{\BBB}}c_{\dot{b}'}\dot{b}'\text{ for }c_{\dot{b}'}\in \AAA 
\]
by (B1) and the relation $K_i1_{\nu}=q^{(\nu,\alpha_i)}$ for $i\in [1, r], \nu\in P$. Hence 
\[
\langle c^V(\xi, v), x\rangle= \langle \dot{c}^V(\xi, v), \pi_{\mu, \mu'}(x)\rangle=\sum_{\dot{b}'\in \dot{\BBB}}c_{\dot{b}'}\langle \dot{b}^{\ast}, \dot{b}'\rangle=c_{\dot{b}}\in \AAA.
\]
Hence $c^V(\xi, v)\in R_q[G]_{\AAA}$. Therefore, $\Upsilon(R_q[G]_{\AAA})\supset \mathbf{O}_{\AAA}$, which completes the proof of the proposition. 
\end{proof}
In the proof of Proposition \ref{prop:comparison}, we used the following general lemma. 
\begin{lem}\label{lem:weight_decomp}
    Let $V$ be a type one $U_q(\g)$-module $V$ and $V_{\AAA}\subset V$ be its $U_q(\g)_\AAA$-submodule. Then $V_{\AAA}=\bigoplus_{\mu\in P}(V_{\AAA}\cap V_{\mu})$.  
\end{lem}
\begin{proof}
    Let $v\in V_{\AAA}$ and write it as $v=\sum_{\mu\in P}v_\mu$ with $v_\mu\in V_{\mu}$. It suffices to show that $v_\mu\in  V_{\AAA}$ for all $\mu \in P$. Set  
    \[
    \{\mu\in P\mid v_{\mu}\neq 0\}=\{\mu_1,\dots, \mu_l\}.
    \]
    By the relabeling, we may assume that, for $k=2,\dots, l$, there exists $i_k\in [1, r]$ such that 
    \[
    \langle \alpha_i\spcheck, \mu_1\rangle=\langle \alpha_i\spcheck, \mu_k\rangle\text{ for } i=1,\dots, i_k-1,\text{ and }
    \langle \alpha_{i_k}\spcheck, \mu_1\rangle>\langle \alpha_{i_k}\spcheck, \mu_k\rangle.
    \]
    It is known that $U_q(\g)_\AAA$ contains the elements 
    \[
    \left[\begin{array}{c}
				K_i; a\\
				n
			\end{array}\right]\coloneqq \prod_{s=0}^{n-1}\frac{K_iq_i^{a-s}-K_i^{-1}q_i^{-a+s}}{q_i^{s+1}-q_i^{-s-1}}
    \]
    for $i\in [1, r]$, $a\in \Z$ and $n\in \Z_{\geq 0}$ (see, e.g., \cite[Section 11.1]{jantzen1996lectures}). For $n_1\coloneqq \min\{\langle \alpha_1\spcheck, \mu_k\rangle\mid k=1,\dots, \ell\}$, we have
    \[
    \left[\begin{array}{c}
				K_1; -n_1\\
				\langle \alpha_1\spcheck, \mu_1\rangle-n_1
			\end{array}\right].v_{\mu_k}
            =\begin{cases}
            v_{\mu_k}&\text{if }\langle \alpha_1\spcheck, \mu_k\rangle=\langle \alpha_1\spcheck, \mu_1\rangle\\
			    0&\text{if }\langle \alpha_1\spcheck, \mu_k\rangle\neq \langle \alpha_1\spcheck, \mu_1\rangle.
			\end{cases}
    \]
Therefore, 
\[
    \left[\begin{array}{c}
				K_1; -n_1\\
				\langle \alpha_1\spcheck, \mu_1\rangle-n_1
			\end{array}\right].v=\sum_{k; \langle \alpha_1\spcheck, \mu_k\rangle=\langle \alpha_1\spcheck, \mu_1\rangle}v_{\mu_k}\in V_{\AAA}.
\]
By repeating a similar argument for $i=2,\dots, r$, we can obtain an element $\widetilde{K}\in U_q(\g)_\AAA$ such that 
\[
\widetilde{K}.v=v_{\mu_1}. 
\]
Thus, $v_{\mu_1}\in V_{\AAA}$, and hence, $\sum_{k=2,\dots, l}v_{\mu_k}\in V_{\AAA}$. By applying this argument iteratively to $\sum_{k=2,\dots, l}v_{\mu_k}\in V_{\AAA}$, we conclude that $v_{\mu_k}\in V_{\AAA}$ for $k=1,\dots, l$. 
\end{proof}
\bibliographystyle{amsalphaURL2} 
\bibliography{referenceEprint}        
\index{Bibliography@\emph{Bibliography}}

\end{document}